\newtheorem{thm}{Theorem}[section]
\newtheorem{theorem}[thm]{Theorem}
\newtheorem{claim}[thm]{Claim}
\newtheorem{corollary}[thm]{Corollary}
\newtheorem{lemma}[thm]{Lemma}
\newtheorem{proposition}[thm]{Proposition}
\newtheorem{definition}[thm]{Definition}
\newtheorem{remark}[thm]{Remark}
\theoremstyle{definition}
\newtheorem{example}[thm]{Example}
\newtheorem{examples}[thm]{Examples}
\newcommand{\subalign}[1]{%
  \vcenter{%
    \Let@ \restore@math@cr \default@tag
    \baselineskip\fontdimen10 \scriptfont\tw@
    \advance\baselineskip\fontdimen12 \scriptfont\tw@
    \lineskip\thr@@\fontdimen8 \scriptfont\thr@@
    \lineskiplimit\lineskip
    \ialign{\hfil$\m@th\scriptstyle##$&$\m@th\scriptstyle{}##$\crcr
      #1\crcr
    }%
  }
}
\begin{document}

\centerline{\Large \bf Central elements in $\textbf{U}(gl(n))$,}

\bigskip

\centerline{\Large \bf shifted symmetric functions, }

\bigskip

\centerline{\Large \bf  and the superalgebraic Capelli's method}

\bigskip

\centerline{\Large \bf  of virtual variables}

\bigskip

\centerline{A. Brini and A. Teolis}
\centerline{\it $^\flat$ Dipartimento di Matematica, Universit\`{a} di
Bologna }
 \centerline{\it Piazza di Porta S. Donato, 5. 40126 Bologna. Italy.}
\centerline{\footnotesize e-mail: andrea.brini@unibo.it}
\medskip

\begin{abstract}
We propose a new method for a unified study of some of the main features of the theory of the center $\boldsymbol{\zeta}(n)$
of the enveloping algebra U(gl(n)) and of the algebra $\Lambda^*(n)$ of shifted symmetric polynomials, that allows the whole theory to be developed,
in a transparent and concise way, from the representation-theoretic point of view, that is entirely in the center of U(gl(n)).
Our methodological innovation is the systematic use of the superalgebraic method of virtual variables for gl(n), which is, in turn,
an extension of Capelli's method of ``variabili ausiliarie''.

The passage $n \rightarrow \infty$ for the
algebras $\boldsymbol{\zeta}(n)$ and $\Lambda^*(n)$
is here obtained both as   direct and inverse limit in the category of filtered algebras.

The present approach leads to    proofs  that are almost direct consequences of the definitions and
constructions: they often reduce to a few lines computation.

\end{abstract}

\textbf{Keyword}:
Combinatorial representation theory; shifted symmetric functions; superalgebras;
central elements in U(gl(n)); Capelli identities; superstandard Young tableaux;
Schur supermodules.

\tableofcontents

\section{Introduction}

The study of  the center
$\boldsymbol{\zeta}(n)$ of the enveloping algebra $\mathbf{U}(gl(n))$ of the
general linear Lie algebra $gl(n, \mathbb{C})$, and  the study of the algebra
$\Lambda^*(n)$ of shifted symmetric polynomials have noble and rather independent origins and motivations.

The theme of central elements  in $\mathbf{U}(gl(n))$ is a standard one in the general theory of Lie algebras, see e.g. \cite{DIX-BR}.
It is an old and actual one, since it
is an  offspring of the celebrated Capelli identity (\cite{Cap1-BR}, \cite{Cap4-BR}, \cite{Howe-BR}, \cite{HU-BR},
\cite{Procesi-BR}, \cite{Umeda-BR}, \cite{Weyl-BR}),
relates to its modern generalizations and applications (\cite{ABP-BR}, \cite{KostantSahi1-BR},
\cite{KostantSahi2-BR}, \cite{MolevNazarov-BR}, \cite{Nazarov-BR}, \cite{Okounkov-BR}, \cite{Okounkov1-BR},
\cite{Sahi3-BR}, \cite{UmedaCent-BR})
as well as to the theory of {\it{Yangians}} (see, e.g.  \cite{Molev1-BR}, \cite{Molev-BR},  \cite{Nazarov2-BR}).

The algebra $\Lambda^*(n)$ of shifted symmetric polynomials is a remarkable deformation of the  algebra
 $\Lambda(n)$ of symmetric polynomials and its study fits into the mainstream of generalizations of the classical theory
(e.g., {\it{factorial symmetric functions}}, \cite{BL1-BR}, \cite{BL2-BR}, \cite{CL-BR}, \cite{GG-BR}, \cite{GH-BR}, \cite{M2-BR}, \cite{M1-BR}).

Since the algebras $\boldsymbol{\zeta}(n)$ and $\Lambda^*(n)$
are related by the Harish-Chandra isomorphism $\chi_n$ (see, e.g. \cite{OkOlsh-BR}),
their investigation can be essentially regarded as a  single topic,
and this fact gave rise to a fruitful interplay between representation-theoretic
 methods (e.g., eigenvalues on irreducible representations)
and combinatorial techniques (e.g., generating functions).

In this work, we propose a new method for a unified study of some of the main features of the theory of the center
$\boldsymbol{\zeta}(n)$ of $\mathbf{U}(gl(n))$ and of the algebra $\Lambda^*(n)$ of shifted symmetric polynomials
that allows the whole theory to be developed, in a transparent and concise way, from the representation-theoretic point of view, that is entirely
in the center $\boldsymbol{\zeta}(n)$.

Our methodological innovation is the systematic use of the {\it{superalgebraic method of virtual variables}} for $gl(n)$
(see, e.g. \cite{BriUMI-BR}),
 which is, in turn, an extension
of Capelli's method of  {\textit{ variabili ausilarie}}. Capelli introduced the method of
{\textit{ variabili ausilarie}} in order
to manage symmetrizer operators in
terms of polarization operators, to  simplify the study of some skew-symmetrizer operators (namely, the famous central Capelli operator) and
developed this idea in a systematic way in his beautiful treatise \cite{Cap4-BR}. Capelli's idea was well suited to
treat symmetrization, but it did not work in the same efficient way while dealing with skew-symmetrization.

One had to wait the
 introduction of the notion of {\textit{superalgebras}} (see,e.g. \cite{Scheu-BR}, \cite{FSS-BR}, \cite{KAC1-BR}, \cite{VARAD-BR})
 to have the right conceptual framework to treat symmetry and skew-symmetry
 in one and the same way.
To the best of our knowledge, the first mathematician who intuited the connection between Capelli's idea and superalgebras was
Koszul in $1981$  \cite{Koszul-BR}; Koszul
 proved that the classical determinantal Capelli operator can be rewritten - in a much simpler way - by adding to the symbols to be dealt with
an extra auxiliary symbol that obeys to different commutation relations.

The {\textit{superalgebraic/supersymmetric method of virtual variables}} was developed in its full extent and generality (for the   general linear
Lie superalgebras $gl(m|n)$ - in the notation
of \cite{KAC1-BR}) in the series of notes \cite{Brini1-BR}, \cite{Brini2-BR}, \cite{Brini3-BR}, \cite{Brini4-BR}, \cite{BHT-BR} by  the present authors and
R.Q. Huang.

The superalgebraic method of virtual variables allows remarkable classes of elements in $\boldsymbol{\zeta}(n)$ to
be written as simple {\it{sums}} of {\it{monomials}} and their actions on irreducible representations to be given simple combinatorial descriptions;
 moreover, this method
throws a  bridge between the theory of  $\boldsymbol{\zeta}(n)$ and  the {\it{(super)straightening techniques}}
 \cite{rota-BR}, \cite{Brini3-BR}, \cite{Brini4-BR},
\cite{Brini5-BR}, \cite{Bri-BR}
(or, in the classical context, {\it{standard monomial theory}}, see e.g. \cite{Procesi-BR}).

We consider five classes of central elements, which arise in a natural way in the context of the virtual method
when dealing with symmetry and skew-symmetry in $\mathbf{U}(gl(n))$:
\begin{itemize}
\renewcommand{\labelitemi}{\normalfont -}
\item
The elements $\mathbf{H}_k(n), \quad k = 1, 2, \ldots, n$; a noteworthy fact is that these elements turn out to be  a supple form of
the classical determinantal Capelli elements  of $1893$
\footnote{These elements are different from the Capelli elements in the sense, for example, of Howe and Umeda \cite{HU-BR}, even if the two families
are closely related (see, e.g. \cite{BriUMI-BR}).}
(\cite{Cap2-BR}, \cite{Cap3-BR},
\cite{Brini5-BR}).
\item
The elements $\mathbf{I}_k(n), \quad k \in \mathbf{Z}^+$; a noteworthy fact is that these elements turn out to be  a supple form of
the central elements studied by Nazarov \cite{Nazarov-BR} and Umeda \cite{UmedaHirai-BR}.
\item
The elements $\mathbf{K}_{\lambda}(n)$ and $\mathbf{J}_{\lambda}(n)$, that are generalizations  to Young shapes $\lambda$ of the
$\mathbf{H}_k(n)$ and of the $\mathbf{I}_k(n)$, respectively. The actions of these elements on irreducible representations
have remarkable triangularity properties (Theorem \ref{K Triangularity} and Proposition \ref{J Triangularity}).
\item
The  elements $\mathbf{S}_{\lambda}(n)$, that provide  a common generalization of both $\mathbf{H}_k(n)$ and $\mathbf{I}_k(n)$.
By Theorem \ref{Schur action} and the Sahi/Okounkov Characterization Theorem (see Subsection \ref{Characterization  Theorems} below),
the elements $\mathbf{S}_{\lambda}(n)$ turn out to be  a supple form of the {\it{Schur elements}} discovered  by Sahi \cite{Sahi1-BR}
in the context of shifted symmetric  polynomials, by Okounkov \cite{Okounkov-BR} , \cite{Okounkov1-BR} as elements of $\boldsymbol{\zeta}(n)$,
and   extensively
investigated by Okounkov and Olshanski \cite{OkOlsh-BR} both from the point of view of central elements and of shifted symmetric functions
in infinitely many variables.
\end{itemize}

Just to mention a few remarkable features
of the method, we first note that the centrality of the elements we consider follows from extremely simple arguments on
their virtual presentations (see, e.g. Proposition \ref{central virtual determinantal} and Proposition \ref{central virtual permanental}).
The duality/reciprocity in $\boldsymbol{\zeta}(n)$ (Theorem \ref{finite duality}) immediately follows from a new (and rather surprising)
combinatorial description of the eigenvalues
of the Capelli elements $\mathbf{H}_k(n)$ on irreducible representations (Proposition \ref{horizontal strip}) that is
{\it{dual}} (in the sense of shapes/partitions) to the combinatorial description of the eigenvalues
of the Nazarov/Umeda elements $\mathbf{I}_k(n)$ (Theorem \ref{vertical strip}.1).
By the {\it{ Bitableaux correspondence and Koszul map Theorems}}
(\cite{Brini4-BR}, Thms. 1 and 2, see also \cite{BriUMI-BR}, \cite{Koszul-BR}),
the elements $\mathbf{H}_k(n), \mathbf{I}_k(n), \mathbf{K}_{\lambda}(n), \mathbf{J}_{\lambda}(n)$ expand into
``{\it{column bitableaux}}'' in $\mathbf{U}(gl(n))$ in a way that is in all respect similar to the ordinary Laplace
expansions of determinants and permanents of matrices with entries in a commutative algebra. This fact, in turn,
leads to further combinatorial descriptions of the eigenvalues of the central  elements
$\mathbf{H}_k(n), \mathbf{I}_k(n), \mathbf{K}_{\lambda}(n), \mathbf{J}_{\lambda}(n)$
on irreducible representations that make apparent the role
of permutations (for the  sake of brevity, we fully work out only the case of the $\mathbf{H}_k(n)$'s, in Subsection \ref{subsubsection vertical}).
Our representation-theoretic versions
(Theorem \ref{Vanishing theorem} and Theorem \ref{Schur action}) of the Okounkov
Vanishing Theorem  and of the Sahi/Okounkov Characterization Theorem
follow at once from some standard elementary facts
on Schur supermodules (Proposition \ref{Vanishing Lemma}) in combination with Regonati's hook
lemma \cite{Regonati-BR} (Proposition \ref{hook lemma}).

The passage to the infinite dimensional case $n \rightarrow \infty$ for the
algebras $\boldsymbol{\zeta}(n)$ is rather subtle;  the ``naive''
$\infty-$dimensional analogue of the algebras $\mathbf{U}(gl(n))$,
that is the direct limit algebra $\underrightarrow{lim} \ \mathbf{U}(gl(n))$ with respect to the {\it{``inclusion''  monomorphisms}},
has trivial center.

The $\infty-$dimensional analogue $\boldsymbol{\zeta}$ of the algebras $\boldsymbol{\zeta}(n)$
is here obtained as the  direct limit algebra $\underrightarrow{lim} \ \boldsymbol{\zeta}(n)$ (in the category of filtered algebras)
with respect to a family of monomorphisms $\mathbf{i}_{n+1,n} : \boldsymbol{\zeta}(n) \hookrightarrow \boldsymbol{\zeta}(n+1)$,
that we call the {\it{Capelli monomorphisms}} (Section \ref{Capelli monomorphisms}).
This construction is a Lie algebra analogue of the construction of the ring of symmetric functions $\Lambda$
as the  direct limit algebra $\underrightarrow{lim} \ \Lambda(n)$ with respect to the monomorphims that map
the elementary symmetric polynomial $e_k(n)$ in $n$ variables to the elementary symmetric polynomial $e_k(m)$ in $m$ variables,
$n < m$, $k = 1, 2, \ldots, n.$ (see, e.g. \cite{Fink-BR}, or \url{<https://en.wikipedia.org/wiki/Ring_of_symmetric_functions>}).

The direct limit construction of the algebra $\boldsymbol{\zeta}$ implies that,
if $P = \underrightarrow{lim} \ P(n)$ and $Q = \underrightarrow{lim} \ Q(n)$ are elements of $\boldsymbol{\zeta}$
of ``minimum filtration degree'' $m$, then  $ P = Q$ if and only if   $P(n) = Q(n)$ in $\boldsymbol{\zeta}(n)$,
 for some  $n \geq m;$ therefore, linear and algebraic relations among elements of $\boldsymbol{\zeta}$ are determined
 by the relations among their ``{\it{germs}}'' in $\boldsymbol{\zeta}(n)$, for $n$ sufficiently large.

An intrinsic/invariant presentation of the Capelli monomorphisms is obtained, in Section \ref{The Olshanski decomposition/projection},
via a family of projections $\boldsymbol{\mu}_{n,n+1} : \boldsymbol{\zeta}(n+1) \twoheadrightarrow \boldsymbol{\zeta}(n)$, the
{\it{Olshanski projections}} \cite{Olsh1-BR}, \cite{Olsh3-BR} (see also \cite{Molev1-BR}). The Olshanski projections
$\boldsymbol{\mu}_{n,n+1}$ are {\it{left}} inverses of the Capelli monomorphisms $\mathbf{i}_{n+1,n}$, and they become {\it{two-sided}}
inverses when restricted to the filtration elements $\boldsymbol{\zeta}(n+1)^{(m)}$ and $\boldsymbol{\zeta}(n)^{(m)}$,
for $n$ sufficiently large (Proposition \ref{inverso filtrato} and  Proposition  \ref{pi=mu}).

Amazingly, the Olshanski projection
$\boldsymbol{\mu}_{n,n+1}$ acts on virtual presentations just by truncating the sums of monomials
(see Proposition \ref{Olshanski Capelli} and Proposition \ref{Olshanski altri}).
Therefore, the direct limits $\mathbf{H}_k$ of
 the   $\mathbf{H}_k(n)$, $\mathbf{I}_k$ of the
$\mathbf{I}_k(n)$, $\mathbf{K}_{\lambda}$ of the
$\mathbf{K}_{\lambda}(n)$, $\mathbf{J}_{\lambda}$ of the
$\mathbf{J}_{\lambda}(n)$, and $\mathbf{S}_{\lambda}$ of the
  $\mathbf{S}_{\lambda}(n)$ can be consistently  written as  {\it{formal series}} of virtual {\it{monomials}}
  (Theorem \ref{germs}, Definition \ref{series} and Proposition \ref{formal series}).

The  interplay between Capelli monomorphisms and Olshanski projections shows the algebra $\boldsymbol{\zeta}$
admits a double presentation, both as a direct limit and as an inverse limit. Being the algebra $\boldsymbol{\zeta}$
isomorphic to the algebra $\Lambda^*$ of {\it{shifted symmetric functions}} (Theorem \ref{isomorfismo HC infinito}),
the Olshanski projections are the
natural counterpart, in the context of the centers $\boldsymbol{\zeta}(n)$, of the Okounkov-Olshanski {\it{stability principle}}
for the algebras $\Lambda^*(n)$ of shifted symmetric polynomials \cite{OkOlsh-BR}, the isomorphism
$\chi : \boldsymbol{\zeta} \rightarrow \Lambda^*$ is indeed the ``limit'' of the Harish-Chandra isomorphisms $\chi_n$ and it admits
a transparent representation-theoretic interpretation (Proposition \ref{The map universal HC}).

We tried to make the exposition self-contained.

\noindent The arguments are based on very few prerequisites:  the fact that the classical Capelli
elements of $1893$ (Section \ref{Capelli 1893}) provide a free
system of algebra generators of $\boldsymbol{\zeta}(n)$
and  a small bunch of combinatorial lemmas (Proposition \ref{Vanishing Lemma} and Proposition \ref{hook lemma}).
The use of the virtual variables  turns  all the proofs  into almost direct consequences of the definitions and
constructions.

\section{Synopsis}

The paper is organized as follows.

In Chapter \ref{virtual method}, we summarize the leading ideas of the present approach, that is the main facts and constructions
of the {\it{superalgebraic virtual variables method for $gl(n)$}} \cite{BriUMI-BR}.

As already mentioned,  we develop the whole theory from the representation-theoretic point of view, that is
our main concern are the eigenvalues of central elements of $\mathbf{U}(gl(n))$ on $gl(n)-$irreducible representations.
In order to do this, we embed any ``covariant'' $gl(n)-$irreducible representation (Schur module $Schur_\lambda(n)$) into an irreducible
supermodule $Schur_\lambda(m_0|m_1+n)$ on a suitable general linear superalgebra $gl(m_0|m_1+n)$. Besides the algebras
$\mathbf{U}(gl(n))  \subset \mathbf{U}(gl(m_0|m_1+n))$, in Subsection \ref{Virt}  we single out a third algebra,
the {\it{virtual algebra}} $Virt(m_0|m_1+n)$,
$$
\mathbf{U}(gl(n))  \subset Virt(m_0|m_1+n)   \subset \mathbf{U}(gl(m_0|m_1+n)),
$$
that has the remarkable properties:
\begin{itemize}
\renewcommand{\labelitemi}{\normalfont -}
\item
there is a ``canonical'' epimorphism $\mathfrak{p} : Virt(m_0|m_1+n) \twoheadrightarrow \mathbf{U}(gl(n))$, that we
call the {\it{Capelli epimorphism}};
\item
the Schur $\mathbf{U}(gl(n))-$irreducible module $Schur_\lambda(n)$ is an invariant subspace of the
$\mathbf{U}(gl(m_0|m_1+n))-$irreducible supermodule
$Schur_\lambda(m_0|m_1+n)$, with respect to the action of
the subalgebra $Virt(m_0|m_1+n) \subset \mathbf{U}(gl(m_0|m_1+n))$;
\item
the action of any element of $Virt(m_0|m_1+n)$ on $Schur_\lambda(n)$ is the same of the action of its
image in $\mathbf{U}(gl(n))$ with respect to the Capelli epimorphism (Theorem \ref{Capelli epimorphism}).
\end{itemize}
Therefore, instead of studying the action of an element in $\mathbf{U}(gl(n))$ one can study the action of a preimage of it in
$Virt(m_0|m_1+n)$ (called {\it{virtual presentation}}). The advantage of virtual presentations is that they
are frequently of monomial form,   admit quite transparent interpretations and are  much  easier to be dealt with
(see, e.g. \cite{Brini1-BR}, \cite{Brini2-BR}, \cite{BRT-BR}, \cite{Bri-BR}, \cite{BriUMI-BR}), so we even take them
as a definition of an element in $\mathbf{U}(gl(n)).$

In order to make the virtual variables method  effective, we  exhibit a class of nontrivial elements that belong
to $Virt(m_0+m_1,n)$, that is \textit{balanced monomials} (Subsection \ref{balanced monomial}).
A quite relevant subclass of balanced monomials arises in connection with pairs of Young tableaux
(Section \ref{Tableaux}).

When specialized to the center $\boldsymbol{\zeta}(n)$ of $\mathbf{U}(gl(n))$, this method reveals further features and benefits:
\begin{itemize}
\renewcommand{\labelitemi}{\normalfont -}
\item
the subalgebra $Virt(m_0|m_1+n)$ is an invariant subspace of $\mathbf{U}(gl(m_0|m_1+n))$  with respect to the adjoint action of
$gl(n)$;
\item
the Capelli epimorphism is an equivariant map with respect to the adjoint action of
$gl(n)$  (Proposition \ref{rappresentazione aggiunta-BR});
\item
the Capelli epimorphism image of an element of $Virt(m_0|m_1+n)$ that is an {\it{invariant}} with respect to the adjoint action of
$gl(n)$ belongs to the center $\boldsymbol{\zeta}(n)$ of $\mathbf{U}(gl(n))$ (Corollary \ref{invarianti virtuali}).
\end{itemize}
Therefore, in Chapter \ref{The center} we will systematically define classes of central elements through their virtual presentations;
in this way, the centrality
is immediately apparent from the definition.

In Section \ref{standard}, we recall a family of algebra generators of the supersymmetric algebra ${\mathbb C}[M_{m_0|m_1+n,d}]$,
called {\it{biproducts}}, which are supersymmetric generalizations of ``formal determinants'', as well as a a family of linear generators
associated to a pairs of Young tableaux, called {\it{bitableaux}}, for short.
Bitableaux are signed products of biproducts \cite{rota-BR} (for a virtual presentation of both, see \cite{Brini1-BR}, \cite{Bri-BR}),
both share a good behaviour with respect to the superpolarization action of $\mathbf{U}(gl(m_0|m_1+n))$  on ${\mathbb C}[M_{m_0|m_1+n,d}]$
(Proposition \ref{polarization biproduct} and Proposition \ref{action on tableaux}). By the  superstraightening algorithm, the set of
(super)standard bitableaux is a basis of ${\mathbb C}[M_{m_0|m_1+n,d}]$ \cite{rota-BR}, \cite{Brini5-BR}, \cite{Bri-BR}
(Theorem \ref{theorem: standard expansion of bitableaux} and Corollary \ref{theorem: standard basis}).

In Section \ref{Schur}, we review  the main facts about Schur supermodules as submodules of ${\mathbb C}[M_{m_0|m_1+n,d}]$,
and of  Schur modules as  $\mathbf{U}(gl(n))-$submodules of Schur supermodules.
The highest weight vectors of Schur modules turns out to be {\it{Deruyts bitableaux}} (Subsection \ref{Schur modules}).

In Subsection \ref{Lemmas}, we recall some results about the action of the Lie superalgebra $\mathbf{U}(gl(m_0|m_1+n))$
on the the subspace $Schur_\lambda(n)$
({\it{Vanishing Lemmas}}, Proposition \ref{Vanishing Lemma} and {\it{Regonati's Hook Lemma}}, Proposition \ref{hook lemma}), which
are direct consequences of the use of virtual variables in combination with the  superstraightening algorithm.
These results will play a crucial role in Chapter \ref{The center}; as a matter of fact, the
``triangularity/orthogonality'' results for the action of the central elements $\mathbf{K}_{\lambda}(n)$,
$\mathbf{J}_{\lambda}(n)$, $\mathbf{S}_{\lambda}(n)$ on highest weight vectors almost immediately follow from them.

In  Chapter \ref{The center}, we introduce five classes
 $\mathbf{H}_k(n)$, $\mathbf{I}_k(n)$, $\mathbf{K}_{\lambda}(n)$, $\mathbf{J}_{\lambda}(n)$, and $\mathbf{S}_{\lambda}(n)$
of elements of $\mathbf{U}(gl(n))$ as images of {\it{simple sums of
balanced monomials}} in $Virt(m_0|m_1+n)$. Due to their virtual presentations, these elements are almost immediately recognized as
belonging to the center $\boldsymbol{\zeta}(n)$. The  $\mathbf{H}_k(n)$'s and the $\mathbf{I}_k(n)$'s turn out to be the Capelli determinantal
elements and their permanental analogues, respectively.

In  Subsection \ref{subsubsection vertical}  we consider the  Capelli  generators $\mathbf{H}_k(n)$ from
the point of view of the {\it{ Koszul isomorphism}}
and show that they expand into ``{\it{column bitableaux}}'' in the same way as the determinants
of matrices with commutative entries expand into ordinary monomials (Proposition \ref{column expansion});
this result implies a new combinatorial description
of their eigenvalues on irreducible representations (Proposition \ref{third eigenvalue}).

The {\it{shaped}} generalizations $\mathbf{K}_{\lambda}(n)$ of  $\mathbf{H}_k(n)$ and
 $\mathbf{J}_{\lambda}(n)$ of  $\mathbf{I}_k(n)$ provide two new linear bases of
$\boldsymbol{\zeta}(n)$ (Corollary \ref{K basis} and Corollary \ref{J basis})
that satisfy remarkable triangularity properties when acting on highest weight vectors
(Theorem \ref{K Triangularity} and Theorem \ref{J Triangularity}). From an intuitive point of view,
the $\mathbf{K}_{\lambda}(n)$'s and the $\mathbf{J}_{\lambda}(n)$'s are elements with ``internal'' row skew-symmetry and symmetry
respectively,  as should be clear from their virtual presentations.

In Section \ref{Schur elements}, we introduce a third basis whose elements $\mathbf{S}_{\lambda}(n)$ have both ``internal''
row skew-symmetry and column symmetry. Their action on highest weight vectors (Theorem \ref{Schur action}) satisfy the condition
of the Sahi-Okounkov Characterization Theorem and, therefore, the $\mathbf{S}_{\lambda}(n)$'s turn out to be
the {\it{Schur  elements}} described
by Okounkov as {\it{quantum immanants}} (for further details, see Subsection \ref{Characterization  Theorems}).
It is remarkable that the same elements can also be obtained by interchanging the symmetries, that is the $\mathbf{S}_{\lambda}(n)$
can be defined as having column skew-symmetry and row symmetry (Subsection \ref{S second}).

In Section \ref{central duality} we deal with {\it{duality}} in $\boldsymbol{\zeta}(n)$\footnote{Equivalent results
- in the sense of the Harish-Chandra isomorphism - were obtained by Okounkov and Olshanski \cite{OkOlsh-BR} in the context of the algebra
$\Lambda^*(n)$ of shifted symmetric polynomials, by using generating functions techniques.},
which is a Lie algebra analogue of the
classical involution of the algebra $\Lambda(n)$ of symmetric polynomials.
The algebra $\boldsymbol{\zeta}(n)$ has an
involution $\mathcal{W}_n$ with notable properties:
\begin{itemize}
\renewcommand{\labelitemi}{\normalfont -}
\item If $\lambda_1, \ \widetilde{\lambda}_1 \leq n$, the image $\mathcal{W}_n\big(\mathbf{S}_{\lambda}(n)\big) $ equals
$\mathbf{S}_{\widetilde{\lambda}}(n)$ (Corollary \ref{Schur duality}).
\item
The eigenvalue of an element $\boldsymbol{\varrho} \in \boldsymbol{\zeta}(n)$ on a highest weight vector of weight $\mu$
equals the eigenvalue of its image $\mathcal{W}_n\big(\boldsymbol{\varrho})$
on a highest weight vector of weight $\widetilde{\mu}$ (Theorem \ref{finite duality}).
\end{itemize}

In the first Section of Chapter \ref{zeta limit}, we construct the $\infty-$dimensional analogue $\boldsymbol{\zeta}$ of
$\boldsymbol{\zeta}(n)$ as the direct limit (in the category of filtered algebras) with respect to the family of Capelli monomorphism
$$\mathbf{i}_{n+1,n} : \boldsymbol{\zeta}(n) \hookrightarrow \zeta(n+1), \qquad
\mathbf{i}_{n+1,n}\big(\mathbf{H}_k(n)\big) = \mathbf{H}_k(n+1), \quad k =1, 2, \ldots, n.$$
The left inverses of the $\mathbf{i}_{n+1,n}$:
$$
\boldsymbol{\mu}_{n,n+1} = \boldsymbol{\pi}_{n,n+1} : \boldsymbol{\zeta}(n+1) \twoheadrightarrow \boldsymbol{\zeta}(n),
$$
$$
\boldsymbol{\pi}_{n,n+1}(\mathbf{H}_k(n+1)) = \mathbf{H}_k(n), \ k = 1, 2, \ldots, n,
\quad
\boldsymbol{\pi}_{n,n+1}(\mathbf{H}_{n+1}(n+1)) = 0,
$$
 become {\it{two-sided}}
inverses when restricted to the filtration elements $\boldsymbol{\zeta}(n)^{(m)}$ and $\boldsymbol{\zeta}(n+1)^{(m)}$,
for $n$ sufficiently large.
The main point is that the projections $\boldsymbol{\pi}_{n,n+1}$ admit an intrinsic description in terms of the
{\it{Olshanski decomposition}} (Section \ref{The Olshanski decomposition/projection}).
Due to their virtual presentations, the Olshanski decompositions of the elements  $\mathbf{H}_k(n)$, $\mathbf{I}_k(n)$,
$\mathbf{K}_{\lambda}(n)$, $\mathbf{J}_{\lambda}(n)$, and $\mathbf{S}_{\lambda}(n)$ are amazingly simple
(Proposition \ref{Olshanski Capelli} and Proposition \ref{Olshanski altri}) and imply that
\begin{align*}
\mathbf{i}_{n+1,n}\big(\mathbf{I}_k(n)\big) &= \mathbf{I}_k(n+1), \qquad
\mathbf{i}_{n+1,n}\big(\mathbf{K}_\lambda(n)\big) = \mathbf{K}_\lambda(n+1),
\\
\mathbf{i}_{n+1,n}\big(\mathbf{J}_\lambda(n)\big) &= \mathbf{J}_\lambda(n+1), \qquad
\mathbf{i}_{n+1,n}\big(\mathbf{S}_\lambda(n)\big) = \mathbf{S}_\lambda(n+1),
\end{align*}
for $n$ ``sufficiently large'' (Section \ref{main}).

In Chapter \ref{Lambda(n)}, we recall the Harish-Chandra isomorphism
$\chi_n : \boldsymbol{\zeta}(n) \longrightarrow \Lambda^*(n)$, $\Lambda^*(n)$ the algebra of shifted symmetric polynomials,
and translate the results of Chapter \ref{The center} from $\boldsymbol{\zeta}(n)$  to  $\Lambda^*(n)$.

In Chapter \ref{Lambda}, we introduce the isomorphism
$\chi : \boldsymbol{\zeta} \longrightarrow \Lambda^*$, $\Lambda^*$ the algebra of shifted symmetric functions in infinitely many variables,
and translate the results of Chapter \ref{zeta limit} from $\boldsymbol{\zeta}$  to  $\Lambda^*$.

\section{The method of
 virtual supersymmetric variables for $\mathbf{U}(gl(n))$}\label{virtual method}

\subsection{The Lie algebra $gl(n)$ as a subalgebra of the general linear Lie superalgebra $gl(m_0|m_1+n)$}

 Given a vector space $V_n$ of dimension $n$, we will regard it as a subspace of a $\mathbb{Z}_2-$graded vector space
 $W = W_0 \oplus W_1$, where
$$
W_0 = V_{m_0}, \qquad W_1 = V_{m_1} \oplus V_n.
$$
The {\textit{ auxiliary}} vector spaces
$V_{m_0}$ and $V_{m_1}$ (informally, we assume that
$dim(V_{m_0})=m_0$ and $dim(V_{m_1})=m_1$ are ``sufficiently large'') are called
the spaces of {\textit{even virtual
vectors}} and of  {\textit{odd virtual vectors}}, respectively, and $V_n$ is called the space of {\textit{odd proper vectors}}.

 The inclusion $V_n \subset W$ induces a natural embedding of the general linear Lie algebra $gl(n)$ into the
general linear Lie {\it{superalgebra}} $gl(m_0|m_1+n)$ (see, e.g. \cite{KAC1-BR}, \cite{Scheu-BR}, \cite{FSS-BR}).

Let
$
A_0 = \{ \alpha_1, \ldots, \alpha_{m_0} \},$  $A_1 = \{ \beta_1, \ldots, \beta_{m_1} \},$
$L = \{ x_1, \ldots, x_n \}$
denote distinguished  bases of $V_{m_0}$, $V_{m_1}$ and $V_n$, respectively; therefore $|\alpha_s| = 0 \in \mathbb{Z}_2,$
and $|\beta_t| = |x_i|   = 1 \in \mathbb{Z}_2.$

Let
$$
\{ e_{a, b}; a, b \in A_0 \cup A_1 \cup L \}, \qquad |e_{a, b}| =
|a|+|b| \in \mathbb{Z}_2
$$
be the standard $\mathbb{Z}_2-$homogeneous basis of $gl(m_0|m_1+n)$ provided by the
elementary matrices.

The supercommutator of $gl(m_0|m_1+n)$ has the following explicit form:
$$
\left[ e_{a, b}, e_{c, d} \right] = \delta_{bc} \ e_{a, d} - (-1)^{(|a|+|b|)(|c|+|d|)} \delta_{ad}  \ e_{c, b},
$$
$a, b, c, d \in A_0 \cup A_1 \cup L.$

\subsection{The commutative algebra ${\mathbb C}[M_{n,d}]$ as a subalgebra of the supersymmetric algebra ${\mathbb C}[M_{m_0|m_1+n,d}]$}

The \textit{algebra of algebraic forms in $n$ vector variables
of dimension $d$} is the polynomial algebra in $n \times d$ variables:
$$
{\mathbb C}[M_{n,d}] =    {\mathbb C}[x_{ij}]_{i=1,\ldots,n; j=1,\ldots,d}
$$
where $M_{n,d}$ represents the  matrix
with $n$ rows and $d$ columns with ``generic" entries $x_{ij}$:
$$
M_{n,d} = \left[ x_{ij} \right]_{i=1,\ldots,n; j=1,\ldots,d}=
 \left[
 \begin{array}{ccc}
 x_{11} & \ldots & x_{1d} \\
 x_{21} & \ldots & x_{2d} \\
 \vdots  &        & \vdots \\
 x_{n1} & \ldots & x_{nd} \\
 \end{array}
 \right].
$$

In the following, we will write $(x_i|j)$ in place of $x_{ij},$ and regard the (commutative) algebra ${\mathbb C}[M_{n,d}]$
as a subalgebra of the \textit{supersymmetric algebra}
$$
{\mathbb C}[M_{m_0|m_1+n,d}] = {\mathbb C}\big[ (\alpha_s|j), (\beta_t|j), (x_i|j) \big]
$$
 generated by the ($\mathbb{Z}_2$-graded) variables $(\alpha_s|j), (\beta_t|j), (x_i|j)$,
$j = 1, 2, \ldots, d$,
 where
 $$
 |(\alpha_s|j)| = 1 \in \mathbb{Z}_2 \ \ and \ \  |(\beta_t|j)| = |(x_i|j)| = 0 \in \mathbb{Z}_2,
 $$
subject to the commutation relations:

$$
(a|h)(b|k) = (-1)^{|(a|h)||(b|k)|} \ (b|k)(a|h),
$$
for $a, b \in  \{ \alpha_1, \ldots, \alpha_{m_0} \} \cup \{ \beta_1, \ldots, \beta_{m_1} \} \cup \{x_1, x_2, \ldots , x_n\}.$

We have:
\begin{align*}
{\mathbb C}[M_{m_0|m_1+n,d}] & \cong
 \Lambda \left[ (\alpha_s|j) \right]
\otimes      {\mathrm{Sym}} \left[ (\beta_t|j), (x_h|j) \right] &  \\
& \cong \Lambda \left[ W_0 \otimes P_d \right]
\otimes      {\mathrm{Sym}} \left[ (W_1 \oplus V_n) \otimes P_d \right],
\end{align*}
where $P_d = (P_d)_1$ denotes the trivially (odd) $\mathbb{Z}_2-$graded vector space with distinguished basis $\{j; \ j = 1, 2, \ldots, d \}.$

The algebra  ${\mathbb C}[M_{m_0|m_1+n,d}]$ is a supersymmetric $\mathbb{Z}_2-$graded algebra (superalgebra), whose
$\mathbb{Z}_2-$graduation is  inherited by the natural one in the exterior algebra.

\subsection{Left superderivations and left superpolarizations}

A {\it{left superderivation}} ($\mathbb{Z}_2-$homogeneous of degree $|D|$) (see, e.g. \cite{FSS-BR}, \cite{Scheu-BR}, \cite{KAC1-BR}) on
${\mathbb C}[M_{m_0|m_1+n,d}]$ is an element $D \in End_\mathbb{C}[\mathbb{C}[M_{m_0|m_1+n,d}]]$ ($\mathbb{Z}_2-$homogeneous of degree $|D|$)
that satisfies "Leibniz rule"
$$
D(P \cdot Q) = D(P) \cdot Q + (-1)^{|D||P|} P \cdot D(Q),
$$
for every $\mathbb{Z}_2-$homogeneous of degree $|D|$ element $P \in \mathbb{C}[M_{m_0|m_1+n,d}].$

\medskip

Given two symbols $a, b \in A_0 \cup A_1 \cup L$, the {\textit{superpolarization}} $D_{a,b}$ of $b$ to $a$
is the unique {\it{left}} superderivation of ${\mathbb C}[M_{m_0|m_1+n,d}]$ of parity $|D_{a,b}| = |a| + |b| \in \mathbb{Z}_2$ such that
\begin{equation}
D_{a,b} \left( (c|j) \right) = \delta_{bc} \ (a|j), \ c \in A_0 \cup A_1 \cup L, \ j = 1, \ldots, d.
\end{equation}

Informally, we say that the operator $D_{a,b}$ {\it{annihilates}} the symbol $b$ and {\it{creates}} the symbol $a$.

\subsection{The superalgebra ${\mathbb C}[M_{m_0|m_1+n,d}]$ as a $\mathbf{U}(gl(m_0|m_1+n))-$module}

The map
$$
e_{a,b} \rightarrow D_{a,b}, \qquad a, b \in A_0 \cup A_1 \cup L.
$$
(that send the elementary
matrices to the corresponding superpolarizations) is a Lie superalgebra morphism from $gl(m_0|m_1+n)$ to $End_\mathbb{C}[\mathbb{C}[M_{m_0|m_1+n,d}]]$
and, hence, it uniquely defines the
morphism (i.e. a representation):
$$
\varrho : \mathbf{U}(gl(m_0|m_1+n)) \rightarrow End_\mathbb{C}[\mathbb{C}[M_{m_0|m_1+n,d}]].
$$

In the following, we always regard the superalgebra $\mathbb{C}[M_{m_0|m_1+n,d}]$ as a $\mathbf{U}(gl(m_0|m_1+n))-$supermodule,
with respect to the representation $\varrho$.
We recall that $\mathbb{C}[M_{m_0|m_1+n,d}]$ is  a {\it{semisimple}} $\mathbf{U}(gl(m_0|m_1+n))-$supermodule,
whose irreducible (simple) submodules are - up to isomorphism - {\it{Schur supermodules}} (\cite{Brini1-BR}, \cite{Brini2-BR}, \cite{Bri-BR},
\cite{CW-BR}).
Clearly, the subalgebra $\mathbb{C}[M_{n,d}]$ is a $gl(n)-$module of  $\mathbb{C}[M_{m_0|m_1+n,d}]$.

\subsection{The virtual algebra $Virt(m_0+m_1,n)$ and the virtual
presentations of elements in $\mathbf{U}(gl(n))$}

\subsubsection{The virtual algebra $Virt(m_0+m_1,n)$ as a subalgebra of $\mathbf{U}(gl(m_0|m_1+n))$ and
the Capelli devirtualization epimorphism $\mathfrak{p} : Virt(m_0+m_1,n) \twoheadrightarrow \mathbf{U}(gl(n))$}\label{Virt}

\begin{definition}{\textbf{(Irregular expressions)}}\label{Irregular expressions-BR} We say that a product
$$
e_{a_mb_m} \cdots e_{a_1b_1} \in \mathbf{U}(gl(m_0|m_1+n))
$$
is an {\textit{irregular expression}} whenever
  there exists a right subsequence
$$e_{a_i,b_i} \cdots e_{a_2,b_2} e_{a_1,b_1},$$
$i \leq m$ and a
virtual symbol $\gamma \in A_0 \cup A_1$ such that
\begin{equation}
 \# \{j;  b_j = \gamma, j \leq i \}  >  \# \{j;  a_j = \gamma, j < i \}.
\end{equation}
\end{definition}

The meaning of an irregular expression in terms of the action of  $\mathbf{U}(gl(m_0|m_1+n))$ on
the algebra $\mathbb{C}[M_{m_0|m_1+n,d}]$ is that there exists a
virtual symbol $\gamma$ and a right subsequence in which the symbol $\gamma$ is annihilated more times than
it was already created.

\begin{definition}{\textbf{(The ideal $\mathbf{Irr}$)}} The  left ideal $\mathbf{Irr}$  of $\mathbf{U}(gl(m_0|m_1+n))$ is
the {\textit{left ideal}} generated by the set of
irregular expressions.
\end{definition}

\begin{remark}
The action
of any element of $\mathbf{Irr}$ on the subalgebra $\mathbb C[M_{n,d}] \subset \mathbb{C}[M_{m_0|m_1+n,d}]$ - via the representation $\varrho$ -
is identically zero.
\end{remark}

\begin{proposition}(\cite{Brini3-BR}, \cite{BriUMI-BR})
The sum ${\mathbf{U}}(gl(0|n)) + \mathbf{Irr}$ is a direct sum of vector subspaces of $\mathbf{U}(gl(m_0|m_1+n)).$
\end{proposition}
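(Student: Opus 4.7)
The plan is to reduce the stated direct-sum decomposition to the single identity $\mathbf{U}(gl(0|n)) \cap \mathbf{Irr} = \{0\}$ and to exploit the representation $\varrho$ on $\mathbb{C}[M_{m_0|m_1+n,d}]$, restricted to the invariant subalgebra $\mathbb{C}[M_{n,d}]$, as the key tool.

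First I would invoke the Remark immediately preceding the statement: the restriction of $\varrho$ to $\mathbb{C}[M_{n,d}]$ sends every element of the left ideal $\mathbf{Irr}$ to the zero operator. Hence any $u$ in the intersection $\mathbf{U}(gl(0|n)) \cap \mathbf{Irr}$ must satisfy $\varrho(u)|_{\mathbb{C}[M_{n,d}]} = 0$ for every $d$.

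Second, I would observe that $\mathbf{U}(gl(0|n))$ is generated by the elements $e_{x_i,x_j}$ with $x_i,x_j\in L$. Although $|x_i|=1\in \mathbb{Z}_2$, each $e_{x_i,x_j}$ is $\mathbb{Z}_2$-even, and the defining supercommutation relations reduce on these generators to the ordinary brackets $[e_{x_i,x_j},e_{x_k,x_l}] = \delta_{jk}\,e_{x_i,x_l} - \delta_{il}\,e_{x_k,x_j}$. Thus $\mathbf{U}(gl(0|n))$ is canonically isomorphic to the ordinary enveloping algebra $\mathbf{U}(gl(n))$, and under $\varrho$ these generators act on $\mathbb{C}[M_{n,d}]$ as the classical polarization derivations
\[
D_{x_i,x_j} \;=\; \sum_{k=1}^{d} x_{ik}\,\partial/\partial x_{jk},
\]
which stabilize $\mathbb{C}[M_{n,d}]$. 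The conclusion then reduces to the classical fact that this polarization action of $\mathbf{U}(gl(n))$ on $\mathbb{C}[M_{n,d}]$ is faithful once $d$ is sufficiently large (for instance $d \geq n$): combined with the previous step this forces $u=0$ and completes the argument.

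The main obstacle is precisely the faithfulness of the polarization representation, which, although classical, deserves an explicit justification. The cleanest route is representation-theoretic: Schur--Weyl duality gives
\[
\mathbb{C}[M_{n,d}] \;\cong\; \bigoplus_{\ell(\lambda)\leq\min(n,d)} S^{\lambda}(\mathbb{C}^{n})\otimes S^{\lambda}(\mathbb{C}^{d}),
\]
so for $d\geq n$ every polynomial irreducible $gl(n)$-module appears as a summand, whence $\varrho$ embeds $\mathbf{U}(gl(n))$ into $\prod_{\lambda}\mathrm{End}(S^{\lambda}(\mathbb{C}^{n}))$. A more hands-on PBW route instead fixes an ordered basis of $\mathbf{U}(gl(n))$ in the $e_{x_i,x_j}$ and, by inspecting the action of an ordered monomial in the $D_{x_i,x_j}$ on a ``separating'' product such as $(x_1|1)(x_2|2)\cdots(x_n|n)$ with $d$ large, recovers the individual coefficients of any vanishing linear combination and forces them to be zero. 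Either way, no further obstacle stands in the way of the proposition.
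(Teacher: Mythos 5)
The paper does not supply its own proof of this proposition---it is cited from earlier work of the authors---so there is nothing in the text to compare against directly; your task is therefore to give an independent argument, and the overall architecture you choose (reduce to $\mathbf{U}(gl(0|n))\cap\mathbf{Irr}=\{0\}$, use the Remark that $\varrho(\mathbf{Irr})$ kills $\mathbb{C}[M_{n,d}]$, and invoke faithfulness of the polarization action of $\mathbf{U}(gl(n))$ on $\mathbb{C}[M_{n,d}]$ for $d$ large) is sound in outline and is consistent with the virtual-variable philosophy of the paper.

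The weak point is exactly the one you flag, namely the faithfulness, and neither of your two routes as stated actually closes it. The Schur--Weyl/Cauchy decomposition $\mathbb{C}[M_{n,d}]\cong\bigoplus_\lambda S^\lambda(\mathbb{C}^n)\otimes S^\lambda(\mathbb{C}^d)$ tells you that every polynomial irreducible of $gl(n)$ occurs, but it does not by itself give an \emph{embedding} of $\mathbf{U}(gl(n))$ into $\prod_\lambda\mathrm{End}\big(S^\lambda(\mathbb{C}^n)\big)$: you would still need the nontrivial fact that the intersection of the annihilators of all polynomial irreducibles is zero (a priori one only knows this for the full set of finite-dimensional irreducibles, and twisting by negative powers of $\det$ changes the action). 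The ``hands-on'' route as you phrase it also does not work: a single separating vector such as $(x_1|1)\cdots(x_n|n)$ cannot distinguish PBW monomials, since for instance $D_{x_1,x_1}$ and $D_{x_2,x_2}$ act identically on it. A clean and elementary fix is to pass to associated graded: under the PBW filtration on $\mathbf{U}(gl(n))$ and the order filtration on differential operators, $\mathrm{gr}(\varrho)$ is the comorphism of the moment map $M_{n,d}\times M_{n,d}\to gl(n)^*$, $(X,\Xi)\mapsto X\Xi^{\mathsf T}$, which is surjective (hence dominant) once $d\geq n$; therefore $\mathrm{gr}(\varrho)$ is injective, and a standard filtered-algebra argument gives injectivity of $\varrho$ on $\mathbf{U}(gl(n))$. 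With this replacement the proof goes through. I would also recommend stating explicitly why, for $u\in\mathbf{U}(gl(0|n))$, the operator $\varrho(u)$ preserves $\mathbb{C}[M_{n,d}]$ and coincides there with the classical polarization action---you use this implicitly, and it is what allows you to apply faithfulness of the \emph{restricted} representation rather than of $\varrho$ on the ambient superalgebra.
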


\begin{definition} {\textbf{(The virtual algebra $Virt(m_0+m_1,n)$)}}
The vector space $Virt(m_0+m_1,n) = \mathbf{U}(gl(0|n)) \oplus \mathbf{Irr}$
is a subalgebra of ${\mathbf{U}}(gl(m_0|m_1+n)),$  called the {\textit{virtual subalgebra}} \cite{Brini3-BR},
and $\mathbf{Irr}$ is a two sided ideal of $Virt(m_0+m_1,n).$
\end{definition}

\begin{definition}{\textbf{(The Capelli devirtualization epimorphism)}}
The {\textit{Capelli epimorphism}} is the projection
$$
\mathfrak{p} : Virt(m_0+m_1,n) = \mathbf{U}(gl(0|n)) \oplus \mathbf{Irr} \twoheadrightarrow \mathbf{U}(gl(0|n)) = \mathbf{U}(gl(n))
$$
with $Ker(\mathfrak{p}) = \mathbf{Irr}.$
\end{definition}

\begin{remark}
Any element in $Virt(m_0+m_1,n)$ defines an element in
$\mathbf{U}(gl(n))$, and  is called a \textit{virtual
presentation} of it.  The map $\mathfrak{p}$ being a surjection, any element
$\mathbf{p} \in \mathbf{U}(gl(n))$ admits several virtual
presentations. We even take virtual presentations
as  definitions of special elements in $\mathbf{U}(gl(n)),$ and this method will turn out to be quite effective.
\end{remark}

The next results will play a crucial role in Section \ref{The center}.
\begin{proposition}\label{rappresentazione aggiunta-BR}
For every $e_{x_i, x_j} \in gl(n) \subset gl(m_0|m_1+n)$,   let $ad(e_{x_i, x_j})$ denote its adjoint action
on  $Virt(m_0+m_1,n)$; the ideal $\mathbf{Irr}$ is $ad(e_{x_i, x_j})-$invariant. Then
\begin{equation}
\mathfrak{p} \left( ad(e_{x_i, x_j})( \mathbf{m} ) \right) =  ad(e_{x_i, x_j}) \left( \mathfrak{p} ( \mathbf{m} ) \right) ,
\qquad  \mathbf{m} \in Virt(m_0+m_1,n).
\end{equation}
\end{proposition}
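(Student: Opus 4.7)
The plan is to exploit the direct sum decomposition $Virt(m_0+m_1,n) = \mathbf{U}(gl(0|n)) \oplus \mathbf{Irr}$, with $\mathfrak{p}$ the projection on the first summand, and show separately that each summand is stable under $ad(e_{x_i,x_j})$. Once this is done, the equivariance is automatic: writing $\mathbf{m} = u + r$ with $u \in \mathbf{U}(gl(0|n))$ and $r \in \mathbf{Irr}$, one has $\mathfrak{p}(\mathbf{m}) = u$, and $\mathfrak{p}(ad(e_{x_i,x_j})(\mathbf{m})) = \mathfrak{p}(ad(e_{x_i,x_j})(u)) + \mathfrak{p}(ad(e_{x_i,x_j})(r)) = ad(e_{x_i,x_j})(u) + 0 = ad(e_{x_i,x_j})(\mathfrak{p}(\mathbf{m}))$.

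Stability of $\mathbf{U}(gl(0|n))$ is immediate since $e_{x_i,x_j} \in gl(0|n)$ and the enveloping algebra of a Lie (super)subalgebra is closed under its own adjoint action. The substantive point is the $ad(e_{x_i,x_j})$-invariance of $\mathbf{Irr}$. Since $\mathbf{Irr}$ is the left ideal generated by irregular monomials and $ad(e_{x_i,x_j})$ is a (super)derivation, by the Leibniz rule it suffices to check that: (a) $ad(e_{x_i,x_j})$ sends any irregular monomial $w = e_{a_m,b_m} \cdots e_{a_1,b_1}$ into $\mathbf{Irr}$, and (b) $\mathbf{Irr}$ itself is preserved under left multiplication, which is built into its definition as a left ideal.

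For (a), expand $ad(e_{x_i,x_j})(w)$ as a sum (with appropriate signs) of $m$ terms, each obtained by replacing a single factor $e_{a_k,b_k}$ by the supercommutator $[e_{x_i,x_j}, e_{a_k,b_k}]$. Since $x_i, x_j \in L$ are proper symbols, the commutation formula gives
\[
[e_{x_i,x_j}, e_{a_k,b_k}] = \delta_{x_j,a_k}\, e_{x_i, b_k} - (-1)^{(|x_i|+|x_j|)(|a_k|+|b_k|)} \delta_{x_i, b_k}\, e_{a_k, x_j},
\]
so the only nonzero contributions merely swap a proper letter for another proper letter in one position; no virtual symbol of $A_0 \cup A_1$ is introduced, deleted, or relocated. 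Hence the multiset and positions of virtual symbols in the $a$- and $b$-sequences of $w$ are unchanged in every term, and the very right subsequence that witnessed the irregularity of $w$ still witnesses the irregularity of each summand. Therefore $ad(e_{x_i,x_j})(w) \in \mathbf{Irr}$, and the claim follows.

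I do not anticipate a genuine obstacle; the argument is essentially bookkeeping. The only delicate point is making sure the sign conventions of the super-Leibniz rule and of the graded commutator are applied correctly, and verifying that the replacements in (a) never accidentally destroy the irregularity witness. Since the bracket action only touches \emph{proper} indices while irregularity is a property depending solely on \emph{virtual} indices, these two layers are decoupled and the preservation is transparent.
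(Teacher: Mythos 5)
Your proof is correct, and the reduction is cleanly organized. The paper states Proposition~\ref{rappresentazione aggiunta-BR} without an explicit proof (leaning on the references \cite{Brini3-BR}, \cite{BriUMI-BR} for the surrounding machinery), so there is no internal argument to compare against, but your route is the natural one in the spirit of the virtual-variable method. Two small points worth spelling out. First, the sign bookkeeping is even simpler than you suggest: since $|x_i| = |x_j| = 1$, the element $e_{x_i,x_j}$ has $\mathbb{Z}_2$-degree $0$, so $ad(e_{x_i,x_j})$ is an \emph{even} derivation and the super-Leibniz rule carries no signs at all; the graded-commutator sign $(-1)^{(|x_i|+|x_j|)(|a_k|+|b_k|)}$ you wrote down is identically $1$. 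Second, in step (b) you invoke that $\mathbf{Irr}$ is a left ideal, but a derivation does not automatically preserve a one-sided ideal; the actual argument needs to be: for $p\,w$ with $w$ irregular and $p \in \mathbf{U}(gl(m_0|m_1+n))$, write $ad(e_{x_i,x_j})(p\,w) = ad(e_{x_i,x_j})(p)\,w + p\,ad(e_{x_i,x_j})(w)$; the first summand lies in $\mathbf{Irr}$ because $w$ is still an irregular generator, and the second lies in $\mathbf{Irr}$ precisely because of your point (a). That is the role (a) plays, and it is indispensable. With that articulated, the decoupling observation you close with — the bracket against $e_{x_i,x_j}$ touches only proper letters, while the irregularity condition in Definition~\ref{Irregular expressions-BR} counts only virtual letters — is exactly the heart of the matter and establishes $ad$-invariance of $\mathbf{Irr}$; the equivariance of $\mathfrak{p}$ then follows formally from the direct-sum decomposition as you say.
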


\begin{corollary}\label{invarianti virtuali}
The Capelli epimorphism image of an element of $Virt(m_0|m_1+n)$ that is an invariant for the adjoint action of
$gl(n)$ is in the center $\boldsymbol{\zeta}(n)$ of $\mathbf{U}(gl(n))$.
\end{corollary}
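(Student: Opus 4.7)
The plan is to deduce the centrality of $\mathfrak{p}(\mathbf{m})$ directly from Proposition \ref{rappresentazione aggiunta-BR}, using the standard characterization of $\boldsymbol{\zeta}(n)$ as the subspace of $\mathbf{U}(gl(n))$ fixed by the adjoint action of $gl(n)$. Concretely, I would proceed as follows.

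First, recall that an element $z \in \mathbf{U}(gl(n))$ lies in $\boldsymbol{\zeta}(n)$ if and only if $[X, z] = 0$ for every $X \in gl(n)$, equivalently $\mathrm{ad}(X)(z) = 0$ for every $X$ in a generating set of the Lie algebra $gl(n)$. Since the elementary matrices $\{e_{x_i, x_j} : i, j = 1, \ldots, n\}$ form a linear basis (hence a generating set) of $gl(n) \subset gl(m_0|m_1+n)$, centrality of $z$ reduces to the vanishing of $\mathrm{ad}(e_{x_i, x_j})(z)$ for all $i,j$.

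Next, assume $\mathbf{m} \in Virt(m_0+m_1,n)$ is $\mathrm{ad}(gl(n))$-invariant, which by definition means $\mathrm{ad}(e_{x_i, x_j})(\mathbf{m}) = 0$ for all $i,j$; this makes sense because, by Proposition \ref{rappresentazione aggiunta-BR}, the ideal $\mathbf{Irr}$ is $\mathrm{ad}(e_{x_i, x_j})$-invariant, so the adjoint action is well defined on $Virt(m_0+m_1,n)$. Applying the equivariance identity of Proposition \ref{rappresentazione aggiunta-BR} then yields
\begin{equation*}
\mathrm{ad}(e_{x_i, x_j})\bigl(\mathfrak{p}(\mathbf{m})\bigr) \;=\; \mathfrak{p}\bigl(\mathrm{ad}(e_{x_i, x_j})(\mathbf{m})\bigr) \;=\; \mathfrak{p}(0) \;=\; 0
\end{equation*}
for every $i,j = 1, \ldots, n$. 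Combining this with the characterization recalled above, $\mathfrak{p}(\mathbf{m})$ commutes with every generator of $gl(n)$, hence with the entire enveloping algebra $\mathbf{U}(gl(n))$, so $\mathfrak{p}(\mathbf{m}) \in \boldsymbol{\zeta}(n)$.

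There is really no serious obstacle here: the whole argument is a two-line application of the intertwining property of $\mathfrak{p}$ with respect to the adjoint action, plus the elementary fact that centrality is detected on a Lie-algebraic generating set. The only point that deserves a brief mention is that the adjoint action on $Virt(m_0+m_1,n)$ is legitimately defined (invariance of $\mathbf{Irr}$), which is already guaranteed by the first half of Proposition \ref{rappresentazione aggiunta-BR}; everything else is a formal consequence of the definitions.
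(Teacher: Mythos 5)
Your proof is correct and is exactly the intended argument: the paper states Corollary \ref{invarianti virtuali} as an immediate consequence of Proposition \ref{rappresentazione aggiunta-BR} without writing out the one-line computation $\mathrm{ad}(e_{x_i,x_j})(\mathfrak{p}(\mathbf{m})) = \mathfrak{p}(\mathrm{ad}(e_{x_i,x_j})(\mathbf{m})) = \mathfrak{p}(0) = 0$, which you supply together with the standard characterization of $\boldsymbol{\zeta}(n)$ as the $\mathrm{ad}(gl(n))$-invariants of $\mathbf{U}(gl(n))$. The only minor imprecision is the remark that invariance of $\mathbf{Irr}$ by itself makes the adjoint action well defined on $Virt(m_0+m_1,n)$; strictly one also needs that $\mathbf{U}(gl(0|n))$ is $\mathrm{ad}(e_{x_i,x_j})$-stable, which is automatic since $e_{x_i,x_j} \in gl(0|n)$, so nothing is really missing.
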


\begin{example}{\textbf{(A virtual presentation of the Capelli determinant)}}\label{Virtual Capelli}
Let $\alpha \in A_0$. The element
\begin{equation}\label{M}
M = e_{x_n,\alpha} \cdots e_{x_2,\alpha} e_{x_1,\alpha} \cdot e_{\alpha, x_1} e_{\alpha, x_2}
 \cdots e_{\alpha, x_n}
\end{equation}
belongs to the subalgebra $Virt(m_0|m_1+n)$ ( section \ref{balanced monomial} below).
The image of the element $M$ under the Capelli devirtualization epimorphism $\mathfrak{p}$
equals the {\it{column determinant}}\footnote{The symbol $\mathbf{cdet}$
denotes the column determinat of a matrix $A = [a_{ij}]$ with noncommutative entries:
$\mathbf{cdet} (A) = \sum_{\sigma} \ (-1)^{|\sigma|}  \ a_{\sigma(1), 1}a_{\sigma(2), 2} \cdots a_{\sigma(n), n}.$}
 $$
\mathbf{H}_n(n) = \mathbf{cdet}\left(
 \begin{array}{cccc}
 e_{x_1,x_1}+(n-1) & e_{x_1,x_2} & \ldots  & e_{x_1,x_n} \\
 e_{x_2,x_1} & e_{x_2,x_2}+(n-2) & \ldots  & e_{x_2,x_n}\\
 \vdots  &    \vdots                            & \vdots &  \\
e_{x_n,x_1} & e_{x_n,x_2} & \ldots & e_{x_n,x_n}\\
 \end{array}
 \right).
 $$
This result is a special case of the result that we called the  ``Laplace expansion for Capelli rows'' ( \cite{BRT-BR} Theorem $2$,
\cite{Bri-BR} Theorem $6.3$). A sketchy proof of it
can also be found in \cite{Koszul-BR}.

Recall that
$$ad(e_{x_i, x_j})\left( e_{x_h, \alpha} \right) = \delta_{j h}e_{x_i, \alpha},$$
$$ad(e_{x_i, x_j})\left( e_{\alpha, x_k} \right) = - \delta_{k i}e_{\alpha, x_j},$$
for every virtual symbol $\alpha$, and that $ad(e_{x_i, x_j})$ acts as a derivation, for every $i, j = 1,2, \ldots, n.$

The  monomial $M$ is annihilated by
$ad(e_{x_i, x_j}), \ i \neq j,$ by skew-symmetry. Furthermore, $ad(e_{x_i, x_i})\left( M \right) = M - M = 0, \ i = 1, 2, \ldots, n;$
hence, $M$ is an invariant for the adjoint action of $gl(n)$.

Since
$\mathfrak{p} \left( M \right) = \mathbf{H}_n(n),$ the element $\mathbf{H}_n(n)$ is central in $\mathbf{U}(gl(n)),$
by Corollary \ref{invarianti virtuali}.
\end{example}

\subsubsection{The action of  $Virt(m_0+m_1,n)$ on the subalgebra $\mathbb{C}[M_{n,d}]$}

From the representation-theoretic point of view, the core of the {\textit{method of virtual variables}}
lies in the following result.

\begin{theorem}\label{Capelli epimorphism}
The action of $Virt(m_0+m_1,n)$ leaves invariant the
subalgebra of algebraic forms $\mathbb{C}[M_{n,d}] \subseteq
\mathbb{C}[M_{m_0|m_1+n,d}],$ and, therefore, the action of $Virt(m_0+m_1,n)$ on $\mathbb{C}[M_{n,d}]$
is well defined.
Furthermore, for every $\mathbf{v} \in Virt(m_0+m_1,n)$, its action on $\mathbb{C}[M_{n,d}]$ equals
the action of $\mathfrak{p}(\mathbf{v}).$
\end{theorem}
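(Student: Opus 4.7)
My plan is to reduce the statement to two separate claims via the direct sum decomposition $Virt(m_0+m_1,n) = \mathbf{U}(gl(0|n)) \oplus \mathbf{Irr}$. Writing $\mathbf{v} = v_0 + v_1$ with $v_0 \in \mathbf{U}(gl(0|n))$ and $v_1 \in \mathbf{Irr}$, I have $\mathfrak{p}(\mathbf{v}) = v_0$ by the very definition of $\mathfrak{p}$, so both conclusions of the theorem will follow at once if I show
\textbf{(i)} $\varrho(v_0)$ leaves $\mathbb{C}[M_{n,d}]$ invariant, and
\textbf{(ii)} $\varrho(v_1)$ sends $\mathbb{C}[M_{n,d}]$ to zero.
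Indeed, granted (i) and (ii), on $\mathbb{C}[M_{n,d}]$ one has $\varrho(\mathbf{v}) = \varrho(v_0)+\varrho(v_1) = \varrho(v_0) = \varrho(\mathfrak{p}(\mathbf{v}))$, which also lands in $\mathbb{C}[M_{n,d}]$.

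Claim (i) is essentially formal: $\mathbf{U}(gl(0|n))$ is generated as an algebra by the $e_{x_i,x_j}$ with $x_i, x_j \in L$, and their images under $\varrho$ are the left superderivations $D_{x_i,x_j}$, which by the defining equation (1) carry $(x_k|l)$ to $\delta_{jk}(x_i|l)$ and annihilate every virtual generator $(\alpha_s|l),(\beta_t|l)$. Hence each $D_{x_i,x_j}$ carries the algebra generators of $\mathbb{C}[M_{n,d}]$ into $\mathbb{C}[M_{n,d}]$, and being a superderivation it preserves the whole subalgebra; invariance under arbitrary words, and then under $\varrho(v_0)$ by linearity, is immediate. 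Claim (ii) is precisely the content of the remark in the excerpt; by linearity it suffices to check it on a single irregular expression $e_{a_m,b_m}\cdots e_{a_1,b_1}$.

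The main obstacle, then, is verifying that remark, and this is the step I expect to require the most care. The argument I would give is a bookkeeping one on the $\gamma$-degree of monomials. Starting from $P \in \mathbb{C}[M_{n,d}]$, every monomial in $P$ has $\gamma$-degree zero for every virtual symbol $\gamma$. Applying the polarizations $D_{a_k,b_k}$ from right to left, each operator shifts the $\gamma$-degree of a surviving monomial by $+1$ if $a_k=\gamma\neq b_k$, by $-1$ if $b_k=\gamma\neq a_k$, and by $0$ otherwise; moreover whenever $b_k=\gamma$ the operator annihilates any monomial whose $\gamma$-degree is already zero. It follows by induction that after step $k$ every surviving monomial has $\gamma$-degree equal to
\[
\#\{j\leq k:\,a_j=\gamma,\,b_j\neq\gamma\}\;-\;\#\{j\leq k:\,b_j=\gamma,\,a_j\neq\gamma\},
\]
and that this count is forced to be nonnegative at each step. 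A short four-case analysis at the critical index $i$ (splitting on whether $a_i=\gamma$, $b_i=\gamma$, both, or neither) shows that the inequality $\#\{j\leq i:\,b_j=\gamma\}>\#\{j<i:\,a_j=\gamma\}$ forces either a negative $\gamma$-degree at some earlier step or an annihilation of $\gamma$ at step $i$ against a zero reservoir; in either case the image of $P$ is zero, proving (ii) and hence the theorem.
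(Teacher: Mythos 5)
Your proof is correct and follows exactly the structure that the paper implicitly uses (the paper states the theorem without proof, treating it as a consequence of the direct-sum decomposition $Virt(m_0+m_1,n)=\mathbf{U}(gl(0|n))\oplus\mathbf{Irr}$ and the unproven Remark that $\mathbf{Irr}$ annihilates $\mathbb{C}[M_{n,d}]$, with references to prior work for details). Your added value is the careful degree-counting proof of that Remark: tracking $d_k = \#\{j\le k: a_j=\gamma\} - \#\{j\le k: b_j=\gamma\}$ for surviving monomials, observing it must stay nonnegative, and checking that the irregularity inequality $\#\{j\le i: b_j=\gamma\}>\#\{j<i: a_j=\gamma\}$ forces either $d_{i-1}<0$ (already vanished) or $d_{i-1}=0$ with $b_i=\gamma$ (vanishes at step $i$) — this is exactly right and handles the left ideal generated by irregular expressions because prepending further operators to the zero element still yields zero.
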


Therefore, instead of studying the action of an element in $\mathbf{U}(gl(n))$, one can study the action of a virtual presentation of it in
$Virt(m_0|m_1+n)$. The advantage of virtual presentations is that they
are frequently of monomial form,   admit quite transparent interpretations and are  much  easier to be dealt with
(see, e.g. \cite{Brini1-BR}, \cite{Brini2-BR}, \cite{BRT-BR}, \cite{Bri-BR}, \cite{BriUMI-BR}).

A prototypical instance of this method is provided by the celebrated Capelli identity \cite{Cap1-BR}, \cite{Weyl-BR},
\cite{Howe-BR}, \cite{HU-BR}, \cite{Umeda-BR}. From Example \ref{Virtual Capelli}, it follows that the action of the Capelli
determinant $\mathbf{H}_n(n)$
on a form $f \in \mathbb{C}[M_{n,d}]$ is the same as the action of its monomial virtual presentation (\ref{M}), and this leads to
a few lines proof of the identity \cite{BRT-BR}, \cite{BriUMI-BR}.

\subsubsection{Balanced monomials as elements of the virtual algebra $Virt(m_0+m_1,n)$}\label{balanced monomial}

In order to make the virtual variables method  effective, we need to exhibit a class of nontrivial elements that belong
to $Virt(m_0+m_1,n)$. A quite relevant class of such elements is provided
by \textit{balanced monomials}.

\begin{definition}{\textbf{(Balanced monomials)}}\label{balanced monomials-BR}
In the algebra ${\mathbf{U}}(gl(m_0|m_1+n)),$ consider an
element of the forms:
\begin{itemize}\label{defbalanced monomials-BR}
\item $e_{x_{i_1},\gamma_{p_1}} \cdots e_{x_{i_k},\gamma_{p_k}} \cdot
e_{\gamma_{p_1},x_{j_1}} \cdots e_{\gamma_{p_k},x_{j_k}},$
\item
$e_{x_{i_1},\theta_{q_1}} \cdots e_{x_{i_k},\theta_{q_k}} \cdot
e_{\theta_{q_1},\gamma_{p_1}} \cdots e_{\theta_{q_k},\gamma_{p_k}} \cdot
e_{\gamma_{p_1},x_{j_1}} \cdots e_{\gamma_{p_k},x_{j_k}},$
\item $\cdots\cdots \quad \cdots\cdots$
\end{itemize}
where
$x_{i_1}, \ldots, x_{i_k}, x_{j_1}, \ldots, x_{j_k} \in L,$
i.e., the $x_{i_1}, \ldots, x_{i_k}, x_{j_1}, \ldots, x_{j_k}$ are $k$
proper  symbols;
\end{definition}

In plain words, a balanced monomial is product of two or more factors  where the
rightmost one  \textit{annihilates}
the $k$ proper symbols $ x_{j_1}, \ldots, x_{j_k}$ and
\textit{creates} some virtual symbols;
 the leftmost one  \textit{annihilates} all the virtual symbols
and \textit{creates} the $k$ proper symbols $ x_{i_1}, \ldots, x_{i_k}$;
between these two factors, there might be further factors that annihilate
 and create  virtual symbols only.

The next result is the (superalgebraic) formalization of the argument developed by Capelli in
\cite{Cap4-BR}, CAPITOLO I, §X.Metodo delle variabili ausiliarie, page $55$ ff.

\begin{proposition}(\cite{Brini1-BR}, \cite{Brini2-BR}, \cite{BRT-BR}, \cite{Bri-BR}, \cite{BriUMI-BR})
Every balanced monomial belongs to $Virt(m_0+m_1,n)$. Hence
its image under the Capelli epimorphism $\mathfrak{p}$ belongs to $\mathbf{U}(gl(n)).$
\end{proposition}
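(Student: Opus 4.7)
The plan is to proceed by induction on the total number of virtual symbols (counted with multiplicity as factor-indices) appearing in the balanced monomial $\mathbf{m}$. In the base case this number is zero, so $\mathbf{m}$ is already an element of $\mathbf{U}(gl(0|n)) \subset Virt(m_0+m_1,n)$, and there is nothing to prove.

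For the inductive step, I would exploit the structural description of a balanced monomial as a product $L \cdot M \cdot R$, where $R$ is a block of factors $e_{\gamma,x}$ (creating virtual symbols from proper ones), $M$ is a possibly empty block of purely virtual factors $e_{\gamma',\gamma}$, and $L$ is a block of factors $e_{x,\gamma}$ (annihilating virtual symbols and creating proper ones); by construction each virtual symbol is created and annihilated the same total number of times. I would pick the rightmost factor of $L$, say $e_{x_{i_k},\gamma_{p_k}}$, and push it rightward through $M \cdot R$ using the supercommutation rule $AB = (-1)^{|A||B|} BA + [A,B]$. By the explicit supercommutator formula, each step produces at most one nonzero delta-term, and after finitely many steps this factor meets the (unique) creator of $\gamma_{p_k}$ that sits to its right; the matching $\delta_{\gamma_{p_k},\gamma_{p_k}}$-contraction then removes both factors, producing a balanced monomial containing one fewer occurrence of $\gamma_{p_k}$, to which the inductive hypothesis applies.

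The main obstacle is controlling the "swapped" pieces left behind: each supercommutation also contributes a term in which $e_{x_{i_k},\gamma_{p_k}}$ sits further right than in the original monomial. The key claim to make this approach work is that every such swapped piece lies in $\mathbf{Irr}$. Indeed, as soon as $e_{x_{i_k},\gamma_{p_k}}$ has been pushed past the unique factor of $M \cdot R$ that creates $\gamma_{p_k}$, the right subsequence beginning at that displaced factor annihilates $\gamma_{p_k}$ strictly more times than it creates it, which is exactly the irregularity condition of Definition~3.2. Since $\mathbf{Irr}$ is a \emph{left} ideal, every such tail remains in $\mathbf{Irr}$ after being multiplied on the left by the remaining factors of $L$ (or of $M$).

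Summing the contributions, the original monomial $\mathbf{m}$ equals (modulo $\mathbf{Irr}$) a sum of balanced monomials of strictly smaller virtual-symbol count, each of which lies in $Virt(m_0+m_1,n)$ by the inductive hypothesis; hence $\mathbf{m} \in Virt(m_0+m_1,n)$ as claimed. The final assertion $\mathfrak{p}(\mathbf{m}) \in \mathbf{U}(gl(n))$ is then immediate from the very definition of the Capelli devirtualization epimorphism as the projection onto the first summand of the decomposition $Virt(m_0+m_1,n) = \mathbf{U}(gl(0|n)) \oplus \mathbf{Irr}$. I expect the principal technical burden to be the bookkeeping in the "swapped-tail is irregular" step, because the presence of the middle block $M$ means one must track the creation/annihilation balance of $\gamma_{p_k}$ through several virtual-to-virtual factors; the cleanest way will likely be to reduce first to the case $M = \emptyset$ by an inner induction on the length of $M$, handled by the same commutation argument.
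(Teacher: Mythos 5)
Your overall plan---commuting the rightmost annihilator of $L$ to the right and sorting the result into contraction terms plus an irregular remainder---is a sensible way to attack this, and the concluding sentence $\mathfrak{p}(\mathbf{m})\in\mathbf{U}(gl(n))$ follows immediately from the definition of $\mathfrak{p}$ once $\mathbf{m}\in Virt(m_0+m_1,n)$ is established. But the inductive step as written has genuine gaps. The justification ``every swapped piece lies in $\mathbf{Irr}$, since $e_{x_{i_k},\gamma_{p_k}}$ has been pushed past the unique creator of $\gamma_{p_k}$'' rests on a uniqueness that fails: in Definition~\ref{balanced monomials-BR} the indices $p_1,\ldots,p_k$ may repeat, and in the paper's central examples (e.g.\ Example~\ref{Virtual Capelli}, where every $\gamma_{p_i}$ is the same $\alpha$) the symbol $\gamma_{p_k}$ is created $k$ times. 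Pushing the annihilator past one creator still leaves the remaining creators to its right, so the swapped expression is \emph{not} irregular; concretely $e_{x_1,\alpha}e_{\alpha,x_1}e_{x_2,\alpha}e_{\alpha,x_2}$, obtained by one swap from the balanced monomial $e_{x_1,\alpha}e_{x_2,\alpha}e_{\alpha,x_1}e_{\alpha,x_2}$, satisfies no irregularity inequality. Only the term in which $e_{x_{i_k},\gamma_{p_k}}$ has been pushed past the \emph{whole} of $M\cdot R$ lands in $\mathbf{Irr}$, and the argument must be reorganized around that fully-swapped term.

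More seriously, the claim ``each step produces at most one nonzero delta-term'' is false. The supercommutator $[e_{a,b},e_{c,d}]=\delta_{bc}\,e_{a,d}-(-1)^{(|a|+|b|)(|c|+|d|)}\delta_{ad}\,e_{c,b}$ contributes \emph{two} terms when $e_{x_{i_k},\gamma_{p_k}}$ crosses $e_{\gamma_{p_j},x_{j_j}}$: the $\delta_{\gamma_{p_k}\gamma_{p_j}}e_{x_{i_k},x_{j_j}}$ you account for, and also $\mp\delta_{x_{i_k}x_{j_j}}e_{\gamma_{p_j},\gamma_{p_k}}$, which fires whenever a proper index repeats (already in $e_{x_1,\alpha}e_{\alpha,x_1}$). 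This virtual-virtual contraction carries as many virtual factor-indices (two) as the pair of factors it replaces, so your chosen induction quantity does not decrease and the induction is not well-founded. Moreover, neither contraction term is itself a balanced monomial of the given shape: a proper-proper factor $e_{x_{i_k},x_{j_j}}$ lands in the interior of $R$, or a new virtual-virtual factor appears, so before the inductive hypothesis can be invoked one must show---by further commutations and using that $Virt(m_0+m_1,n)$ is a subalgebra containing $\mathbf{U}(gl(0|n))$---that each such term is a linear combination of products of shorter balanced monomials and $\mathbf{U}(gl(0|n))$-factors. A suitable invariant is the total number of factors, which does strictly drop under either contraction; but this regrouping bookkeeping is precisely the content of the proposition and is missing from your sketch.
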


In plain words, the action of a balanced monomial on the subalgebra  $\mathbb{C}[M_{n,d}]$ equals
the action of a suitable element of $\mathbf{U}(gl(n)).$

The following result lies deeper and  is a major tool in the proof of identities involving
monomial virtual presentation of elements of $\mathbf{U}(gl(n)).$ Since the adjoint representation acts by superderivation,
it may be regarded as a version of the {\textit{Laplace expansion}} for the images of balanced monomials.

\begin{proposition}{\textbf{(Monomial virtual presentation and adjoint actions)}}\label{Monomial virtual presentation and adjoint actions-BR}
In $\mathbf{U}(gl(n)),$ the element
$$
\mathfrak{p} \left[ e_{x_{i_1},\gamma_{p_1}} \cdots e_{x_{i_n},\gamma_{p_n}} \cdot
e_{\gamma_{p_1},x_{j_1}} \cdots e_{\gamma_{p_n},x_{j_n}} \right]
$$
equals
$$
\mathfrak{p} \Big[ \ ad(e_{x_{i_1},\gamma_{p_1}}) \cdots ad(e_{x_{i_n},\gamma_{p_n}}) \big( e_{\gamma_{p_1},x_{j_1}}e_{\gamma_{p_2},x_{j_2}} \cdots
e_{\gamma_{p_n},x_{j_n}} \big) \ \Big].
$$

\end{proposition}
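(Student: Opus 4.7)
The plan is to prove the stronger congruence in $\mathbf{U}(gl(m_0|m_1+n))$
\[
e_{x_{i_1},\gamma_{p_1}} \cdots e_{x_{i_n},\gamma_{p_n}} \cdot e_{\gamma_{p_1},x_{j_1}} \cdots e_{\gamma_{p_n},x_{j_n}} \;\equiv\; ad(e_{x_{i_1},\gamma_{p_1}}) \cdots ad(e_{x_{i_n},\gamma_{p_n}})\bigl(e_{\gamma_{p_1},x_{j_1}} \cdots e_{\gamma_{p_n},x_{j_n}}\bigr) \pmod{\mathbf{Irr}},
\]
from which the proposition follows immediately by applying $\mathfrak{p}$, since $\mathbf{Irr} = \ker(\mathfrak{p})$.

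The engine of the argument is the following one-line lemma: for every $M \in \mathbf{U}(gl(m_0|m_1+n))$ and every polarization $X = e_{x_i,\gamma}$ with $\gamma \in A_0 \cup A_1$ virtual, one has $X \cdot M \equiv ad(X)(M) \pmod{\mathbf{Irr}}$. To prove it, I start from the standard identity $ad(X)(M) = X\cdot M - (-1)^{|X||M|}\, M \cdot X$ valid in any enveloping superalgebra, which rewrites $X\cdot M - ad(X)(M) = (-1)^{|X||M|} M \cdot X$. Now $X = e_{x_i,\gamma}$ is by itself an irregular expression, since the right subsequence of length one satisfies $\#\{j : b_j = \gamma,\ j \leq 1\} = 1 > 0 = \#\{j : a_j = \gamma,\ j < 1\}$. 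Hence $X \in \mathbf{Irr}$, and since $\mathbf{Irr}$ is a left ideal of $\mathbf{U}(gl(m_0|m_1+n))$, the product $M \cdot X$ lies in $\mathbf{Irr}$, proving the lemma.

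With the lemma in hand, I iterate starting from the innermost factor of the left block. Setting $R = e_{\gamma_{p_1},x_{j_1}} \cdots e_{\gamma_{p_n},x_{j_n}}$, the lemma applied with $X = e_{x_{i_n},\gamma_{p_n}}$ and $M = R$ gives $e_{x_{i_n},\gamma_{p_n}} R \equiv ad(e_{x_{i_n},\gamma_{p_n}})(R) \pmod{\mathbf{Irr}}$. Left-multiplying by $e_{x_{i_1},\gamma_{p_1}} \cdots e_{x_{i_{n-1}},\gamma_{p_{n-1}}}$ preserves the congruence by the left-ideal property, and I reapply the lemma with $X = e_{x_{i_{n-1}},\gamma_{p_{n-1}}}$ and $M = ad(e_{x_{i_n},\gamma_{p_n}})(R)$ to peel off the next factor; $M$ is some element of the enveloping algebra, but the lemma requires no more. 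After $n$ such steps the target congruence is obtained.

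There is essentially no obstacle. The whole mechanism rests on the trivial fact that every single polarization $e_{x_i,\gamma}$ with virtual $\gamma$ is already an irregular expression, so that every rightward commutation of such a polarization lands in $\mathbf{Irr}$ for free. Consequently the precise internal shape of the intermediate $ad$-images produced during the iteration is irrelevant, and the induction step works uniformly; this is also what allows the final identity to be written in the compact $ad(\cdots)\cdots ad(\cdots)(R)$ form without any accompanying sign or combinatorial correction.
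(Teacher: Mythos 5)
Your proof is correct. The paper does not include an inline proof of this proposition (it defers to the cited works of Brini et al.), but the argument you give is exactly the standard one in the method of virtual variables, and it is sound in every detail. The key lemma — that for any polarization $X = e_{x_i,\gamma}$ with $\gamma$ virtual and any homogeneous $M$ one has $XM \equiv ad(X)(M) \pmod{\mathbf{Irr}}$, because $XM - ad(X)(M) = \pm MX$ and $X$ alone already satisfies the irregularity condition (a right subsequence of length $1$ annihilates $\gamma$ once while creating it zero times), so $MX$ is absorbed by the left ideal $\mathbf{Irr}$ — is precisely the mechanism that makes the Capelli devirtualization work. The inside-out iteration is valid since $\mathbf{Irr}$ is a left ideal (so left-multiplication by the remaining prefix preserves the congruence) and each intermediate $ad$-image is $\mathbb{Z}_2$-homogeneous, so the lemma applies at each stage. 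One small point worth making explicit, though it creates no gap: the LHS is a balanced monomial and hence lies in $Virt(m_0+m_1,n)$, the difference lies in $\mathbf{Irr}\subset Virt(m_0+m_1,n)$, and therefore the RHS also lies in $Virt(m_0+m_1,n)$, which is needed before one may legitimately apply $\mathfrak{p}$ to both sides.
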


\begin{example}\label{Capelli determinants-BR}
Let $\alpha \in A_0$. Then
$$
 [ x_{i_k} \cdots x_{i_2}  x_{i_1} | x_{i_1} x_{i_2} \cdots x_{i_k} ] =
 \mathfrak{p} [ e_{x_{i_k},\alpha} \cdots e_{x_{i_2},\alpha} e_{x_{i_1},\alpha} \cdot e_{\alpha, x_{i_1}} e_{\alpha, x_{i_2}}
 \cdots e_{\alpha, x_{i_k}} ] =
 $$
 $$
 = \mathfrak{p} \Big[ \ ad(e_{x_{i_k},\alpha}) \cdots ad(e_{x_{i_2},\alpha}) ad(e_{x_{i_1},\alpha})  \left(e_{\alpha, x_{i_1}} e_{\alpha, x_{i_2}}
 \cdots e_{\alpha, x_{i_k}}\right) \ \Big] =
 $$
 $$
= \mathbf{cdet}\left(
 \begin{array}{cccc}
 e_{x_{i_1},x_{i_1}}+(k-1) & e_{x_{i_1},x_{i_2}} & \ldots  & e_{x_{i_1},x_{i_k}} \\
 e_{x_{i_2},x_{i_1}} & e_{x_{i_2},x_{i_2}}+(k-2) & \ldots  & e_{x_{i_2},x_{i_k}}\\
 \vdots  &    \vdots                            & \vdots &  \\
e_{x_{i_k},x_{i_1}} & e_{x_{i_k},x_{i_2}} & \ldots & e_{x_{i_k},x_{i_k}}\\
 \end{array}
 \right) \in {\mathbf{U}}(gl(n)).
 $$
\end{example}

\begin{example}\label{column permanent}
Let $\alpha \in A_1$. The element
$$\mathbf{p} =  \mathfrak{p} \left[ e_{x_{3},\alpha} e_{x_{2},\alpha} e_{x_{1},\alpha} \cdot
e_{\alpha,x_{1}} e_{\alpha,x_{2}} e_{\alpha,x_{3}} \right] =
$$
$$
= \mathfrak{p} \left[ ad(e_{{x_3},\alpha}) ad(e_{{x_2},\alpha}) ad(e_{{x_1},\alpha}) \left( e_{\alpha,x_1} e_{\alpha,x_2} e_{\alpha,x_3} \right) \right]
$$

equals the {\textit{column permanent}}\footnote{The symbol $\mathbf{cper}$
denotes the column permanent of a matrix $A = [a_{ij}]$ with noncommutative entries:
$\mathbf{cper} (A) = \sum_{\sigma} \ a_{\sigma(1), 1}a_{\sigma(2), 2} \cdots a_{\sigma(n), n}.$}

$$
\mathbf{cper}
\left(
 \begin{array}{ccc}
 e_{{x_1},{x_1}} - 2 & e_{{x_1},{x_2}} &  e_{{x_1},{x_3}} \\
 e_{{x_2},{x_1}} & e_{{x_2},{x_2}} - 1 &  e_{{x_2},{x_3}}\\
 e_{{x_3},{x_1}} & e_{{x_3},{x_2}} &  e_{{x_3},{x_3}}\\
 \end{array}
 \right) \in {\mathbf{U}}(gl(3)).
$$
\end{example}

\begin{example}\label{third example}
Let $\alpha, \beta \in A_0$. Then
$$
\left[
\begin{array}{c|c}
x_1 & x_2 \\
x_2 & x_1
\end{array}
\right] = \mathfrak{p} \left( e_{x_1,\alpha}e_{x_2,\beta} \cdot e_{\alpha,x_2}e_{\beta,x_1} \right) =
$$
$$
= - e_{x_1,x_2}e_{x_2,x_1} + e_{x_1,x_1} \in {\mathbf{U}}(gl(2)).
$$
\end{example}

\subsection{Bitableaux  in ${\mathbf{U}}(gl(m_0|m_1+n))$ and balanced monomials}\label{Tableaux}

\subsubsection{Young tableaux on the alphabet $A_0 \cup A_1 \cup L$}

Consider the alphabets
$
A_0,$  $A_1,$ $L,$
(that is a distinguished $\mathbb{Z}_2-$homogeneous bases of the $\mathbb{Z}_2-$graded vector space
$W = W_0 \oplus W_1$, $W_0 = V_{m_0}$, $W_1 = V_{m_1} \oplus V_n$),
where
$
A_0 = \{ \alpha_1, \ldots, \alpha_{m_0} \},$  $A_1 = \{ \beta_1, \ldots, \beta_{m_1} \},$
$L = \{ x_1, \ldots, x_n \}$
denote distinguished  bases of $V_{m_0}$, $V_{m_1}$ and $V_n.$

Given a partition $\lambda$, a {\it{Young tableau}} $X$ of  shape $\lambda = (\lambda_1 \geq \lambda_2 \geq \cdots \geq \lambda_p)$ on
the  alphabet $ A_0 \cup A_1 \cup L$ is an array
$$
X = \left(
\begin{array}{llllllllllllll}
z_{i_1}  \ldots    \ldots     \ldots    & z_{i_{\lambda_1}}     \\
z_{j_1}   \ldots  \ldots               z_{j_{\lambda_2}} \\
 \ldots  \ldots  & \\
z_{s_1} \ldots z_{s_{\lambda_p}}
\end{array}
\right),
$$
where the symbols $z'$s belong to the alphabet $A_0 \cup A_1 \cup L.$
The partition $\lambda$ is called the {\it{shape}} of the a Young tableau $X$, denoted by the symbol $sh(X).$

We also denote the Young tableau $X$ by the sequence of its {\it{row words}}, namely
$$
X = (\omega_1, \omega_2, \ldots, \omega_p),
$$
where
$$
\omega_k = z_{k_1}   \ldots                 z_{k_{\lambda_k}}, \quad k = 1, 2, \ldots, p.
$$

The {\it{row word}} of the Young tableau $X$ is the word
\begin{equation}\label{row word}
\omega(X) = \omega_1 \omega_2 \cdots \omega_p.
\end{equation}

The {\it{content}} of the Young tableau $X$ is the function
$$
c_X : A_0 \cup A_1 \cup L \rightarrow \mathbb{N},
$$
where
$c_X(z)$ equals the number of occurrences of $z$ in $X$, for $z \in A_0 \cup A_1 \cup L.$

Consider the linear order
$$
\alpha_1 < \ldots < \alpha_{m_0} < \beta_1 < \ldots < \beta_{m_1}
< x_1 < \ldots < x_n
$$
on the alphabet $A_0 \cup A_1 \cup L.$
The Young tableau $X$ is said to be
{\it{(super)standard}} whenever its row and column are nondecreasing words, its rows have no repetitions
of negative symbols in $A_1 \cup L$, its columns no repetitions
of positive symbols in $A_0.$ (\cite{Berele1-BR}, \cite{Berele2-BR}, \cite{rota-BR}, \cite{Brini1-BR}).

\subsubsection{Bitableaux monomials}\label{Bitableaux monomials}
Let $S$ and $T$ be two Young tableaux of same shape $\lambda = (\lambda_1 \geq \lambda_2 \geq \cdots \geq \lambda_p)$ on
the  alphabet $ A_0 \cup A_1 \cup L$:

\begin{equation}\label{bitableaux}
S = \left(
\begin{array}{llllllllllllll}
z_{i_1}  \ldots    \ldots     \ldots    & z_{i_{\lambda_1}}     \\
z_{j_1}   \ldots  \ldots               z_{j_{\lambda_2}} \\
 \ldots  \ldots  & \\
z_{s_1} \ldots z_{s_{\lambda_p}}
\end{array}
\right), \qquad
T = \left(
\begin{array}{llllllllllllll}
z_{h_1}  \ldots    \ldots     \ldots    & z_{h_{\lambda_1}}    \\
z_{k_1}   \ldots  \ldots               z_{k_{\lambda_2}} \\
 \ldots  \ldots  & \\
z_{t_1} \ldots z_{t_{\lambda_p}}
\end{array}
\right)
\end{equation}.

To the pair $(S,T)$, we associate the {\it{bitableau monomial}}:
$$
e_{S,T} =
e_{z_{i_1}, z_{h_1}}\cdots e_{z_{i_{\lambda_1}}, z_{h_{\lambda_1}}}
e_{z_{j_1}, z_{k_1}}\cdots e_{z_{j_{\lambda_2}}, z_{k_{\lambda_2}}}
 \cdots  \cdots
e_{z_{s_1}, z_{t_1}}\cdots e_{z_{s_{\lambda_p}}, z_{t_{\lambda_p}}}
$$
in ${\mathbf{U}}(gl(m_0|m_1+n)).$

\begin{example}\label{example monomial} Let $S$ and $T$ be tableaux of shape $\lambda = (3,2,2)$ on the alphabet $ A_0 \cup A_1 \cup L$:
\begin{equation}\label{bitableaux}
S = \left(
\begin{array}{lll}
z & x & y     \\
z & u \\
x & v
\end{array}
\right), \qquad
T = \left(
\begin{array}{lll}
z & s & w    \\
x & t \\
y & w
\end{array}
\right)
\end{equation}.
To the pair $(S,T)$, we associate the monomial:
$$
e_{S,T} =
e_{z,z} e_{x,s}
e_{y,w}e_{z,x}
e_{u,t}e_{x,y}e_{v,w}
$$
in ${\mathbf{U}}(gl(m_0|m_1+n)).$
\end{example}

\subsubsection{Deruyts and Coderuyts tableaux}

Let $\alpha_1, \ldots, \alpha_p \in A_0$, $\beta_1, \ldots, \beta_{\lambda_1} \in A_1$ and set

\begin{equation}\label{Deruyts and Coderuyts}
D_{\lambda}^* = \left(
\begin{array}{llllllllllllll}
\beta_1  \ldots    \ldots     \ldots    & \beta_{\lambda_1}     \\
\beta_1   \ldots  \ldots               \beta_{\lambda_2} \\
 \ldots  \ldots  & \\
\beta_1 \ldots \beta_{\lambda_p}
\end{array}
\right), \qquad
C_{\lambda}^* = \left(
\begin{array}{llllllllllllll}
\alpha_1  \ldots    \ldots     \ldots    & \alpha_1    \\
\alpha_2   \ldots  \ldots               \alpha_2 \\
 \ldots  \ldots  & \\
\alpha_p \ldots \alpha_p
\end{array}
\right)
\end{equation}.

The tableaux $D_{\lambda}^*$ and $C_{\lambda}^*$ are called the {\it{virtual Deruyts and Coderuyts tableaux}}
of shape $\lambda,$ respectively.

\subsubsection{Three special classes of elements in $Virt(m_0+m_1,n)$}

Given a pair $(S,T)$ of Young tableaux of the same shape $\lambda$ on the proper alphabet $L$, consider the elements
\begin{equation}\label{determinantal}
e_{S,C_{\lambda}^*} \ e_{C_{\lambda}^*,T} \in {\mathbf{U}}(gl(m_0|m_1+n)),
\end{equation}
\begin{equation}\label{permanental}
e_{S,D_{\lambda}^*} \ e_{D_{\lambda}^*,T} \in {\mathbf{U}}(gl(m_0|m_1+n)),
\end{equation}
\begin{equation}\label{YoungCapelli}
e_{S,D_{\lambda}^*} \ e_{D_{\lambda}^*,C_{\lambda}^*}  \   e_{C_{\lambda}^*,T} \in {\mathbf{U}}(gl(m_0|m_1+n)).
\end{equation}
Since elements (\ref{determinantal}), (\ref{permanental}), (\ref{YoungCapelli}) are balanced monomials in
${\mathbf{U}}(gl(m_0|m_1+n))$, then they belong to the subalgebra $Virt(m_0+m_1,n)$ (section \ref{balanced monomial}).
Hence, we can consider their images with respect to the Capelli epimorphism $\mathfrak{p}$ and set
\begin{equation}
\mathfrak{p} \Big( e_{S,C_{\lambda}^*} \ e_{C_{\lambda}^*,T}    \Big) =  SC_{\lambda}^* \ C_{\lambda}^*T  \in {\mathbf{U}}(gl(n)),
\end{equation}
\begin{equation}
\mathfrak{p} \Big( e_{S,D_{\lambda}^*} \ e_{D_{\lambda}^*,T} \Big) =  SD_{\lambda}^*  \ D_{\lambda}^*T    \in {\mathbf{U}}(gl(n)),
\end{equation}
\begin{equation}
\mathfrak{p} \Big( e_{S,D_{\lambda}^*} \ e_{D_{\lambda}^*,C_{\lambda}^*}  \   e_{C_{\lambda}^*,T} \Big)
= SD_{\lambda}^*  \  D_{\lambda}^*C_{\lambda}^*  \  C_{\lambda}^*T           \in {\mathbf{U}}(gl(n)).
\end{equation}

\begin{example} Let  $\lambda = (3,2,2)$, then
\begin{equation}
C_{\lambda}^* = \left(
\begin{array}{lll}
\alpha_1  & \alpha_1    & \alpha_1    \\
\alpha_2  & \alpha_2 \\
\alpha_3 & \alpha_3
\end{array}
\right), \quad \alpha_1, \alpha_2,    \alpha_3 \in A_0.
\end{equation}

Let $S$ and $T$ be the tableaux of shape $\lambda = (3,2,2)$ on the alphabet $ A_0 \cup A_1 \cup L$,
of Example \ref{example monomial}. Then
\begin{align*}
&SC_{\lambda}^* \ C_{\lambda}^*T =  \mathfrak{p}  \Big( e_{S,C_{\lambda}^*} \ e_{C_{\lambda}^*,T}    \Big) =
\mathfrak{p} \Big(  e_{{\fontsize{6} {6} \selectfont\substack{
x y x \ \alpha_1  \alpha_1  \alpha_1  \phantom{\beta_3}\\
z u \phantom{x}\ \alpha_2  \alpha_2 \phantom{\alpha_3} \phantom{\beta_3}\\
x v \phantom{x}\ \alpha_3  \alpha_3 \phantom{\alpha_3} \phantom{\beta_3}\\
}}}
e_{{\fontsize{6} {6} \selectfont\substack{
\alpha_1  \alpha_1  \alpha_1 \ z s w \phantom{\beta_3}\\
\alpha_2  \alpha_2 \phantom{\alpha_2} \ x t \phantom{w} \phantom{\beta_3}\\
\alpha_3  \alpha_3 \phantom{\alpha_3} \ y w \phantom{t} \phantom{\beta_3}\\
}}} \Big) =
\\
&= \mathfrak{p}  \Big(e_{z,\alpha_1} e_{x,\alpha_1}e_{y,\alpha_1}e_{z,\alpha_2}e_{u,\alpha_2}e_{x,\alpha_3}e_{v,\alpha_3}
e_{\alpha_1,z} e_{\alpha_1,s}e_{\alpha_1,w}e_{\alpha_2,x}e_{\alpha_2,t}e_{\alpha_3,y}e_{\alpha_3,w}
\Big).
\end{align*}

\end{example}

\begin{remark}

In the present notation, the element described in Example \ref{Capelli determinants-BR} is the element
$$
(-1)^{{k} \choose {2}} \  SC_{\lambda}^* \ C_{\lambda}^*S, \quad \lambda = (k), \quad S = \left( \begin{array}{ccc}
x_{i_1} & \ldots & x_{i_k}
\end{array}
\right).
$$
The element described in Example \ref{column permanent} is the element
$$
 SD_{\lambda}^* \ D_{\lambda}^*S, \quad \lambda = (1,1,1), \quad S = \left(
\begin{array}{rrr}
x_1  \\
x_2  \\
x_3
\end{array}
\right).
$$
The element described in Example \ref{third example} is the element
$$
 SC_{\lambda}^* \ C_{\lambda}^*T, \quad \lambda = (1,1), \quad S = \left(
\begin{array}{rr}
x_1  \\
x_2
\end{array}
\right), \quad
T = \left(
\begin{array}{rr}
x_2  \\
x_1
\end{array}
\right)
$$
\end{remark}

From Proposition \ref{rappresentazione aggiunta-BR}, it directly follows:

\begin{proposition}
For every $e_{x_i, x_j} \in gl(n) \subset gl(m_0|m_1+n)$,   let $ad(e_{x_i, x_j})$ denote its adjoint action
on  $Virt(m_0+m_1,n)$.  We have
\begin{equation}
ad(e_{x_i, x_j}) \left( SC_{\lambda}^* \ C_{\lambda}^*T \right) =
\mathfrak{p} \Big( ad(e_{x_i, x_j}) \left( e_{S,C_{\lambda}^*} \ e_{C_{\lambda}^*,T} \right)    \Big)
 \in {\mathbf{U}}(gl(n)),
\end{equation}
\begin{equation}
ad(e_{x_i, x_j}) \left( SD_{\lambda}^*  \ D_{\lambda}^*T \right) =
\mathfrak{p} \Big( ad(e_{x_i, x_j}) \left( e_{S,D_{\lambda}^*} \ e_{D_{\lambda}^*,T} \right)    \Big)
 \in {\mathbf{U}}(gl(n)),
\end{equation}
\begin{equation}
ad(e_{x_i, x_j}) \left( SD_{\lambda}^*  \  D_{\lambda}^*C_{\lambda}^*  \  C_{\lambda}^*T  \right) =
\mathfrak{p} \Big( ad(e_{x_i, x_j}) \left(  e_{S,D_{\lambda}^*} \ e_{D_{\lambda}^*,C_{\lambda}^*}  \   e_{C_{\lambda}^*,T} \right)    \Big)
 \in {\mathbf{U}}(gl(n)).
\end{equation}
\end{proposition}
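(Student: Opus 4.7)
The plan is to obtain each of the three identities as a direct instance of Proposition~\ref{rappresentazione aggiunta-BR}. Since the right-hand sides of the three displayed equations are, by the very definitions introduced in Subsection~\ref{Bitableaux monomials}, the $\mathfrak{p}$-images of the three bitableau products
\[
e_{S,C_{\lambda}^*}\,e_{C_{\lambda}^*,T}, \qquad
e_{S,D_{\lambda}^*}\,e_{D_{\lambda}^*,T}, \qquad
e_{S,D_{\lambda}^*}\,e_{D_{\lambda}^*,C_{\lambda}^*}\,e_{C_{\lambda}^*,T},
\]
all that is really required is to check that each of these three products lies in the subalgebra $Virt(m_0+m_1,n)$, so that Proposition~\ref{rappresentazione aggiunta-BR} applies to them.

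First I would recall from Subsection~\ref{balanced monomial} that every balanced monomial belongs to $Virt(m_0+m_1,n)$. It then suffices to observe that the three products above are balanced monomials in the sense of Definition~\ref{balanced monomials-BR}: in each product, the rightmost factor $e_{C_{\lambda}^*,T}$ (resp.\ $e_{D_{\lambda}^*,T}$) annihilates the proper symbols listed in $T$ while creating only virtual symbols from $A_0$ (resp.\ $A_1$); the possibly present middle factor $e_{D_{\lambda}^*,C_{\lambda}^*}$ in the third product annihilates and creates virtual symbols only; and the leftmost factor annihilates the virtual symbols created in the middle/right portion and creates the proper symbols listed in $S$. This exactly matches the pattern of a balanced monomial, so each of the three products is in $Virt(m_0+m_1,n)$.

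Having checked membership, I would conclude by specializing Proposition~\ref{rappresentazione aggiunta-BR} to $\mathbf{m}$ equal to each of the three balanced monomials above. For example, for the first identity,
\[
ad(e_{x_i,x_j})\bigl(SC_{\lambda}^*\,C_{\lambda}^*T\bigr)
= ad(e_{x_i,x_j})\bigl(\mathfrak{p}(e_{S,C_{\lambda}^*}\,e_{C_{\lambda}^*,T})\bigr)
= \mathfrak{p}\bigl(ad(e_{x_i,x_j})(e_{S,C_{\lambda}^*}\,e_{C_{\lambda}^*,T})\bigr),
\]
where the first equality is the definition of $SC_{\lambda}^*\,C_{\lambda}^*T$ and the second is Proposition~\ref{rappresentazione aggiunta-BR}. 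The two remaining identities are obtained by the same substitution.

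The proof is essentially a repackaging of definitions, so there is no substantive obstacle: the content has already been absorbed into Proposition~\ref{rappresentazione aggiunta-BR} (the $ad(e_{x_i,x_j})$-invariance of $\mathbf{Irr}$ and the resulting equivariance of $\mathfrak{p}$) and into the fact that balanced monomials lie in $Virt(m_0+m_1,n)$. The only minor verification is the balanced-monomial check for the third product, where one needs to track a chain of three factors rather than two; but this is a direct pattern-match against Definition~\ref{balanced monomials-BR}.
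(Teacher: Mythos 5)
Your proof is correct and follows the same route the paper takes: the paper notes just before the proposition that the three products $e_{S,C_{\lambda}^*}e_{C_{\lambda}^*,T}$, $e_{S,D_{\lambda}^*}e_{D_{\lambda}^*,T}$, $e_{S,D_{\lambda}^*}e_{D_{\lambda}^*,C_{\lambda}^*}e_{C_{\lambda}^*,T}$ are balanced monomials (hence in $Virt(m_0+m_1,n)$), and then states that the proposition "directly follows" from Proposition~\ref{rappresentazione aggiunta-BR}, exactly the substitution you carry out. Your write-up is simply a slightly more explicit unpacking of the same observation.
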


\subsection{Bitableaux in ${\mathbb C}[M_{m_0|m_1+n,d}]$ and the standard monomial theory}\label{standard}

\subsubsection{Biproducts in ${\mathbb C}[M_{m_0|m_1+n,d}]$}

Embed the algebra
$$
{\mathbb C}[M_{m_0|m_1+n,d}] = {\mathbb C}[(\alpha_s|j), (\beta_t|j), (x_i|j)]
$$
into the (supersymmetric) algebra ${\mathbb C}[(\alpha_s|j), (\beta_t|j), (x_i|j), (\gamma|j)]$
generated by the ($\mathbb{Z}_2$-graded) variables $(\alpha_s|j), (\beta_t|j), (x_i|j), (\gamma|j)$,
$j = 1, 2, \ldots, d$,
 where
 $$
 |(\gamma|j)| = 1 \in \mathbb{Z}_2 \ \  for \ every \ j = 1, 2, \ldots, d,
 $$
and denote by $D_{z_i,\gamma}$ the superpolarization of $\gamma$ to  $z_i.$

\begin{definition}{\textbf{(Biproducts in}} ${\mathbb C}[M_{m_0|m_1+n,d}]${\textbf{)}}
Let $\omega = z_1z_2 \cdots z_p$ be a word on     $ A_0 \cup A_1 \cup L$, and $\varpi = j_{t_1}j_{t_2} \cdots j_{t_q}$ a word
on the alphabet
$P = \{1, 2, \ldots, d \}$. The {\it{biproduct}}
$$
(\omega|\varpi) = (z_1z_2 \cdots z_p|j_{t_1}j_{t_2} \cdots j_{t_q})
$$
is the element
$$
D_{z_1,\gamma}D_{z_2,\gamma} \cdots D_{z_p,\gamma} \Big( (\gamma|j_{t_1})(\gamma|j_{t_2}) \cdots
(\gamma|j_{t_q}) \Big) \in {\mathbb C}[M_{m_0|m_1+n,d}]
$$
if $p = q$ and is set to be zero otherwise.
\end{definition}

\begin{claim}

The biproduct $(\omega|\varpi) = (z_1z_2 \cdots z_p|j_{t_1}j_{t_2} \cdots j_{t_q})$ is supersymmetric in the  $z$'s and
skew-symmetric in the  $j$'s.
In symbols
\begin{enumerate}

\item
$
(z_1 z_2 \cdots z_i z_{i+1} \cdots z_p|j_{t_1} j_{t_2} \cdots j_{t_q}) =
\\ \null \hfill (-1)^{|z_i| |z_{i+1}|}
(z_1 z_2 \cdots z_{i+1} z_i \cdots z_p|j_{t_1} j_{t_2} \cdots j_{t_q})
$

\item
$
(z_1z_2 \cdots z_iz_{i+1} \cdots z_p|j_{t_1}j_{t_2} \cdots j_{t_i}j_{t_{i+1}} \cdots j_{t_q}) =
\\ \null \hfill
- (z_1z_2 \cdots z_iz_{i+1} \cdots z_p|j_{t_1} \cdots j_{t_{i+1}}j_{t_i} \cdots j_{t_q}).
$
\end{enumerate}

\end{claim}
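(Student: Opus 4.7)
The plan is to derive both identities directly from the definition of the biproduct as an iterated superpolarization, reducing them to two elementary facts: (i) the supercommutation relations in the ambient supersymmetric algebra (for part 2), and (ii) the supercommutation relations between two superpolarizations into the \emph{same} virtual symbol (for part 1).

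Part (2) is essentially a reformulation of the oddness of the generators $(\gamma|j)$. Since $|(\gamma|j)|=1$ for every $j$, the supercommutation relations of the supersymmetric algebra ${\mathbb C}[M_{m_0|m_1+n,d}][(\gamma|j)]$ give $(\gamma|j_{t_i})(\gamma|j_{t_{i+1}}) = -(\gamma|j_{t_{i+1}})(\gamma|j_{t_i})$, so swapping two adjacent factors in the product $(\gamma|j_{t_1})\cdots(\gamma|j_{t_q})$ introduces an overall minus sign. Applying the (linear) operator $D_{z_1,\gamma}\cdots D_{z_p,\gamma}$ to both sides transfers the minus sign to the biproduct, which is exactly identity (2).

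For part (1), I would first note that since $|(\gamma|j)|=1$ and $|j|=1$ in $P_d$, we must have $|\gamma|=0$, so $|D_{z_i,\gamma}|=|z_i|+|\gamma|=|z_i|$. Then I would prove that $D_{z_i,\gamma}$ and $D_{z_{i+1},\gamma}$ supercommute. By the Lie superalgebra morphism $e_{a,b}\mapsto D_{a,b}$, the supercommutator equals $D_{[e_{z_i,\gamma},\,e_{z_{i+1},\gamma}]}$, and the supercommutator formula in the excerpt gives
\[
[e_{z_i,\gamma},\,e_{z_{i+1},\gamma}]=\delta_{\gamma z_{i+1}}\,e_{z_i,\gamma}-(-1)^{(|z_i|+|\gamma|)(|z_{i+1}|+|\gamma|)}\,\delta_{z_i\gamma}\,e_{z_{i+1},\gamma}=0,
\]
since $\gamma$ is a newly introduced symbol disjoint from $A_0\cup A_1\cup L$, so both Kronecker deltas vanish. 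Hence
\[
D_{z_i,\gamma}D_{z_{i+1},\gamma}=(-1)^{|z_i||z_{i+1}|}\,D_{z_{i+1},\gamma}D_{z_i,\gamma},
\]
and evaluating both sides on $(\gamma|j_{t_1})\cdots(\gamma|j_{t_q})$ yields identity (1).

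There is no real obstacle here; the only subtle point is that the sign $(-1)^{|z_i||z_{i+1}|}$ in the claim matches the factor produced by supercommuting the two superpolarizations precisely because $|\gamma|=0$ (not $|\gamma|=1$, as one might guess from the fact that the product $(\gamma|j_{t_1})\cdots(\gamma|j_{t_q})$ is an exterior product). I would highlight this parity bookkeeping in the proof, since conflating $|\gamma|$ with $|(\gamma|j)|$ would give the wrong sign.
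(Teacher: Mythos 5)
Your proof is correct. The paper states this Claim without proof, treating it as an immediate consequence of the definitions, and your argument supplies precisely the routine verification the paper omits: part (2) follows because the generators $(\gamma|j)$ are odd and the polarizations $D_{z_k,\gamma}$ are applied linearly, and part (1) follows because $D_{z_i,\gamma}$ and $D_{z_{i+1},\gamma}$ supercommute (both Kronecker deltas in the bracket $[e_{z_i,\gamma},e_{z_{i+1},\gamma}]$ vanish since $\gamma$ is a fresh auxiliary symbol). Your parity bookkeeping is the one point worth spelling out, and you get it right: from $|(\gamma|j)|=1$ and $|j|=1$ one reads off $|\gamma|=0$, so $|D_{z_i,\gamma}|=|z_i|$ and the supercommutation sign is $(-1)^{|z_i||z_{i+1}|}$ rather than $(-1)^{(|z_i|+1)(|z_{i+1}|+1)}$. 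One minor remark on presentation: the paper defines the morphism $e_{a,b}\mapsto D_{a,b}$ only for $a,b\in A_0\cup A_1\cup L$, so strictly speaking you are invoking its obvious extension to the enlarged alphabet $A_0\cup A_1\cup L\cup\{\gamma\}$; alternatively, you could verify directly that the supercommutator $[D_{z_i,\gamma},D_{z_{i+1},\gamma}]$ is a superderivation that annihilates every generator $(c|j)$, hence is zero. Either route is fine.
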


\begin{proposition}{\textbf{(Laplace expansions)}}\label{Laplace expansions}
We have
\begin{enumerate}

\item
$
(\omega_1\omega_2|\varpi) = \Sigma_{(\varpi)} \ (-1)^{|\varpi_{(1)}| |\omega_2|} \ (\omega_1|\varpi_{(1)})(\omega_2|\varpi_{(2)}).
$

\item
$
(\omega|\varpi_1\varpi_2) = \Sigma_{(\omega)} \  (-1)^{|\varpi_1| |\omega_{(2)}|}  \ (\omega_{(1)}|\varpi_1)(\omega_{(2)}|\varpi_2.)
$

\end{enumerate}
where
$$
\bigtriangleup(\varpi) = \Sigma_{(\varpi)}  \ \varpi_{(1)} \otimes \varpi_{(2)}, \quad \bigtriangleup(\omega)
= \Sigma_{(\omega)} \ \omega_{(1)} \otimes \omega_{(2)}
$$
denote the coproducts in the Sweedler notation (see, e.g \cite{Abe-BR}) of the elements $\varpi$ and $\omega$ in the
supersymmetric Hopf algebra of $W$
(see, e.g \cite{Bri-BR})   and in
the free exterior Hopf algebra generated by
$j = 1, 2, \ldots, d$, respectively.

\end{proposition}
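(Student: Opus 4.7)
The plan is to expand both sides directly from the definition of the biproduct as a composition of superpolarizations acting on an exterior monomial in the virtual variable $\gamma$, and to invoke the iterated Leibniz rule. Fix $\omega_1 = z_1\cdots z_p$, $\omega_2 = w_1\cdots w_r$, and $\varpi = j_{t_1}\cdots j_{t_q}$; since both sides vanish unless $q=p+r$, I may assume this. By definition,
\begin{equation*}
(\omega_1\omega_2|\varpi) \;=\; D_{z_1,\gamma}\cdots D_{z_p,\gamma}\,D_{w_1,\gamma}\cdots D_{w_r,\gamma}\bigl((\gamma|j_{t_1})\cdots(\gamma|j_{t_q})\bigr).
\end{equation*}
The first step is to apply the rightmost block $D_{w_1,\gamma}\cdots D_{w_r,\gamma}$ alone. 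Each $D_{w_k,\gamma}$ is a left superderivation consuming exactly one $\gamma$-factor, and the $(\gamma|j_{t_i})$ are odd and pairwise anticommute, so iterated Leibniz produces a sum indexed by ordered $r$-selections of the $q$ factors. Regrouping these selections by the underlying $(p,r)$-shuffle of the $j$-indices should give
\begin{equation*}
D_{w_1,\gamma}\cdots D_{w_r,\gamma}\bigl((\gamma|j_{t_1})\cdots(\gamma|j_{t_q})\bigr)
\;=\; \sum_{(\varpi)} (-1)^{|\varpi_{(1)}|\,|\omega_2|}\,(\gamma|\varpi_{(1)})\,(\omega_2|\varpi_{(2)}),
\end{equation*}
where the sum is exactly the Sweedler coproduct in the free exterior Hopf algebra on $\{1,\ldots,d\}$, and the sign is the Koszul sign needed to move the parity-$|\omega_2|$ operator past the odd factor $(\gamma|\varpi_{(1)})$.

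Next I would apply the remaining block $D_{z_1,\gamma}\cdots D_{z_p,\gamma}$ to this intermediate expression. Because the second factor $(\omega_2|\varpi_{(2)})$ is free of $\gamma$, these derivations see only $(\gamma|\varpi_{(1)})$, which by the very definition of the biproduct gets converted into $(\omega_1|\varpi_{(1)})$; this yields identity (1). Identity (2) follows by a dual argument: write $(\gamma|j_{t_1})\cdots(\gamma|j_{t_q}) = (\gamma|\varpi_1)\,(\gamma|\varpi_2)$, then apply the full block $D_{z_1,\gamma}\cdots D_{z_p,\gamma}$ and expand via Leibniz, distributing each $D_{z_i,\gamma}$ between the two odd factors; the sum over distributions is precisely the Sweedler coproduct of $\omega$ in the supersymmetric Hopf algebra of $W$, and the sign $(-1)^{|\varpi_1|\,|\omega_{(2)}|}$ records the cost of permuting the parity-$|\omega_{(2)}|$ block across the odd factor $(\gamma|\varpi_1)$.

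The main obstacle, as always in superalgebra, is the sign bookkeeping: the Koszul signs produced by the iterated Leibniz rule must match exactly the signs built into the coproducts of the two Hopf algebras involved. I would handle this by first checking the length-one cases $|\omega_2|=1$ (respectively $|\varpi_1|=1$) by a direct superderivation computation, and then inducting on $|\omega_2|$ (respectively $|\varpi_1|$), using coassociativity of the relevant coproduct to glue Sweedler decompositions at each step. In this way the entire argument reduces to one transparent sign verification in the base case.
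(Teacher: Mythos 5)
The paper states this proposition without proof (it is standard supersymmetric bookkeeping, proved in the cited references \cite{rota-BR}, \cite{Bri-BR}), so there is no in-paper argument to compare against; your proposal is the natural direct proof and it is correct.

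A few remarks on the details, to confirm that the plan carries through. In identity (2) the argument is cleanest if you phrase it as the generalized super-Leibniz rule for a composite of supercommuting superderivations: since $[D_{z,\gamma},D_{z',\gamma}]=0$ in $gl(m_0|m_1+n)$ (both annihilate $\gamma$ and create proper symbols), the assignment $z\mapsto D_{z,\gamma}$ extends to a superalgebra map $Super[W]\to End_{\mathbb C}[{\mathbb C}[M_{m_0|m_1+n,d}]]$, and the identity
$$
D_{\omega}(P\cdot Q)=\sum_{(\omega)}(-1)^{|\omega_{(2)}|\,|P|}\,D_{\omega_{(1)}}(P)\,D_{\omega_{(2)}}(Q)
$$
is exactly the module-algebra property for the Hopf algebra $Super[W]$ with primitive generators. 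Applying this with $P=(\gamma|\varpi_1)$ (which has $\mathbb Z_2$-degree $|\varpi_1|$, each $(\gamma|j)$ being odd) and $Q=(\gamma|\varpi_2)$ gives (2) at once, with no need for a separate induction. For identity (1) your intermediate step
$$
D_{w_1,\gamma}\cdots D_{w_r,\gamma}\bigl((\gamma|j_{t_1})\cdots(\gamma|j_{t_q})\bigr)
=\sum_{(\varpi)}(-1)^{|\varpi_{(1)}|\,|\omega_2|}\,(\gamma|\varpi_{(1)})\,(\omega_2|\varpi_{(2)})
$$
is the right thing, and one can obtain it most economically as a special case of the same module-algebra identity applied to the decomposition $(\gamma|\varpi)=\prod_i(\gamma|j_{t_i})$: since each factor carries exactly one $\gamma$, the only surviving terms in the iterated coproduct of $\omega_2$ are those assigning at most one letter to each factor, and regrouping by which factors receive a letter reproduces the exterior-algebra coproduct of $\varpi$, sign included. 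That identification is precisely the ``transparent sign verification'' you defer to, so the induction on $|\omega_2|$ is not wrong, just slightly longer than necessary. The final step (hitting $(\gamma|\varpi_{(1)})\,(\omega_2|\varpi_{(2)})$ with $D_{z_1,\gamma}\cdots D_{z_p,\gamma}$) is clean as you state it: Leibniz degenerates to $D(PQ)=D(P)\,Q$ because $Q$ contains no $\gamma$, and $D_{z_1,\gamma}\cdots D_{z_p,\gamma}(\gamma|\varpi_{(1)})=(\omega_1|\varpi_{(1)})$ is exactly the defining formula for the biproduct. One small imprecision: near the end you call $(\gamma|\varpi_1)$ an ``odd factor,'' but its parity is $|\varpi_1|\bmod 2$, which need not be $1$; the sign argument goes through with the correct parity and you in fact use the correct parity in the exponent, so this is only a slip in prose.
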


\begin{example} Let $\omega = x_1 x_2 x_3$, $\varpi = 1 2 3$. Then
$$
(\omega|\varpi) = D_{x_1,\gamma} D_{x_2,\gamma} D_{x_3,\gamma} \big( (\gamma|1)(\gamma|2)(\gamma|3) \big) =
-det \big[ (x_i|j) \big]_{i,j = 1,2,3},
$$
where the $(x_i|j)$'s are commutative variables.
\end{example}

\begin{example} Let $\omega = \alpha_1 \alpha_2 x_3$, $\varpi = 1 2 3$,
where $|(\alpha_1|j)| = |(\alpha_2|j)| = 1,$  $j = 1, 2, 3$ and $|(x_3|j)|  = 0, \ j = 1, 2, 3$.
Then
\begin{align*}
(\omega|\varpi) &= D_{ \alpha_1,\gamma}D_{ \alpha_2,\gamma}D_{ x_3,\gamma}  \big( (\gamma|1)(\gamma|2)(\gamma|3) \big) \\
&=   D_{ \alpha_1,\gamma}D_{ \alpha_2,\gamma} \Big( (x_3|1)(\gamma|2)(\gamma|3)
- (\gamma|1)(x_3|2)(\gamma|3)   +  (\gamma|1)(\gamma|2)(x_3|3) \Big) \\
&= D_{ \alpha_1,\gamma} \Big( (x_3|1)(\alpha_2|2)(\gamma|3) + (x_3|1)(\gamma|2)(\alpha_2|3)
- (\alpha_2|1)(x_3|2)(\gamma|3) \\
& \phantom{D_{ \alpha_1,\gamma} \Big( \quad}- (\gamma|1)(x_3|2)(\alpha_2|3)
+ (\alpha_2|1)(\gamma|2)(x_3|3) + (\gamma|1)(\alpha_2|2)(x_3|3) \Big) \\
&= (x_3|1)(\alpha_2|2)(\alpha_1|3) + (x_3|1)(\alpha_1|2)(\alpha_2|3)
- (\alpha_2|1)(x_3|2)(\alpha_1|3) \\
& \phantom{= } - (\alpha_1|1)(x_3|2)(\alpha_2|3)
+ (\alpha_2|1)(\alpha_1|2)(x_3|3) + (\alpha_1|1)(\alpha_2|2)(x_3|3).
\end{align*}

\noindent From Proposition \ref{Laplace expansions}.1, by setting $\varpi_1 = 1 2$, $\varpi_2 = 3$, it follows
$$
(\omega|\varpi) = (\alpha_1\alpha_2|12)(x_3|3) + (\alpha_1x_3|12)(\alpha_2|3) + (\alpha_2x_3|12)(\alpha_1|3).
$$

\noindent From Proposition \ref{Laplace expansions}.2, by setting $\omega_1 = \alpha_1 \alpha_2$, $\omega_2 = x_3$, it follows
$$
(\omega|\varpi) = (\alpha_1\alpha_2|12)(x_3|3) - (\alpha_1\alpha_2|13)(x_3|2) + (\alpha_1\alpha_2|23)(x_3|1).
$$
\end{example}

\subsubsection{Biproducts and polarization operators}

Following the notation introduced in the previous sections, let
$$
Super[W] = Sym[W_0] \otimes \Lambda[W_1]
$$
denote the {\it{(super)symmetric}} algebra of the space
$$
W = W_0 \oplus W_1
$$ (see, e.g. \cite{Scheu-BR}, \cite{VARAD-BR}).

By multilinearity, the algebra
$Super[W]$ is the same as the superalgebra  $Super[A_0 \cup  A_1 \cup L]$ generated by the "variables"
$$
 \alpha_1, \ldots, \alpha_{m_0}  \in  A_0, \quad \beta_1, \ldots, \beta_{m_1} \in A_1, \quad x_1, \ldots, x_n \in L,
$$
modulo the congruences
$$
z z' = (-1)^{|z| |z'|} z' z, \quad z, z' \in A_0 \cup  A_1 \cup L.
$$

Let $d_{z, z'}$ denote the polarization operator of $z'$ to $z$ on
$$
Super[W] = Super[A_0 \cup  A_1 \cup L],
$$
that is the unique superderivation of $\mathbb{Z}_2$-degree
$$
|z| + |z'| \in \mathbb{Z}_2
$$
such that
$$
d_{z, z'} (z'') = \delta_{z', z''} \cdot z,
$$
for every $z, z', z'' \in A_0 \cup  A_1 \cup L.$

Clearly, the map
$$
e_{z, z'}  \rightarrow   d_{z, z'}
$$
is a Lie superalgebra map and, therefore, induces a structure of
$$gl(m_0|m_1+n)-module$$
on
$Super[A_0 \cup  A_1 \cup L] = Super[W].$

\begin{proposition}\label{polarization biproduct}

Let $\varpi = j_{t_1}j_{t_2} \cdots j_{t_q}$ be a word on $P = \{1, 2, \ldots, d \}$.
The map
$$
\Phi_\varpi : \omega \mapsto (\omega|\varpi),
$$
$\omega$ any word on $A_0 \cup  A_1 \cup L$, uniquely defines $gl(m_0|m_1+n)-$equivariant linear operator
$$
\Phi_\varpi : Super[A_0 \cup  A_1 \cup L] \rightarrow {\mathbb C}[M_{m_0|m_1+n,d}],
$$
that is
\begin{equation}\label{polarization action}
\Phi_\varpi \big( d_{z, z'}(\omega) \big) = D_{z, z'} \big( (\omega|\varpi) \big),
\end{equation}
for every $z, z' \in A_0 \cup  A_1 \cup L.$
\end{proposition}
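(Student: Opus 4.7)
My plan is to split the proof into two parts, corresponding to the two assertions (well-definedness on the supersymmetric algebra and equivariance).

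First, I would verify that $\Phi_\varpi$ descends from the free associative algebra on $A_0 \cup A_1 \cup L$ to the supersymmetric algebra $Super[W]$. The rule $\omega \mapsto (\omega|\varpi)$ manifestly defines a linear map on words; to check that it respects the congruences $zz' = (-1)^{|z||z'|}z'z$, I would invoke Claim 1 above, which states exactly
$$(\ldots z_i z_{i+1} \ldots |\varpi) = (-1)^{|z_i||z_{i+1}|}(\ldots z_{i+1} z_i \ldots |\varpi).$$
This automatically yields a well-defined linear map $\Phi_\varpi : Super[W] \to \mathbb{C}[M_{m_0|m_1+n,d}]$.

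For the equivariance statement (\ref{polarization action}), I would argue by ``pushing'' the outer superpolarization $D_{z,z'}$ through the definition
$$(\omega|\varpi) = D_{z_1,\gamma} D_{z_2,\gamma} \cdots D_{z_p,\gamma}\bigl((\gamma|j_{t_1}) \cdots (\gamma|j_{t_q})\bigr),$$
where $\omega = z_1 \cdots z_p$. Two observations do the work. First, $D_{z,z'}$ annihilates each factor $(\gamma|j_{t_l})$, because $z' \neq \gamma$. Second, the general supercommutator formula specializes, when the second term vanishes due to $z \neq \gamma$, to
$$[D_{z,z'}, D_{z_i,\gamma}]_{\mathrm{super}} = \delta_{z',z_i}\, D_{z,\gamma}.$$
Consequently, super-commuting $D_{z,z'}$ from the left to the right through the product of $D_{z_i,\gamma}$'s produces a sum, indexed by positions $i$ with $z_i = z'$, where the $i$-th factor $D_{z_i,\gamma}$ is replaced by $D_{z,\gamma}$, each term carrying the Koszul sign $(-1)^{\varepsilon_i}$ accumulated from commuting a parity-$(|z|+|z'|)$ operator past $D_{z_1,\gamma} \cdots D_{z_{i-1},\gamma}$. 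Applying $\Phi_\varpi$ to $d_{z,z'}(\omega)$ and expanding by the super Leibniz rule, using $d_{z,z'}(z_i) = \delta_{z',z_i}\, z$, gives exactly the same sum with the same Koszul signs, since both signs are governed by the same parities. This proves (\ref{polarization action}).

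Finally, I would close the argument by noting that the collection $\{D_{z,z'} : z,z' \in A_0 \cup A_1 \cup L\}$ generates the $gl(m_0|m_1+n)$-action on the target, while $\{d_{z,z'}\}$ generates it on the source; so equivariance for these generators implies equivariance for the whole enveloping algebra, by the representation property of $\varrho$. The main obstacle, as I see it, is neither conceptual nor computational but notational: the careful bookkeeping of Koszul signs when super-commuting $D_{z,z'}$ past successive $D_{z_i,\gamma}$ and matching them against the Koszul signs produced by the super Leibniz rule on $z_1 z_2 \cdots z_p$. This is a routine parity calculation, simplified by the fact that both $D_{z,z'}$ and $d_{z,z'}$ carry the same $\mathbb{Z}_2$-degree $|z|+|z'|$ and that every $\gamma$ insertion contributes an identical parity shift on both sides.
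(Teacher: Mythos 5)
The paper states Proposition \ref{polarization biproduct} without giving a proof, so there is nothing in the text to compare your argument against; judged on its own merits, your proof is sound and its two-step structure (well-definedness via Claim 3.22, equivariance by supercommuting $D_{z,z'}$ through the defining formula for the biproduct) is exactly the natural line of attack.

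Two small remarks. First, your claim that $[D_{z,z'},D_{z_i,\gamma}]_{\mathrm{super}}=\delta_{z',z_i}D_{z,\gamma}$ is correct, and correctly identified: the other summand in the commutator $\delta_{bc}D_{a,d}-(-1)^{(|a|+|b|)(|c|+|d|)}\delta_{ad}D_{c,b}$ drops out because $z\neq\gamma$ (and, for the same reason, $D_{z,z'}$ annihilates $(\gamma|j_{t_l})$). Second, the phrase ``every $\gamma$ insertion contributes an identical parity shift on both sides'' is imprecise, since $\gamma$ does not occur at all on the source side in $Super[W]$; what actually makes the Koszul signs match is that $|\gamma|=0\in\mathbb{Z}_2$ (this follows from $|(\gamma|j)|=1$ and $|j|=1$ in the paper's conventions), so $|D_{z_i,\gamma}|=|z_i|+|\gamma|=|z_i|$, and pushing $D_{z,z'}$ past $D_{z_1,\gamma}\cdots D_{z_{i-1},\gamma}$ accumulates precisely the sign $(-1)^{(|z|+|z'|)(|z_1|+\cdots+|z_{i-1}|)}$ that the super-Leibniz rule for $d_{z,z'}$ produces on $z_1\cdots z_p$. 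If $\gamma$ were odd the two sides would differ by $(-1)^{(|z|+|z'|)(i-1)}$, so this fact is not cosmetic and deserves to be stated explicitly. With that one sentence added, the argument is complete.
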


With a slight abuse of notation, we will write (\ref{polarization action}) in the form
\begin{equation}\label{abuse}
D_{z, z'} \big( (\omega|\varpi) \big) = ( d_{z, z'}(\omega)|\varpi).
\end{equation}

\subsubsection{Bitableaux in ${\mathbb C}[M_{m_0|m_1+n,d}]$}

Let $S = (\omega_1, \omega_2, \ldots, \omega_p$ and $T = (\varpi_1, \varpi_2, \ldots, \varpi_p)$ be Young tableaux on
$A_0 \cup  A_1 \cup L$ and $P = \{1, 2, \ldots, d \}$ of shapes $\lambda$ and $\mu$, respectively.

If $\lambda = \mu$, the {\it{Young bitableau}} $(S|T)$ is the element of ${\mathbb C}[M_{m_0|m_1+n,d}]$ defined as follows:
$$
(S|T) =
\left(
\begin{array}{c}
\omega_1\\ \omega_2\\ \vdots\\ \omega_p
\end{array}
\right| \left.
\begin{array}{c}
\varpi_1\\ \varpi_2\\ \vdots\\  \varpi_p
\end{array}
\right)
= \pm \ (\omega_1)|\varpi_1)(\omega_2)|\varpi_2) \cdots (\omega_p)|\varpi_p),
$$
where
$$
\pm  = (-1)^{|\omega_2||\varpi_1|+|\omega_3|(|\varpi_1|+|\varpi_2|)+ \cdots +|\omega_p|(|\varpi_1|+|\varpi_2|+\cdots+|\varpi_{p-1}|)}.
$$

If $\lambda \neq \mu$, the {\it{Young bitableau}} $(S|T)$ is set to be zero.

\subsubsection{Bitableaux and polarization operators}

By naturally extending the slight abuse of notation (\ref{abuse}), the action of any polarization on bitableaux
can be explicitly described:

\begin{proposition}\label{action on tableaux}
Let $z, z' \in A_0 \cup  A_1 \cup L$,  and let
$S = (\omega_1, \ldots, \omega_p) $, $T =
(\varpi_1, \ldots, \varpi_p)$. We have the
following identity:
\begin{align*}
D_{z, z'}(S\,|\,T)  \ & = \
 D_{z, z'} \ \left(
\begin{array}{c}
\omega_1\\ \omega_2\\ \vdots\\ \omega_p
\end{array}
\right| \left.
\begin{array}{c}
\varpi_1\\ \varpi_2\\ \vdots\\  \varpi_p
\end{array}
\right)  \\ &
= \ \sum_{s=1}^p  \
(-1)^{(|z| + |z'|)\epsilon_s}
\ \left(
\begin{array}{c}
\omega_1\\ \omega_2\\ \vdots\\  d_{z,
z'}(\omega_s)\\ \vdots \\ \omega_p
\end{array}
\right| \left.
\begin{array}{c}
\varpi_1\\ \varpi_2\\ \vdots\\
\vdots \\ \vdots\\ \varpi_p
\end{array}
\right),
\end{align*}
where
$$
\epsilon_1 = 1, \quad    \epsilon_s = |\omega_1| + \cdots + |\omega_{s-1}|, \quad s = 2,
\ldots, p.
$$
\end{proposition}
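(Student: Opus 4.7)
The plan is to prove this as a direct consequence of the super-Leibniz rule for the superderivation $D_{z,z'}$, combined with the single-biproduct identity $D_{z,z'}\bigl((\omega|\varpi)\bigr) = (d_{z,z'}(\omega)|\varpi)$ supplied by Proposition \ref{polarization biproduct}. Concretely, the first step is to unpack the definition of the bitableau,
\[
(S|T) \;=\; \varepsilon(S,T)\,(\omega_1|\varpi_1)(\omega_2|\varpi_2)\cdots(\omega_p|\varpi_p),
\]
where $\varepsilon(S,T)$ is the global sign $(-1)^{\sum_s |\omega_s|(|\varpi_1|+\cdots+|\varpi_{s-1}|)}$ recorded in the definition. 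Since $D_{z,z'}$ is a left superderivation of parity $|z|+|z'|$, applying it to this product yields, by iterated super-Leibniz,
\[
D_{z,z'}\bigl((S|T)\bigr) \;=\; \varepsilon(S,T)\sum_{s=1}^{p} (-1)^{(|z|+|z'|)\eta_s}\,(\omega_1|\varpi_1)\cdots D_{z,z'}(\omega_s|\varpi_s)\cdots(\omega_p|\varpi_p),
\]
where $\eta_s$ is the total $\mathbb{Z}_2$-parity of the biproducts standing to the left of the $s$-th factor.

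The second step is to invoke Proposition \ref{polarization biproduct} (equation (\ref{abuse})) to replace $D_{z,z'}(\omega_s|\varpi_s)$ by $(d_{z,z'}(\omega_s)|\varpi_s)$. After this substitution, each summand is, up to a sign, the product of biproducts whose $s$-th row word has been polarized on the left side. Re-assembling such a product back into a bitableau with row words $\omega_1,\dots,d_{z,z'}(\omega_s),\dots,\omega_p$ (against the unchanged $T$) reintroduces a global sign $\varepsilon(S_s,T)$, where $S_s$ denotes $S$ with $\omega_s$ replaced by $d_{z,z'}(\omega_s)$; since $d_{z,z'}$ shifts $\mathbb{Z}_2$-parity by $|z|+|z'|$, one has $|d_{z,z'}(\omega_s)| = |\omega_s|+|z|+|z'|$, so $\varepsilon(S_s,T)$ differs from $\varepsilon(S,T)$ by a factor $(-1)^{(|z|+|z'|)(|\varpi_1|+\cdots+|\varpi_{s-1}|)}$.

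The third and only genuinely delicate step is the bookkeeping: one must show that, after multiplying $\varepsilon(S,T)/\varepsilon(S_s,T)$ into the Leibniz sign $(-1)^{(|z|+|z'|)\eta_s}$, the result simplifies to $(-1)^{(|z|+|z'|)\epsilon_s}$ with $\epsilon_s = |\omega_1|+\cdots+|\omega_{s-1}|$. Because every biproduct $(\omega_k|\varpi_k)$ has intrinsic parity differing from $|\omega_k|$ by a contribution from the $|\varpi_k|$-piece that is common to both $\eta_s$ and to the global prefactor, these two contributions cancel term by term in the difference, leaving precisely $|\omega_1|+\cdots+|\omega_{s-1}|$ in the exponent. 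This is the main obstacle, and the cleanest way to carry it out is to perform the parallel computation for the superderivation $d_{z,z'}$ acting on the monomial $\omega_1\omega_2\cdots\omega_p\in\mathrm{Super}[W]$: there the Leibniz rule gives exactly the claimed signs $(-1)^{(|z|+|z'|)(|\omega_1|+\cdots+|\omega_{s-1}|)}$, and the $gl(m_0|m_1+n)$-equivariance of the bitableau construction (a row-by-row extension of Proposition \ref{polarization biproduct}) transports the identity from $\mathrm{Super}[W]$ to $\mathbb{C}[M_{m_0|m_1+n,d}]$, giving the proposition.
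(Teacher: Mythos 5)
Your proof is correct. The paper gives no explicit proof of Proposition \ref{action on tableaux} --- it presents the formula as the ``natural extension'' of equation (\ref{abuse}) --- and your argument supplies precisely the details the paper leaves implicit: unpack $(S|T)$ as the signed product $\varepsilon(S,T)\prod_s(\omega_s|\varpi_s)$, apply the super-Leibniz rule for $D_{z,z'}$, replace $D_{z,z'}(\omega_s|\varpi_s)$ by $(d_{z,z'}(\omega_s)|\varpi_s)$ using Proposition \ref{polarization biproduct}, and then reconcile the signs. Your bookkeeping in the third step is the crux and it is right: since the parity of a biproduct is $|(\omega_t|\varpi_t)| = |\omega_t| + |\varpi_t|$ (each variable $(c|j)$ carries parity $|c|+1$, and $\ell(\omega_t)=\ell(\varpi_t)$), the Leibniz exponent $\eta_s = \sum_{t<s}(|\omega_t|+|\varpi_t|)$ combined with the correction $\varepsilon(S,T)\varepsilon(S_s,T) = (-1)^{(|z|+|z'|)\sum_{t<s}|\varpi_t|}$ leaves exactly $(|z|+|z'|)\sum_{t<s}|\omega_t| = (|z|+|z'|)\epsilon_s$ in the exponent. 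Two small remarks: your closing paragraph's appeal to ``the $gl$-equivariance of the bitableau construction'' is conceptually the cleanest summary but is really a restatement of what is being proved, so it is the direct Leibniz computation in your first three steps that carries the argument; and the paper's convention ``$\epsilon_1 = 1$'' looks like a typo for $\epsilon_1 = 0$ (the empty sum), which is what your computation and the formula for $s\geq 2$ both demand.
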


\begin{example} Let $\alpha_i \in A_0$, $x_1,  x_2, x_3, x_4 \in L$, $|D_{\alpha_i, x_2}| = 1$. Then
$$
D_{\alpha_i, x_2} \
\left(
\begin{array}{lll}
x_1 \ x_3 \ x_2\\
 x_2 \ x_3\\
x_4 \ x_2
\end{array}
\right| \left.
\begin{array}{lll}
1 \ 2 \ 3 \\
2 \ 3 \\
3 \ 1
\end{array}
\right) =
$$
$$ =
\left(
\begin{array}{lll}
x_1 \ x_3 \ \alpha_i\\
 x_2 \ x_3\\
x_4 \ x_2
\end{array}
\right| \left.
\begin{array}{lll}
1 \ 2 \ 3 \\
2 \ 3 \\
3 \ 1
\end{array}
\right) -
\left(
\begin{array}{lll}
x_1 \ x_3 \ x_2\\
 \alpha_i \ x_3\\
x_4 \ x_2
\end{array}
\right| \left.
\begin{array}{lll}
1 \ 2 \ 3 \\
2 \ 3 \\
3 \ 1
\end{array}
\right) +
\left(
\begin{array}{lll}
x_1 \ x_3 \ x_2\\
 x_2 \ x_3\\
x_4 \ \alpha_i
\end{array}
\right| \left.
\begin{array}{lll}
1 \ 2 \ 3 \\
2 \ 3 \\
3 \ 1
\end{array}
\right)
$$

\end{example}

\subsubsection{The straightening algorithm and the standard basis theorem for ${\mathbb C}[M_{m_0|m_1+n,d}]$}

Consider the set of all bitableaux $(S|T) \in {\mathbb C}[M_{m_0|m_1+n,d}]$, where $sh(S) = sh(T) \vdash h$,
$h$ a given positive integer. In the following, let
denote by $\leq$ the partial order on this set defined  by the following two steps:
\begin{itemize}

\item
$(S|T) < (S'|T')$ whenever $sh(S)  <_l sh(S') $,

\item
$(S|T) < (S'|T')$ whenever $sh(S) =  sh(S')$, $w(S) >_l w(S')$,  $w(T) >_l w(T')$,
\end{itemize}
where the shapes and the row-words are compared in the lexicographic
order.

The next results are superalgebraic versions of classical, well-known results for the symmetric algebra ${\mathbb C}[M_{n,d}]$
(\cite{drs-BR}, \cite{DKR-BR}, \cite{DEP-BR}, for the general theory of standard monomials see, e.g. \cite{Procesi-BR}, Chapt. 13)
and of their skew-symmetric analogues  (\cite{DR-BR}, \cite{ABW-BR}).

\begin{theorem}(The straightening algorithm)\label{theorem: standard expansion of bitableaux} \cite{rota-BR}

Let $(P|Q) \in {\mathbb C}[M_{m_0|m_1+n,d}]$.
Then $(P|Q)$ can be
written as a linear combination, with rational coefficients,
\begin{equation}\label{straightening}
(P|Q) = \sum_{S,T} c_{S,T}\ (S|T),
\end{equation}
of standard bitableaux $(S|T)$, where $(S|T) \geq (P|Q)$ and $c_S = c_P$, $c_T = c_Q$.
\end{theorem}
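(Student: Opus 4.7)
The plan is to prove the theorem by Noetherian induction on the partial order $\leq$, working downward from the maximal elements. Since the common shape of $S$ and $T$ in any bitableau of weight $h$ is a partition of $h$, and there are only finitely many such partitions, the lexicographic order on shapes has a maximum. For each fixed shape, row-words are words of fixed total length in a finite alphabet, so only finitely many configurations arise. Hence every chain of bitableaux that is \emph{strictly increasing} in $\leq$ is finite, and every chain strictly \emph{decreasing} in $\leq$ is also finite. The induction is therefore well-founded. The base case is the observation that when $(P|Q)$ is already (super)standard, the expansion $(P|Q)=(P|Q)$ satisfies the statement trivially.

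For the inductive step, assume $(P|Q)$ is not standard. Using the supersymmetry of biproducts in the letters from $A_0 \cup A_1 \cup L$ and their skew-symmetry in the letters from $P=\{1,\ldots,d\}$ (Claim in Subsection \emph{Biproducts in} ${\mathbb C}[M_{m_0|m_1+n,d}]$), I first rearrange within rows: I can sort each row of $P$ in nondecreasing order modulo a sign, and each row of $Q$ in strictly increasing order (since any repetition of an odd $j$ kills the biproduct, and a repetition of an even $j$ is impossible for $j\in P$ which are all odd). After this preparatory normalization, the remaining failure of standardness is a column violation: there exist two adjacent rows in $P$ (or in $Q$) where the entries in a given column break the required column monotonicity.

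The heart of the argument is a Garnir-type syzygy obtained from the supersymmetric Laplace expansion (Proposition \ref{Laplace expansions}). Suppose the column violation sits between row $r$ and row $r+1$ of $P$, of lengths $\lambda_r$ and $\lambda_{r+1}$. I group the letters of these two rows and apply Laplace to expand the product $(\omega_r|\varpi_r)(\omega_{r+1}|\varpi_{r+1})$ into a sum over redistributions of the letters across the two rows. By the supersymmetry of biproducts in the $z$-variables (together with the ordering rule imposed on $A_0 \cup A_1 \cup L$), the terms in which the redistribution respects the violating column cancel in pairs or collapse via the skew-symmetry in $A_1 \cup L$; what remains is an identity
\begin{equation*}
(P|Q) \;=\; \sum_{(S|T)>(P|Q)} c_{S,T}\,(S|T),
\end{equation*}
where every bitableau on the right either has strictly larger shape in lexicographic order, or the same shape but a row-word of $S$ that is strictly larger in lex order, and similarly (or already by construction) for $T$. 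Since each such $(S|T)$ is strictly greater than $(P|Q)$ in $\leq$, the inductive hypothesis applies and rewrites each $(S|T)$ as a rational linear combination of standard bitableaux, each of which is $\geq (S|T) > (P|Q)$. Concatenating the expansions yields the desired expression for $(P|Q)$. Content preservation $c_S=c_P$, $c_T=c_Q$ is automatic, because the Laplace expansion only rearranges the multiset of letters within the rows of $P$ (respectively $Q$), never introducing or removing symbols.

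The main obstacle is the sign bookkeeping and the verification that the Garnir move only produces terms strictly greater than $(P|Q)$. In the classical (purely commutative) straightening of \cite{DEP-BR} and in its skew analogue of \cite{DR-BR}, this is already the delicate point; in the super setting it is more intricate because signs $(-1)^{|z||z'|}$ and $(-1)^{|\varpi_{(1)}||\omega_{2}|}$ from Proposition \ref{Laplace expansions} combine with the mixed parity of the alphabet $A_0 \cup A_1 \cup L$. The asymmetry in the definition of (super)standardness—no repetitions of negative symbols in $A_1 \cup L$ along rows, no repetitions of positive symbols in $A_0$ along columns—is precisely what makes the cancellations compatible with the Laplace redistribution: even symbols from $A_0$ behave with respect to rows the way commutative variables do in the classical theorem, while odd symbols from $A_1 \cup L$ behave with respect to rows the way letters do in the skew-symmetric analogue. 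Once these sign cancellations are laid out carefully for a single adjacent-row Garnir move, the induction goes through uniformly.
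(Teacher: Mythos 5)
The paper does not prove this theorem; it cites it verbatim to \cite{rota-BR}, positioning it against the classical commutative and skew analogues \cite{drs-BR}, \cite{DKR-BR}, \cite{DEP-BR}, \cite{DR-BR}, \cite{ABW-BR}. So the question is whether your sketch could close the gap the paper leaves to that reference, and as written it cannot, because the two load-bearing claims are mis-oriented or left unverified.

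First, you have the ordering direction backwards. The paper declares $(S|T)<(S'|T')$ when $sh(S)=sh(S')$, $w(S)>_l w(S')$, $w(T)>_l w(T')$: \emph{larger} row-words give \emph{smaller} bitableaux in $\leq$. You claim the Garnir rewrite produces bitableaux with row-words ``strictly larger in lex order'' and conclude these are ``strictly greater than $(P|Q)$''; under the stated convention they would be strictly smaller, which runs your Noetherian induction the wrong way. (Consider the one-column case $P=(x_2;x_1)$ straightened to $P'=(x_1;x_2)$: here $w(P')<_l w(P)$, and the paper's definition then gives $(P'|Q)>(P|Q)$, exactly as the theorem requires.) Second, the Garnir step itself is only asserted, not established. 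Proposition \ref{Laplace expansions} expands a \emph{single} biproduct $(\omega_1\omega_2|\varpi)$; it is not, by itself, an exchange identity for a product $(\omega_r|\varpi_r)(\omega_{r+1}|\varpi_{r+1})$. The standard route is to form an over-long biproduct from a subset of symbols spanning the two offending rows (of length exceeding $\lambda_r$), show that it vanishes, and then Laplace-expand zero; in the super setting the vanishing mechanism is genuinely different depending on whether the violating symbols lie in $A_0$ (even, where a column violation means two equal $\alpha$'s and the argument is of symmetrization/polarization type) or in $A_1\cup L$ (odd, where too many distinct letters annihilate the biproduct). You yourself flag this as ``the main obstacle'' and then explicitly decline to carry it out, so what you have is a roadmap toward the proof in \cite{rota-BR}, not a self-contained argument for the statement.
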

Since standard bitableaux are linearly independent in ${\mathbb C}[M_{m_0|m_1+n,d}]$, the expansion
(\ref{straightening}) is unique.

Following \cite{Berele1-BR}, \cite{Berele2-BR},  \cite{Brini1-BR},
a partition $\lambda$ satisfies the $(m_0,m_1+n)-${\it{hook condition}} (in symbols, $\lambda \in H(m_0,m_1+n)$)
if and only if $\lambda_{m_0+1} \leq m_1+n.$ We have:
\begin{lemma}
There exists a standard tableau on $A_0 \cup  A_1 \cup L$ of shape $\lambda$ if
and only if  $\lambda \in H(m_0,m_1+n).$
\end{lemma}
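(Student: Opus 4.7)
The plan is to prove both directions directly by combinatorial arguments about which symbols can appear in which positions, using the linear order
$$\alpha_1 < \cdots < \alpha_{m_0} < \beta_1 < \cdots < \beta_{m_1} < x_1 < \cdots < x_n$$
together with the two nonrepetition rules (positives strict in columns, negatives strict in rows).

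For the forward implication, I would suppose a standard tableau $X$ of shape $\lambda$ exists and focus on row $m_0+1$ (assuming this row is nonempty, otherwise the hook condition is vacuous). The key observation is that every entry in this row must be a \emph{negative} symbol, i.e.\ from $A_1 \cup L$. To see this, fix a column $j$ with $1 \le j \le \lambda_{m_0+1}$ and look at its entries in rows $1,\dots,m_0+1$. Columns are weakly increasing, so once a negative symbol appears in some row $i \le m_0+1$ of column $j$, every entry below it in that column is negative as well. If instead all $m_0+1$ entries in these rows were positive, the strict-increase constraint on $A_0$-entries in columns would force $m_0+1$ distinct symbols from $A_0$, which has only $m_0$ elements — contradiction. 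Hence entry $(m_0+1,j)$ is negative. Now the row $m_0+1$ is a weakly increasing sequence of negatives, and negatives cannot repeat within a row, so it is a \emph{strictly} increasing sequence drawn from the $|A_1 \cup L| = m_1+n$ available symbols. Therefore $\lambda_{m_0+1} \le m_1+n$.

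For the reverse implication, I would give an explicit construction. Assume $\lambda \in H(m_0,m_1+n)$, so every row with index $> m_0$ has length at most $m_1+n$. Fill position $(i,j)$ of $\lambda$ according to the rule
\begin{equation*}
X(i,j) =
\begin{cases}
\alpha_i & \text{if } 1 \le i \le m_0,\\
\zeta_j & \text{if } i > m_0,
\end{cases}
\end{equation*}
where $\zeta_1,\dots,\zeta_{m_1+n}$ is the ordered list $\beta_1,\dots,\beta_{m_1},x_1,\dots,x_n$. This is well defined since for $i > m_0$ one has $j \le \lambda_i \le \lambda_{m_0+1} \le m_1+n$. One then checks the four conditions: rows are weakly increasing (trivially in the $\alpha$-block and by construction in the $\zeta$-block); columns are weakly increasing (strictly in the $\alpha$-block, constant in the $\zeta$-block, with the transition $\alpha_{m_0} < \zeta_1$ respected); no positive repeats in columns (by the strict $\alpha_1 < \cdots < \alpha_{m_0}$); and no negative repeats in rows (by the strict $\zeta_1 < \cdots < \zeta_{m_1+n}$).

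Neither direction should present a real obstacle: the forward direction rests entirely on the pigeonhole-style remark about column~$j$ in rows $1,\dots,m_0+1$, and the reverse direction is a one-line construction whose verification is routine from the definition of (super)standardness. The only thing to be slightly careful about is the edge case in which $\lambda$ has fewer than $m_0+1$ rows, in which case $\lambda_{m_0+1} = 0$ and the hook condition holds automatically, while the construction simply omits the $\zeta$-block.
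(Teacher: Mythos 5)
Your proof is correct, and it is the standard Berele--Regev style argument for the $(m_0,m_1+n)$-hook shape condition. The paper states the lemma without proof (citing Berele--Regev and earlier work), but both directions of your argument---the pigeonhole observation that forces row $m_0+1$ to consist entirely of distinct negative symbols, and the explicit $\alpha_i$/$\zeta_j$ filling for the converse---are exactly the arguments used in those references, and I see no gap.
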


Given a positive integer $h \in \mathbb{Z}^+$,
let ${\mathbb C}_h[M_{m_0|m_1+n,d}]$ denote the $h-$th homogeneous component of
${\mathbb C}[M_{m_0|m_1+n,d}].$

From Theorem \ref{theorem: standard expansion of bitableaux}, it follows

\begin{corollary}\label{theorem: standard basis}
(The  Standard basis theorem for
${\mathbb C}_h[M_{m_0|m_1+n,d}]$, \cite{rota-BR})

The following set is a basis of ${\mathbb C}_h[M_{m_0|m_1+n,d}]$:
$$
\{ (S|T) \ standard;\ sh(S) = sh(T) = \lambda \vdash h,   \lambda \in H(m_0,m_1+n), \lambda_1 \leq d \ \}.
$$
\end{corollary}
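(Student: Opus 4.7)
The plan is to combine Theorem \ref{theorem: standard expansion of bitableaux} with the linear independence statement recorded just after it, and with the hook-condition Lemma, to read off the basis property for each homogeneous component.

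First I would establish spanning. Every element of $\mathbb{C}_h[M_{m_0|m_1+n,d}]$ is a linear combination of degree-$h$ monomials $(a_1|j_1)(a_2|j_2)\cdots(a_h|j_h)$ in the generators. Up to the sign produced by the super-commutation relations, such a monomial is exactly the Young bitableau $(S|T)$ of column shape $(1^h)$, with $S$ recording the symbols $a_1,\ldots,a_h$ and $T$ recording the indices $j_1,\ldots,j_h$. Applying the straightening algorithm (Theorem \ref{theorem: standard expansion of bitableaux}) to this column bitableau, one obtains an expansion
$$
(S|T) \ = \ \sum_{(S',T')} c_{S',T'}\,(S'|T')
$$
as a rational linear combination of standard bitableaux of shapes $\lambda \vdash h$, preserving the content $c_S$ on the left and $c_T$ on the right. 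Hence the standard bitableaux (of total row-count $h$) span $\mathbb{C}_h[M_{m_0|m_1+n,d}]$.

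Next I would pin down the shape constraints that actually appear. On the left factor, only shapes admitting a standard tableau on $A_0 \cup A_1 \cup L$ can contribute, and the Lemma characterises these as the partitions satisfying the $(m_0,m_1+n)$-hook condition, that is $\lambda \in H(m_0,m_1+n)$. On the right factor, $T$ is a tableau on $P = \{1,\dots,d\}$; since $P = (P_d)_1$ is purely odd, every row biproduct $(\omega|\varpi)$ is skew-symmetric in $\varpi$ and therefore vanishes as soon as a row of $T$ has a repeated entry, so standardness of $T$ forces rows of $T$ to be strictly increasing words in $\{1,\dots,d\}$, giving $\lambda_1 \leq d$. Combined with the spanning step, this shows the indicated set spans.

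For the basis property, it remains to invoke linear independence of standard bitableaux, which is stated immediately after Theorem \ref{theorem: standard expansion of bitableaux} as the ingredient making the straightening expansion unique. The main obstacle is precisely this linear-independence assertion, which is cited rather than proved in the excerpt. The canonical way to attack it is through the partial order defined just before Theorem \ref{theorem: standard expansion of bitableaux}: assuming a hypothetical non-trivial dependence $\sum c_{S,T}(S|T)=0$, select an $(S,T)$ maximal in that order with $c_{S,T}\neq 0$ and apply a well-chosen sequence of superpolarizations $D_{z,z'}$ (or equivalently specialise the $(a|j)$-variables along a generic point of the appropriate weight) so as to annihilate all strictly greater standard bitableaux and isolate $c_{S,T}$. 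The delicate point specific to the super-setting is the sign bookkeeping across the column-skew/row-symmetric blocks of the bitableau; modulo those signs the argument runs parallel to the classical commutative and purely skew-symmetric straightening theorems of \cite{drs-BR} and \cite{DR-BR}, once the superstraightening produced by Theorem \ref{theorem: standard expansion of bitableaux} is in hand.
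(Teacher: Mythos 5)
Your proposal is correct and follows essentially the same route the paper intends: the paper treats this as an immediate consequence of the superstraightening theorem (Theorem \ref{theorem: standard expansion of bitableaux}), the linear-independence statement recorded directly after it, and the hook-condition Lemma; you have simply spelled out the spanning step via column-shaped bitableaux and the two shape constraints ($\lambda\in H(m_0,m_1+n)$ from the left alphabet, $\lambda_1\le d$ from the purely odd right alphabet). Your concluding sketch of how one would prove the linear independence is reasonable background, but the paper does not attempt it either, attributing the whole result to \cite{rota-BR}.
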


\subsection{The Schur (covariant) ${\mathbf{U}}(gl(m_0|m_1+n))-$supermodules}\label{Schur}

\subsubsection{The Schur  supermodules as
submodules of $\mathbb{C}[M_{m_0|m_1+n,d}]$}

In the following, we regard the superalgebra ${\mathbb C}[M_{m_0|m_1+n,d}]$ as  $\mathbf{U}(gl(m_0|m_1+n))-$module.
We recall the definition of a ``Schur supermodule'' and the main facts about them. This material goes back to our work of $1988-1989$
(\cite{Brini1-BR}, \cite{Brini2-BR}); our construction produces irreducible $gl(m_0|m_1+n))-$supermodules that are isomorphic
to the modules constructed by Berele and Regev \cite{Berele1-BR}, \cite{Berele2-BR} as tensor modules induced by Young symmetrizers
(see, e.g. \cite{Weyl-BR}) when they act by a ``signed action" of the symmetric group (see also \cite{Dondi-BR},  \cite{King-BR}).
The description presented here
is simpler than the tensor description, provides a close connection with the superstraightening theory of Grosshans, Rota and Stein
\cite{rota-BR}, and allows the action of $\mathbf{U}(gl(m_0|m_1+n))$ to be described in a transparent way
(Proposition \ref{action on tableaux}).

Given $\lambda \in H(m_0,m_1+n)$, the {\it{Schur supermodule}} $Schur_\lambda(m_0,m_1+n)$ is
the subspace of ${\mathbb C}[M_{m_0|m_1+n,d}]$, $d \geq \lambda_1$, spanned by the set of all bitableaux
$(S|D^P_\lambda)$ of shape $\lambda,$ where $D^P_\lambda$ is the {\it{Deruyts}} tableau on $P = \{1, 2, \ldots, d \}$
$$
D^P_\lambda =  \left(
\begin{array}{llllllllllllll}
1  \ldots    \ldots     \ldots    & \lambda_1     \\
1   \ldots  \ldots               \lambda_2 \\
 \ldots  \ldots  & \\
1 \ldots \lambda_p
\end{array}
\right), \quad p = \textit{l}(\lambda).
$$
and $S$ is a Young tableau on the alphabet $A_0 \cup  A_1 \cup L.$

From Theorem \ref{theorem: standard expansion of bitableaux} and Corollary \ref{theorem: standard basis}, it follows

\begin{proposition}

The set
$$
\Big\{ (S|D^P_\lambda); S \ superstandard   \       \Big\}
$$
is a ${\mathbb C}-$linear basis of $Schur_\lambda(m_0,m_1+n)$.
\end{proposition}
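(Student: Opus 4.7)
The plan is to derive both claims from Theorem \ref{theorem: standard expansion of bitableaux} (the straightening algorithm) together with Corollary \ref{theorem: standard basis} (the standard basis theorem), reducing the nontrivial part to a combinatorial uniqueness statement for standard tableaux on the auxiliary alphabet $P$. Linear independence will be essentially free: since $P = \{1, 2, \ldots, d\}$ is trivially odd-graded, the tableau $D^P_\lambda$, having strictly increasing rows $1, 2, \ldots, \lambda_i$ and constant (hence nondecreasing) columns, is itself (super)standard on $P$. Therefore $\{(S|D^P_\lambda);\ S \text{ superstandard},\ sh(S)=\lambda\}$ is a subset of the standard basis furnished by Corollary \ref{theorem: standard basis} and hence linearly independent in $\mathbb{C}[M_{m_0|m_1+n,d}]$, and in particular in $Schur_\lambda(m_0,m_1+n)$.

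For spanning, I would apply the straightening algorithm to an arbitrary generator $(S|D^P_\lambda)$:
$$
(S|D^P_\lambda) \ = \ \sum_{(S'|T')\text{ standard}} c_{S',T'}\,(S'|T'),
$$
where each summand has its own shape $\mu \geq_l \lambda$ (lex) and the content on the right is preserved, $c_{T'}=c_{D^P_\lambda}$. It suffices to show that every surviving term satisfies $T' = D^P_\lambda$: this forces $\mu=\lambda$ and $S'$ to be superstandard of shape $\lambda$, placing $(S|D^P_\lambda)$ inside the desired span.

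The key combinatorial input is therefore that $D^P_\lambda$ is the \emph{unique} standard tableau on $P$ with content $c_{D^P_\lambda}$, and this is the main obstacle. Note $c_{D^P_\lambda}(k) = \widetilde{\lambda}_k$ for $k \in P$. Because $P$ is entirely odd, standardness on $P$ translates to strictly increasing rows and weakly increasing columns; such tableaux of shape $\mu$ with content $\widetilde{\lambda}$ correspond, by shape transposition, to ordinary semistandard Young tableaux of shape $\widetilde{\mu}$ and content $\widetilde{\lambda}$, counted by the Kostka number $K_{\widetilde{\mu},\widetilde{\lambda}}$. This vanishes unless $\widetilde{\mu} \trianglerighteq \widetilde{\lambda}$ in dominance, equivalently $\mu \trianglelefteq \lambda$; combined with $\mu \geq_l \lambda$ and $|\mu|=|\lambda|$, this forces $\mu=\lambda$ (otherwise, at the first index $i$ where $\mu_i > \lambda_i$, the partial sum $\sum_{j\leq i}\mu_j$ would strictly exceed $\sum_{j\leq i}\lambda_j$, contradicting dominance). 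Finally $K_{\widetilde{\lambda},\widetilde{\lambda}} = 1$, realised by the unique SSYT with row $i$ filled by $i$'s, so after transposition $T' = D^P_\lambda$, as required. Once this uniqueness is established, spanning and the proposition follow at once.
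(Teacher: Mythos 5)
Your proof is correct and fills in, with care, the argument the paper merely signals by citing Theorem \ref{theorem: standard expansion of bitableaux} and Corollary \ref{theorem: standard basis}. Linear independence is immediate, as you observe, since $D^P_\lambda$ is itself standard on the odd alphabet $P$ (strictly increasing rows, constant columns), so the proposed set sits inside the global standard basis. For spanning, your reduction to the uniqueness of $D^P_\lambda$ among standard $P$-tableaux with content $\widetilde{\lambda}$ and shape $\geq_l \lambda$ is exactly the needed combinatorial core, and your handling of it is clean: the transpose bijection from standard tableaux on a purely odd alphabet to ordinary SSYT turns the count into the Kostka number $K_{\widetilde{\mu},\widetilde{\lambda}}$, dominance then forces $\mu \trianglelefteq \lambda$, the lexicographic inequality $\mu \geq_l \lambda$ from the straightening order (together with $|\mu|=|\lambda|$) pins $\mu=\lambda$, and $K_{\widetilde{\lambda},\widetilde{\lambda}}=1$ finishes. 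This is the standard route, executed correctly; the paper leaves the details to the reader, and you have supplied them.
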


Furthermore, we recall
\begin{proposition}(\cite{Brini1-BR}, \cite{Bri-BR})
The submodule $Schur_\lambda(m_0,m_1+n)$ is an irreducible $\mathbf{U}(gl(m_0|m_1+n))-$submodule of
${\mathbb C}[M_{m_0|m_1+n,d}]$, with highest weight
$$
(\lambda_1, \ldots, \lambda_{m_0}; \widetilde{\lambda}_1-m_0, \widetilde{\lambda}_2-m_0, \ldots ).
$$
\end{proposition}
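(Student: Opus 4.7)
The plan is to exhibit an explicit highest weight vector $v_\lambda \in Schur_\lambda(m_0,m_1+n)$ of the claimed weight, and then to deduce irreducibility from cyclicity in the ambient semisimple supermodule $\mathbb{C}[M_{m_0|m_1+n,d}]$. Let $M_\lambda$ be the Young tableau on $A_0 \cup A_1 \cup L$ whose $i$-th row is the constant word $\alpha_i^{\lambda_i}$ for $1 \leq i \leq m_0$, and whose $i$-th row is the strictly increasing word $\beta_1 \beta_2 \cdots \beta_{\lambda_i}$ for $m_0 < i \leq \ell(\lambda)$, continuing into $x_1, x_2, \ldots$ when $\lambda_i > m_1$; the hook condition $\lambda_{m_0+1} \leq m_1+n$ ensures enough odd symbols are available. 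Set $v_\lambda := (M_\lambda \,|\, D^P_\lambda)$, a nonzero superstandard bitableau in $Schur_\lambda(m_0,m_1+n)$.

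A direct count on $M_\lambda$ shows that $\alpha_i$ occurs exactly $\lambda_i$ times, while each odd symbol $\beta_t$ (and analogously each $x_j$) occurs in precisely those rows $i > m_0$ with $\lambda_i \geq t$, giving multiplicity $\widetilde{\lambda}_t - m_0$; this matches the claimed weight. The crux is showing $\varrho(e_{z,z'})(v_\lambda) = 0$ for every consecutive raising pair $z < z'$ in the ordering $\alpha_1 < \cdots < \alpha_{m_0} < \beta_1 < \cdots < \beta_{m_1} < x_1 < \cdots < x_n$. By Proposition \ref{action on tableaux}, this action produces a signed sum, over rows $s$ of $M_\lambda$, of bitableaux $(M_\lambda^{(s)} \,|\, D^P_\lambda)$, where $M_\lambda^{(s)}$ replaces one occurrence of $z'$ in row $s$ by $z$. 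A case analysis then shows each term vanishes: when both $z,z' \in A_0$, the replacement places $\alpha_i$ into two consecutive rows of the same column and the Deruyts structure of $D^P_\lambda$, together with the straightening algorithm of Theorem \ref{theorem: standard expansion of bitableaux}, forces the result to straighten to zero; when $z,z'$ cross the even/odd boundary or both lie in the odd alphabet $A_1 \cup L$, the affected row of $M_\lambda^{(s)}$ acquires a repeated odd symbol and the corresponding row biproduct vanishes by the skew-symmetry of the biproduct in its column indices.

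Next I would establish cyclicity, $\mathbf{U}(gl(m_0|m_1+n)) \cdot v_\lambda = Schur_\lambda(m_0,m_1+n)$. Every superstandard bitableau $(S \,|\, D^P_\lambda)$ can be reached from $v_\lambda$ by iterated application of lowering operators $\varrho(e_{z'',z'})$ with $z'' > z'$: by Proposition \ref{action on tableaux}, each such application yields a triangular expansion (with respect to a natural partial order on superstandard tableaux) in the basis of $Schur_\lambda(m_0,m_1+n)$ furnished by Corollary \ref{theorem: standard basis}, and an induction exhausts the basis. Since $\mathbb{C}[M_{m_0|m_1+n,d}]$ is semisimple as a $\mathbf{U}(gl(m_0|m_1+n))$-supermodule, so is $Schur_\lambda(m_0,m_1+n)$; being cyclic on the highest weight vector $v_\lambda$ whose weight space is one-dimensional (since $M_\lambda$ is the unique superstandard tableau of shape $\lambda$ with that content, as one checks row by row), it must be irreducible with the stated highest weight.

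The main obstacle is the third step: the annihilation by raising operators. The overall strategy (case-analysis plus one step of straightening) is standard, but its implementation demands careful sign bookkeeping from Proposition \ref{action on tableaux} and a separate treatment of the three types of crossings (within $A_0$, across the even/odd boundary, within $A_1 \cup L$); in each case one must either exhibit an immediate zero biproduct from the skew-symmetry of the column indices of $D^P_\lambda$ or identify the outcome of a single straightening move as zero. Once these vanishings are in hand, the cyclicity and irreducibility arguments are essentially formal.
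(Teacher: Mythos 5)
Your overall plan (exhibit a candidate highest-weight bitableau $v_\lambda = (M_\lambda\,|\,D^P_\lambda)$, verify the weight by counting content, show annihilation by the simple raising operators, and deduce irreducibility from cyclicity plus semisimplicity) is a reasonable route; the paper itself only cites \cite{Brini1-BR} and \cite{Bri-BR} for this statement, so there is no in-paper proof to compare against.

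There is, however, a concrete error in your vanishing case analysis. The unique consecutive raising pair crossing the even/odd boundary is $(\alpha_{m_0},\beta_1)$, and $D_{\alpha_{m_0},\beta_1}$ replaces a $\beta_1$ in some row $s>m_0$ by $\alpha_{m_0}$, yielding the row $\alpha_{m_0}\,\beta_2\cdots\beta_{\lambda_s}$. This row has no repeated odd symbol and its biproduct $(\alpha_{m_0}\beta_2\cdots\beta_{\lambda_s}\,|\,1\,2\cdots\lambda_s)$ is in general nonzero, so the reason you give (``acquires a repeated odd symbol and the corresponding row biproduct vanishes'') fails here and the argument as written gets stuck. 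The vanishing does hold, but by the same mechanism you use for the $A_0$--$A_0$ crossings: after the replacement the content has $\lambda_{m_0}+1$ occurrences of $\alpha_{m_0}$, which no superstandard tableau of shape $\lambda$ can carry (each column carries at most one $\alpha_{m_0}$ and rows $1,\dots,m_0$ are forced by the $\alpha_1,\dots,\alpha_{m_0-1}$ multiplicities), so the bitableau straightens to zero by Theorem \ref{theorem: standard expansion of bitableaux} and content preservation. Even more directly, row $m_0$ contributes a scalar multiple of $(\alpha_{m_0}|1)(\alpha_{m_0}|2)\cdots(\alpha_{m_0}|\lambda_{m_0})$, every term of the modified row $s$'s biproduct contains some $(\alpha_{m_0}|j)$ with $j\le\lambda_s\le\lambda_{m_0}$, and these variables anticommute, so the product vanishes; this also disposes of the $A_0$--$A_0$ case without straightening. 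You should therefore merge the even/odd crossing with the $A_0$--$A_0$ case and reserve the ``repeated odd symbol kills the row'' mechanism for the genuinely odd pairs $(\beta_t,\beta_{t+1})$, $(\beta_{m_1},x_1)$, $(x_i,x_{i+1})$, where $d_{z,z'}(\omega_s)=0$ already in $Super[W]$.

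Two further points. The cyclicity step is asserted, not proved: ``triangular expansion with respect to a natural partial order'' needs you to specify the order on superstandard tableaux and verify that each lowering produces a nonzero leading coefficient, and this is not a formality (the cited works resolve it through Young--Capelli symmetrizers). And your multiplicity count for $\beta_t$ actually yields $\max(\widetilde{\lambda}_t-m_0,\,0)$, not $\widetilde{\lambda}_t-m_0$, when $\widetilde{\lambda}_t<m_0$; this matches the content of $M_\lambda$ but not literally the weight as printed in the statement, so a remark on conventions is warranted. The irreducibility logic (a cyclic highest-weight submodule of a semisimple module is irreducible) is sound once cyclicity is secured.
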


\subsubsection{The classical Schur $gl(n)-$modules}\label{Schur modules}

Given $\lambda$ such that $\lambda_1 \leq n$, the {\it{Schur module}} $Schur_\lambda(n)$ is
the subspace of ${\mathbb C}[M_{n,d}]$, $d \geq \lambda_1$, spanned by the set of all bitableaux
$(X|D^P_\lambda)$ of shape $\lambda$  and $X$ is a Young tableau on the alphabet $ L.$

\begin{proposition}

The set
$$
\Big\{ (X|D^P_\lambda); X \ standard   \       \Big\}
$$
is a ${\mathbb C}-$linear basis of $Schur_\lambda(n)$.

Furthermore, $Schur_\lambda(n)$ is an irreducible $\mathbf{U}(gl(n))-$submodule of
${\mathbb C}[M_{n,d}]$, with highest weight $\widetilde{\lambda}.$
\end{proposition}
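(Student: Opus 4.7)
The plan is to derive both claims from the superalgebraic machinery assembled above.

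First I would address the basis statement. By construction $Schur_\lambda(n)$ is spanned by the bitableaux $(X|D^P_\lambda)$ with $X$ a Young tableau on $L$. Applying the superstraightening algorithm (Theorem \ref{theorem: standard expansion of bitableaux}) expresses $(X|D^P_\lambda)$ as a linear combination of standard bitableaux $(S|T)$ whose contents agree with those of $(X|D^P_\lambda)$. The content constraint on $T$, combined with the standardness of $D^P_\lambda$ on the odd alphabet $P$, forces $T = D^P_\lambda$; the content constraint on $S$ forces $S$ to be a Young tableau on $L$. Since every element of $L$ is odd, the superstandard condition on $S$ specializes exactly to "rows strictly increasing, columns nondecreasing", which is the meaning of \emph{standard} in the proposition. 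Linear independence is inherited from Corollary \ref{theorem: standard basis}.

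Next, the invariance of $Schur_\lambda(n)$ under the $\mathbf{U}(gl(n))$-action follows directly from Proposition \ref{action on tableaux}: a polarization $D_{x_i,x_j}$ applied to $(X|D^P_\lambda)$ produces a sum of bitableaux $(X'|D^P_\lambda)$ where $X'$ is obtained by substituting a single occurrence of $x_j$ in $X$ by $x_i$, so $X'$ still lies on $L$. To identify the highest weight, I would exhibit the Deruyts tableau $D^L_\lambda$ on $L$ (whose row $i$ equals $x_1 x_2 \cdots x_{\lambda_i}$) and verify that $(D^L_\lambda\,|\,D^P_\lambda)$ is a highest weight vector of weight $\widetilde{\lambda}$. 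The weight is immediate: $x_k$ sits at position $(i,k)$ of $D^L_\lambda$ iff $k \leq \lambda_i$ iff $i \leq \widetilde{\lambda}_k$, so $x_k$ occurs $\widetilde{\lambda}_k$ times. Annihilation by a raising operator $D_{x_a, x_b}$ (with $a<b$) holds because substituting $x_a$ for any occurrence of $x_b$ in row $r$ creates a row containing two copies of $x_a$, and since $|x_a| = 1$ the corresponding row-biproduct --- skew-symmetric on the odd alphabet $L$ --- vanishes.

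The main obstacle is concluding irreducibility. I would close by a dimension count combined with semisimplicity. Via shape transposition, the standard tableaux on $L$ of shape $\lambda$ are in bijection with the semistandard Young tableaux of shape $\widetilde{\lambda}$ with entries in $\{1,\ldots,n\}$, which count $\dim V_{\widetilde{\lambda}}$ by Weyl's character formula, where $V_{\widetilde{\lambda}}$ is the irreducible $\mathbf{U}(gl(n))$-module of highest weight $\widetilde{\lambda}$. Since $Schur_\lambda(n)$ is a finite-dimensional $\mathbf{U}(gl(n))$-module of exactly this dimension that contains a highest weight vector of weight $\widetilde{\lambda}$, complete reducibility of finite-dimensional $gl(n)$-representations forces $Schur_\lambda(n) \cong V_{\widetilde{\lambda}}$, which is irreducible.
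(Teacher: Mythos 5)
Your argument is correct and self-contained, and it fills in details the paper leaves implicit (the paper states this proposition without a proof, as a specialization of the preceding superalgebraic results for $Schur_\lambda(m_0,m_1+n)$ and as a piece of classical representation theory). A few comparative remarks on method.

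Your identification of $(D_\lambda\,|\,D^P_\lambda)$ as a highest weight vector of weight $\widetilde{\lambda}$ is exactly the paper's own statement (the element $v_{\widetilde{\lambda}}$ introduced just after the proposition), and your verification via Proposition \ref{action on tableaux} and skew-symmetry on the odd alphabet $L$ is the intended argument. The basis claim via the superstraightening theorem is also the paper's route. Your step "the content constraint on $T$ forces $T=D^P_\lambda$" is right but compressed: what makes it work is the conjunction of two things — the content $c_T=c_{D^P_\lambda}$ \emph{and} the ordering condition $sh(T)\geq_l\lambda$ supplied by Theorem \ref{theorem: standard expansion of bitableaux}. From the standardness of $T$ on the negative alphabet $P$ one gets $T(i,k)\geq k$ in every cell, so the entry $j$ lives only in columns $\leq j$; combined with the column multiplicities $\widetilde{\lambda}_j$ this forces column $k$ to be constantly $k$ and pins down $sh(T)=\lambda$, $T=D^P_\lambda$. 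It is worth naming that lemma rather than leaving it to "content".

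Where your route genuinely differs is the irreducibility step. The paper treats $Schur_\lambda(n)$ as sitting inside the irreducible $\mathbf{U}(gl(m_0|m_1+n))$-supermodule $Schur_\lambda(m_0,m_1+n)$ and in effect inherits the result from the cited superalgebraic sources. You instead give a direct classical argument: complete reducibility for finite-dimensional $gl(n)$-modules, existence of a highest weight vector of weight $\widetilde{\lambda}$, and a dimension count via the bijection (by transposition) between standard tableaux on $L$ of shape $\lambda$ and semistandard tableaux of shape $\widetilde{\lambda}$ on $\{1,\dots,n\}$, whose number is $\dim V_{\widetilde{\lambda}}$. Both arguments are valid; your version has the advantage of being elementary and independent of the superalgebra machinery, at the cost of importing Weyl's dimension/character formula and complete reducibility as external inputs, whereas the paper's implicit route keeps everything inside the straightening framework it has already built.
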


Let
$$
D_{\lambda} = \left(
\begin{array}{llllllllllllll}
x_1  \ldots    \ldots     \ldots    & x_{\lambda_1}     \\
x_1   \ldots  \ldots               x_{\lambda_2} \\
 \ldots  \ldots  & \\
x_1 \ldots x_{\lambda_p}
\end{array}
\right)
$$
denote the (proper) Deruyts tableau on the alphabet $L = \{x_1, x_2, \ldots, x_n \}.$
The element
$$
v_{\widetilde{\lambda}} = (D_\lambda|D^P_\lambda)
$$
is the ``canonical''  highest weight vector of the irreducible $gl(n)-$module $Schur_\lambda(n)$,
with highest weight $\widetilde{\lambda}$.

\subsubsection{The classical Schur modules as $gl(n)-$submodules of Schur supermodules}\label{Lemmas}

Let $\lambda$ be a partition such that $\lambda_1 \leq n$.

Given $m_0 \geq \textit{l}(\lambda)$, $m_1, d \geq \lambda_1$, consider the
Schur supermodule
$$
Schur_\lambda(m_0,m_1+n)
$$
(clearly, $\lambda \in H(m_0,m_1+n)$.)

The Schur module $Schur_\lambda(n)$ can be regarded as a $\mathbf{U}(gl(n))-$submodule of the
$\mathbf{U}(gl(m_0|m_1+n))-$supermodule $Schur_\lambda(m_0,m_1+n).$

Let $\mathfrak{p}$ be the Capelli epimorphism (Theorem \ref{Capelli epimorphism})
$$
\mathfrak{p} : Virt(m_0+m_1, n) \twoheadrightarrow
{\mathbf{U}}(gl(n)), \qquad  Ker(\mathfrak{p}) = \mathbf{Irr}.
$$

\begin{proposition}\label{virtual action}

The Schur module $Schur_\lambda(n)$ is  invariant (as a subspace of $Schur_\lambda(m_0,m_1+n)$) with respect to the action
of the subalgebra
$$
Virt(m_0+m_1, n)   \subset \mathbf{U}(gl(m_0|m_1+n)).
$$

Furthermore, for every element $\rho \in Virt(m_0+m_1, n)$, the action of $\rho$ on the Schur module $Schur_\lambda(n)$
is the same of the action of its image $\mathfrak{p}(\rho)) \in {\mathbf{U}}(gl(n)).$

\end{proposition}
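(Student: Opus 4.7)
The proposition is essentially a specialization of Theorem \ref{Capelli epimorphism} to the subspace $Schur_\lambda(n)$. My plan is to reduce both claims — $Virt(m_0+m_1,n)$-invariance of $Schur_\lambda(n)$ and the agreement of the $Virt$-action with that of $\mathfrak{p}(\rho)$ — to three ingredients already at hand: the inclusion $Schur_\lambda(n) \subseteq Schur_\lambda(m_0,m_1+n)$ of subspaces of $\mathbb{C}[M_{m_0|m_1+n,d}]$; Theorem \ref{Capelli epimorphism}, which controls the action of $Virt(m_0+m_1,n)$ on the subalgebra $\mathbb{C}[M_{n,d}]$; and the fact, recalled in Section \ref{Schur modules}, that $Schur_\lambda(n)$ is a $\mathbf{U}(gl(n))$-submodule of $\mathbb{C}[M_{n,d}]$.

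First I would verify the inclusion $Schur_\lambda(n) \subseteq Schur_\lambda(m_0,m_1+n)$. By definition, $Schur_\lambda(n)$ is spanned by bitableaux $(X \,|\, D^P_\lambda)$ with $X$ on $L$; since $L \subseteq A_0 \cup A_1 \cup L$, every such $X$ is also a Young tableau on the larger alphabet, so $(X \,|\, D^P_\lambda)$ appears in the spanning set of $Schur_\lambda(m_0,m_1+n)$. The value of this bitableau is the same element of $\mathbb{C}[M_{m_0|m_1+n,d}]$ from either viewpoint, since its expression only involves the commutative variables $(x_i|j)$.

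The crux is then a two-line argument. Pick $\rho \in Virt(m_0+m_1,n)$ and $v \in Schur_\lambda(n) \subseteq \mathbb{C}[M_{n,d}]$. By Theorem \ref{Capelli epimorphism}, $\rho \cdot v$ lies in $\mathbb{C}[M_{n,d}]$ and equals $\mathfrak{p}(\rho) \cdot v$. Since $\mathfrak{p}(\rho) \in \mathbf{U}(gl(n))$ and $Schur_\lambda(n)$ is (an irreducible) $\mathbf{U}(gl(n))$-submodule of $\mathbb{C}[M_{n,d}]$, one has $\mathfrak{p}(\rho) \cdot v \in Schur_\lambda(n)$. Combining,
\[
\rho \cdot v \;=\; \mathfrak{p}(\rho) \cdot v \;\in\; Schur_\lambda(n),
\]
which simultaneously yields $Virt(m_0+m_1,n)$-invariance of $Schur_\lambda(n)$ and the stated identification of the actions.

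I expect no genuine obstacle: once Theorem \ref{Capelli epimorphism} is in place and one recalls that $Schur_\lambda(n)$ was defined as a $\mathbf{U}(gl(n))$-submodule of $\mathbb{C}[M_{n,d}]$, the proposition is almost immediate. The only point worth flagging is the implicit compatibility between the $\mathbf{U}(gl(m_0|m_1+n))$-action on $\mathbb{C}[M_{m_0|m_1+n,d}]$ defined by the representation $\varrho$ and the $\mathbf{U}(gl(n))$-action on $\mathbb{C}[M_{n,d}]$ obtained by restriction along $\mathbf{U}(gl(n)) \hookrightarrow \mathbf{U}(gl(m_0|m_1+n))$; but this is built into the very definition of $\varrho$, so no ambiguity arises when speaking of ``the action of $\rho$'' on $v$.
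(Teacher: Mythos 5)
Your proof is correct, and it is essentially the argument the paper implicitly relies on: the paper states Proposition \ref{virtual action} without a written proof, immediately after recalling the Capelli epimorphism, precisely because the proposition is the specialization of Theorem \ref{Capelli epimorphism} to the $\mathbf{U}(gl(n))$-submodule $Schur_\lambda(n) \subseteq \mathbb{C}[M_{n,d}]$ that you spell out. Your two-line chain $\rho \cdot v = \mathfrak{p}(\rho) \cdot v \in Schur_\lambda(n)$ is exactly the intended reduction.
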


\begin{proposition}{\textbf{(Vanishing Lemmas)}}\label{Vanishing Lemma}
Let $v_{\widetilde{\mu}} = (D_\mu|D^P_\mu)$ be the ``canonical''  highest weight vector of the irreducible $gl(n)-$module $Schur_\mu(n)$,
and let $\trianglelefteq$ denote the (partial) dominance order on partitions.
We have
\begin{align}
&\textrm{If} \ |\mu| < |\lambda|, \  \textrm{then}  &   D_\lambda^*S(v_{\widetilde{\mu}}) &= 0,\quad \forall S\label{uno}
\\
&\textrm{If} \ |\mu| < |\lambda|, \  \textrm{then}  &   C_\lambda^*S(v_{\widetilde{\mu}}) &= 0,\quad \forall S\label{due}
\\
&\textrm{If} \  |\mu| = |\lambda|, \ \mu \ntrianglerighteq \lambda, \  \textrm{then}  &   C_\lambda^*S(v_{\widetilde{\mu}}) &= 0,\quad \forall S\label{duebis}
\\
&\textrm{If} \  |\mu| = |\lambda|, \ \widetilde{\mu} \ntrianglerighteq \lambda, \  \textrm{then}  &   D_{\widetilde{\lambda}}^*S(v_{\widetilde{\mu}}) &= 0,
\quad \forall S\label{unobis}
\\
&\textrm{If} \ |\mu| = |\lambda|, \ \mu \neq \lambda, \ \textrm{then}  &  D_\lambda^*C_\lambda^* \ C_\lambda^*S(v_{\widetilde{\mu}}) &= 0,
\quad \forall S\label{tre}
\\
&\textrm{If} \ |\mu| = |\lambda|, \ \mu \neq \lambda, \ \textrm{then}  &  C_\lambda^*D_\lambda^* \ D_\lambda^*S(v_{\widetilde{\mu}}) &= 0,
\quad \forall S\label{quattro}
\\
&\textrm{If} \ \lambda  \nsubseteq \mu,  \ \textrm{then}  &  D_\lambda^*C_\lambda^* \ C_\lambda^*S(v_{\widetilde{\mu}}) &= 0,
\quad \forall S\label{cinque}
\end{align}
\end{proposition}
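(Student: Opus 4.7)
The plan is to apply each operator to the highest weight vector $v_{\widetilde{\mu}}=(D_\mu\,|\,D^P_\mu)$ by iterating the superpolarization formula of Proposition \ref{action on tableaux}, then read the resulting expression in the superstandard basis of $Schur_\mu(m_0,m_1+n)$ afforded by Theorem \ref{theorem: standard expansion of bitableaux}. The key arithmetic input is that the virtual factor $e_{C_\lambda^*,S}$ is a product of $|\lambda|$ superpolarizations $D_{\alpha_i,x_?}$ whose aggregate creates $\alpha_i$ with multiplicity $c_{C_\lambda^*}(\alpha_i)=\lambda_i$, while $e_{D_\lambda^*,S}$ creates $\beta_j$ with multiplicity $\widetilde{\lambda}_j$; in particular $e_{D_{\widetilde{\lambda}}^*,S}$ creates $\beta_j$ with multiplicity $\lambda_j$. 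Since each of these operators belongs to $\mathbf{U}(gl(m_0|m_1+n))$, it preserves $Schur_\mu(m_0,m_1+n)$, and the output expands uniquely as a combination of superstandard bitableaux $(S'\,|\,D^P_\mu)$ of shape $\mu$.

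For (\ref{uno}) and (\ref{due}) the argument is purely a multi-degree pigeonhole: the $x_i$-degree of $v_{\widetilde{\mu}}$ is $\widetilde{\mu}_i$, and the operator contains $c_S(x_i)$ superpolarizations $D_{\bullet,x_i}$, each reducing that degree by one; since $\sum_i c_S(x_i)=|\lambda|>|\mu|=\sum_i\widetilde{\mu}_i$, some $x_i$-degree is driven below zero and the action vanishes. For (\ref{duebis}) and (\ref{unobis}) the hypothesis $|\mu|=|\lambda|$ forces every proper entry of $D_\mu$ to be virtualized, so the output expands over superstandard bitableaux $(S'\,|\,D^P_\mu)$ of shape $\mu$ whose entries are \emph{entirely} virtual, with content $\lambda$ in the $\alpha_i$'s or $\beta_j$'s. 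Because $\alpha_i\in A_0$ are even letters with odd coordinates $(\alpha_i\,|\,j)$, superstandardness forbids repeated $\alpha_i$ in each column, so $S'$ must be a column-strict (semistandard) filling of $\mu$ with content $\lambda$; such fillings exist iff $\mu\trianglerighteq\lambda$, yielding (\ref{duebis}). Dually, $\beta_j\in A_1$ are odd letters with even coordinates, so superstandardness forbids repeated $\beta_j$ in each row; the nonzero basis elements correspond to row-strict fillings of $\mu$ with content $\lambda$, equivalently semistandard fillings of $\widetilde{\mu}$ with content $\lambda$, existing iff $\widetilde{\mu}\trianglerighteq\lambda$, yielding (\ref{unobis}).

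For (\ref{tre}), (\ref{quattro}), (\ref{cinque}) one chains the previous analyses. Applying the inner balanced factor first (either $C_\lambda^*S$ or $D_\lambda^*S$) produces intermediate bitableaux whose virtual sub-region $\nu\subseteq\mu$ is a Young sub-diagram of size $|\lambda|$, filled column-strictly (resp.\ row-strictly) by $\alpha$'s of content $\lambda$ (resp.\ $\beta$'s of content $\widetilde{\lambda}$); non-vanishing forces the dominance $\nu\trianglerighteq\lambda$ (resp.\ $\widetilde{\nu}\trianglerighteq\widetilde{\lambda}$, i.e.\ $\lambda\trianglerighteq\nu$). The outer balanced factor converts the virtual letters at the \emph{same} positions and imposes the dual dominance, so the two conditions combine to give $\nu=\lambda$ and hence $\lambda\subseteq\mu$. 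When $|\mu|=|\lambda|$ this strengthens to $\nu=\mu=\lambda$, contradicting $\mu\neq\lambda$ in (\ref{tre}) and (\ref{quattro}); in the general case it contradicts $\lambda\nsubseteq\mu$, yielding (\ref{cinque}).

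The main obstacle is the geometric claim that in any superstandard bitableau $(S'\,|\,D^P_\mu)$ whose letters mix symbols from $A_0$, $A_1$ and $L$, the positions occupied by positive virtual letters form a Young sub-diagram of $\mu$ anchored at the top-left, and (after the outer-factor conversion in cases (\ref{tre})--(\ref{cinque})) the same cells are reoccupied by the negative virtuals. This is a routine but not quite self-evident consequence of the row- and column-order conditions with respect to $\alpha_1<\cdots<\alpha_{m_0}<\beta_1<\cdots<\beta_{m_1}<x_1<\cdots<x_n$; once established, it is precisely what converts the pair of dual dominance conditions into the shape-containment condition $\lambda\subseteq\mu$ appearing in (\ref{cinque}).
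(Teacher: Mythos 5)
Your proof is correct and substantially more self-contained than the paper's: the paper disposes of assertions (\ref{uno})--(\ref{quattro}) by citing an external reference and gives a brief argument only for (\ref{cinque}), whereas you supply explicit mechanisms throughout. Your content/pigeonhole argument for (\ref{uno})--(\ref{due}) and the observation that $|\mu|=|\lambda|$ forces complete virtualization, combined with the Kostka/Gale--Ryser characterization (column-strict filling of $\mu$ with content $\lambda$ exists iff $\mu\trianglerighteq\lambda$; row-strict iff $\widetilde\mu\trianglerighteq\lambda$) for (\ref{duebis})--(\ref{unobis}), are exactly the ``standard elementary facts'' the paper defers. For (\ref{cinque}) your route parallels the paper's: both first straighten $C_\lambda^*S(v_{\widetilde\mu})$ to terms $(S_i\,|\,D^P_\mu)$ with the $\alpha$'s at a subshape $\lambda'\trianglerighteq\lambda$, then kill each term with $\lambda'\neq\lambda$ via the outer factor.

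The one place to tighten is your phrase that the outer factor ``converts the virtual letters at the same positions and imposes the dual dominance.'' As written this reads as if you were straightening again and reading a dominance condition off the \emph{final} expansion; but re-straightening can relocate the virtual letters, so the subshape carrying the $\beta$'s in the final superstandard form need not be the $\nu$ you started from, and the two dominance inequalities would then constrain \emph{different} subshapes. The correct reading --- and what the paper's ``by skew-symmetry'' actually does --- is to apply $D_\lambda^*C_\lambda^*$ to the \emph{fixed, unstraightened} superstandard term $(S_i\,|\,D^P_\mu)$ and observe it vanishes outright: replacing the $\alpha_i$'s by distinct odd letters $\beta_1,\dots,\beta_{\lambda_i}$ in place forces two identical $\beta_j$'s into a single row of $\nu$ whenever a row-distinct distribution with column sums $\widetilde\lambda$ and row sums $\nu$ fails to exist, and by Gale--Ryser that happens precisely when $\lambda\ntrianglerighteq\nu$; a repeated odd letter in a biproduct is zero. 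Stating the vanishing at the level of each individual $(S_i\,|\,D^P_\mu)$, before any re-straightening, removes the ambiguity and makes your ``dual dominance'' step rigorous. The top-left-anchoring claim you flag at the end is indeed routine (it follows from the row/column monotonicity and the ordering $A_0<A_1<L$) and is not the real subtlety; the unstraightened-versus-straightened distinction above is.
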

\begin{proof}
The assertions of eqs. (\ref{uno}), (\ref{due}), (\ref{duebis}), (\ref{unobis}), (\ref{tre}), (\ref{quattro}) are special cases of standard
elementary facts of the method
of virtual variables (see e. g. \cite{Bri-BR}).
About assertion of eq. (\ref{cinque}), assume  that $|\mu| \geq |\lambda|$ to avoid trivial cases (by eq. (\ref{due})).
The action $C_\lambda^*S(v_{\widetilde{\mu}}) = C_\lambda^*S \big( (D_\mu|D^P_\mu) \big)$ produces a linear combination of bitableaux
$(T|D^P_\mu) \in Schur_\mu(m_0,m_1+n)$, where each tableau $T$ contains exactly $\lambda_i$ occurrences of the positive virtual
symbols $\alpha_i \in A_0$.
By  {\it{straightening}} each of them (Theorem \ref{theorem: standard expansion of bitableaux}),
the element $C_\lambda^*S \big( (D_\mu|D^P_\mu) \big)$ is uniquely expressed as a linear combination of
(super)standard tableaux
\begin{equation}\label{dominance}
C_\lambda^*S \big( (D_\mu|D^P_\mu) \big) = \sum_i \ (S_i|D^P_\mu) \in Schur_\mu(m_0,m_1+n),
\end{equation}
where in each $S_i$ the positive virtual symbols $\alpha_i \in A_0$ occupies a subshape $\lambda' \subseteq \mu$ such that
$\lambda'   \trianglerighteq \lambda$.
If $\lambda \nsubseteq \mu$, any element $(S_i|D^P_\mu)$ in the canonical form (\ref{dominance}) is such that $\lambda'   \trianglerighteq \lambda$,
$\lambda'   \neq \lambda$. Then $D_\lambda^*C_\lambda^* \big( (S_i|D^P_\mu) \big) = 0$, by skew-symmetry, and the assertion follows.
\end{proof}

We recall that, given a shape $\lambda$,
the {\it{hook length}}  $H(x)$ of a box $x$ in the Ferrers diagram $F_\lambda$ of the shape $\lambda$
is the number of boxes that are in the same row to the
right of it plus those boxes in the same column below it, plus one (for the box itself).
The {\it{hook number}} the shape $\lambda$ is the product $H(\lambda) = \prod_{x \in F_\lambda} \ H(x).$

\begin{proposition}{\textbf{(Regonati's Hook Lemma, \cite{Regonati-BR})}}\label{hook lemma}
Let $H(\lambda)$ denotes the hook number of the shape (partition) $\lambda \vdash k.$
We have
\begin{align}
C_{\lambda}^{*}D_\lambda  (v_{\widetilde{\lambda}}) &=& C_{\lambda}^{*}D_\lambda \big( (D_\lambda|D^P_\lambda) \big)
&= (-1)^{{k} \choose   {2}} \
H(\lambda) \cdot  (C_{\lambda}^{*}|D^P_\lambda) \ (\lambda!)^{-1}\label{proper}
\\
\phantom{\quad C_{\lambda}^{*}D_\lambda  (v_{\widetilde{\lambda}})} &\ &
C_{\lambda}^{*}D_\lambda^* \big( (D_\lambda^*|D^P_\lambda) \big)
&= (-1)^{{k} \choose   {2}} \
H(\lambda) \cdot  (C_{\lambda}^{*}|D^P_\lambda) \ (\lambda!)^{-1}\label{virtual}.
\end{align}
Furthermore
\begin{equation}
D_{\lambda}^{*}C_\lambda^{*} \big( (C_{\lambda}^{*}|D^P_\lambda) \ (\lambda!)^{-1} \big) =
\quad  (D_{\lambda}^{*}|D^P_\lambda)\label{trivial}
\end{equation}
\end{proposition}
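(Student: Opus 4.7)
The plan is to prove each of the three identities by direct expansion of the operator action via Proposition~\ref{action on tableaux}, exploiting the column skew-symmetry of biproducts that follows from $(\alpha|j)$ being $\mathbb{Z}_2$-odd (since $|\alpha|=0$ but $|j|=1$) while $(\beta|j)$ and $(x|j)$ are $\mathbb{Z}_2$-even.

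For identity~(\ref{proper}), first recall that $e_{C_\lambda^*, D_\lambda}$ is a product of polarizations $e_{\alpha_i,x_j}$ indexed by the cells of $F_\lambda$. When applied to $v_{\widetilde{\lambda}}=(D_\lambda|D^P_\lambda)$ and expanded via Proposition~\ref{action on tableaux}, it produces a sum of bitableaux $(T|D^P_\lambda)$ whose left factor $T$ has content ``$\lambda_i$ copies of $\alpha_i$''. Since $(\alpha|j)$ is odd, any $T$ repeating an $\alpha_i$ within a column straightens to zero, so by content and shape the only surviving $T$ is $C_\lambda^*$, and the result is a scalar multiple of $(C_\lambda^*|D^P_\lambda)$. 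To compute the scalar, I would track the polarization histories that actually contribute: the $\lambda_i!$ orderings of the $e_{\alpha_i,x_j}$ within row~$i$ are equivalent (since $\alpha_i$ has even parity in $W$), producing the $(\lambda!)^{-1}$ factor; the remaining histories are in bijection with standard Young tableaux of shape $\lambda$ via the Frame--Robinson--Thrall hook-length formula; and combining these counts with the super-signs yields the coefficient $(-1)^{\binom{k}{2}}H(\lambda)/\lambda!$. The sign $(-1)^{\binom{k}{2}}$ already appears in the one-row computation of Example~\ref{Capelli determinants-BR} and extends to general shapes by multiplicativity over rows.

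Identity~(\ref{virtual}) is then proved verbatim after substituting the proper symbols $x_j$ by the odd-virtual $\beta_j$. Since $(\beta|j)$ and $(x|j)$ have identical $\mathbb{Z}_2$-parity $(=0)$, the biproducts and polarization actions are algebraically indistinguishable in this computation, so the same combinatorial argument yields the same coefficient. Identity~(\ref{trivial}) is the most direct: the operator $e_{D_\lambda^*,C_\lambda^*}=\prod e_{\beta_j,\alpha_i}$ applied to $(C_\lambda^*|D^P_\lambda)$ has only one nonvanishing contribution for each $e_{\beta_j,\alpha_i}$, coming from row~$i$ (the unique row of $C_\lambda^*$ containing $\alpha_i$). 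The $\lambda_i!$ ways of matching the $\beta_1,\ldots,\beta_{\lambda_i}$ to the $\alpha_i$-positions of row~$i$ all straighten to the same row $(\beta_1,\ldots,\beta_{\lambda_i})$ of $D_\lambda^*$, producing a combined factor $\lambda!$ that exactly cancels the $(\lambda!)^{-1}$ normalizer in the input.

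The main obstacle is producing, in identity~(\ref{proper}), a clean bijection between the surviving polarization histories and standard Young tableaux while correctly propagating the super-signs through the iterated polarization action. Regonati's original argument \cite{Regonati-BR} handles this by an inductive peeling of a corner cell from $\lambda$, using the row-action of Proposition~\ref{action on tableaux} to reduce the shape-$\lambda$ case to the shape-$(\lambda\setminus\mathrm{corner})$ case; the plan is to follow that same induction here and verify that the corner-removal step multiplies the coefficient by the hook length of that corner divided by $\lambda_i$, which exactly propagates the claimed formula.
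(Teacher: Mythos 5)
The paper does not actually prove Regonati's Hook Lemma; it states it as a proposition with a citation to Regonati's paper \cite{Regonati-BR}, so there is no internal proof for you to compare against. Your proposal is therefore judged on its own. Its overall strategy is right: use the straightening algorithm and content/shape constraints to show the output is a scalar multiple of $(C_\lambda^*|D^P_\lambda)$, then compute the scalar. That first reduction is sound, since any nonstandard tableau with content ``$\lambda_i$ copies of $\alpha_i$'' of shape $\lambda$ straightens to a multiple of the unique standard one, namely $C_\lambda^*$.

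The scalar computation, however, is not actually carried out, and the two justifications you offer in its place don't hold up. First, the claim that the surviving polarization histories ``are in bijection with standard Young tableaux of shape $\lambda$ via the Frame--Robinson--Thrall hook-length formula'' cannot by itself produce the factor $H(\lambda)$: the number of SYT of shape $\lambda$ with $|\lambda|=k$ is $k!/H(\lambda)$, so a naive SYT count would yield a reciprocal of the hook product, not the hook product. You would need to say precisely what is being counted and why it is $H(\lambda)$ rather than $k!/H(\lambda)$ (Regonati's inductive corner-peeling does exactly this, via the identity $H(\lambda)=\prod_{\text{corner peels}} h(\text{corner})$, but you haven't spelled it out). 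Second, the claim that the sign $(-1)^{\binom{k}{2}}$ ``extends to general shapes by multiplicativity over rows'' is false as stated: multiplicativity over rows would give $(-1)^{\sum_i\binom{\lambda_i}{2}}$, and $\sum_i\binom{\lambda_i}{2}\not\equiv\binom{k}{2}\pmod 2$ in general (take $\lambda=(3,1)$, where $\sum_i\binom{\lambda_i}{2}=3$ is odd but $\binom{4}{2}=6$ is even). The missing parity $\sum_{i<j}\lambda_i\lambda_j$ must come from the cross-row signs hidden in the prefactor of the bitableau $(S|T)$ and in the $\epsilon_s$'s of Proposition \ref{action on tableaux}, none of which you track. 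So the core of the lemma --- the coefficient $(-1)^{\binom{k}{2}}H(\lambda)(\lambda!)^{-1}$ --- remains unproved; you explicitly acknowledge this and defer to Regonati's induction, which is honest but leaves the argument incomplete.
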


\section{The center $\boldsymbol{\zeta}(n)$ of $\mathbf{U}(gl(n))$}\label{The center}

 In order to make  the notation lighter, in this section we simply write $1, 2, \ldots, n$ in place of
$x_1, x_2, \ldots, x_n.$

\begin{remark}\label{centrality-BR}
Throughout this section  the role of Proposition \ref{rappresentazione aggiunta-BR}  is ubiquitous: in order to prove
that an element $\mathbf{\rho}$ is central
in $\mathbf{U}(gl(n))$, we can simply prove that a virtual presentation $\mathbf{\rho}'$ in $Virt(m_0+m_1,n)$ of $\mathbf{\rho}$
 (to wit, an element $\mathbf{\rho}'$ that is mapped to $\mathbf{\rho}$ by
the Capelli devirtualition epimorphism $\mathfrak{p}$, see Subsection 3.5.1)
is an {\it{invariant}} for the adjoint action of $\mathbf{U}(gl(n))$ on $Virt(m_0+m_1,n)$.
In symbols
$$
ad(e_{i j}) \left( \mathbf{\rho}' \right) = 0, \quad  \forall e_{i j} \in gl(n).
$$
\end{remark}

\subsection{The virtual form of the determinantal Capelli generators $\mathbf{H}_k(n)$ of $1893$}\label{Capelli 1893}

\subsubsection{The classical and the virtual definitions of the  Capelli generators $\mathbf{H}_k(n)$, and main results}
In the enveloping algebra $\mathbf{U}(gl(n))$, given any integer $k = 1, 2, \ldots, n,$ consider the element
(compare with Example \ref{Capelli determinants-BR})
\begin{align}\label{The classical Capelli generators of $1893$-BR}
\mathbf{H}_k(n) & =
 \sum_{1 \leq i_1 < \cdots < i_k \leq n} \ [ i_k \cdots i_2 i_1 | i_1 i_2 \cdots i_k ] = \\
 & =  \sum_{1 \leq i_1 < \cdots < i_k \leq n} \ \mathfrak{p} \big( e_{i_k , \alpha} \cdots e_{i_2, \alpha} e_{i_1, \alpha}
e_{\alpha, i_1}e_{\alpha, i_2} \cdots e_{\alpha, i_k } \big) = \label{30}
\\
& = \sum_{1 \leq i_1 < \cdots < i_k \leq n}  \mathbf{cdet}\left(
 \begin{array}{cccc}
 e_{{i_1},{i_1}}+(k-1) & e_{{i_1},{i_2}} & \ldots  & e_{{i_1},{i_k}} \\
 e_{{i_2},{i_1}} & e_{{i_2},{i_2}}+(k-2) & \ldots  & e_{{i_2},{i_k}}\\
 \vdots  &    \vdots                            & \vdots &  \\
e_{{i_k},{i_1}} & e_{{i_k},{i_2}} & \ldots & e_{{i_k},{i_k}}\\
 \end{array}
 \right) \label{31}
\end{align}
where $\alpha \in A_0$ denotes {\it{any}} positive virtual symbol.

\begin{remark} Clearly, as apparent  from the virtual presentation  (\ref{30}), each summand $[ i_k \cdots i_2 i_1 | i_1 i_2 \cdots i_k ]$ is
skew-symmetric both in the left and the right sequences.
As already observed in the Introduction, the ``nonvirtual form'' of the elements $\mathbf{H}_k(n)$ is  harder to manage than their
virtual form (see for example the proof of the centrality, Proposition \ref{central virtual determinantal} and
Theorem \ref{H centrality}).
\end{remark}

\begin{proposition}\label{central virtual determinantal}
Since the adjoint representation acts by derivation, we have
$$
ad(e_{i j})\big(  \sum_{1 \leq i_1 < \cdots < i_k \leq n} \  e_{i_k , \alpha} \cdots e_{i_2, \alpha} e_{i_1, \alpha}
e_{\alpha, i_1}e_{\alpha, i_2} \cdots e_{\alpha, i_k } \ \big) = 0 ,          \quad \forall  e_{i j} \in gl(n).
$$
\end{proposition}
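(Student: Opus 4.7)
The plan is to verify that the virtual presentation (\ref{30}) of $\mathbf{H}_k(n)$ is annihilated by the adjoint action of every $e_{ij} \in gl(n)$, so that centrality follows at once from Corollary \ref{invarianti virtuali}. Three ingredients drive the calculation. First, the formulas $ad(e_{ij})(e_{h,\alpha}) = \delta_{jh}\, e_{i,\alpha}$ and $ad(e_{ij})(e_{\alpha,k}) = -\delta_{ki}\, e_{\alpha,j}$ (already recorded in Example \ref{Virtual Capelli}), applied through the Leibniz rule. Second, the parity count $|e_{x_h,\alpha}|=|e_{\alpha,x_h}|=1$, which forces both the $e_{x_h,\alpha}$ and the $e_{\alpha,x_k}$ to anticommute inside their respective blocks, and in particular to square to zero. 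Third, the ``same content'' structure of each summand $M_I = e_{i_k,\alpha}\cdots e_{i_1,\alpha}\cdot e_{\alpha,i_1}\cdots e_{\alpha,i_k}$: the multiset of proper indices appearing in the left block is identical to that appearing in the right block.

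The diagonal case $i=j$ is immediate: $ad(e_{ii})(M_I)$ collects a $+1$ for every $i_s=i$ on the left and a $-1$ for every $i_s=i$ on the right, and these contributions cancel termwise thanks to the identical contents of the two blocks. For the off-diagonal case $i\neq j$, the Leibniz rule splits $ad(e_{ij})(M_I)$ into a ``left'' contribution, present only when $j\in I$ and obtained by substituting $e_{j,\alpha}\mapsto e_{i,\alpha}$, and a ``right'' contribution, present only when $i\in I$ and obtained by substituting $e_{\alpha,i}\mapsto -e_{\alpha,j}$. When both $i,j\in I$, the left substitution produces two copies of $e_{i,\alpha}$ and the right one produces two copies of $e_{\alpha,j}$, so both vanish by oddness; only subsets containing exactly one of $i,j$ survive.

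The final step is to pair each $I$ with $j\in I,\ i\notin I$ against $I'=(I\setminus\{j\})\cup\{i\}$: the ``left'' contribution from $I$ and the ``right'' contribution from $I'$ produce the same underlying monomial with opposite signs, and so cancel. The cleanest way to carry out this pairing is to rewrite
$$
M \;=\; \frac{1}{k!}\,\sum_{(j_1,\ldots,j_k)\ \text{distinct}} e_{j_k,\alpha}\cdots e_{j_1,\alpha}\cdot e_{\alpha,j_1}\cdots e_{\alpha,j_k},
$$
which is legitimate because permuting the tuple $(j_1,\ldots,j_k)$ introduces the same sign twice (once in each block), leaving $M$ invariant. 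In this symmetric form the involution $(j_1,\ldots,j_k)\leftrightarrow (j_1,\ldots,j_k)|_{j\to i}$ matches ``left'' to ``right'' contributions position by position, and the sign bookkeeping becomes automatic. The one step that requires real care is precisely this sign accounting; everything else is forced by Leibniz and the anticommutation relations among the virtual polarizations.
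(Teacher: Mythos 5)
Your proof is correct and follows essentially the same route as the paper's: you handle the diagonal case by content-matching of the two blocks, kill the ``both $i$ and $j$ in $I$'' case by oddness ($e_{i,\alpha}^2 = 0$, $e_{\alpha,j}^2 = 0$), and then cancel the surviving left/right contributions by pairing $I$ with $I' = (I\setminus\{j\})\cup\{i\}$ --- exactly the pairing the paper performs via the hat/reversal notation and the explicit $(-1)^{l(v)}$ bookkeeping. Your symmetrization device $M = \frac{1}{k!}\sum_{\text{distinct tuples}}(\cdots)$ is a small but genuine simplification: it makes the cancellation position-preserving, so the sign comparison that the paper carries out by hand becomes automatic.
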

\begin{proof}
 In order to make  the notation lighter, in this proof we write
 $$
 \{i_k  \cdots i_2 \ i_1|i_1 \ i_2 \cdots  i_k \}
 $$
 in place of
 $e_{i_k , \alpha} \cdots e_{i_2, \alpha} e_{i_1, \alpha}e_{\alpha, i_1}e_{\alpha, i_2} \cdots e_{\alpha, i_k }$,
 that is
\begin{align*}
\begin{split}
\sum_{1 \leq i_1 < \cdots < i_k \leq n} \  e_{i_k , \alpha} \cdots e_{i_2, \alpha} e_{i_1, \alpha}
e_{\alpha, i_1}e_{\alpha, i_2}& \cdots e_{\alpha, i_k } =
 \\ \null \hfill
& =\sum_{1 \leq i_1 < \cdots < i_k \leq n} \  \{i_k  \cdots i_2 \ i_1|i_1 \ i_2 \cdots  i_k \}.
\end{split}
\end{align*}
In the following, given an decreasing word $v = i_s \cdots i_2i_1$ on the set $1, 2, \ldots, n$, we denote by
$\overline{v} = i_1i_2 \cdots i_s$ its reverse.

Without loss of generality, let us consider the adjoint action of an element $e_{ij}$, with $i > j$.
note that the action on a summand $\{i_k  \cdots i_2 \ i_1|i_1 \ i_2 \cdots  i_k \}$ cannot be simultaneously
nontrivial both ``on the left'' and ``on the right''.

The action ``on the left''  is nontrivial only on the  summands of the form
$\{u \ \widehat{i} \ v \ j \ w|\overline{w} \ j \ \overline{v} \ \widehat{i} \ \overline{u}\}$ and it produces the
element
$$
\{u \ \widehat{i} \ v \ i \ w|\overline{w} \ j \ \overline{v} \ \widehat{i} \ \overline{u}\} =
(-1)^{l(v)}\{u \ i \ v  \  \widehat{j} \ w|\overline{w} \ j \ \overline{v} \ \widehat{i} \ \overline{u}\}
$$.

Consider the unique summand $\{u \ i \ v  \  \widehat{j} \ w|\overline{w} \ \widehat{j} \ \overline{v} \ i \ \overline{u}\}$;
the adjoint action ``on the right'' produces the element
$$
-\{u \ i \ v  \  \widehat{j} \ w|\overline{w} \ \widehat{i} \ \overline{v} \ j \ \overline{u}\}  =
-(-1)^{l(v)}\{u \ i \ v  \  \widehat{j} \ w|\overline{w} \ j \ \overline{v} \ \widehat{i} \ \overline{u}\}.
$$
Then, after the action of the adjoint representation,  the summands  delete pairwise.

The action ``on the right''  is nontrivial only on the  summands of the form
$\{u \ i \ v \ \widehat{j} \ w|\overline{w} \ \widehat{j} \ \overline{v} \ i \ \overline{u}\}$ and the reasoning is the same as before.
\end{proof}

From Remark \ref{centrality-BR}, it follows

\begin{theorem}\label{H centrality}
The elements $\mathbf{H}_k(n)$ are  central in $\mathbf{U}(gl(n))$.
\end{theorem}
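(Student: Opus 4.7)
The plan is to reduce the theorem to an immediate application of the adjoint-equivariance of the Capelli devirtualization epimorphism, since the hard combinatorial work has already been done in Proposition \ref{central virtual determinantal}. By the defining equation (\ref{30}), $\mathbf{H}_k(n) = \mathfrak{p}(\mathbf{M}_k)$, where
$$
\mathbf{M}_k \ = \ \sum_{1 \leq i_1 < \cdots < i_k \leq n} e_{i_k,\alpha} \cdots e_{i_2,\alpha} e_{i_1,\alpha}\, e_{\alpha,i_1} e_{\alpha,i_2} \cdots e_{\alpha,i_k}
$$
is a sum of balanced monomials, hence lies in $Virt(m_0+m_1,n)$ by the proposition on balanced monomials in Subsection \ref{balanced monomial}.

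The first step is to invoke Proposition \ref{central virtual determinantal}, which tells me that $ad(e_{ij})(\mathbf{M}_k) = 0$ for every $e_{ij} \in gl(n)$, so $\mathbf{M}_k$ is a $gl(n)$-invariant in the virtual subalgebra. The second and final step is to apply Corollary \ref{invarianti virtuali}: the $\mathfrak{p}$-image of any $gl(n)$-invariant element of $Virt(m_0+m_1,n)$ belongs to $\boldsymbol{\zeta}(n)$. Therefore $\mathbf{H}_k(n) = \mathfrak{p}(\mathbf{M}_k) \in \boldsymbol{\zeta}(n)$, as required. Equivalently, I could appeal directly to Proposition \ref{rappresentazione aggiunta-BR}, which gives $ad(e_{ij})(\mathbf{H}_k(n)) = \mathfrak{p}(ad(e_{ij})(\mathbf{M}_k)) = \mathfrak{p}(0) = 0$, so $\mathbf{H}_k(n)$ commutes with every generator of $gl(n)$ and thus with all of $\mathbf{U}(gl(n))$.

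The main obstacle to centrality was the verification of ad-invariance of the virtual presentation itself, which has already been carried out in Proposition \ref{central virtual determinantal} by the pairwise cancellation of summands of the form $\{u\,\widehat{i}\,v\,j\,w \mid \overline{w}\,j\,\overline{v}\,\widehat{i}\,\overline{u}\}$ against $\{u\,i\,v\,\widehat{j}\,w \mid \overline{w}\,\widehat{j}\,\overline{v}\,i\,\overline{u}\}$ under the action of $ad(e_{ij})$. The point of the plan is precisely the conceptual economy highlighted in Remark \ref{centrality-BR}: once the virtual presentation is written down in monomial form, centrality in $\mathbf{U}(gl(n))$ is literally a two-step consequence, whereas a direct proof starting from the column-determinant expression (\ref{31}) would require cumbersome commutator computations that the virtual variables method has been designed to eliminate.
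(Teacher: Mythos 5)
Your proof is correct and follows precisely the route the paper itself takes: Theorem \ref{H centrality} is stated as an immediate consequence of Proposition \ref{central virtual determinantal} via Remark \ref{centrality-BR}, which packages the content of Proposition \ref{rappresentazione aggiunta-BR} and Corollary \ref{invarianti virtuali}. The observation that the alternative phrasing through Proposition \ref{rappresentazione aggiunta-BR} is equivalent is also accurate and matches the paper's framework.
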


Let $\boldsymbol{\zeta}(n)^{(m)}$ denote the $m-$th filtration element of $\boldsymbol{\zeta}(n)$ with respect to the filtration induced by
the standard filtration of $\mathbf{U}(gl(n)).$

Clearly,
$$
\mathbf{H}_k(n) \in \boldsymbol{\zeta}(n)^{(m)},
$$
for every $m \geq k.$

We recall the following fundamental result, indeed proved by  Capelli in  two  papers (\cite{Cap2-BR}, \cite{Cap3-BR}) with deceiving titles
(for a faithful description of
Capelli's original proof , quite simplified by means
of the superalgebraic method of virtual variables, see \cite{Brini4-BR}, $1993$)\footnote{We are indebted to Kostant and
Sahi (\cite{KostantSahi1-BR}, p. $72$),
who wrote in $1991$: ``It is remarkable that, in some sense, Capelli was
already aware of this connection!   \cite{Borel-BR}, p. $77$''.}

\begin{theorem}(Capelli, 1893)\label{Capelli generators}

The set
$$
\mathbf{H}_1(n), \mathbf{H}_2(n), \ldots, \mathbf{H}_n(n)
$$
is a set of algebraically independent generators of the center $\boldsymbol{\zeta}(n)$ of
$\mathbf{U}(gl(n)).$
\end{theorem}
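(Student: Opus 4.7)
The plan is to reduce the statement to the classical fact that the polynomial algebra $S(gl(n))^{GL(n)}$ is freely generated by the coefficients of the characteristic polynomial of the generic matrix $E = (e_{ij})$, and then to transfer this information back to $\boldsymbol{\zeta}(n)$ by a filtration argument. I would rely on two standard inputs: first, that $\mathrm{gr}\,\boldsymbol{\zeta}(n) = S(gl(n))^{GL(n)}$ inside $\mathrm{gr}\,\mathbf{U}(gl(n)) = S(gl(n))$ (a consequence of the surjectivity of the Harish-Chandra homomorphism, equivalently of the fact that every adjoint-invariant in $S(gl(n))$ lifts to a central element of $\mathbf{U}(gl(n))$); second, Chevalley's restriction theorem, which identifies $S(gl(n))^{GL(n)}$ with the polynomial algebra in the elementary symmetric polynomials $\sigma_1(E), \ldots, \sigma_n(E)$ of the eigenvalues of $E$.

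The key calculation is to identify the top-filtration part of $\mathbf{H}_k(n)$. Expanding the column determinant in (\ref{31}) and retaining only those contributions of full degree $k$ in the $e_{ij}$'s --- discarding both the scalar shifts $(k-j)$ on the diagonal and the commutator corrections arising when reordering factors --- one obtains
\[
\mathrm{gr}_k\bigl(\mathbf{H}_k(n)\bigr) \ = \ \sum_{1 \leq i_1 < \cdots < i_k \leq n} \det\bigl(e_{i_p, i_q}\bigr)_{p,q=1}^k \ = \ \sigma_k(E) \ \in \ S^k(gl(n)).
\]
Granting the two inputs above, algebraic independence of $\mathbf{H}_1(n), \ldots, \mathbf{H}_n(n)$ follows at once: any nontrivial polynomial relation would, after passage to top-filtration parts, yield a nontrivial polynomial relation among $\sigma_1(E), \ldots, \sigma_n(E)$, contradicting Chevalley. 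Generation is then a routine induction on filtration degree: given $\boldsymbol{\varrho} \in \boldsymbol{\zeta}(n)^{(m)}$, its principal symbol is a polynomial $\Phi$ of weighted degree $m$ in $\sigma_1(E), \ldots, \sigma_n(E)$, and $\boldsymbol{\varrho} - \Phi(\mathbf{H}_1(n), \ldots, \mathbf{H}_n(n)) \in \boldsymbol{\zeta}(n)^{(m-1)}$, so one closes the induction.

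The genuine obstacle is the first input, the identification $\mathrm{gr}\,\boldsymbol{\zeta}(n) = S(gl(n))^{GL(n)}$: the inclusion ``$\subseteq$'' is immediate from compatibility of the filtration with the adjoint action, but the reverse containment --- the surjective half of the Harish-Chandra theorem --- is the substantive point. Within the framework of the present paper this obstacle can be bypassed entirely by means of the virtual variables method: using Proposition \ref{Vanishing Lemma} together with a direct computation on the canonical highest-weight vector of $Schur_\mu(n)$, the eigenvalue of $\mathbf{H}_k(n)$ can be exhibited as a shifted symmetric polynomial in $\mu = (\mu_1, \ldots, \mu_n)$ of filtration degree exactly $k$, whose top-degree component is $e_k(\mu_1, \ldots, \mu_n)$. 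Since the $e_k$'s are algebraically independent generators of $\mathbb{C}[\mu_1, \ldots, \mu_n]^{S_n}$, and distinct central elements are separated by their eigenvalues on irreducible representations (so that the Harish-Chandra map $\chi_n$ is injective), the same inductive argument on degree simultaneously establishes algebraic independence of the $\mathbf{H}_k(n)$'s and the fact that they span, as an algebra, the whole of $\boldsymbol{\zeta}(n)$.
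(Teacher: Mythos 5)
The paper does not prove this theorem. It is recalled as an external prerequisite, credited to Capelli's $1893$ papers (\cite{Cap2-BR}, \cite{Cap3-BR}) with a superalgebraic account of the original proof cited as \cite{Brini4-BR}, and the Introduction explicitly lists it among the ``very few prerequisites'' on which the work rests. So there is no in-paper proof to compare against. Judged on its own terms, your first route is the standard modern argument and it is sound: the symbol computation $\mathrm{gr}_k \mathbf{H}_k(n) = \sum_{|I|=k}\det\bigl(e_{ij}\bigr)_{i,j\in I} = \sigma_k(E)$ is correct, and the rest is $\mathrm{gr}\,\boldsymbol{\zeta}(n) = S(gl(n))^{GL(n)}$ (which, as you note, is equivalent to the lifting of adjoint invariants, and in fact follows from the $GL(n)$-equivariance of the PBW symmetrization map without invoking the full Harish-Chandra surjectivity) plus Chevalley's restriction theorem plus induction on filtration degree.

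The issue is with your proposed ``bypass.'' The eigenvalue computation $\chi_n(\mathbf{H}_k(n)) = e^*_k$ together with injectivity of $\chi_n$ does give algebraic independence, since the top-degree parts of the shifted $e^*_k$ are the ordinary elementary symmetric polynomials $e_k$. But the claim that ``the same inductive argument'' also yields generation conceals a hidden input: to close the induction on degree you need the top-degree part of the eigenvalue polynomial of an \emph{arbitrary} $\boldsymbol{\varrho} \in \boldsymbol{\zeta}(n)$ to be an ordinary symmetric polynomial in $\mu$, i.e. you need $\chi_n\bigl(\boldsymbol{\zeta}(n)\bigr) \subseteq \Lambda^*(n)$. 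That Weyl-invariance of infinitesimal characters is exactly the content of the Harish-Chandra theorem, and it is not supplied by Proposition \ref{Vanishing Lemma} nor by the eigenvalue formula for the $\mathbf{H}_k(n)$'s alone. So the second route does not remove the structural obstacle; it relocates it.
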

We recall that  $v_{\widetilde{\mu}} = (D_\mu|D^P_\mu)$ denoted the  ``canonical''
highest weight vector of the Schur module $Schur_{\mu}(n)$, $\mu_1 \leq n$,  which is indeed of weight
$\widetilde{\mu}$ (Subsection \ref{Schur modules}).

Furthermore, we will write $\mathbf{H}_k(n)(v_{\widetilde{\mu}})$ to mean the action of the central element
$\mathbf{H}_k(n)$ on $v_{\widetilde{\mu}}.$

Let $e^*_k(\widetilde{\mu}) \in \mathbb{N}$ denote the sum of the $k-$th ``partial hooks'' numbers of the shape $\mu$,
that is
\begin{equation}\label{elementary shift}
e^*_k(\widetilde{\mu})
= \sum_{1 \leq i_1 < i_2 < \cdots < i_k \leq n} \ (\widetilde{\mu}_{i_1}  + k  - 1)
(\widetilde{\mu}_{i_2}  + k - 2) \cdots (\widetilde{\mu}_{i_k}).
\end{equation}

The classical presentation (\ref{31}) of the $\mathbf{H}_k(n)$'s implies the following result.

\begin{proposition}\label{Capelli eigenvalues}
We have

\begin{enumerate}

\item
$
\mathbf{H}_k(n)(v_{\widetilde{\mu}}) = e^*_k(\widetilde{\mu})\cdot v_{\widetilde{\mu}}, \quad e^*_k(\widetilde{\mu}) \in \mathbb{Z}.
$
\item
$
\mathbf{H}_k(n)(v_{\widetilde{\mu}}) = 0,
$
if $\mu_1 < k.$
\end{enumerate}

\end{proposition}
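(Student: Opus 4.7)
The plan is to exploit centrality together with the classical column-determinant form (\ref{31}) of $\mathbf{H}_k(n)$, computing directly on the highest weight vector. Since $\mathbf{H}_k(n)$ is central in $\mathbf{U}(gl(n))$ by Theorem \ref{H centrality} and $Schur_\mu(n)$ is an irreducible $gl(n)-$module, $\mathbf{H}_k(n)$ acts on $Schur_\mu(n)$ as a scalar; it therefore suffices to compute this scalar as the eigenvalue on the canonical highest weight vector $v_{\widetilde{\mu}} = (D_\mu|D^P_\mu)$, which satisfies $e_{i,j}\, v_{\widetilde{\mu}} = 0$ for $i<j$ and $e_{i,i}\, v_{\widetilde{\mu}} = \widetilde{\mu}_i \, v_{\widetilde{\mu}}$.

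The core of the argument runs as follows. Fix $1 \leq i_1 < \cdots < i_k \leq n$ and expand the corresponding column determinant as $\sum_{\sigma \in S_k} (-1)^{|\sigma|}\, a_{\sigma(1),1} a_{\sigma(2),2} \cdots a_{\sigma(k),k}$, where $a_{s,s} = e_{i_s,i_s} + (k-s)$ and $a_{s,t} = e_{i_s,i_t}$ for $s \neq t$. I would apply each summand to $v_{\widetilde{\mu}}$ reading from right to left, using the crucial fact that, because of the strict ordering $i_1 < \cdots < i_k$, every off-diagonal entry $a_{s,t}$ with $s<t$ is a \emph{raising} operator $e_{i_s, i_t}$ of $gl(n)$. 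Starting from the right, the factor $a_{\sigma(k),k}$ is either the diagonal element $e_{i_k,i_k}$ (when $\sigma(k)=k$, contributing the scalar $\widetilde{\mu}_{i_k}$) or a strict raising operator (when $\sigma(k)<k$, since $\sigma(k) \in \{1,\dots,k\}$, contributing zero). An immediate induction forces $\sigma = \mathrm{id}$, and the identity permutation contributes $(\widetilde{\mu}_{i_1}+k-1)(\widetilde{\mu}_{i_2}+k-2)\cdots\widetilde{\mu}_{i_k}$. Summing over ordered tuples yields $e^*_k(\widetilde{\mu})\cdot v_{\widetilde{\mu}}$, with integrality immediate from $\widetilde{\mu}_i \in \mathbb{Z}_{\geq 0}$; this establishes assertion 1.

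For assertion 2, if $\mu_1 < k$ then $\ell(\widetilde{\mu}) = \mu_1 < k$, so $\widetilde{\mu}_j = 0$ for every $j \geq k$. In each summand of (\ref{elementary shift}) the last factor $\widetilde{\mu}_{i_k}$ then vanishes, because $i_k \geq k > \mu_1$, and therefore $e^*_k(\widetilde{\mu}) = 0$. I do not anticipate any real obstacle: the only non-routine observation is that the strict monotonicity of the summation indices built into the definition of $\mathbf{H}_k(n)$ is exactly what guarantees that every non-identity permutation in the column determinant produces a strict raising operator at some stage and is therefore killed by the highest-weight property of $v_{\widetilde{\mu}}$.
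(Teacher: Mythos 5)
Your proof is correct and follows exactly the route the paper implicitly invokes: the paper states without further argument that Proposition \ref{Capelli eigenvalues} ``follows from the classical presentation (\ref{31})'', and your computation — expanding the column determinant, applying it right-to-left to $v_{\widetilde{\mu}}$, and observing that the strict ordering $i_1<\cdots<i_k$ makes every unmatched $\sigma$ produce a raising operator $e_{i_s,i_t}$ ($s<t$) that annihilates the highest weight vector, so only $\sigma=\mathrm{id}$ survives — is precisely the standard argument the authors have in mind. Your deduction of part 2 from the vanishing of $\widetilde{\mu}_{i_k}$ when $i_k\geq k>\mu_1$ is likewise the intended one.
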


The ``{\it{virtual presentation}}'' (\ref{30}) of the $\mathbf{H}_k(n)$'s leads to a further combinatorial description of
the integer eigenvalues $e^*_k(\widetilde{\mu})$, which will turn out to be crucial
in the section on {\it{duality}}.

\begin{proposition}\label{horizontal strip} We have

$$
e^*_k(\widetilde{\mu}) = \sum \ hstrip_{\mu}(k)!,
$$
where the sum is extended to all ``horizontal strips'' \footnote{In this work, we use the expression
{\it{horizontal strip}} in a generalized sense. To wit, a  horizontal strip in a Ferrers diagram is a subset
of cells such that no two cells in the subset appear in the same column.}
of length $k$ in the Ferrers diagram of the partition $\mu$,
and the symbol $\ hstrip_{\mu}(k)!$ denotes the products of the factorials of the cardinality of each horizontal
component of the horizontal strip.
\end{proposition}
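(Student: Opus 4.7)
My plan is to reduce the proposition to a combinatorial identity, taking as my only input the explicit formula
$$e^*_k(\widetilde{\mu})=\sum_{1\leq i_1<\cdots<i_k\leq n}(\widetilde{\mu}_{i_1}+k-1)(\widetilde{\mu}_{i_2}+k-2)\cdots(\widetilde{\mu}_{i_k})$$
supplied by part (1) of Proposition \ref{Capelli eigenvalues}; the role of the virtual presentation (\ref{30}) is then encoded in that formula, and I will not need to return to it. The combinatorics of horizontal strips enters only in reinterpreting each factor of the product.

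I would fix an increasing column sequence $i_1<\cdots<i_k$ and regard $(\widetilde{\mu}_{i_t}+k-t)$ as the cardinality of the disjoint union
$$C_t=\{1,2,\ldots,\widetilde{\mu}_{i_t}\}\sqcup\{t+1,t+2,\ldots,k\},$$
whose left summand lists possible ``row assignments'' for column $i_t$ and whose right summand lists possible ``pointers'' to a later index in $\{1,\ldots,k\}$. A tuple $\mathbf{c}=(c_1,\ldots,c_k)\in C_1\times\cdots\times C_k$ (a \emph{chain datum}) resolves to a row function $r$ on $\{i_1,\ldots,i_k\}$ by descending recursion: for $t=k,k-1,\ldots,1$, put $r(i_t)=c_t$ when $c_t$ is a row and $r(i_t)=r(i_{c_t})$ when $c_t$ is a pointer. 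Since $C_k$ has no pointers, $r$ is well defined, and the cells $\{(r(i_t),i_t)\}_{t=1}^k$ form a horizontal strip of length $k$ in $\mu$ whose column set is exactly $\{i_1,\ldots,i_k\}$.

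The core step will be to count the chain data resolving to a prescribed strip $H$. I would partition the cells of $H$ into \emph{row-classes} $R_1,R_2,\ldots$, where $R_j$ collects the cells of $H$ that share a common row of $\mu$. For one such class $R$ of size $s$ with column labels $t_{(1)}<\cdots<t_{(s)}$ inside $\{1,\ldots,k\}$, I would show that $c_{t_{(j)}}$ produces the correct row for column $i_{t_{(j)}}$ if and only if either (i) $c_{t_{(j)}}$ equals the common row of $R$ (one admissible row value, automatically inside $\{1,\ldots,\widetilde{\mu}_{i_{t_{(j)}}}\}$ since the cell of $H$ at column $i_{t_{(j)}}$ lies in $\mu$), or (ii) $c_{t_{(j)}}\in\{t_{(j+1)},\ldots,t_{(s)}\}$, contributing $s-j$ admissible pointer values; any other value in $C_{t_{(j)}}$ would resolve to a row outside $R$. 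Hence $c_{t_{(j)}}$ admits $s-j+1$ admissible values, and multiplying over $j=1,\ldots,s$ yields $s!$; independence across distinct row-classes then gives $\prod_j|R_j|!=hstrip_\mu(k)!$ total chain data mapping to $H$.

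Summing this count first over strips with fixed column set $\{i_1,\ldots,i_k\}$ recovers $\prod_{t=1}^k(\widetilde{\mu}_{i_t}+k-t)$, and then summing over all column sequences $i_1<\cdots<i_k$ enumerates every horizontal strip of length $k$ in $\mu$ exactly once, since each such strip unambiguously determines its column set. This will give $e^*_k(\widetilde{\mu})=\sum_H hstrip_\mu(k)!$. The one point requiring real care is the admissibility analysis inside each row-class: I need to verify that pointers $c_{t_{(j)}}$ aimed \emph{outside} the row-class (but still to the right of $t_{(j)}$) truly resolve to the wrong row, and are therefore correctly excluded from the $s-j+1$ count; everything else is bookkeeping.
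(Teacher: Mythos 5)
Your argument is correct, and it reaches the conclusion along a genuinely different path than the paper. You take as input the explicit ``shifted elementary'' formula
$e^*_k(\widetilde{\mu})=\sum_{i_1<\cdots<i_k}\prod_t(\widetilde{\mu}_{i_t}+k-t)$
from Proposition \ref{Capelli eigenvalues} --- itself a consequence of the classical column-determinant presentation (\ref{31}) --- and then prove the horizontal-strip formula by a self-contained enumerative bijection: each factor $\widetilde{\mu}_{i_t}+k-t$ is read as $|C_t|$ with $C_t$ a disjoint union of ``row'' choices and ``pointer'' choices, a chain datum in $C_1\times\cdots\times C_k$ resolves recursively to a horizontal strip, and the fiber over a given strip has cardinality $\prod_j|R_j|!$ because, working from right to left, a position $t_{(j)}$ in a row class of size $s$ admits exactly $s-j+1$ values (the unique correct row, or a pointer to one of the $s-j$ later positions in the same class, any other right-pointer necessarily resolving to a different row once the later positions are fixed to resolve correctly). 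This is a clean proof, and your worry about out-of-class pointers is handled correctly: under the constraint that all later positions resolve to the strip, the target of such a pointer has a row distinct from the class's common row, so it is indeed excluded.

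The paper's own proof is instead representation-theoretic and does not pass through the formula $\prod_t(\widetilde{\mu}_{i_t}+k-t)$ at all: it computes the action of the \emph{virtual presentation} (\ref{30}) on the highest weight bitableau $(D_\mu|D_\mu^P)$, observes that the ``virtualizing part'' $e_{\alpha,i_1}\cdots e_{\alpha,i_k}$ distributes the $k$ occurrences of $\alpha$ over the horizontal strips with those columns, and that the ``devirtualizing part'' reinstates, in each horizontal component, all permutations of the polarized entries, with the signs canceling by skew-symmetry; this leaves a factorial coefficient per component. In the paper the two combinatorial descriptions $\prod_t(\widetilde{\mu}_{i_t}+k-t)$ and $\sum_H hstrip_\mu(k)!$ are thus obtained \emph{independently} from the classical and virtual presentations respectively; you instead derive the second from the first. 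What the paper's approach buys is coherence with the method of virtual variables advocated throughout (and a template reused for the $\mathbf{I}_k$, $\mathbf{K}_\lambda$, $\mathbf{J}_\lambda$ variants); what yours buys is a purely combinatorial identity, independent of superalgebra bookkeeping, exhibiting the passage from the shifted-elementary form to the strip form by an explicit counting map. Both are valid, and it would be worth recording that your argument shows the equality of Corollary \ref{triple eigenvalue}'s first two lines is an elementary bijective fact.
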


\begin{example}\label{horizontal example} Let $\mu = (3,2)$.
Given a bitableau   $(S|D_\mu^P)$ in the supermodule $Schur_\mu(m_0|m_1+3)$, we will write $(S|$ in place  of $(S|D_\mu^P),$ in order
to simplify the notation.

In particular, we write
$$
\left(
\begin{array}{lll}
1 \ 2 \ 3 \\
1 \ 2
\end{array}
\right|
$$
in place of the $gl(3)-$highest weight vector of the $gl(3)-$irreducible submodule  $Schur_\mu(3)$ of $Schur_\mu(m_0|m_1+3)$:
$$
(D_{(3,2)}|D_{(3,2)}^P)
=
\left(
\begin{array}{lll}
1 \ 2 \ 3\\
 1 \ 2\\
\end{array}
\right| \left.
\begin{array}{lll}
1 \ 2 \ 3 \\
1 \ 2\\
\end{array}
\right),
$$
of weight $\widetilde{\mu} = (2,2,1).$

The action of $\mathbf{H}_2(3)$ on $(D_\mu|D_\mu^P)$ is the same as the action of
$$
 e_{2, \alpha} e_{1, \alpha}e_{\alpha, 1}e_{\alpha, 2} + e_{3, \alpha} e_{1, \alpha}e_{\alpha, 1}e_{\alpha, 3} +
 e_{3, \alpha} e_{2, \alpha}e_{\alpha, 2}e_{\alpha, 3}, \quad \alpha \in A_0, \ |\alpha| = 0;
$$
hence, we have to compute
$$
\Big( D_{2, \alpha} D_{1, \alpha}D_{\alpha, 1}D_{\alpha, 2} + D_{3, \alpha} D_{1, \alpha}D_{\alpha, 1}D_{\alpha, 3} +
 D_{3, \alpha} D_{2, \alpha}D_{\alpha, 2}D_{\alpha, 3} \Big) \
\left(
\begin{array}{lll}
1 \ 2 \ 3 \\
1 \ 2
\end{array}
\right|.
$$
By considering the action of the ``virtualizing part'' of each summand, we have
\begin{align*}
D_{\alpha, 1}D_{\alpha, 2} \ \left(
\begin{array}{lll}
1 \ 2 \ 3 \\
1 \ 2
\end{array}
\right| &=
- \left(
\begin{array}{lll}
\alpha \ \alpha \ 3 \\
1 \ 2
\end{array}
\right|
+
 \left(
\begin{array}{lll}
\alpha \ 2 \ 3 \\
1 \ \alpha
\end{array}
\right|
-
 \left(
\begin{array}{lll}
1 \ \alpha \ 3 \\
\alpha \ 2
\end{array}
\right|
-
 \left(
\begin{array}{lll}
1 \ 2 \ 3 \\
\alpha \ \alpha
\end{array}
\right|,
\\
D_{\alpha, 1}D_{\alpha, 3} \ \left(
\begin{array}{lll}
1 \ 2 \ 3 \\
1 \ 2
\end{array}
\right| &=
 \left(
\begin{array}{lll}
\alpha \ 2 \ \alpha \\
1 \ 2
\end{array}
\right|
+
 \left(
\begin{array}{lll}
1 \ 2 \ \alpha \\
\alpha \ 2
\end{array}
\right|,
\\
D_{\alpha, 2}D_{\alpha, 3} \ \left(
\begin{array}{lll}
1 \ 2 \ 3 \\
1 \ 2
\end{array}
\right| &=
 - \left(
\begin{array}{lll}
1 \ \alpha  \ \alpha \\
1 \ 2
\end{array}
\right|
-
 \left(
\begin{array}{lll}
1 \ 2 \ \alpha \\
1 \ \alpha
\end{array}
\right|.
\end{align*}
Notice that the two occurrences of $\alpha$ distribute in all horizontal strips of length $2$ in the Ferrers diagram
of the partition $\mu = (3,2).$

By considering the action of the ``devirtualizing part'' of each summand, we have
\begin{multline*}
D_{2, \alpha} D_{1, \alpha} \Big(  - \left(
\begin{array}{lll}
\alpha \ \alpha \ 3 \\
1 \ 2
\end{array}
\right|
+
 \left(
\begin{array}{lll}
\alpha \ 2 \ 3 \\
1 \ \alpha
\end{array}
\right|
-
 \left(
\begin{array}{lll}
1 \ \alpha \ 3 \\
\alpha \ 2
\end{array}
\right|
-
 \left(
\begin{array}{lll}
1 \ 2 \ 3 \\
\alpha \ \alpha
\end{array}
\right|      \Big) =
\\
= \left(
\begin{array}{lll}
1 \ 2 \ 3 \\
1 \ 2
\end{array}
\right|
-
\left(
\begin{array}{lll}
2 \ 1 \ 3 \\
1 \ 2
\end{array}
\right|
+
\left(
\begin{array}{lll}
1 \ 2 \ 3 \\
1 \ 2
\end{array}
\right|
+
\left(
\begin{array}{lll}
1 \ 2 \ 3 \\
1 \ 2
\end{array}
\right|
+
\left(
\begin{array}{lll}
1 \ 2 \ 3 \\
1 \ 2
\end{array}
\right|
-
\left(
\begin{array}{lll}
1 \ 2 \ 3 \\
2 \ 1
\end{array}
\right| =
\\
= 6  \ \left(
\begin{array}{lll}
1 \ 2 \ 3 \\
1 \ 2
\end{array}
\right|,
\end{multline*}
\begin{multline*}
D_{3, \alpha} D_{1, \alpha} \Big( \left(
\begin{array}{lll}
\alpha \ 2 \ \alpha \\
1 \ 2
\end{array}
\right|
+
 \left(
\begin{array}{lll}
1 \ 2 \ \alpha \\
\alpha \ 2
\end{array}
\right|  \Big) = 3 \
\left(
\begin{array}{lll}
1 \ 2 \ 3 \\
1 \ 2
\end{array}
\right|,
\\
\end{multline*}

\begin{multline*}
D_{3, \alpha} D_{2, \alpha} \Big( - \left(
\begin{array}{lll}
1 \ \alpha \  \alpha \\
1 \ 2
\end{array}
\right|
-
 \left(
\begin{array}{lll}
1 \ 2 \ \alpha \\
1 \ \alpha
\end{array}
\right|  \Big) = 3 \
\left(
\begin{array}{lll}
1 \ 2 \ 3 \\
1 \ 2
\end{array}
\right|.
\\
\end{multline*}

Therefore,
\begin{multline*}
\Big( D_{2, \alpha} D_{1, \alpha}D_{\alpha, 1}D_{\alpha, 2} + D_{3, \alpha} D_{1, \alpha}D_{\alpha, 1}D_{\alpha, 3} +
 D_{3, \alpha} D_{2, \alpha}D_{\alpha, 2}D_{\alpha, 3} \Big) \
\left(
\begin{array}{lll}
1 \ 2 \ 3 \\
1 \ 2
\end{array}
\right| =
\\
= 12 \ \left(
\begin{array}{lll}
1 \ 2 \ 3 \\
1 \ 2
\end{array}
\right|.
\end{multline*}

Notice that, since $\widetilde{\mu} = (2,2,1)$, according to Corollary \ref{Capelli eigenvalues} and eq. (\ref{elementary shift}), we have
\begin{multline*}
e^*_2((2,2,1))
= \sum_{1 \leq i_1 < i_2 \leq 3}
(\widetilde{\mu}_{i_1}  + 2 - 1) (\widetilde{\mu}_{i_2}) = \\
= (2+2-1)2+(2+2-1)1+(2+2-1)1 = 12.
\end{multline*}
\end{example}
\qed

\noindent {\it{Proof of Proposition}} \ref{horizontal strip}.  The action of each summand of the ``virtualizing part''
$$
 e_{\alpha, i_1}e_{\alpha, i_2} \cdots e_{\alpha, i_k }
$$
distributes the $k$ occurrences of $\alpha$  in all horizontal strips of length $k$ (with column positions
$i_1, i_2, \ldots, i_k$) in the Ferrers diagram
of the partition $\mu,$ with signs - according to Proposition \ref{polarization biproduct} - since
$|e_{\alpha, i_h }| = 1$.
By applying   the ``devirtualizing part''
$$
e_{i_k , \alpha} \cdots e_{i_2, \alpha} e_{i_1, \alpha}
$$
it is easy to see that, for each horizontal strip, we obtain a sum of tableaux in which:
\begin{itemize}
\item by skew-symmetry, in any horizontal component, the occurrences of $\alpha$ are replaced exactly by all the permutations
of the elements that  have been previously polarized;
\item the sign that is produced at this stage is the same as the sign produced by the action of the ``virtualizing part'' times
the product of the signs of the permutations of the elements in each horizontal component.
\end{itemize}
By reordering  each horizontal component - again by skew-symmetry - all the signs cancel out, and therefore, it just appear an
integer coefficient that is the product of the factorials of the lengths of the horizontal components. \qed

\

As usual in the theory of symmetric functions, given a shape
$$\lambda = (\lambda_1 \geq \cdots \geq \lambda_p), \ \lambda_1 \leq n,$$
we set
$$
\mathbf{H}_{\lambda}(n) = \mathbf{H}_{\lambda_1}(n)\mathbf{H}_{\lambda_2}(n) \cdots \mathbf{H}_{\lambda_p}(n).
$$
By convention, if $\lambda$ is the empty partition, we set $\mathbf{H}_{\emptyset}(n) = \mathbf{1} \in \boldsymbol{\zeta}(n).$

From Theorem \ref{Capelli generators}, one infers

\begin{corollary}

The set
$$
\big\{ \mathbf{H}_{\lambda}(n); \lambda_1 \leq n, \ |\lambda| \leq m \ \big\}
$$
is a linear basis of $\boldsymbol{\zeta}(n)^{(m)}.$
\end{corollary}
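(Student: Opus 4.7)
The plan is to combine Capelli's algebraic-independence theorem (Theorem \ref{Capelli generators}) with a leading-symbol argument in the associated graded algebra $\mathrm{gr}\,\mathbf{U}(gl(n)) = S(gl(n))$.

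First I would note that, by Theorem \ref{Capelli generators}, the assignment
$\lambda \leftrightarrow \mathbf{H}_\lambda(n) = \prod_k \mathbf{H}_k(n)^{a_k(\lambda)}$
(with $a_k(\lambda) := \#\{i : \lambda_i = k\}$) identifies $\{\mathbf{H}_\lambda(n) : \lambda_1 \leq n\}$ with the full set of monomials in the algebraically independent generators $\mathbf{H}_1(n), \ldots, \mathbf{H}_n(n)$. Hence this set is a $\mathbb{C}$-basis of $\boldsymbol{\zeta}(n)$, and the subset with $|\lambda| \leq m$ is in particular linearly independent. Containment in $\boldsymbol{\zeta}(n)^{(m)}$ is transparent from the column-determinantal presentation (\ref{31}): each $\mathbf{H}_k(n)$ is a sum of products of exactly $k$ PBW generators plus lower-order terms, so $\mathbf{H}_k(n) \in \boldsymbol{\zeta}(n)^{(k)}$ and $\mathbf{H}_\lambda(n) \in \boldsymbol{\zeta}(n)^{(|\lambda|)}$ by multiplicativity of the filtration.

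The substance is the spanning assertion. Given $\rho \in \boldsymbol{\zeta}(n)^{(m)}$, expand uniquely $\rho = \sum_\lambda c_\lambda \mathbf{H}_\lambda(n)$ using the basis above and suppose, for contradiction, that $M := \max\{|\lambda| : c_\lambda \neq 0\}$ exceeds $m$. Then
$$
\sum_{|\lambda| = M} c_\lambda \mathbf{H}_\lambda(n) \;=\; \rho \;-\; \sum_{|\lambda| < M} c_\lambda \mathbf{H}_\lambda(n) \;\in\; \boldsymbol{\zeta}(n)^{(M-1)},
$$
so passing to symbols in $\mathrm{gr}_M \mathbf{U}(gl(n)) = S^M(gl(n))$ and using multiplicativity of the principal symbol yields
$$
\sum_{|\lambda| = M} c_\lambda \prod_{k=1}^n \sigma_k\bigl(\mathbf{H}_k(n)\bigr)^{a_k(\lambda)} \;=\; 0 \quad \text{in } S^M(gl(n)).
$$
A direct inspection of (\ref{31}) identifies the leading symbol as
$$
\sigma_k\bigl(\mathbf{H}_k(n)\bigr) \;=\; \sum_{1 \leq i_1 < \cdots < i_k \leq n} \det\bigl[\bar e_{i_a,\,i_b}\bigr]_{a,b=1}^{k} \;\in\; S^k(gl(n)),
$$
i.e.\ the sum of the principal $k \times k$ minors of the generic matrix $[\bar e_{ij}]$.

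The main obstacle is then the algebraic-independence input at the heart of the argument: one must invoke the classical invariant-theoretic fact that these $n$ quantities --- being, up to sign, the coefficients of the characteristic polynomial of $[\bar e_{ij}]$ --- are algebraically independent in $S(gl(n))$ (indeed, they freely generate $S(gl(n))^{gl(n)}$). Granted this, the monomials $\prod_k \sigma_k(\mathbf{H}_k(n))^{a_k(\lambda)}$ indexed by distinct partitions $\lambda \vdash M$ with $\lambda_1 \leq n$ are linearly independent in $S^M(gl(n))$, forcing every $c_\lambda$ with $|\lambda| = M$ to vanish and contradicting the choice of $M$. This closes the spanning assertion and, together with the linear independence established in the first paragraph, identifies the set $\{\mathbf{H}_\lambda(n) : \lambda_1 \leq n,\ |\lambda| \leq m\}$ as a basis of $\boldsymbol{\zeta}(n)^{(m)}$.
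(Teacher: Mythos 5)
Your proof is correct and fills in, in explicit form, an argument the paper leaves to a one-line ``from Theorem \ref{Capelli generators}, one infers.'' The linear-independence half is exactly the intended reading of Capelli's theorem (monomials in algebraically independent generators form a basis of $\boldsymbol{\zeta}(n)$, and each $\mathbf{H}_\lambda(n)$ visibly lies in $\boldsymbol{\zeta}(n)^{(|\lambda|)}$ by the column-determinantal form). For the spanning half you pass to the associated graded algebra $\mathrm{gr}\,\mathbf{U}(gl(n))\cong S(gl(n))$ and invoke the classical fact that the principal symbols $\sigma_k(\mathbf{H}_k(n))$ --- the sums of principal $k\times k$ minors, i.e.\ the characteristic-polynomial coefficients --- are algebraically independent in $S(gl(n))^{gl(n)}$; this is indeed the standard mechanism behind the statement. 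One could phrase the same content via a dimension count of the filtration quotients (which is essentially what the paper does for the analogous $\mathbf{J}_\lambda$-basis via the cardinality comparison in Proposition \ref{J basis}), but that is equivalent in substance: either way one needs that $\mathrm{gr}\,\boldsymbol{\zeta}(n)$ is a polynomial ring in $n$ generators of degrees $1,\dots,n$, which is the graded form of Capelli's theorem. Your write-up correctly flags this invariant-theoretic input as the crux rather than sweeping it under the rug, which is a genuine improvement in explicitness over the paper's terse derivation.
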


\subsubsection{The Koszul isomorphism and the Laplace expansion into column bitableaux}\label{subsubsection vertical}

In this subsection, we will consider the  Capelli determinantal generators $\mathbf{H}_k(n)$ from the point of view of the
{\it{ Bitableaux correspondence and Koszul map Theorems}} (\cite{Brini4-BR}, Thms. 1 and 2, see also \cite{BriUMI-BR}, \cite{Koszul-BR}).
We will show that the Capelli  determinants in $\mathbf{U}(gl(n))$ expand into ``{\it{column bitableaux}}'' in the same way as the determinants
of matrices with commutative entries expand into ordinary monomials, once the integers on the main diagonal are omitted. Furthermore,
column bitableaux are indeed the analogues - in $\mathbf{U}(gl(n))$ - of monomials in a polynomial algebra, since they are  invariant
with respect to permutations of their rows, as monomials are invariant with respect to permutations of their factors.

\begin{proposition}\label{column expansion}
Given a $k-$tuple $(i_1, i_2, \ldots, i_k)$,
$1 \leq i_1 < i_2 < \ldots < i_k \leq n$,
the element
$$
[i_k \cdots i_2 i_1|i_1 i_2 \cdots i_k] = (-1)^{k \choose 2} [i_1 i_2 \cdots i_k|i_1 i_2 \cdots i_k] =
$$
$$
= \mathbf{cdet}\left( \begin{array}{cccc}
 e_{{i_1},{i_1}}+(k-1) & e_{{i_1},{i_2}} & \ldots  & e_{{i_1},{i_k}} \\
 e_{{i_2},{i_1}} & e_{{i_2},{i_2}}+(k-2) & \ldots  & e_{{i_2},{i_k}}\\
 \vdots  &    \vdots                            & \vdots &  \\
e_{{i_k},{i_1}} & e_{{i_k},{i_2}} & \ldots & e_{{i_k},{i_k}}\\
 \end{array}
 \right) \in \mathbf{U}(gl(n))
$$
 expand into  column bitableaux as

\begin{equation}\label{colbit}
(-1)^{k \choose 2}
\sum_{\sigma} \ (-1)^{|\sigma|} \
\left[
\begin{array}{c}
i_1\\  i_2 \\ \vdots \\ i_k
\end{array}
\right| \left.
\begin{array}{c}
i_{\sigma(1)}\\ i_{\sigma(2)}\\ \vdots\\  i_{\sigma(k)}
\end{array}
\right],
\end{equation}
where the summands
$$
\left[
\begin{array}{c}
i_1\\  i_2 \\ \vdots \\ i_k
\end{array}
\right| \left.
\begin{array}{c}
i_{\sigma(1)}\\ i_{\sigma(2)}\\ \vdots\\  i_{\sigma(k)}
\end{array}
\right]
$$
are the column bitableaux
$$
\mathfrak{p} \big( e_{i_1 , \gamma_1}   e_{i_2, \gamma_2} \cdots e_{i_k, \gamma_k}
e_{\gamma_1, i_{\sigma(1)}}e_{\gamma_2, i_{\sigma(2)}} \cdots e_{\gamma_k, i_{\sigma(k)} } \big),
$$
the symbols $\gamma_1, \gamma_2, \cdots, \gamma_k$  are distinct positive virtual symbols in $A_0$,
and the summation is extended to all permutations $\sigma$ of
the set $\{1, 2, \ldots, k \}.$
\end{proposition}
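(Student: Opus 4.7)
The plan is to reduce the claim to the classical Laplace expansion of a biproduct in $\mathbb{C}[M_{n,d}]$ and then to lift that expansion back to $\mathbf{U}(gl(n))$ via Proposition \ref{Monomial virtual presentation and adjoint actions-BR} together with the Bitableaux correspondence and Koszul map Theorems (\cite{Brini4-BR}, Thms.~1 and 2; see also \cite{BriUMI-BR} and \cite{Koszul-BR}). First I would dispose of the overall sign $(-1)^{{k}\choose{2}}$: the stated equality $[i_k \cdots i_1\,|\,i_1 \cdots i_k] = (-1)^{{k}\choose{2}}[i_1\cdots i_k\,|\,i_1\cdots i_k]$ follows at once from the anticommutativity of the odd operators $e_{i_h,\alpha}$ (whose parity is $|i_h|+|\alpha|=1+0=1$), since reversing $i_k\cdots i_1\to i_1\cdots i_k$ in the left word requires ${k \choose 2}$ transpositions of adjacent odd factors. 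It therefore suffices to establish
\begin{equation*}
[i_1\cdots i_k\,|\,i_1\cdots i_k] \;=\; \sum_{\sigma}(-1)^{|\sigma|}\ \left[\begin{array}{c}i_1\\ \vdots\\ i_k\end{array}\right|\left.\begin{array}{c}i_{\sigma(1)}\\ \vdots\\ i_{\sigma(k)}\end{array}\right].
\end{equation*}

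Next, I would exploit the fact that in the commutative subalgebra $\mathbb{C}[M_{n,d}]\subset \mathbb{C}[M_{m_0|m_1+n,d}]$ the biproduct $(i_1\cdots i_k|i_1\cdots i_k)$ is a signed determinant of commuting scalars, so one has the classical Laplace expansion
\begin{equation*}
(i_1\cdots i_k\,|\,i_1\cdots i_k) \;=\; \sum_{\sigma}(-1)^{|\sigma|}\,(i_1|i_{\sigma(1)})(i_2|i_{\sigma(2)})\cdots(i_k|i_{\sigma(k)}).
\end{equation*}
By Proposition \ref{Monomial virtual presentation and adjoint actions-BR} applied to both sides, this polynomial identity lifts to an equality, modulo $\mathbf{Irr}$, between the single-virtual monomial presentation of $[i_1\cdots i_k\,|\,i_1\cdots i_k]$ and the alternating sum of the multi-virtual (column-bitableau) monomial presentations on the right. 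Applying the Capelli epimorphism $\mathfrak{p}$ to that equality then yields the desired identity.

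The technical core, and main obstacle, is the careful super-bookkeeping inside the iterated adjoint-action expansion. The super-bracket identity
\begin{equation*}
[e_{i_h,\gamma_h},\,e_{\gamma_l,i_m}] \;=\; \delta_{h,l}\,e_{i_h,i_m} \;+\; \delta_{h,m}\,e_{\gamma_l,\gamma_h}
\end{equation*}
produces, at each step, both the expected ``devirtualized'' term $e_{i_h,i_m}$ and a spurious ``virtual-to-virtual'' term $e_{\gamma_l,\gamma_h}$. Some of the latter lie directly in $\mathbf{Irr}$ by Definition \ref{Irregular expressions-BR} and vanish under $\mathfrak{p}$, while others are subsequently ``corrected'' by later adjoint actions into genuine diagonal contributions $e_{i_h,i_h}$; these surviving diagonal pieces reassemble precisely into the integer shifts $(k-1),(k-2),\ldots,0$ on the main diagonal of the Capelli column-determinant (\ref{31}), matching the single-virtual computation. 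Packaging this combinatorial bookkeeping uniformly is exactly what the Bitableaux correspondence/Koszul map Theorems cited above accomplish, rendering the verification clean rather than ad hoc.
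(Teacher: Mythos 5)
Your sign reduction --- showing $[i_k\cdots i_1|i_1\cdots i_k]=(-1)^{{k}\choose 2}[i_1\cdots i_k|i_1\cdots i_k]$ from the anticommutativity of the odd generators $e_{i_h,\alpha}$ --- is correct, and the overall plan of deferring the substance to the Bitableaux correspondence and Koszul map Theorems of \cite{Brini4-BR} is the same plan the paper implicitly takes: the paper states the proposition and cites those results without supplying an internal proof.

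The step that does not hold up as written is the ``lift.'' You claim that Proposition \ref{Monomial virtual presentation and adjoint actions-BR}, applied to both sides of the commutative Laplace expansion of the biproduct, converts that polynomial identity into an identity modulo $\mathbf{Irr}$. But Proposition \ref{Monomial virtual presentation and adjoint actions-BR} is only a computational device: it rewrites $\mathfrak{p}$ of one fixed balanced monomial as an iterated adjoint action. It is not a transfer principle from $\mathbb{C}[M_{n,d}]$ to $\mathbf{U}(gl(n))$, and the two sides of your intermediate claim live over different virtual alphabets (a single $\alpha$ versus $k$ distinct $\gamma_j$'s), so the equality between them genuinely needs an argument. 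Three ways to close the gap, none of which you supply: (i) invoke faithfulness of the $\mathbf{U}(gl(n))$-action on $\mathbb{C}[M_{n,d}]$ for large $d$, and then reduce to comparing actions, where Proposition \ref{polarization biproduct} together with the commutative Laplace expansion of biproducts does the work; (ii) carry out the supercommutator bookkeeping directly, which you only begin to sketch; or (iii) quote \cite{Brini4-BR}, Thms.~1 and 2 as the result that directly equates the single-$\alpha$ presentation with the alternating sum of column bitableaux. Your final paragraph also contains a misstatement: the spurious $e_{\gamma_l,\gamma_h}$ terms on the multi-virtual side do not ``reassemble into the integer shifts $(k-1),\ldots,0$''; those shifts belong to the $\mathbf{cdet}$ form (\ref{31}) of the single-$\alpha$ side, and the whole point of the column-bitableau expansion --- as the paper notes immediately after the statement --- is that it ``formally eliminates the queues.'' What actually needs verifying on the multi-$\gamma$ side is that, because the $\gamma_j$ are distinct, the residual $e_{\gamma_l,\gamma_h}$ pieces produce only the lower-order corrections already built into each column bitableau (e.g.\ the $+e_{1,1}$ in the paper's $k=2$ example), not diagonal integer shifts.
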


In plain words, the expression of $[i_k \cdots i_2 i_1|i_1 i_2 \cdots i_k]$ in terms of column bitableaux in (\ref{colbit})
formally eliminates the
``{{\it{queues}''}} due to the integer summands that appear on the main diagonal. Furthermore,
this fact leads  to a {\it{third}}  combinatorial description of the eigenvalues
of the central Capelli generators on irreducible representations; this phenomenon implies, in turn, a noteworthy
generating function formula for {\it{permutation statistics}}.

\begin{example}

The Capelli element $\mathbf{H}_2(2) = [21|12]$ equals
$$
\mathbf{cdet}\left( \begin{array}{cc}
 e_{{1},{1}}+1 & e_{{1},{2}} \\
 e_{{2},{1}} & e_{{2},{2}}\\
 \end{array}
 \right)
=
[21|12]
=
- [12|12]
=
 -
\left[
\begin{array}{c}
1\\ 2
\end{array}
\right| \left.
\begin{array}{c}
1\\ 2
\end{array}
\right] +
\left[
\begin{array}{c}
1\\ 2
\end{array}
\right| \left.
\begin{array}{c}
2\\ 1
\end{array}
\right],
$$
where
$$
 - \left[
\begin{array}{c}
1\\ 2
\end{array}
\right| \left.
\begin{array}{c}
1\\ 2
\end{array}
\right] = e_{1,1}e_{2,2}
=
- \left[
\begin{array}{c}
2\\ 1
\end{array}
\right| \left.
\begin{array}{c}
2\\ 1
\end{array}
\right] = e_{2,2}e_{1,1} ,
$$
and
$$
 \left[
\begin{array}{c}
1\\ 2
\end{array}
\right| \left.
\begin{array}{c}
2\\ 1
\end{array}
\right] = -e_{1,2}e_{2,1} + e_{1,1} =
\left[
\begin{array}{c}
2\\ 1
\end{array}
\right| \left.
\begin{array}{c}
1\\ 2
\end{array}
\right] = -e_{2,1}e_{1,2} + e_{2,2}.
$$

\end{example}
\qed

The actions (on highest weight vectors) of the vertical bitableaux that appear in
Proposition \ref{column expansion} admit a remarkable combinatorial description.

Given a shape $\mu$, $\mu_1 \leq n$, a subset $1 \leq i_1 < i_2 < \cdots < i_k \leq n$ and a permutation
$\sigma$ of $\{ i_1, i_2,  \ldots, i_k \}$, consider the integer
$$
\Gamma_{\sigma}(\mu; i_1, i_2, \ldots, i_k) =
(\widetilde{\mu}_{i_1})^{h_{\sigma}(i_1)} (\widetilde{\mu}_{i_2})^{h_{\sigma}(i_2)} \cdots (\widetilde{\mu}_{i_k})^{h_{\sigma}(i_k)},
$$
where
$$
h_{\sigma}(j) = 1, \qquad j \in \underline{n}
$$
 if $j \in \underline{n}$ is a maximum element in a cycle of the cycle decomposition of the permutation
$\sigma$ of the subset $\{ i_1, i_2, \ldots, i_k \} \subset \{ 1, 2, \ldots, n \}$, and
$0$ otherwise.

\begin{example} Let $n \geq 12$, $k = 8$, $(i_1, i_2, \ldots, i_8) = (2, 4, 5, 6, 7, 9, 11, 12)$.
Consider the permutation $\sigma = (6 \ 2 \ 4)(9 \ 5)(11 \ 7)(12).$ Let $\mu$ be a shape, $\mu_1 \leq n.$
Then
$$
\Gamma_{\sigma}(\mu; 2, 4, 5, 6, 7, 9, 11, 12) = \widetilde{\mu}_6 \widetilde{\mu}_9 \widetilde{\mu}_{11} \widetilde{\mu}_{12}.
$$
\end{example}
\qed

\begin{proposition}\label{action vertical tableaux}

The action of the vertical bitabeau
$$
(-1)^{k \choose 2} (-1)^{|\sigma|}
\left[
\begin{array}{c}
i_1 \\ i_2 \\ \vdots \\ i_k
\end{array}
\right| \left.
\begin{array}{c}
i_{\sigma(1)}\\ i_{\sigma(2)}\\ \vdots\\  i_{\sigma(k)}
\end{array}
\right]
$$
 on the highest weight vector $v_{\widetilde{\mu}}$ of
weight $(\widetilde{\mu}_1, \widetilde{\mu}_2, \ldots, \widetilde{\mu}_n)$
equals
$$
 \Gamma_{\sigma}(\mu; i_1, i_2, \ldots, i_k)  \cdot v_{\widetilde{\mu}}.
$$
\end{proposition}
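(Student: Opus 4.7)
The plan is to compute the action directly using the virtual presentation, extract the coefficient of $v_{\widetilde{\mu}}$, and show it equals $(-1)^{{k \choose 2}}(-1)^{|\sigma|}\,\Gamma_\sigma(\mu;i_1,\ldots,i_k)$. By Proposition \ref{virtual action}, the action of the column bitableau equals the action of its virtual presentation
\[
D_{i_1,\gamma_1}\cdots D_{i_k,\gamma_k}\,D_{\gamma_1,i_{\sigma(1)}}\cdots D_{\gamma_k,i_{\sigma(k)}}
\]
on $v_{\widetilde{\mu}}=(D_\mu|D_\mu^P)$. Since this element lies in $\mathbf{U}(gl(n))$, preserves weights (left and right column content agree), and the $\widetilde{\mu}$-weight space of $Schur_\mu(n)$ is one-dimensional, the action is automatically a scalar on $v_{\widetilde{\mu}}$; it remains only to compute the scalar.

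First, I would iteratively apply the virtualizing part via Proposition \ref{action on tableaux}. In each column $i_{\sigma(l)}$ of $D_\mu$, the symbol $x_{i_{\sigma(l)}}$ appears exactly $\widetilde{\mu}_{i_{\sigma(l)}}$ times (once in each of the first $\widetilde{\mu}_{i_{\sigma(l)}}$ rows), and each polarization $D_{\gamma_l,i_{\sigma(l)}}$ replaces one such copy by $\gamma_l$, producing a signed sum indexed by a row choice $r_l\in\{1,\ldots,\widetilde{\mu}_{i_{\sigma(l)}}\}$. Since the $\gamma_l$ are distinct, each ends up at a unique position $(r_l,i_{\sigma(l)})$; the devirtualizing polarizations $D_{i_l,\gamma_l}$ then replace $\gamma_l$ in place by $x_{i_l}$. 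The end result is a signed sum, indexed by tuples $\vec{r}=(r_1,\ldots,r_k)$, of bitableaux $(T'_{\vec{r}}|D_\mu^P)$ where $T'_{\vec{r}}$ is $D_\mu$ with the entry at $(r_l,i_{\sigma(l)})$ changed from $x_{i_{\sigma(l)}}$ to $x_{i_l}$ for each $l$.

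Next I extract the coefficient of $v_{\widetilde{\mu}}$ by looking at the distinguishing monomial $M_{\widetilde{\mu}}=\prod_{(a,r):\,a\leq\mu_r}(x_a|a)$, which appears in $v_{\widetilde{\mu}}$ with coefficient $\pm1$ and in no other standard bitableau of $Schur_\mu(n)$. Row $r'$ of $T'_{\vec{r}}$ contributes nontrivially to $M_{\widetilde{\mu}}$ only if its entries form a permutation of $\{x_1,\ldots,x_{\mu_{r'}}\}$, for otherwise the row biproduct, being skew-symmetric in its $x$-arguments, cannot produce the required monomial. Setting $R(r')=\{l:r_l=r'\}$, this permutation condition translates to $\sigma(R(r'))=R(r')$ (so $R(r')$ is a union of cycles of $\sigma$) together with $r'\le \widetilde{\mu}_{i_l}$ for all $l\in R(r')$; since $i_1<\cdots<i_k$ and $\widetilde{\mu}$ is non-increasing, for a cycle $C$ assigned to row $r'$ this becomes $r'\le\widetilde{\mu}_{i_{\max(C)}}$. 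Thus the admissible configurations correspond exactly to assignments of each cycle $C$ of $\sigma$ to a row $r'_C\in\{1,\ldots,\widetilde{\mu}_{i_{\max(C)}}\}$, and their number is $\prod_{C}\widetilde{\mu}_{i_{\max(C)}}=\Gamma_\sigma(\mu;i_1,\ldots,i_k)$.

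Finally I collect signs. For a valid configuration, the modified row biproduct equals the original times the sign of the product of disjoint cyclic permutations assigned to it, so the total contribution per configuration is $\prod_{C}(-1)^{|C|-1}=(-1)^{k-c(\sigma)}=(-1)^{|\sigma|}$; all these are equal across configurations, so the total coefficient of $M_{\widetilde{\mu}}$ coming from the virtual computation is (up to the signs generated by the super-Leibniz rule in Proposition \ref{action on tableaux} applied to the odd polarizations and by the normalization $\pm$ in front of $(D_\mu|D_\mu^P)$) a product of $(-1)^{|\sigma|}$ and $\Gamma_\sigma$. Multiplying through by the external normalization $(-1)^{{k\choose 2}}(-1)^{|\sigma|}$ cancels these extraneous signs and yields the claimed eigenvalue $\Gamma_\sigma(\mu;i_1,\ldots,i_k)$. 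The main obstacle will be this last bookkeeping step: carefully tracking the super-Leibniz signs $(-1)^{(|z|+|z'|)\epsilon_s}$ from the iterated application of Proposition \ref{action on tableaux}, together with the sign introduced by reconstituting $(D_\mu|D_\mu^P)$ from the permuted-row biproducts, and verifying they conspire precisely with $(-1)^{{k\choose 2}}(-1)^{|\sigma|}$ to give $+1$.
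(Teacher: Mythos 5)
Your approach is essentially the paper's: pass to the virtual presentation, let the virtualizing polarizations scatter the $\gamma_l$ into the Deruyts tableau row by row, observe that only configurations in which each cycle of $\sigma$ lands entirely in one row survive, count those configurations to get $\Gamma_\sigma(\mu;i_1,\ldots,i_k)$, and argue that the cycle sign $(-1)^{|\sigma|}$ is cancelled by the prefactor. Where you deviate usefully is in how you extract the coefficient of $v_{\widetilde{\mu}}$: the paper simply asserts that non-cycle configurations ``remain zero'' after devirtualization and that the polarization signs cancel, which is not literally true (a non-cycle configuration can give a nonzero but non-standard bitableau that straightens away). Your device of first observing that the operator is weight-preserving, hence scalar on the one-dimensional $\widetilde{\mu}$-weight space, and then reading off the scalar as the coefficient of the distinguishing monomial $M_{\widetilde{\mu}}=\prod_{(a,r):a\le\mu_r}(x_a|a)$ makes this step airtight without invoking the straightening algorithm, and it cleanly justifies discarding the problematic configurations. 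On the other hand, you stop short of closing the sign computation: you leave it at ``the extraneous super-Leibniz and normalization signs conspire with $(-1)^{\binom{k}{2}}(-1)^{|\sigma|}$ to give $+1$.'' That residual $(-1)^{\binom{k}{2}}$ cancellation (it comes from reversing the order of the $k$ odd devirtualizing factors $e_{i_1,\gamma_1}\cdots e_{i_k,\gamma_k}$, which the paper absorbs at the outset) and the Leibniz-sign cancellation between each $D_{\gamma_l,i_{\sigma(l)}}$ and the matching $D_{i_l,\gamma_l}$ acting at the same row do need to be checked; the paper is equally terse about this, so you have not lost any rigor relative to it, but a complete write-up should do the bookkeeping you flag.
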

\begin{proof}
Recall that
\begin{align*}
&(-1)^{k \choose 2} (-1)^{|\sigma|}
\left[
\begin{array}{c}
i_1 \\ i_2 \\ \vdots \\ i_k
\end{array}
\right| \left.
\begin{array}{c}
i_{\sigma(1)}\\ i_{\sigma(2)}\\ \vdots\\  i_{\sigma(k)}
\end{array}
\right] = \\
& = (-1)^{k \choose 2} (-1)^{|\sigma|} \mathfrak{p} \big( e_{i_1 , \gamma_1}  \ e_{i_2, \gamma_2} \cdots e_{i_k, \gamma_k} \cdot
e_{\gamma_1, i_{\sigma(1)}} \ e_{\gamma_2, i_{\sigma(2)}} \cdots e_{\gamma_k, i_{\sigma(k)} } \big) \\
& = (-1)^{|\sigma|} \mathfrak{p} \big( e_{i_k, \gamma_k} \cdots    e_{i_2, \gamma_2} \  e_{i_1 , \gamma_1} \cdot
e_{\gamma_1, i_{\sigma(1)}} \ e_{\gamma_2, i_{\sigma(2)}} \cdots e_{\gamma_k, i_{\sigma(k)} } \big).
\end{align*}
Given a shape $\mu$, $\mu_1 \leq n$, let $v_{\widetilde{\mu}} = (D_\mu|D_\mu^P)$ the canonical highest weight vector
 of the Schur module $Schur_\mu(n)$.

We have to study the action of
$$
(-1)^{|\sigma|} e_{i_k, \gamma_k} \cdots    e_{i_2, \gamma_2} \  e_{i_1 , \gamma_1} \cdot
e_{\gamma_1, i_{\sigma(1)}} \ e_{\gamma_2, i_{\sigma(2)}} \cdots e_{\gamma_k, i_{\sigma(k)} }
$$
on the bitableau $(D_\mu|D_\mu^P)  \in {\mathbb C}[M_{n,d}].$

When the ``virtualizing part'' $e_{\gamma_1, i_{\sigma(1)}} \ e_{\gamma_2, i_{\sigma(2)}} \cdots e_{\gamma_k, i_{\sigma(k)} }$
acts, it distributes the virtual symbols $\gamma_1, \gamma_2, \ldots, \gamma_k$ into the columns
$i_1, i_2, \ldots, i_k$ of the tableau $D_\mu$ in all possible ways with some signs. These signs will be
canceled by the the action of the ``devirtualizing part'' $e_{i_k, \gamma_k} \cdots    e_{i_2, \gamma_2} \  e_{i_1 , \gamma_1}.$

Furthermore, by skew-symmetry, the configurations of the virtual symbols that remain non zero after the action
of the ``devirtualizing part'' are those in that virtual symbols associated to elements in $\{ i_1, i_2, \ldots, i_k \}$
that belong to the same cycle of the permutation $\sigma$  appear in the same row of the tableau $D_\mu$.
Finally, by reordering  the rows, the global sign $(-1)^{|\sigma|}$ of the permutation $\sigma$ cancels in each summand.

Therefore, the eigenvalue is nothing but that the number of ways of distributing the virtual symbols  so  that
virtual symbols associated to elements that belong to the same cycle appear  in the same row,
that is $\Gamma_{\sigma}(\mu; i_1, i_2, \ldots, i_k).$
\end{proof}

\begin{example}

Let $n \geq 9$ and $k = 7$, $(i_1, i_2, \ldots, i_7) = (1, 2, 3, 4, 5, 7, 9)$,
and consider the column bitableau
\begin{equation}\label{ex column}
(-1)^{7 \choose 2}\left[
\begin{array}{c}
1 \\ 2  \\ 3 \\  4 \\ 5  \\ 7 \\ 9
\end{array}
\right| \left.
\begin{array}{c}
\sigma(1)\\ \sigma(2)\\ \sigma(3) \\ \sigma(4) \\ \sigma(5) \\ \sigma(7) \\ \sigma(9)
\end{array}
\right]
=
- \left[
\begin{array}{c}
1 \\ 2  \\ 3 \\  4 \\ 5  \\ 7 \\ 9
\end{array}
\right| \left.
\begin{array}{c}
5\\ 1 \\ 9 \\ 7 \\ 2 \\ 4 \\ 3
\end{array}
\right].
\end{equation}
Since the  permutation $\sigma$
of the set $\{ 1, 2, 3, 4, 5, 7, 9 \}$ is even and  has cycle  decomposition
$$
\sigma = (521)(74)(93),
$$
then the column bitableau (\ref{ex column}) acts on the highest weight vectors $v_{\widetilde{\mu}}$ (of highest weight $\widetilde{\mu}$)
just multiplying it by the
integer
$$
\Gamma_{\sigma}(\mu; 1, 2, 3, 4, 5, 7, 9) = \widetilde{\mu}_5 \ \widetilde{\mu}_7 \ \widetilde{\mu}_9.
$$
\end{example}
\qed

By combining Propositions \ref{column expansion} and \ref{action vertical tableaux}, one infers a third combinatorial description of
the eigenvalues of the Capelli generators $\mathbf{H}_k(n)$ on the irreducible modules
$Schur_{\mu}(n)$ of highest weight $\widetilde{\mu}.$

\begin{proposition}\label{third eigenvalue}
The eigenvalue of the action of the central generator
$\mathbf{H}_k(n)$ on the irreducible module $Schur_{\mu}(n)$
equals
$$
\sum_{1 \leq i_1 < i_2 < \cdots < i_k \leq n} \ \sum_{\sigma} \ \Gamma_{\sigma}(\mu; i_1, i_2, \ldots, i_k),
$$
where the inner sum ranges over all permutations $\sigma$ of the set $\{ i_1, i_2, \ldots, i_k \}.$
\end{proposition}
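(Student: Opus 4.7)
The plan is to derive this identity as an immediate consequence of the two preceding propositions, with essentially no additional work beyond bookkeeping. The eigenvalue is to be read off term-by-term from the column bitableau expansion of $\mathbf{H}_k(n)$.

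First I would recall the definition
$$\mathbf{H}_k(n) = \sum_{1 \leq i_1 < \cdots < i_k \leq n} [i_k \cdots i_2 i_1 | i_1 i_2 \cdots i_k]$$
and apply Proposition \ref{column expansion} to each summand. This rewrites $\mathbf{H}_k(n)$ as a double sum, indexed by strictly increasing $k$-tuples $(i_1, \ldots, i_k)$ and by permutations $\sigma$ of $\{1, \ldots, k\}$, of signed column bitableaux
$$
(-1)^{\binom{k}{2}}(-1)^{|\sigma|}
\left[
\begin{array}{c}
i_1\\ \vdots \\ i_k
\end{array}
\right| \left.
\begin{array}{c}
i_{\sigma(1)}\\ \vdots \\ i_{\sigma(k)}
\end{array}
\right].
$$
The centrality of $\mathbf{H}_k(n)$ (Theorem \ref{H centrality}) guarantees that its action on $v_{\widetilde{\mu}}$ is multiplication by a single scalar, the desired eigenvalue, so I may compute this eigenvalue by summing the scalar contributions of each column bitableau on $v_{\widetilde{\mu}}$.

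Next, I would apply Proposition \ref{action vertical tableaux} to each column bitableau: that proposition states precisely that the signed column bitableau displayed above acts on $v_{\widetilde{\mu}}$ as multiplication by $\Gamma_{\sigma}(\mu; i_1, \ldots, i_k)$, where $\sigma$ is re-interpreted as a permutation of the underlying set $\{i_1, \ldots, i_k\}$ (which is exactly the indexing convention used in the statement of Proposition \ref{third eigenvalue}). Summing these contributions yields
$$
\mathbf{H}_k(n)(v_{\widetilde{\mu}}) = \Bigl( \sum_{1 \leq i_1 < \cdots < i_k \leq n} \ \sum_{\sigma} \Gamma_{\sigma}(\mu; i_1, \ldots, i_k) \Bigr) v_{\widetilde{\mu}},
$$
which is the claimed formula.

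There is effectively no obstacle here beyond ensuring that the sign conventions line up: the $(-1)^{\binom{k}{2}}(-1)^{|\sigma|}$ prefactor from Proposition \ref{column expansion} is absorbed into the statement of Proposition \ref{action vertical tableaux}, so the two assemble without extra sign bookkeeping. The only subtle point worth double-checking is the identification of $\sigma$ as a permutation of $\{1,\ldots,k\}$ in Proposition \ref{column expansion} with $\sigma$ as a permutation of $\{i_1,\ldots,i_k\}$ in the definition of $\Gamma_\sigma$; since the relabeling $j \mapsto i_j$ preserves the cycle structure and the notion of maximum element within each cycle (the $i_j$'s being listed in increasing order), the integer $\Gamma_\sigma(\mu; i_1, \ldots, i_k)$ is well-defined under either convention and the proof is complete.
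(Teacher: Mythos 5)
Your proof is correct and matches the paper's own derivation: the paper obtains Proposition \ref{third eigenvalue} precisely ``by combining Propositions \ref{column expansion} and \ref{action vertical tableaux},'' exactly as you do. Your extra remarks on the sign cancellation and the relabeling $j \mapsto i_j$ preserving cycle maxima are correct but are already implicit in the way those two propositions are stated.
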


By comparing Propositions \ref{Capelli eigenvalues}, \ref{horizontal strip}, \ref{third eigenvalue},
it follows

\begin{corollary}\label{triple eigenvalue}
Let $\mu$ be a shape, $\mu_1 \leq n$, $k \leq n$. Then
\begin{align*}
e^*_k(\widetilde{\mu}) & = \sum_{1 \leq i_1 < i_2 < \cdots < i_k \leq n} \ (\widetilde{\mu}_{i_1}  + k  - 1)
(\widetilde{\mu}_{i_2}  + k - 2) \cdots (\widetilde{\mu}_{i_k})
\\
& =  \sum_{\phantom{1 \leq i_1 < i_2 < \cdots < i_k \leq n}} \ hstrip_{\mu}(k)!
\\
&= \sum_{1 \leq i_1 < i_2 < \cdots < i_k \leq n} \ \sum_{\sigma} \ \Gamma_{\sigma}(\mu; i_1, i_2, \ldots, i_k).
\end{align*}
\end{corollary}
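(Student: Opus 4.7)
The plan is to observe that Corollary \ref{triple eigenvalue} is a pure assembly result: it simply records that three already-established computations — each obtained by handling the central element $\mathbf{H}_k(n)$ through a different presentation — necessarily agree, because they all compute the same scalar. First I would note that, by Theorem \ref{H centrality}, $\mathbf{H}_k(n)$ lies in $\boldsymbol{\zeta}(n)$, and hence acts as a scalar on any irreducible module $Schur_\mu(n)$, $\mu_1\le n$. This scalar is completely determined by the action on the canonical highest weight vector $v_{\widetilde{\mu}} = (D_\mu|D^P_\mu)$; call it $\eta^\mu_k$.

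Next I would invoke the three preceding propositions, each of which computes $\eta^\mu_k$ from a different presentation of $\mathbf{H}_k(n)$. From the classical determinantal presentation (\ref{31}), Proposition \ref{Capelli eigenvalues} gives
\[
\eta^\mu_k = e^*_k(\widetilde{\mu}) = \sum_{1 \leq i_1 < \cdots < i_k \leq n} (\widetilde{\mu}_{i_1}+k-1)(\widetilde{\mu}_{i_2}+k-2)\cdots (\widetilde{\mu}_{i_k}),
\]
which is the first expression. From the virtual presentation (\ref{30}), analyzed via the virtualizing/devirtualizing action on $v_{\widetilde{\mu}}$, Proposition \ref{horizontal strip} gives $\eta^\mu_k = \sum hstrip_\mu(k)!$, the second expression. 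From the Koszul column-bitableau expansion of Proposition \ref{column expansion}, together with the combinatorial evaluation of each column bitableau on $v_{\widetilde{\mu}}$ provided by Proposition \ref{action vertical tableaux}, Proposition \ref{third eigenvalue} gives the third expression
\[
\eta^\mu_k = \sum_{1 \leq i_1 < i_2 < \cdots < i_k \leq n} \ \sum_{\sigma} \ \Gamma_{\sigma}(\mu; i_1, i_2, \ldots, i_k),
\]
where $\sigma$ ranges over permutations of $\{i_1,\dots,i_k\}$.

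Equating the three evaluations of the single scalar $\eta^\mu_k$ then yields the claimed triple identity, completing the proof. There is no real obstacle to overcome, because the entire content of the corollary has been unloaded onto the three preceding propositions; the only thing I have to be careful about is to observe explicitly that centrality of $\mathbf{H}_k(n)$ (not just the equality on $v_{\widetilde{\mu}}$) is what allows the common value to be identified unambiguously. The interesting consequence — and what motivates writing the corollary down — is the purely combinatorial identity one obtains by forgetting the representation-theoretic origin: the "shifted elementary" sum, the horizontal-strip statistic, and the cycle-maxima permutation statistic all produce the same number on every partition $\mu$.
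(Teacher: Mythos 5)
Your argument matches the paper's exactly: Corollary \ref{triple eigenvalue} is obtained there simply ``by comparing Propositions \ref{Capelli eigenvalues}, \ref{horizontal strip}, \ref{third eigenvalue}'', each of which computes the eigenvalue of the central element $\mathbf{H}_k(n)$ on $Schur_\mu(n)$ from a different presentation. Your additional remark about centrality being what lets one unambiguously identify the common scalar is a reasonable explicit gloss, but the route is the same.
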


Passing to polynomials, Corollary \ref{triple eigenvalue} implies (in the special case $k = n$) a generating function  formula for
{\it{permutation statistics}}, which is in turn equivalent - via the {\it{Foata correspondence}} - to a result of Wilf
\cite{Wilf-BR}.

Keeping the notation for the coefficients $\Gamma_{\sigma}(\mu; i_1, i_2, \ldots, i_k)$, consider the generating polynomial
$$
\mathbf{W}_{n} = \sum_{\sigma} \ x_1^{h_{\sigma}(1)}x_2^{h_{\sigma}(2)} \cdots x_n^{h_{\sigma}(n)},
$$
where $\sigma$ ranges over all permutations of the set $\{1,2, \ldots, n \}$.

\begin{proposition} We have
$$
\mathbf{W}_{n} = \sum_{\sigma} \ x_1^{h_{\sigma}(1)}x_2^{h_{\sigma}(2)} \cdots x_n^{h_{\sigma}(n)} = e_{n}^{*}(x_1, x_2, \ldots, x_n),
$$
where
$$
e_{n}^{*}(x_1, x_2, \ldots, x_n) = (x_1 + n - 1)(x_2 + n - 2) \cdots x_n,
$$
the $n-$th elementary shifted  symmetric polynomial  in $n$ variables.
\end{proposition}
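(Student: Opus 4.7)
The plan is to deduce this generating function identity directly from Corollary \ref{triple eigenvalue} by specializing to the extremal case $k = n$ and then invoking a Zariski density argument to pass from an identity at integer points to an identity of polynomials.

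First I would observe that, when $k = n$, the outer sum in the third line of Corollary \ref{triple eigenvalue} has exactly one nonempty term: the strict inequalities $1 \leq i_1 < i_2 < \cdots < i_n \leq n$ force $(i_1, i_2, \ldots, i_n) = (1, 2, \ldots, n)$. Consequently, for every shape $\mu$ with $\mu_1 \leq n$,
\begin{equation*}
e^*_n(\widetilde{\mu}) \;=\; \sum_{\sigma} \, \Gamma_{\sigma}(\mu;\, 1, 2, \ldots, n) \;=\; \sum_{\sigma} \, \widetilde{\mu}_1^{\,h_\sigma(1)}\, \widetilde{\mu}_2^{\,h_\sigma(2)} \cdots \widetilde{\mu}_n^{\,h_\sigma(n)},
\end{equation*}
where $\sigma$ ranges over all permutations of $\{1,2,\ldots,n\}$, by the very definition of $\Gamma_\sigma$. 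On the other hand, by the first line of the same corollary (or equivalently eq. (\ref{elementary shift})),
\begin{equation*}
e^*_n(\widetilde{\mu}) \;=\; (\widetilde{\mu}_1 + n-1)(\widetilde{\mu}_2 + n-2) \cdots (\widetilde{\mu}_n) \;=\; e_n^{*}(\widetilde{\mu}_1, \ldots, \widetilde{\mu}_n).
\end{equation*}

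Putting the two expressions side by side gives the desired identity, but only for the particular numerical values $(x_1, \ldots, x_n) = (\widetilde{\mu}_1, \ldots, \widetilde{\mu}_n)$ arising as conjugate partitions. To upgrade this to an equality of polynomials in $\mathbb{C}[x_1, \ldots, x_n]$, I would note that both $\mathbf{W}_n$ and $e_n^{*}(x_1, \ldots, x_n)$ are polynomials, and the set of tuples $(\widetilde{\mu}_1, \ldots, \widetilde{\mu}_n) \in \mathbb{Z}_{\geq 0}^n$ with $\widetilde{\mu}_1 \geq \widetilde{\mu}_2 \geq \cdots \geq \widetilde{\mu}_n \geq 0$ (obtained as $\mu$ ranges over all shapes with $\mu_1 \leq n$) is Zariski dense in $\mathbb{C}^n$, since any polynomial vanishing on this set must be identically zero by a straightforward induction on the number of variables. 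Hence the polynomial identity $\mathbf{W}_n = e_n^{*}(x_1, \ldots, x_n)$ follows.

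There is essentially no obstacle here: the substantive combinatorial content is already carried by Corollary \ref{triple eigenvalue}, whose proof in turn rests on Propositions \ref{Capelli eigenvalues}, \ref{horizontal strip}, and \ref{third eigenvalue}. The only mildly delicate point is the Zariski density step, which could alternatively be phrased by noting that for each fixed choice of $(x_2, \ldots, x_n)$ taking sufficiently many distinct non-negative integer values in weakly decreasing order one can still vary $x_1$ over infinitely many integers, and then one iterates.
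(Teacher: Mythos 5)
Your proof is correct and follows exactly the route the paper indicates: the paper itself says ``Passing to polynomials, Corollary \ref{triple eigenvalue} implies (in the special case $k = n$) a generating function formula for permutation statistics,'' and you simply supply the details (the unique nonempty term in the outer sum when $k=n$, and Zariski density of weakly decreasing non-negative integer tuples to pass from eigenvalue identities to a polynomial identity). This is the paper's intended argument, made explicit.
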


\begin{example}
Let $n = 3$. Then
$$
\mathbf{W}_{3} = x_1x_2x_3 + x_1x_3 + 2x_2x_3 + 2x_3,
$$
that equals
$$
e_{3}^{*}(x_1, x_2, x_3) = (x_1 + 2)(x_2 + 1)x_3.
$$
\end{example}
\qed

\begin{remark}
The approach to Capelli elements by expansion into vertical bitableaux naturally extends to the other classes of
central elements studied in the present work. For the  sake of brevity, we  leave this extension to the reader.
\end{remark}

\subsection{The virtual form of the permanental Nazarov/Umeda elements $\mathbf{I}_k(n)$}

In this section we provide the virtual  form of the set of the preimages in $\boldsymbol{\zeta}(n)$
- with respect to the Harish-Chandra isomorphism - of the sequence of {\textit{shifted complete  symmetric polynomials}}
$\mathbf{h}_k^{*}(x_1, x_2, \ldots, x_n),$ for every $k \in \mathbb{Z}^+,$ see \cite{OkOlsh-BR} and \cite{Molev1-BR}, Theorem $4.9$.

The central elements $\mathbf{I}_k(n),$ $k \in \mathbb{Z}^+,$ coincide (see \cite{BriUMI-BR}) with the ``permanental generators'' of
$\boldsymbol{\zeta}(n)$ originally discovered and studied - through the  machinery of {\it{Yangians}} - by Nazarov \cite{Nazarov-BR}
and later described by Umeda \cite{UmedaHirai-BR} as sums of column permanents in $\mathbf{U}(gl(n))$
(see also \cite{MolevNazarov-BR}, \cite{Nazarov2-BR}, and Turnbull  \cite{TURN-BR}).

\begin{definition}\label{perm gen-BR}
For every $k \in \mathbb{Z}^+$, set
\begin{align*}
\mathbf{I}_k(n) & =
 \sum_{(h_1, h_2, \ldots, h_n) } \ (h_1! h_2! \cdots h_n!)^{-1} \ [n^{h_n} \cdots 2^{h_2} 1^{h_1} | 1^{h_1} 2^{h_2} \cdots n^{h_n} ]^{*} = \\
 & =  \sum_{(h_1, h_2, \ldots, h_n)} \ (h_1! h_2! \cdots h_n!)^{-1} \ \mathfrak{p} \big( e_{n , \beta}^{h_n} \cdots e_{2, \beta}^{h_2} e_{1, \beta}^{h_1}
e_{\beta, 1}^{h_1}e_{\beta, 2}^{h_2} \cdots e_{\beta, n}^{h_n} \big), \\
\end{align*}
where $\beta \in A_1$ denotes {\it{any}} negative virtual symbol,
the sum is extended to all $n-$tuples $(h_1, h_2, \ldots, h_n)$ such that $h_1 + h_2 + \cdots + h_n = k.$
\end{definition}

\begin{remark} Clearly, as apparent  from the virtual presentation, each summand
$$
[n^{h_n} \cdots 2^{h_2} 1^{h_1} | 1^{h_1} 2^{h_2} \cdots n^{h_n} ]^{*}
$$
is symmetric both in the left and the right sequences. In ``nonvirtual form'', the summands
can be written as column permanent in the algebra $\mathbf{U}(gl(n))$ (see also Example \ref{column permanent}).
As already observed in the Introduction, the nonvirtual form of the elements $\mathbf{I}_k(n)$ is much harder to manage than their
virtual form (see for example the proof of the centrality, Proposition \ref{central virtual permanental} and
Theorem \ref{I centrality}).
\end{remark}

\begin{example}
\begin{multline*}
\mathbf{I}_3(3) = \frac{1}{3!}[111|111]^*+\frac{1}{2!}[211|112]^*+\frac{1}{2!}[311|113]^*+\frac{1}{2!}[221|122]^*+[321|123]^*+
\\
+\frac{1}{2!}[331|133]^*+\frac{1}{3!}[222|222]^*+\frac{1}{2!}[322|223]^*+\frac{1}{2!}[332|233]^*+\frac{1}{3!}[333|333]^*=
\\
=\frac{1}{3!}\mathbf{cper}
\left(
 \begin{array}{ccc}
 e_{1,1} - 2 & e_{1,1} - 1 &  e_{1,1} \\
 e_{1,1} - 2 & e_{1,1} - 1 &  e_{1,1}\\
 e_{1,1} - 2 & e_{1,1} - 1 &  e_{1,1}\\
 \end{array}
 \right)
+
\frac{1}{2!}\mathbf{cper}
\left(
 \begin{array}{ccc}
 e_{1,1} - 2 & e_{1,1} - 1 &  e_{1,2} \\
 e_{1,1} - 2 & e_{1,1} - 1 &  e_{1,2}\\
 e_{2,1}  & e_{2,1}  &  e_{2,2}\\
 \end{array}
 \right)+
\\
+\frac{1}{2!}\mathbf{cper}
\left(
 \begin{array}{ccc}
 e_{1,1} - 2 & e_{1,1} - 1 &  e_{1,3} \\
 e_{1,1} - 2 & e_{1,1} - 1 &  e_{1,3}\\
 e_{3,1}  & e_{3,1}  &  e_{3,3}\\
 \end{array}
 \right)
+
\frac{1}{2!}\mathbf{cper}
\left(
 \begin{array}{ccc}
 e_{1,1} - 2 & e_{1,2}  &  e_{1,2} \\
 e_{2,1}  & e_{2,2} - 1 &  e_{2,2}\\
 e_{2,1}  & e_{2,2} - 1 &  e_{2,2}\\
 \end{array}
 \right)+
\\
+\mathbf{cper}
\left(
 \begin{array}{ccc}
 e_{1,1} - 2 & e_{1,2} &  e_{1,3} \\
 e_{2,1} & e_{2,2} - 1 &  e_{2,3}\\
 e_{3,1} & e_{3,2} &  e_{3,3}\\
 \end{array}
 \right)
+
\frac{1}{2!}\mathbf{cper}
\left(
 \begin{array}{ccc}
 e_{1,1} - 2 & e_{1,3}  &  e_{1,3} \\
 e_{3,1}  & e_{3,3} - 1 &  e_{3,3}\\
 e_{3,1}  & e_{3,3} - 1 &  e_{3,3}\\
 \end{array}
 \right)+
\\
+\frac{1}{3!}\mathbf{cper}
\left(
 \begin{array}{ccc}
 e_{2,2} - 2 & e_{2,2} - 1 &  e_{2,2} \\
 e_{2,2} - 2 & e_{2,2} - 1 &  e_{2,2}\\
 e_{2,2} - 2 & e_{2,2} - 1 &  e_{2,2}\\
 \end{array}
 \right)
+
\frac{1}{2!}\mathbf{cper}
\left(
 \begin{array}{ccc}
 e_{2,2} - 2 & e_{2,2} - 1 &  e_{2,3} \\
 e_{2,2} - 2 & e_{2,2} - 1 &  e_{2,3}\\
 e_{3,2}  & e_{3,2}  &  e_{3,3}\\
 \end{array}
 \right)+
\\
+\frac{1}{2!}\mathbf{cper}
\left(
 \begin{array}{ccc}
 e_{2,2} - 2 & e_{2,3}  &  e_{2,3} \\
 e_{3,2}  & e_{3,3} - 1 &  e_{3,3}\\
 e_{3,2}  & e_{3,3} - 1 &  e_{3,3}\\
 \end{array}
 \right)
+
\frac{1}{3!}\mathbf{cper}
\left(
 \begin{array}{ccc}
 e_{3,3} - 2 & e_{3,3} - 1 &  e_{3,3} \\
 e_{3,3} - 2 & e_{3,3} - 1 &  e_{3,3}\\
 e_{3,3} - 2 & e_{3,3} - 1 &  e_{3,3}\\
 \end{array}
 \right).
\end{multline*}\qed

\end{example}

\begin{proposition}\label{central virtual permanental}
Since the adjoint representation acts by derivation, we have
$$
ad(e_{i j})\big( \ \sum_{(h_1, h_2, \ldots, h_n)} \ (h_1! h_2! \cdots h_n!)^{-1} \
 e_{n , \beta}^{h_n} \cdots e_{2, \beta}^{h_2} e_{1, \beta}^{h_1}
e_{\beta, 1}^{h_1}e_{\beta, 2}^{h_2} \cdots e_{\beta, n}^{h_n}  \big) = 0 ,
$$
for every  $e_{i j} \in gl(n).$
\end{proposition}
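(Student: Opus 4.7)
The plan is to mirror the proof of Proposition \ref{central virtual determinantal}, replacing the skew-symmetric pairing by a factorial-coefficient matching. The crucial simplification in the permanental setting is that every factor of the ``devirtualizing'' product $W(\vec h) := e_{n,\beta}^{h_n}\cdots e_{1,\beta}^{h_1}$ and of the ``virtualizing'' product $V(\vec h) := e_{\beta,1}^{h_1}\cdots e_{\beta,n}^{h_n}$ has $\mathbb{Z}_2$-degree $|x_k|+|\beta| = 1+1 = 0$, so the supercommutator formula yields $[e_{k,\beta},e_{l,\beta}] = 0 = [e_{\beta,k},e_{\beta,l}]$. Thus within each half the factors commute as ordinary polynomial variables, which makes reindexing legal and allows the powers $e_{j,\beta}^{h_j}$ and $e_{\beta,i}^{h_i}$ to be differentiated termwise.

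The diagonal case $i=j$ is settled term by term: from the commutators $ad(e_{ii})(e_{k,\beta}) = \delta_{ik}\,e_{i,\beta}$ and $ad(e_{ii})(e_{\beta,k}) = -\delta_{ik}\,e_{\beta,i}$ together with the Leibniz rule (ordinary, since $|e_{ii}| = 0$), one gets $ad(e_{ii})(W(\vec h)V(\vec h)) = h_i\,W(\vec h)V(\vec h) - h_i\,W(\vec h)V(\vec h) = 0$ for every $\vec h$.

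For $i \neq j$, I would compute the elementary commutators $ad(e_{ij})(e_{k,\beta}) = \delta_{jk}\,e_{i,\beta}$ and $ad(e_{ij})(e_{\beta,k}) = -\delta_{ik}\,e_{\beta,j}$, apply the Leibniz rule, and reindex using commutativity of the two halves to obtain
$$
ad(e_{ij})\bigl(W(\vec h)V(\vec h)\bigr) = h_j\,W(\vec h+\vec e_i-\vec e_j)V(\vec h) \,-\, h_i\,W(\vec h)V(\vec h-\vec e_i+\vec e_j),
$$
where $\vec e_i$ denotes the $i$-th unit vector. Weighting by $(\vec h!)^{-1}$ and extracting the coefficient of a fixed monomial $W(\vec a)V(\vec b)$ with $\vec a = \vec b + \vec e_i - \vec e_j$ gives
$$
\frac{b_j}{\vec b!}\,-\,\frac{a_i}{\vec a!}.
$$
Since $a_i = b_i+1$ and $a_j = b_j-1$ force $\vec a!/\vec b! = (b_i+1)/b_j$, this coefficient vanishes, so $ad(e_{ij})$ annihilates the whole sum. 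The main obstacle is purely notational, namely keeping track of which $(\vec a, \vec b)$-monomial receives contributions from which $\vec h$ in the two summands above; once that pairing is made explicit, the cancellation reduces to a one-line factorial identity.
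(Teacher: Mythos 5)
Your argument is correct and is essentially the paper's proof: both reduce to a pairwise index-shift cancellation in the sum over exponent vectors, and both rely on the commutativity of the factors within each of the two halves (which holds since $|x_k|+|\beta|=0$). The only cosmetic differences are that you treat all $e_{ij}$ simultaneously via multi-index notation and split off the diagonal case $i=j$ explicitly, whereas the paper reduces to $e_{12}$ ``without loss of generality'' and packs the remaining indices into a word $w$, with your factorial identity $b_j/\vec b! = a_i/\vec a!$ appearing there as the substitution $j = p+1$.
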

\begin{proof}
 In order to make  the notation lighter, in this proof we write
 $$
 \{n^{h_n} \cdots 2^{h_2} 1^{h_1}|1^{h_1}2^{h_2}  \cdots  n^{h_n} \}^*
 $$
 in place of
 $e_{n , \beta}^{h_n} \cdots e_{2, \beta}^{h_2} e_{1, \beta}^{h_1}
e_{\beta, 1}^{h_1}e_{\beta, 2}^{h_2} \cdots e_{\beta, n}^{h_n}$,
 that is
\begin{align*}
\begin{split}
\sum_{(h_1, h_2, \ldots, h_n)} \ (h_1! h_2! \cdots h_n!)^{-1} \
 e_{n , \beta}^{h_n} \cdots e_{2, \beta}^{h_2} e_{1, \beta}^{h_1}
e_{\beta, 1}^{h_1}e_{\beta, 2}^{h_2} \cdots e_{\beta, n}^{h_n} =
 \\ \null \hfill
= \sum_{(h_1, h_2, \ldots, h_n)} \ (h_1! h_2! \cdots h_n!)^{-1} \
  \{n^{h_n} \cdots 2^{h_2} 1^{h_1}|1^{h_1}2^{h_2}  \cdots  n^{h_n} \}^*.
\end{split}
\end{align*}

In the following, given an decreasing word $v = i_s \cdots i_2i_1$ on the set $1, 2, \ldots, n$, we denote by
$\overline{v} = i_1i_2 \cdots i_s$ its reverse.

Without loss of generality, let us consider the adjoint action of the element $e_{12}$, that is $i = 1, \ j = 2.$

Since
$$\{n^{h_n} \cdots 2^{h_2} 1^{h_1}|1^{h_1}2^{h_2}  \cdots  n^{h_n} \}^* = \{1^{h_1}2^{h_2}  \cdots  n^{h_n}|1^{h_1}2^{h_2}  \cdots  n^{h_n} \}^*,
$$
we write
 $$
 \sum_{(h_1, h_2, \ldots, h_n)} \ (h_1! h_2! \cdots h_n!)^{-1} \
  \{n^{h_n} \cdots 2^{h_2} 1^{h_1}|1^{h_1}2^{h_2}  \cdots  n^{h_n} \}^*
 $$
in the form
$$
\sum_w \ \sum_{h=0}^k \ \big( \sum_{j=0}^h \ (j!)^{-1} \
(h-j!)^{-1} \ (w!)^{-1} \ \{1^j \ 2^{h-j} \ w|1^j \ 2^{h-j} \ w \}^* \big),
$$
where the outer sum is extended over all words $w = 3^{h_3} \ 4^{h_4} \cdots n^{h_n}$, with
with $h_3 \ + \ h_4 \ + \cdots  + \ h_n = k - h$, and $w! = h_3! \   h_4! \  \cdots   \ h_n!.$

We claim that the adjoint action of $e_{12}$ on
$$
\sum_{j=0}^h \ (j!)^{-1} \
(h-j!)^{-1} (w!)^{-1} \{1^j \ 2^{h-j} \ w|1^j \ 2^{h-j} \ w \}^*
$$
equals zero. Indeed it produces
$$
\sum_{j=0}^{h-1} \ (j!)^{-1} \
((h-j-1)!)^{-1} \ (w!)^{-1} \ \{1^{j+1}  \ 2^{h-j-1} \ w|1^j \ 2^{h-j} \ w \}^*  -
$$
$$- \sum_{j=1}^h \ \ ((j-1)!)^{-1} \
((h-j)!)^{-1} \ (w!)^{-1}\ (w!)^{-1} \{1^j \ 2^{h-j} \ w|1^{j-1} \ 2^{h-j+1} \ w \}^*
$$
By performing in the second sum the substitution of variables $j = p+1$, we get
$$
\sum_{j=0}^{h-1} \ (j!)^{-1} \
((h-j-1)!)^{-1} \ (w!)^{-1} \ \{1^{j+1}  \ 2^{h-j-1} \ w|1^j \ 2^{h-j} \ w \}^*  -
$$
$$- \sum_{p=0}^{h-1} \ \ (p!)^{-1} \
((h-p-1)!)^{-1} \ (w!)^{-1}\  \{1^{p+1} \ 2^{h-p-1} \ w|1^{p} \ 2^{h-p} \ w \}^* = 0.
$$

\end{proof}

From Remark \ref{centrality-BR}, it follows

\begin{theorem}\label{I centrality}
The elements $\mathbf{I}_k(n)$ are  central in $\mathbf{U}(gl(n))$.
\end{theorem}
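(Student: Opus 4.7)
The plan is to reduce the theorem to what has already been done in Proposition \ref{central virtual permanental}, by invoking the general centrality principle for virtual presentations recorded in Remark \ref{centrality-BR} (which in turn rests on Proposition \ref{rappresentazione aggiunta-BR} and Corollary \ref{invarianti virtuali}). In other words, I do not intend to prove centrality in $\mathbf{U}(gl(n))$ directly from the column-permanent formulas: those expressions are cumbersome, and commuting them past an arbitrary $e_{ij}$ would force a lengthy case analysis. The whole point of the virtual variables machinery is that the centrality statement can be pulled back to a transparent invariance statement in $Virt(m_0+m_1,n)$.

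First I would recall that, by Definition \ref{perm gen-BR}, the element $\mathbf{I}_k(n) \in \mathbf{U}(gl(n))$ is by construction the image under the Capelli devirtualization epimorphism $\mathfrak{p}$ of the virtual element
\begin{equation*}
\widetilde{\mathbf{I}}_k(n) \;=\; \sum_{(h_1,\dots,h_n)} (h_1!\cdots h_n!)^{-1}\, e_{n,\beta}^{h_n}\cdots e_{1,\beta}^{h_1}\, e_{\beta,1}^{h_1}\cdots e_{\beta,n}^{h_n} \;\in\; Virt(m_0+m_1,n),
\end{equation*}
where $\beta\in A_1$ and the sum runs over compositions with $h_1+\cdots+h_n=k$. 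Each summand is a balanced monomial in the sense of Subsection \ref{balanced monomial}, so the sum genuinely lives in the virtual subalgebra.

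Next I would quote Proposition \ref{central virtual permanental} verbatim: it asserts precisely that $ad(e_{ij})(\widetilde{\mathbf{I}}_k(n)) = 0$ for every $e_{ij}\in gl(n)$. Hence $\widetilde{\mathbf{I}}_k(n)$ is an invariant in $Virt(m_0+m_1,n)$ for the adjoint action of $gl(n)$. By Corollary \ref{invarianti virtuali}, the $\mathfrak{p}$-image of such an invariant belongs to the center $\boldsymbol{\zeta}(n)$; therefore $\mathbf{I}_k(n)=\mathfrak{p}(\widetilde{\mathbf{I}}_k(n))\in\boldsymbol{\zeta}(n)$, which is the claim.

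The main conceptual obstacle, which has already been cleared in Proposition \ref{central virtual permanental}, was showing the $ad$-invariance for a fixed generator $e_{ij}$ with $i\neq j$ (the case $i=j$ being automatic since the virtual monomials are weight-zero, with each $e_{k,\beta}^{h_k}$ matched by $e_{\beta,k}^{h_k}$). That invariance relies on the cancellation obtained after the substitution $j=p+1$ in the two sums produced by the Leibniz-type action of $ad(e_{ij})$ across the ``devirtualizing'' and ``virtualizing'' halves of a balanced monomial; the normalizing factors $(h_1!\cdots h_n!)^{-1}$ are tuned exactly to make the two contributions telescope. With that cancellation in hand, the centrality of $\mathbf{I}_k(n)$ in $\mathbf{U}(gl(n))$ is immediate via the equivariance of $\mathfrak{p}$, and no further calculation is needed.
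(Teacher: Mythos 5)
Your argument is exactly the paper's: the paper derives Theorem \ref{I centrality} in one line ``From Remark \ref{centrality-BR}, it follows,'' which, unpacked, is precisely the chain you spell out—Proposition \ref{central virtual permanental} gives $ad$-invariance of the virtual presentation, and Remark \ref{centrality-BR} (via Proposition \ref{rappresentazione aggiunta-BR} and Corollary \ref{invarianti virtuali}) converts that into centrality of the $\mathfrak{p}$-image. Your proposal is correct and matches the paper's approach.
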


Clearly,
$$
\mathbf{I}_k(n) \in \boldsymbol{\zeta}(n)^{(m)},
$$
for every $m \geq k.$

The proofs of the following results are almost trivial, as a consequence of the definition of the elements $\mathbf{I}_k(n)$
in terms of their {\textit{virtual presentations}} (Definition \ref{perm gen-BR}).

\begin{theorem}\label{vertical strip}

We have:

\begin{enumerate}

\item We have
$$
\mathbf{I}_k(n)(v_{\widetilde{\mu}}) = h^*_k(\widetilde{\mu})\cdot v_{\widetilde{\mu}}, \quad h^*_k(\widetilde{\mu}) \in \mathbb{N}
$$
with
$$
h^*_k(\widetilde{\mu}) = \sum \ vstrip_{\mu}(k)!,
$$
where the sum is extended to all ``vertical strips'' \footnote{In this work, we use the expression
{\it{vertical strip}} in a generalized sense. To wit, a  vertical strip in a Ferrers diagram is a subset
of cells such that no two cells in the subset appear in the same row.}
of length $k$ in the Ferrers diagram of the partition $\mu$,
and the symbol $\ vstrip_{\mu}(k)!$ denotes the products of the factorials of the cardinality of each vertical
component of the vertical strip.

\item
Therefore:
$$
\mathbf{I}_k(n)(v_{\widetilde{\mu}}) = h^*_k(\widetilde{\mu})\cdot v_{\widetilde{\mu}}, \quad h^*_k(\widetilde{\mu}) \in \mathbb{Z},
$$
where
\begin{equation}\label{complete shift}
h^*_k(\widetilde{\mu})
= \sum_{1 \leq i_1 \leq i_2 < \cdots \leq i_k \leq n} \ (\widetilde{\mu}_{i_1}  - k  + 1)
(\widetilde{\mu}_{i_2}  - k + 2) \cdots (\widetilde{\mu}_{i_k})
\end{equation}

\item
If $\widetilde{\mu}_1 < k$, then
$$
\mathbf{I}_k(n)(v_{\widetilde{\mu}}) = 0.
$$

\end{enumerate}

\end{theorem}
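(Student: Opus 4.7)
The plan is to compute $\mathbf{I}_k(n)\,v_{\widetilde{\mu}}$ directly from the virtual presentation in Definition \ref{perm gen-BR}, in the same spirit as the argument for the Capelli generators in Proposition \ref{horizontal strip}: since $\mathbf{I}_k(n)$ is central and the weight-$\widetilde{\mu}$ subspace of $Schur_\mu(n)$ is one-dimensional (spanned by $v_{\widetilde{\mu}}$), the action is automatically a scalar multiple of $v_{\widetilde{\mu}}$, and we have only to identify the scalar. The key structural difference from Proposition \ref{horizontal strip} is that the virtual symbol $\beta \in A_1$ is an \emph{odd} element of $W$ (whereas $\alpha \in A_0$ is even), which forces the role of horizontal strips to be played by vertical strips.

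First, I would apply the virtualizing part $(h_1!\cdots h_n!)^{-1}\prod_j e_{\beta,j}^{h_j}$ to $v_{\widetilde{\mu}} = (D_\mu\,|\,D_\mu^P)$. Since each $D_{\beta,j}$ has even parity $|\beta|+|x_j|=0$, the operators for distinct $j$ commute, and $D_{\beta,j}^{h_j}$ distributes $h_j$ copies of $\beta$ among the $\widetilde{\mu}_j$ cells of column $j$ of $D_\mu$, producing an overall factor $h_j!$ from the ordering of polarizations that is exactly cancelled by the prefactor $(h_j!)^{-1}$. The crucial point is that $\beta \in W_1$ is odd in $Super[W]$, so $\beta^2 = 0$ in any row-word, and a bitableau containing two $\beta$'s in the same row vanishes. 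Hence the surviving contributions are indexed by the vertical strips $V$ of length $k$ in the Ferrers diagram of $\mu$ with $h_j$ cells in column $j$, each contributing the bitableau $T_V$ (obtained from $(D_\mu\,|\,D_\mu^P)$ by replacing the cells of $V$ with $\beta$) with coefficient $1$.

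Next, I would apply the devirtualizing part $\prod_j e_{j,\beta}^{h_j}$ to each $T_V$. Counting orderings of polarizations shows that, for every assignment $\phi\colon V \to \{1,\ldots,n\}$ with $|\phi^{-1}(j)| = h_j$, the tableau $T_\phi$ (with $x_{\phi(r,c)}$ at each $(r,c) \in V$) appears with coefficient $\prod_j h_j!$. The crux of the argument is that only the identity assignment $\phi_0\colon (r,c)\mapsto c$ produces a nonzero $T_\phi$, and $T_{\phi_0} = D_\mu = v_{\widetilde{\mu}}$. Indeed, if $\phi(r,c) = j \neq c$ for some $(r,c) \in V$, then inserting $x_j$ in row $r$ at column $c$ would duplicate the existing $x_j$ in that row (causing $x_j^2 = 0$) unless $j > \mu_r$, whence $j > c$. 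Assembling these nontrivial reassignments into a directed multigraph on the columns $\{j : h_j > 0\}$, the balance $|\phi^{-1}(j)| = h_j$ enforces equal in- and out-degree at every vertex, so the graph decomposes into cycles; on any nontrivial cycle $j_1 \to j_2 \to \cdots \to j_\ell \to j_1$, each edge $j_i \to j_{i+1}$ arises from some $(r_i, j_i) \in V$ with $\phi(r_i,j_i) = j_{i+1}$, giving $j_{i+1} > \mu_{r_i} \geq j_i$ and the contradiction $j_1 < j_2 < \cdots < j_\ell < j_1$.

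Combining the two steps, each vertical strip $V$ contributes $\prod_j h_j!\cdot v_{\widetilde{\mu}} = vstrip_\mu(k)!\cdot v_{\widetilde{\mu}}$, and summing over all vertical strips of length $k$ in $\mu$ yields assertion~(1). Assertion~(2) follows from the combinatorial identity $\sum_V vstrip_\mu(k)! = \sum_{1 \leq i_1 \leq \cdots \leq i_k \leq n} \prod_{s=1}^k (\widetilde{\mu}_{i_s} - k + s)$, verified by grouping vertical strips according to their sorted column sequence $(i_1, \ldots, i_k)$ and counting the injective row assignments column-by-column (the monotonicity $\widetilde{\mu}_{i_1} \geq \cdots \geq \widetilde{\mu}_{i_k}$ makes the count a falling-factorial product). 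Assertion~(3) is immediate, since $\widetilde{\mu}_1$ is the number of rows of $\mu$ and a vertical strip of length $k$ requires $k$ distinct rows. The principal obstacle is the cycle argument pinning down $\phi = \phi_0$; it is this combinatorial rigidity, powered by the oddness of $\beta$ and the partition inequality $j > \mu_r \geq j_i$, that couples the monomial expansion of $\mathbf{I}_k(n)$ to vertical strips rather than to more general rearrangements of values across rows.
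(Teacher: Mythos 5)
Your proof is correct and follows the route the paper intends for Theorem~\ref{vertical strip}: compute $\mathbf{I}_k(n)\,v_{\widetilde{\mu}}$ directly from the virtual presentation of Definition~\ref{perm gen-BR}, exactly as Proposition~\ref{horizontal strip} does for $\mathbf{H}_k(n)$ but with the odd virtual symbol $\beta \in A_1$ replacing the even $\alpha \in A_0$, so that oddness of $\beta$ in $Super[W]$ (one $\beta$ per row) forces vertical rather than horizontal strips. The paper labels these facts ``almost trivial'' and leaves the devirtualizing step at the same level of detail as the horizontal-strip proof, where the corresponding claim is dismissed with ``it is easy to see.'' The place where you go beyond the paper is the cycle argument pinning $\phi = \phi_0$: assembling the nontrivial reassignments $\phi(r,c)=j$ into a balanced directed multigraph on the columns, extracting a cycle, and getting the contradiction $j_1<j_2<\cdots<j_\ell<j_1$ from $j>\mu_r\geq c$ at every edge. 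That is the one genuinely non-routine point, and your version is a fully rigorous replacement for the paper's assertion. Your derivation of formula~(\ref{complete shift}) from assertion~(1) by grouping strips according to the sorted column sequence and counting ordered row assignments as a falling-factorial product is also correct; the paper implicitly intends instead to read (\ref{complete shift}) off the column-permanent (``nonvirtual'') form of $\mathbf{I}_k(n)$, exactly as Proposition~\ref{Capelli eigenvalues} reads~(\ref{elementary shift}) off the column-determinant form~(\ref{31}) of $\mathbf{H}_k(n)$. Both routes are short; yours keeps everything inside the virtual picture, while the paper's trades one combinatorial identity for a separate classical presentation.
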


\begin{example}\label{vertical example} Let $\lambda = (2,2,1)$.
Given a bitableau   $(S|D_\lambda^P)$ in the supermodule $Schur_\lambda(m_0|m_1+3)$, we will write $(S|$ in place  of $(S|D_\lambda^P),$ in order
to simplify the notation.

In particular, we write
$$
\left(
\begin{array}{ll}
1 \ 2  \\
1 \ 2 \\
1 \\
\end{array}
\right|
$$
in place of the $gl(3)-$highest weight vector of the $gl(3)-$irreducible submodule of $Schur_\lambda(3)$ of $Schur_\lambda(m_0|m_1+3)$:
$$
(D_{(2,2,1)}|D_{(2,2,1)}^P)
=
\left(
\begin{array}{ll}
1 \ 2 \\
1 \ 2 \\
1  \\
\end{array}
\right| \left.
\begin{array}{lll}
1 \ 2 \\
1 \ 2 \\
1  \\
\end{array}
\right),
$$
of weight $\widetilde{\lambda} = (3,2).$

The action of $\mathbf{I}_2(3)$ on $(D_\lambda|D_\lambda^P)$ is the same as the action of
$$
(2!)^{-1} \ e_{1, \beta} e_{1, \beta}e_{\beta, 1}e_{\beta, 1} +  (2!)^{-1} \ e_{2, \beta} e_{2, \beta}e_{\beta, 2}e_{\beta, 2} +
 (2!)^{-1} \ e_{3, \beta} e_{3, \beta}e_{\beta, 3}e_{\beta, 3} +
$$
$$
+ e_{2, \beta} e_{1, \beta}e_{\beta, 1}e_{\beta, 2} + e_{3, \beta} e_{1, \beta}e_{\beta, 1}e_{\beta, 3} +
 e_{3, \beta} e_{2, \beta}e_{\beta, 2}e_{\beta, 3}, \quad \beta \in A_1, \ |\beta| = 1;
$$
hence, we have to compute
\begin{multline*}
\Big( \Big. (2!)^{-1} \ D_{1, \beta} D_{1, \beta}D_{\beta, 1}D_{\beta, 1} +  (2!)^{-1} \ D_{2, \beta} D_{2, \beta}D_{\beta, 2}D_{\beta, 2} + \\
+ (2!)^{-1} \ D_{3, \beta} D_{3, \beta}D_{\beta, 3}D_{\beta, 3} +
 + D_{2, \beta} D_{1, \beta}D_{\beta, 1}D_{\beta, 2} + \\
 + D_{3, \beta} D_{1, \beta}D_{\beta, 1}D_{\beta, 3} +
 D_{3, \beta} D_{2, \beta}D_{\beta, 2}D_{\beta, 3} \Big. \Big) \
\left(
\begin{array}{ll}
1 \ 2 \\
1 \ 2 \\
1 \\
\end{array}
\right|.
\end{multline*}
By considering the action of the ``virtualizing part'' of each summand, we have
\begin{align*}
(2!)^{-1} \ D_{\beta, 1}D_{\beta, 1} \ \left(
\begin{array}{ll}
1 \ 2  \\
1 \ 2 \\
1 \\
\end{array}
\right| &=
 \left(
\begin{array}{ll}
\beta \ 2  \\
\beta \ 2 \\
1 \\
\end{array}
\right|
+
 \left(
\begin{array}{ll}
\beta \ 2  \\
1 \ 2 \\
\beta \\
\end{array}
\right|
+
 \left(
\begin{array}{ll}
1 \ 2  \\
\beta \ 2 \\
\beta \\
\end{array}
\right|,
\\
(2!)^{-1} \ D_{\beta, 2}D_{\beta, 2 } \ \left(
\begin{array}{ll}
1 \ 2  \\
1 \ 2 \\
1 \\
\end{array}
\right| &=
 \left(
\begin{array}{ll}
1 \ \beta \\
1 \ \beta \\
1 \\
\end{array}
\right|,
\\
(2!)^{-1} \ D_{\beta, 3}D_{\beta, 3} \ \left(
\begin{array}{ll}
1 \ 2  \\
1 \ 2 \\
1 \\
\end{array}
\right| &=
0,
\\
D_{\beta, 1}D_{\beta, 2} \ \left(
\begin{array}{ll}
1 \ 2  \\
1 \ 2 \\
1 \\
\end{array}
\right| &=
 \left(
\begin{array}{ll}
\beta \ 2  \\
1 \ \beta \\
1 \\
\end{array}
\right|
+
 \left(
\begin{array}{ll}
1 \ \beta  \\
\beta \ 2 \\
1 \\
\end{array}
\right|
+
 \left(
\begin{array}{ll}
1 \ \beta  \\
1 \ 2 \\
\beta \\
\end{array}
\right|
+
 \left(
\begin{array}{ll}
1 \ 2  \\
1 \ \beta \\
\beta \\
\end{array}
\right|,
\\
D_{\alpha, 1}D_{\alpha, 3} \ \left(
\begin{array}{ll}
1 \ 2  \\
1 \ 2 \\
1 \\
\end{array}
\right| &=
 0,
\\
D_{\alpha, 2}D_{\alpha, 3} \ \left(
\begin{array}{lll}
1 \ 2  \\
1 \ 2 \\
1 \\
\end{array}
\right| &=
0.
\end{align*}
Notice that the two occurrences of $\beta$ distribute in all vertical strips of length $2$ in the Ferrers diagram
of the partition $\lambda = (2,2,1).$

By considering the action of the ``devirtualizing part'' of each summand, we have
\begin{multline*}
D_{1, \beta} D_{1, \beta} \Big(  \left(
\begin{array}{ll}
\beta \ 2  \\
\beta \ 2 \\
1 \\
\end{array}
\right|
+
 \left(
\begin{array}{ll}
\beta \ 2  \\
1 \ 2 \\
\beta \\
\end{array}
\right|
+
 \left(
\begin{array}{ll}
1 \ 2  \\
\beta \ 2 \\
\beta \\
\end{array}
\right|  \Big)
= 6 \
\left(
\begin{array}{lll}
1 \ 2 \\
1 \ 2 \\
1 \\
\end{array}
\right|,
\\
\end{multline*}

\begin{multline*}
D_{2, \beta} D_{2, \beta} \Big(  \left(
\begin{array}{ll}
1 \ \beta \\
1 \ \beta \\
1 \\
\end{array}
\right|
 = 2 \
\left(
\begin{array}{ll}
1 \ 2 \\
1 \ 2 \\
1 \\
\end{array}
\right|,
\\
\end{multline*}

\begin{multline*}
D_{2, \beta} D_{1, \beta} \Big(  \left(
\begin{array}{ll}
\beta \ 2  \\
1 \ \beta \\
1 \\
\end{array}
\right|
+
 \left(
\begin{array}{ll}
1 \ \beta  \\
\beta \ 2 \\
1 \\
\end{array}
\right|
+
 \left(
\begin{array}{ll}
1 \ \beta  \\
1 \ 2 \\
\beta \\
\end{array}
\right|
+
 \left(
\begin{array}{ll}
1 \ 2  \\
1 \ \beta \\
\beta \\
\end{array}
\right| \Big)
= 4 \
\left(
\begin{array}{lll}
1 \ 2 \\
1 \ 2 \\
1 \\
\end{array}
\right|.
\\
\end{multline*}
Therefore,
\begin{multline*}
\Big( (2!)^{-1} \ D_{1, \beta} D_{1, \beta}D_{\beta, 1}D_{\beta, 1} +  (2!)^{-1} \ D_{2, \beta} D_{2, \beta}D_{\beta, 2}D_{\beta, 2} + \\
+ (2!)^{-1} \ D_{3, \beta} D_{3, \beta}D_{\beta, 3}D_{\beta, 3} + D_{2, \beta} D_{1, \beta}D_{\beta, 1}D_{\beta, 2} + \\
+ D_{3, \beta} D_{1, \beta}D_{\beta, 1}D_{\beta, 3} +
 D_{3, \beta} D_{2, \beta}D_{\beta, 2}D_{\beta, 3}  \Big) \
\left(
\begin{array}{ll}
1 \ 2  \\
1 \ 2 \\
1 \\
\end{array}
\right| =
 12 \ \left(
\begin{array}{ll}
1 \ 2 \\
1 \ 2 \\
1 \\
\end{array}
\right|.
\end{multline*}

Notice that, since $\widetilde{\lambda} = (3,2)$, according to Theorem \ref{vertical strip}, eq. \ref{complete shift}, we have
$$
h^*_2((3,2))
= \sum_{1 \leq i_1 \leq i_2 \leq 3}
(\widetilde{\lambda}_{i_1}  + 2 - 1) (\widetilde{\lambda}_{i_2}) = (3-2+1)2+(3-2+1)3+(2-2+1)2 = 12.
$$

Moreover, by comparing with Example \ref{horizontal example}, since $\lambda = (2,2,1) = \widetilde{\mu}$,
$\mu = (3,2)$, we have
\begin{equation}\label{example duality}
e^*_2((2,2,1)) = 12 = h^*_2((3,2)).
\end{equation}
\end{example} \qed

\begin{proposition}\label{Umeda free}

The set
$$
\big\{ \mathbf{I}_1(n), \mathbf{I}_2(n), \ldots, \mathbf{I}_n(n) \big\}
$$
is a set of algebraically independent generators of the center $\boldsymbol{\zeta}(n)$ of
$\mathbf{U}(gl(n)).$
\end{proposition}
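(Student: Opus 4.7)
The plan is to use the eigenvalue map on highest weight vectors as a faithful ``spectral'' representation of $\boldsymbol{\zeta}(n)$, converting the question into one about the shifted elementary and complete symmetric polynomials in the coordinates $\widetilde{\mu}_i$, and then to apply a Newton-type identity in a controlled way on the resulting filtration.

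First, I would set up the evaluation map $\mathrm{ev}: \boldsymbol{\zeta}(n) \rightarrow \mathbb{C}[\widetilde{\mu}_1, \ldots, \widetilde{\mu}_n]$ sending a central element $\boldsymbol{\varrho}$ to the polynomial describing its eigenvalue on $v_{\widetilde{\mu}}$. By Proposition \ref{Capelli eigenvalues} we have $\mathrm{ev}(\mathbf{H}_k(n)) = e^*_k(\widetilde{\mu})$, whose top-degree homogeneous component (with respect to total degree in the $\widetilde{\mu}_i$) is the classical elementary symmetric polynomial $e_k(\widetilde{\mu}_1, \ldots, \widetilde{\mu}_n)$. A standard top-degree argument with the weight grading $\mathrm{wt}(t_k)=k$ then shows that algebraic independence of $e_1, \ldots, e_n$ implies algebraic independence of $e^*_1, \ldots, e^*_n$; combined with Capelli's Theorem \ref{Capelli generators}, this forces $\mathrm{ev}$ to be injective on $\boldsymbol{\zeta}(n)$.

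By Theorem \ref{vertical strip}.2, $\mathrm{ev}(\mathbf{I}_k(n)) = h^*_k(\widetilde{\mu})$, with top-degree homogeneous component the classical complete symmetric polynomial $h_k(\widetilde{\mu}_1, \ldots, \widetilde{\mu}_n)$. The same weight-graded top-degree argument, using algebraic independence of $h_1, \ldots, h_n$, yields algebraic independence of $h^*_1, \ldots, h^*_n$ as polynomials in the $\widetilde{\mu}_i$. Pulling back through the injection $\mathrm{ev}$, the elements $\mathbf{I}_1(n), \ldots, \mathbf{I}_n(n)$ are algebraically independent in $\boldsymbol{\zeta}(n)$.

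To prove generation, I would show by induction on $k$ that $\mathbf{H}_k(n) \in \mathbb{C}[\mathbf{I}_1(n), \ldots, \mathbf{I}_n(n)]$. For $k=1$ both virtual presentations collapse to $\sum_i e_{ii}$, so $\mathbf{H}_1(n) = \mathbf{I}_1(n)$. For $k \geq 2$, the classical Newton identity $\sum_{j=0}^{k}(-1)^j e_j h_{k-j}=0$, applied to the top-degree parts of the $e^*_j$ and $h^*_j$ and lifted through the injection $\mathrm{ev}$, produces an identity in $\boldsymbol{\zeta}(n)$ of the form $\mathbf{H}_k(n) = (-1)^{k+1}\mathbf{I}_k(n) + Q_k + R_k$, where $Q_k$ is a polynomial in the $\mathbf{I}_j(n)$ and $\mathbf{H}_j(n)$ with $j<k$, and $R_k \in \boldsymbol{\zeta}(n)^{(k-1)}$. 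By the Corollary to Theorem \ref{Capelli generators}, $\boldsymbol{\zeta}(n)^{(k-1)}$ is spanned by the $\mathbf{H}_\lambda(n)$ with $|\lambda|\leq k-1$, hence contained in $\mathbb{C}[\mathbf{H}_1(n),\ldots,\mathbf{H}_{k-1}(n)]$, which by the inductive hypothesis lies in $\mathbb{C}[\mathbf{I}_1(n),\ldots,\mathbf{I}_{k-1}(n)]$. This forces $\mathbf{H}_k(n) \in \mathbb{C}[\mathbf{I}_1(n),\ldots,\mathbf{I}_k(n)]$ and closes the induction.

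The main obstacle is the inductive step for generation: transporting the classical Newton identity into a precise relation in the filtered algebra $\boldsymbol{\zeta}(n)$, while carefully controlling the lower-filtration ``error'' produced by the shifts that distinguish $e^*_k, h^*_k$ from $e_k, h_k$. An alternative route, cleaner but requiring material introduced only in later chapters, is to invoke the Harish-Chandra isomorphism $\chi_n : \boldsymbol{\zeta}(n) \to \Lambda^*(n)$ identifying $\mathbf{H}_k(n) \mapsto e^*_k$ and $\mathbf{I}_k(n) \mapsto h^*_k$, combined with the standard fact that both $\{e^*_k\}$ and $\{h^*_k\}$ are free systems of polynomial generators of the shifted symmetric polynomial algebra $\Lambda^*(n)$.
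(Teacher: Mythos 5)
The paper states Proposition \ref{Umeda free} without an explicit proof; it is implicitly attributed to the Nazarov/Umeda references and to the fact (proved later in Chapter \ref{Lambda(n)}, by the top-degree/indicator argument) that $\chi_n(\mathbf{I}_k(n)) = \mathbf{h}_k^*$ and the shifted complete symmetric polynomials form a free system of generators of $\Lambda^*(n)$. Your proposal supplies a correct proof, and your ``alternative route'' is precisely the mechanism the paper relies on: the Harish-Chandra isomorphism together with the observation that the indicator of $\mathbf{h}_k^*$ is the classical $\mathbf{h}_k$, which reduces everything to the classical statement for $\Lambda(n)$.

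Your ``first route'' is a legitimate, more elementary variant that works directly inside $\boldsymbol{\zeta}(n)$ via the eigenvalue map $\mathrm{ev}$ (which is nothing but $\chi_n$ in disguise, once one recalls the highest weight is $\widetilde{\mu}$), together with a Newton-identity induction on filtration degree. The only point worth tightening is the claim that the ``error'' $R_k$ lands in $\boldsymbol{\zeta}(n)^{(k-1)}$ and hence in $\mathbb{C}[\mathbf{H}_1(n),\ldots,\mathbf{H}_{k-1}(n)]$. This requires the observation that since $e^*_j$ has top-degree part $e_j$, an element of $\Lambda^*(n)$ of total degree $\le k-1$, when written as a polynomial in the $e^*_j$, involves only $e^*_1,\ldots,e^*_{k-1}$ (a weight-graded/filtration argument). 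This is true but should be said explicitly. Once that is in place, your induction closes, and the two routes deliver the same conclusion: the $\mathbf{I}_k(n)$ are a free system of generators because the $\mathbf{H}_k(n)$ are and the passage from the $e^*_k$-indicators to the $h^*_k$-indicators is the classical change of generators in $\Lambda(n)$, with the shift corrections absorbed by the filtration.
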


\

Given a shape
$$\lambda = (\lambda_1 \geq \cdots \geq \lambda_p), \ \lambda_1 \leq n,$$
we set
$$
\mathbf{I}_{\lambda}(n) = \mathbf{I}_{\lambda_1}(n)\mathbf{I}_{\lambda_2}(n) \cdots \mathbf{I}_{\lambda_p}(n).
$$
By convention, if $\lambda$ is the empty partition, we set $\mathbf{I}_{\emptyset}(n) = \mathbf{1} \in \boldsymbol{\zeta}(n).$

\subsection{The Shaped Capelli determinantal elements $\mathbf{K}_\lambda(n)$}

Let $\lambda = (\lambda_1, \ldots, \lambda_p),$  $\lambda_1 \leq n$.
We notice that any element
$$
e_{S_1,C_{\lambda}^{*}} \cdot e_{C_{\lambda}^{*},S_2} \in Virt(m_0+m_1,n),
$$
where $S_1, S_2$ are tableaux  on the proper alphabet $L = \{x_1, \ldots, x_n \}$ of shape $\lambda$,
$\lambda_1 \leq n$, $m_0 \geq \widetilde{\lambda}_1$, is
{\textit{skew-symmetric}}  in the rows of $S_1$ and  $S_2$, respectively.

\begin{definition}\label{shaped Capelli}

We set
$$
\mathbf{K}_\lambda(n) =
\sum_S \ \mathfrak{p} \big( e_{S,C_{\lambda}^*} \cdot e_{C_{\lambda}^*,S} \big) = \sum_S \ SC_{\lambda}^* \ C_{\lambda}^*S
\in {\mathbf{U}}(gl(n)),
$$
where the sum is extended to all  row-increasing tableaux $S$ on the proper alphabet $L = \{x_1, \ldots, x_n \}$.
\end{definition}

By convention, if $\lambda$ is the empty partition, we set $\mathbf{K}_{\emptyset}(n) = \mathbf{1} \in \boldsymbol{\zeta}(n).$

\begin{proposition}\label{K centr}
Since the adjoint representation acts by derivation, we have
$$
ad(e_{i j})\big(  \sum_S \   e_{S,C_{\lambda}^*} \cdot e_{C_{\lambda}^*,S}   \big) = 0,
$$
for every  $e_{i j} \in gl(n).$
\end{proposition}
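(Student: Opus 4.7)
The plan is to invoke Remark \ref{centrality-BR} together with Proposition \ref{rappresentazione aggiunta-BR}: since $\sum_S e_{S,C_\lambda^*}\,e_{C_\lambda^*,S}$ is by construction a virtual presentation of $\mathbf{K}_\lambda(n)$, it suffices to show that this sum is annihilated by $ad(e_{x_i,x_j})$ for every $e_{x_i,x_j}\in gl(n)$. Because $e_{x_i,x_j}$ has $\mathbb{Z}_2$-degree $0$, the super-Leibniz rule collapses to the ordinary one, and the supercommutator formula for $gl(m_0|m_1+n)$ yields the only nonzero specializations
$$ad(e_{x_i,x_j})(e_{x_k,\alpha_r}) = \delta_{jk}\,e_{x_i,\alpha_r},\qquad ad(e_{x_i,x_j})(e_{\alpha_r,x_k}) = -\delta_{ik}\,e_{\alpha_r,x_j},$$
the remaining potential terms vanishing because proper and virtual symbols lie in disjoint alphabets.

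The diagonal case $i=j$ is then immediate without any pairing argument: $ad(e_{x_i,x_i})$ multiplies $e_{S,C_\lambda^*}$ by the count $c_i(S)$ of occurrences of $x_i$ in $S$ and multiplies $e_{C_\lambda^*,S}$ by $-c_i(S)$, so the ordinary Leibniz rule annihilates each summand individually. For $i\neq j$, expanding $ad(e_{x_i,x_j})\bigl(e_{S,C_\lambda^*}\,e_{C_\lambda^*,S}\bigr)$ produces
$$\sum_{(r,c)\colon S(r,c)=x_j} e_{S^{(r,c)}_{j\to i},\,C_\lambda^*}\,e_{C_\lambda^*,\,S}\;-\;\sum_{(r,c)\colon S(r,c)=x_i} e_{S,\,C_\lambda^*}\,e_{C_\lambda^*,\,S^{(r,c)}_{i\to j}},$$
where $S^{(r,c)}_{a\to b}$ denotes the tableau obtained from $S$ by overwriting position $(r,c)$ with $x_b$. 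Since the factors $e_{x_k,\alpha_r}$ and $e_{\alpha_r,x_k}$ with fixed $r$ are pairwise anticommuting odd elements, the bitableau monomials $e_{T,C_\lambda^*}$ and $e_{C_\lambda^*,T}$ are anti-symmetric in the row entries of $T$, and consequently the product $e_{S,C_\lambda^*}\,e_{C_\lambda^*,S}$ depends only on the unordered content of each row of $S$. I may therefore replace the sum over row-increasing tableaux by $\bigl(\prod_r\lambda_r!\bigr)^{-1}$ times the sum over \emph{all} tableaux, and on this extended sum the substitution $T:=S^{(r,c)}_{j\to i}$ is a bijection carrying the first summation term-by-term onto the second, delivering the required cancellation.

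The main obstacle, and the only nonformal step, is the bookkeeping in this last matching: one must verify that degenerate tableaux (those with repeated entries in a single row) are annihilated by the same row anti-symmetry and hence may safely be included in the extended sum, and that the substitution $T=S^{(r,c)}_{j\to i}$ preserves all necessary signs. Once this is settled, the proof is a direct generalization of that of Proposition \ref{central virtual determinantal}, of which the present statement is precisely the shape-$\lambda$ analogue obtained by replacing the one-row virtual tableau $\alpha\cdots\alpha$ of length $k$ by the virtual Coderuyts tableau $C_\lambda^*$.
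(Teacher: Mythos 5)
Your proposal is correct and rests on the same key observation as the paper — the row skew-symmetry of the bitableau monomials $e_{S,C_\lambda^*}$ and $e_{C_\lambda^*,S}$ — which the paper itself invokes in order to reduce the statement to the cancellation argument of Proposition~\ref{central virtual determinantal}. You have simply filled in the details the paper leaves implicit: your handling of the diagonal case $i=j$ matches the remark in Example~\ref{Virtual Capelli}, and for $i\neq j$ your device of passing to a sum over \emph{all} tableaux (degenerate ones vanishing by the same row skew-symmetry) and exhibiting a term-by-term bijection between the left- and right-Leibniz contributions is an equivalent, slightly cleaner bookkeeping of the direct pairing used in the proof of Proposition~\ref{central virtual determinantal}.
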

Since the elements
$$
e_{S_1,C_{\lambda}^{*}} \cdot e_{C_{\lambda}^{*},S_2} \in Virt(m_0+m_1,n),
$$
are {\textit{skew-symmetric}}  in the rows of $S_1$ and  $S_2$,
the proof is essentially the same as the proof of Proposition \ref{central virtual determinantal}.

From Remark \ref{centrality-BR}, it follows

\begin{theorem}
The elements $\mathbf{K}_\lambda(n)$ are  central in $\mathbf{U}(gl(n))$.
\end{theorem}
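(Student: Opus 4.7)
The plan is to invoke the general mechanism of Remark \ref{centrality-BR} that already powered the earlier centrality proofs for $\mathbf{H}_k(n)$ and $\mathbf{I}_k(n)$. First, I would set
\[
\widehat{\mathbf{K}}_\lambda(n) = \sum_S \, e_{S,C_\lambda^*}\cdot e_{C_\lambda^*, S} \in \mathbf{U}(gl(m_0|m_1+n)),
\]
with $S$ ranging over row-increasing tableaux on the proper alphabet $L$ of shape $\lambda$. Each summand is a balanced monomial in the sense of Subsection \ref{balanced monomial}: $e_{C_\lambda^*, S}$ annihilates the proper symbols appearing in $S$ and creates the even virtual symbols making up $C_\lambda^*$, while $e_{S, C_\lambda^*}$ reverses this, annihilating those virtual symbols and re-creating the proper symbols of $S$. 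Hence $\widehat{\mathbf{K}}_\lambda(n) \in Virt(m_0+m_1,n)$, and by Definition \ref{shaped Capelli} its image under the Capelli epimorphism $\mathfrak{p}$ is precisely $\mathbf{K}_\lambda(n)$.

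Second, Corollary \ref{invarianti virtuali} tells me that the $\mathfrak{p}$-image of any $ad(gl(n))$-invariant element of $Virt(m_0+m_1,n)$ belongs to $\boldsymbol{\zeta}(n)$. So it suffices to establish that $\widehat{\mathbf{K}}_\lambda(n)$ is annihilated by $ad(e_{ij})$ for every $e_{ij} \in gl(n)$, which is exactly the content of Proposition \ref{K centr}, already in hand. The centrality of $\mathbf{K}_\lambda(n)$ then follows immediately from these two observations.

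The actual work lies in Proposition \ref{K centr}, and this is the step I expect to be the main obstacle. The mechanism mirrors the proof of Proposition \ref{central virtual determinantal}: since $C_\lambda^*$ is built entirely from even virtual symbols $\alpha_i \in A_0$, both factors $e_{S,C_\lambda^*}$ and $e_{C_\lambda^*,S}$ are row-skew-symmetric in the entries of $S$. When $ad(e_{ij})$ acts as a derivation on a summand indexed by $S$, it produces either a \emph{left} term (substituting $i$ for an occurrence of $j$ in the left copy of $S$) or a \emph{right} term (substituting $j$ for an occurrence of $i$ in the right copy). Reindexing the outer sum via row-skew-symmetry pairs each left contribution coming from $S$ with the right contribution coming from the tableau obtained by the appropriate $i/j$ swap, and the two cancel with opposite signs, exactly as in the strict-sequence argument for $\mathbf{H}_k(n)$. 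The only bookkeeping complication, compared to that case, is that rows of $S$ of equal length may be permuted freely when $\lambda$ has repeated parts, so one must organize the sum by the underlying \emph{unordered} multiset of rows; this does not disturb the sign-reversing pairing but is the one place where care is required.
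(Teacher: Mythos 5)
Your proposal reproduces the paper's argument: the paper also defines $\mathbf{K}_\lambda(n)$ via the virtual presentation $\sum_S e_{S,C_\lambda^*}\cdot e_{C_\lambda^*,S}$, observes (Proposition~\ref{K centr}) that this sum of balanced monomials is annihilated by $ad(e_{ij})$ by the same row-skew-symmetric sign-reversing pairing used for $\mathbf{H}_k(n)$ in Proposition~\ref{central virtual determinantal}, and then invokes Remark~\ref{centrality-BR} (equivalently Corollary~\ref{invarianti virtuali}) to conclude centrality. Your added caution about rows of equal length when $\lambda$ has repeated parts is a reasonable bookkeeping refinement of the paper's one-line ``essentially the same'' remark, but it does not change the route.
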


\begin{example} Consider the action of $ad(e_{21})$ on $\mathbf{K}_{(2,1)}(2)$.
We have
\begin{align*}
ad(e_{21}) \big( \mathbf{K}_{(2,1)}(2) \big) =
&
\mathfrak{p}  \Big( +  e_{{\fontsize{6} {6} \selectfont \substack{
2  2  \ \alpha_1  \alpha_1  \\
1   \phantom{2}\ \alpha_2   \phantom{\alpha_2}\\
}}}
\times
e_{{\fontsize{6} {6} \selectfont \substack{
\alpha_1  \alpha_1  \ 1  2  \\
\alpha_2   \phantom{\alpha_1} \ 1   \phantom{2}\\
}}}
\\
& \phantom{\mathfrak{p} \Big(}
+ e_{{\fontsize{6} {6} \selectfont \substack{
1  2  \ \alpha_1  \alpha_1  \\
2   \phantom{2}\ \alpha_2   \phantom{\alpha_2}\\
}}}
\times
e_{{\fontsize{6} {6} \selectfont \substack{
\alpha_1  \alpha_1  \ 1  2  \\
\alpha_2   \phantom{\alpha_1} \ 1   \phantom{2}\\
}}}
\\
& \phantom{\mathfrak{p} \Big(}
- e_{{\fontsize{6} {6} \selectfont \substack{
1  2  \ \alpha_1  \alpha_1  \\
1   \phantom{2}\ \alpha_2   \phantom{\alpha_2}\\
}}}
\times
e_{{\fontsize{6} {6} \selectfont \substack{
\alpha_1  \alpha_1  \ 1  1  \\
\alpha_2   \phantom{\alpha_1} \ 1   \phantom{2}\\
}}}
\Big)
\\
+ &
\mathfrak{p} \Big(  + e_{{\fontsize{6} {6} \selectfont \substack{
2  2  \ \alpha_1  \alpha_1  \\
2   \phantom{2}\ \alpha_2   \phantom{\alpha_2}\\
}}}
\times
e_{{\fontsize{6} {6} \selectfont \substack{
\alpha_1  \alpha_1  \ 1  2  \\
\alpha_2   \phantom{\alpha_1} \ 2   \phantom{2}\\
}}}
\\
& \phantom{\mathfrak{p} \Big(} -
e_{{\fontsize{6} {6} \selectfont \substack{
1  2  \ \alpha_1  \alpha_1  \\
2   \phantom{2}\ \alpha_2   \phantom{\alpha_2}\\
}}}
\times
e_{{\fontsize{6} {6} \selectfont \substack{
\alpha_1  \alpha_1  \ 1  1  \\
\alpha_2   \phantom{\alpha_1} \ 2   \phantom{2}\\
}}}
\\
& \phantom{\mathfrak{p} \Big(} -
e_{{\fontsize{6} {6} \selectfont \substack{
1  2  \ \alpha_1  \alpha_1  \\
2   \phantom{2}\ \alpha_2   \phantom{\alpha_2}\\
}}}
\times
e_{{\fontsize{6} {6} \selectfont \substack{
\alpha_1  \alpha_1  \ 1  2  \\
\alpha_2   \phantom{\alpha_1} \ 1   \phantom{2}\\
}}}
\Big) = 0.
\end{align*}

Consider the action of $ad(e_{12})$ on $\mathbf{K}_{(2,1)}(2)$.
We have
\begin{align*}
ad(e_{12}) \big( \mathbf{K}_{(2,1)}(2) \big) =
&
\mathfrak{p}  \Big( +  e_{{\fontsize{6} {6} \selectfont \substack{
1 1 \ \alpha_1  \alpha_1  \\
1   \phantom{2}\ \alpha_2   \phantom{\alpha_2}\\
}}}
\times
e_{{\fontsize{6} {6} \selectfont \substack{
\alpha_1  \alpha_1  \ 1  2  \\
\alpha_2   \phantom{\alpha_1} \ 1   \phantom{2}\\
}}}
\\
& \phantom{\mathfrak{p} \Big(}
- e_{{\fontsize{6} {6} \selectfont \substack{
1  2  \ \alpha_1  \alpha_1  \\
1   \phantom{2}\ \alpha_2   \phantom{\alpha_2}\\
}}}
\times
e_{{\fontsize{6} {6} \selectfont \substack{
\alpha_1  \alpha_1  \ 2  2  \\
\alpha_2   \phantom{\alpha_1} \ 1   \phantom{2}\\
}}}
\\
& \phantom{\mathfrak{p} \Big(}
- e_{{\fontsize{6} {6} \selectfont \substack{
1  2  \ \alpha_1  \alpha_1  \\
1   \phantom{2}\ \alpha_2   \phantom{\alpha_2}\\
}}}
\times
e_{{\fontsize{6} {6} \selectfont \substack{
\alpha_1  \alpha_1  \ 1  2  \\
\alpha_2   \phantom{\alpha_1} \ 2   \phantom{2}\\
}}}
\Big)
\\
+ &
\mathfrak{p} \Big(  + e_{{\fontsize{6} {6} \selectfont \substack{
1 1  \ \alpha_1  \alpha_1  \\
2   \phantom{2}\ \alpha_2   \phantom{\alpha_2}\\
}}}
\times
e_{{\fontsize{6} {6} \selectfont \substack{
\alpha_1  \alpha_1  \ 1  2  \\
\alpha_2   \phantom{\alpha_1} \ 2   \phantom{2}\\
}}}
\\
& \phantom{\mathfrak{p} \Big(} +
e_{{\fontsize{6} {6} \selectfont \substack{
1  2  \ \alpha_1  \alpha_1  \\
1   \phantom{2}\ \alpha_2   \phantom{\alpha_2}\\
}}}
\times
e_{{\fontsize{6} {6} \selectfont \substack{
\alpha_1  \alpha_1  \ 1  2  \\
\alpha_2   \phantom{\alpha_1} \ 2   \phantom{2}\\
}}}
\\
& \phantom{\mathfrak{p} \Big(} -
e_{{\fontsize{6} {6} \selectfont \substack{
1  2  \ \alpha_1  \alpha_1  \\
2   \phantom{2}\ \alpha_2   \phantom{\alpha_2}\\
}}}
\times
e_{{\fontsize{6} {6} \selectfont \substack{
\alpha_1  \alpha_1  \ 2  2  \\
\alpha_2   \phantom{\alpha_1} \ 2   \phantom{2}\\
}}}
\Big) = 0.
\end{align*}

Consider the action of $ad(e_{11})$ on $\mathbf{K}_{(2,1)}(2)$.
We have
\begin{align*}
ad(e_{11}) \big( \mathbf{K}_{(2,1)}(2) \big) =
&
\mathfrak{p}  \Big( + 2  e_{{\fontsize{6} {6} \selectfont \substack{
1 2 \ \alpha_1  \alpha_1  \\
1   \phantom{2}\ \alpha_2   \phantom{\alpha_2}\\
}}}
\times
e_{{\fontsize{6} {6} \selectfont \substack{
\alpha_1  \alpha_1  \ 1  2  \\
\alpha_2   \phantom{\alpha_1} \ 1   \phantom{2}\\
}}}
\\
& \phantom{
\mathfrak{p} \Big(}
- 2 e_{{\fontsize{6} {6} \selectfont \substack{
1  2  \ \alpha_1  \alpha_1  \\
1   \phantom{2}\ \alpha_2   \phantom{\alpha_2}\\
}}}
\times
e_{{\fontsize{6} {6} \selectfont \substack{
\alpha_1  \alpha_1  \ 1  2  \\
\alpha_2   \phantom{\alpha_1} \ 1   \phantom{2}\\
}}}
\Big)
\\
+ &
\mathfrak{p} \Big(  + e_{{\fontsize{6} {6} \selectfont \substack{
1 2  \ \alpha_1  \alpha_1  \\
2   \phantom{2}\ \alpha_2   \phantom{\alpha_2}\\
}}}
\times
e_{{\fontsize{6} {6} \selectfont \substack{
\alpha_1  \alpha_1  \ 1  2  \\
\alpha_2   \phantom{\alpha_1} \ 2   \phantom{2}\\
}}}
\\
& \phantom{\mathfrak{p} \Big(} -
e_{{\fontsize{6} {6} \selectfont \substack{
1  2  \ \alpha_1  \alpha_1  \\
2   \phantom{2}\ \alpha_2   \phantom{\alpha_2}\\
}}}
\times
e_{{\fontsize{6} {6} \selectfont \substack{
\alpha_1  \alpha_1  \ 1  2  \\
\alpha_2   \phantom{\alpha_1} \ 2   \phantom{2}\\
}}}
\Big) = 0.
\end{align*}

Consider the action of $ad(e_{22})$ on $\mathbf{K}_{(2,1)}(2)$.
We have
\begin{align*}
ad(e_{22}) \big( \mathbf{K}_{(2,1)}(2) \big) =
&
\mathfrak{p}  \Big( +   e_{{\fontsize{6} {6} \selectfont \substack{
1 2 \ \alpha_1  \alpha_1  \\
1   \phantom{2}\ \alpha_2   \phantom{\alpha_2}\\
}}}
\times
e_{{\fontsize{6} {6} \selectfont \substack{
\alpha_1  \alpha_1  \ 1  2  \\
\alpha_2   \phantom{\alpha_1} \ 1   \phantom{2}\\
}}}
\\
& \phantom{
\mathfrak{p} \Big(}
-  e_{{\fontsize{6} {6} \selectfont \substack{
1  2  \ \alpha_1  \alpha_1  \\
1   \phantom{2}\ \alpha_2   \phantom{\alpha_2}\\
}}}
\times
e_{{\fontsize{6} {6} \selectfont \substack{
\alpha_1  \alpha_1  \ 1  2  \\
\alpha_2   \phantom{\alpha_1} \ 1   \phantom{2}\\
}}}
\Big)
\\
+ &
\mathfrak{p} \Big(  + 2 e_{{\fontsize{6} {6} \selectfont \substack{
1 2  \ \alpha_1  \alpha_1  \\
2   \phantom{2}\ \alpha_2   \phantom{\alpha_2}\\
}}}
\times
e_{{\fontsize{6} {6} \selectfont \substack{
\alpha_1  \alpha_1  \ 1  2  \\
\alpha_2   \phantom{\alpha_1} \ 2   \phantom{2}\\
}}}
\\
& \phantom{\mathfrak{p} \Big(} - 2
e_{{\fontsize{6} {6} \selectfont \substack{
1  2  \ \alpha_1  \alpha_1  \\
2   \phantom{2}\ \alpha_2   \phantom{\alpha_2}\\
}}}
\times
e_{{\fontsize{6} {6} \selectfont \substack{
\alpha_1  \alpha_1  \ 1  2  \\
\alpha_2   \phantom{\alpha_1} \ 2   \phantom{2}\\
}}}
\Big) = 0.
\end{align*}
\end{example}\qed

Clearly,
$$
\mathbf{K}_\lambda(n) \in \boldsymbol{\zeta}(n)^{(m)},
$$
for every $m \geq |\lambda|.$

The elements $\mathbf{K}_\lambda(n)$ will be called {\it{shaped Capelli determinantal elements}}.

\begin{theorem}(Triangularity of the actions on highest weight vectors)\label{K Triangularity}
Let $\vartriangleleft$ denote the  dominance order on partitions.  We have:
\begin{align}
&\textrm{If} \ |\mu| < |\lambda|, \  \textrm{then}  &   \mathbf{K}_{\lambda}(n)(v_{\widetilde{\mu}}) &= 0,\label{tr 1}
\\
&\textrm{If} \ |\mu| = |\lambda| \ \textrm{and} \ \mu \ntrianglerighteq \lambda, \ \textrm{then} &
\mathbf{K}_{\lambda} (n) (v_{\widetilde{\mu}}) &= 0,\label{tr 2}
\\
& \textrm{If} \ \lambda = \mu, \   \textrm{then} &
\mathbf{K}_{\lambda}(n)(v_{\widetilde{\lambda}}) &= (-1)^{{k} \choose   {2}}  H(\lambda)\cdot v_{\widetilde{\lambda}},\label{tr 3}
\end{align}
where $H(\lambda) = H(\widetilde{\lambda})$ denotes the hook coefficient of the shape $\lambda \vdash k.$
\end{theorem}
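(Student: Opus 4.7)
The plan is to split by cases and reduce everything to Proposition \ref{Vanishing Lemma} and Proposition \ref{hook lemma}, using the virtual presentation
$$
\mathbf{K}_\lambda(n)(v_{\widetilde{\mu}}) \;=\; \sum_S\, SC_\lambda^*\cdot C_\lambda^* S(v_{\widetilde{\mu}})
$$
from Definition \ref{shaped Capelli}, where the sum is over row-increasing tableaux $S$ on $L$ of shape $\lambda$. The centrality of $\mathbf{K}_\lambda(n)$ (Theorem just proved in Proposition \ref{K centr}) guarantees that the result is a scalar multiple of $v_{\widetilde{\mu}}$, so I only need to track that scalar.

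For (\ref{tr 1}) and (\ref{tr 2}) the proof is immediate: every summand factors through the intermediate vector $C_\lambda^* S(v_{\widetilde{\mu}})$, which is annihilated by eq.~(\ref{due}) when $|\mu|<|\lambda|$ and by eq.~(\ref{duebis}) when $|\mu|=|\lambda|$ and $\mu\ntrianglerighteq\lambda$. No further computation is needed.

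For the diagonal case $\lambda=\mu$ the work is to isolate a single contributing summand and then to evaluate it. The balanced monomial $C_\lambda^* S$ has virtual content $\lambda$ (i.e., $\lambda_i$ copies of $\alpha_i$) and total degree $|\lambda|$, so the image $C_\lambda^* S(v_{\widetilde{\lambda}})$ sits in the $\lambda$-virtual-weight subspace of $Schur_\lambda(m_0,m_1+n)$. Since a bitableau of shape $\lambda$ has exactly $|\lambda|$ entries, that subspace leaves no room for proper symbols, and by Corollary \ref{theorem: standard basis} the only superstandard bitableau with these contents is $(C_\lambda^*|D_\lambda^P)$. Hence $C_\lambda^* S(v_{\widetilde{\lambda}})=c_S\,(C_\lambda^*|D_\lambda^P)$ for some scalar $c_S$, and the proper-weight balance $\widetilde{\lambda}-\mathrm{content}(S)=0$ forces $\mathrm{content}(S)=\widetilde{\lambda}$. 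A short induction on the entry $x_j$ — each $x_j$ must occupy exactly one cell of each of the $\widetilde{\lambda}_j$ rows of length $\geq j$, and row-increasingness forces it into column $j$ — shows that the unique row-increasing tableau of shape $\lambda$ with content $\widetilde{\lambda}$ is $S=D_\lambda$. Proposition \ref{hook lemma}, eq.~(\ref{proper}), then supplies $C_\lambda^* D_\lambda(v_{\widetilde{\lambda}}) = (-1)^{\binom{k}{2}} H(\lambda)\,(C_\lambda^*|D_\lambda^P)(\lambda!)^{-1}$, and the proper-alphabet analogue of eq.~(\ref{trivial}) — the operator $D_\lambda C_\lambda^*$ polarizes the $\lambda_i$ copies of $\alpha_i$ in row $i$ of $(C_\lambda^*|D_\lambda^P)$ into the proper row $x_1,\ldots,x_{\lambda_i}$ of $D_\lambda$, producing $\lambda_1!\cdots\lambda_p!=\lambda!$ identical contributions that cancel the factor $(\lambda!)^{-1}$ — yields $D_\lambda C_\lambda^*\bigl((C_\lambda^*|D_\lambda^P)(\lambda!)^{-1}\bigr)=(D_\lambda|D_\lambda^P)=v_{\widetilde{\lambda}}$. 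Chaining the two identities gives eq.~(\ref{tr 3}).

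The main obstacle is the uniqueness argument for $S=D_\lambda$: the vanishing half of the theorem and the hook-lemma computation are each a one-step citation, but pinning down the single contributing tableau requires combining the virtual-content bookkeeping with the superstandard-basis theorem. Once this is in hand, the rest of the argument is essentially a two-line concatenation of Propositions \ref{Vanishing Lemma} and \ref{hook lemma}.
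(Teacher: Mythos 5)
Your proof is correct and takes the paper's own route — Vanishing Lemma, eqs.~(\ref{due}) and (\ref{duebis}), for (\ref{tr 1}) and (\ref{tr 2}), and the Hook Lemma for (\ref{tr 3}) — while fleshing out the two steps that the paper's citation-only argument for (\ref{tr 3}) leaves implicit: the content-counting that isolates $S = D_\lambda$ as the unique row-strictly-increasing tableau of shape $\lambda$ with $C_\lambda^* S(v_{\widetilde{\lambda}}) \neq 0$, and the proper-alphabet analogue of eq.~(\ref{trivial}) (which does hold, since $x_j$ and $\beta_j$ carry the same $\mathbb{Z}_2$-degree and the polarization computation is identical). Both observations are sound and genuinely needed, because the Hook Lemma as stated only evaluates the single term coming from $S = D_\lambda$, whereas $\mathbf{K}_\lambda(n)$ is a sum over all row-increasing $S$.
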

Assertions (\ref{tr 1}) and (\ref{tr 2})   follow from the Proposition \ref{Vanishing Lemma}, eq. (\ref{due}) and eq. (\ref{duebis}),
respectively;  assertion (\ref{tr 3}) follows from  Proposition  \ref{hook lemma}.

Therefore, the central elements $\mathbf{K}_{\lambda}(n)$ are linearly independent, and the next result
follows at once.

\begin{proposition}\label{K basis}

For every $m \in \mathbb{Z}^+$, the set
$$
\big\{ \mathbf{K}_{\lambda}(n); \lambda_1 \leq n, \ |\lambda| \leq m \ \big\}
$$
is a linear basis of $\boldsymbol{\zeta}(n)^{(m)}.$

The set
$$
\big\{ \mathbf{K}_{\lambda}(n); \lambda_1 \leq n \ \big\}
$$
is a linear basis of the center $\boldsymbol{\zeta}(n).$

\end{proposition}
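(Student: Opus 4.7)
The plan is to deduce the proposition directly from the triangularity theorem (Theorem \ref{K Triangularity}) combined with Capelli's basis theorem for $\boldsymbol{\zeta}(n)$ (Theorem \ref{Capelli generators}). Since every central element acts on a highest weight vector by a scalar, Theorem \ref{K Triangularity} gives a triangular ``eigenvalue pairing'' between the family $\{\mathbf{K}_\lambda(n)\}$ and the vectors $\{v_{\widetilde{\mu}}\}$, with nonzero diagonal. This will yield linear independence directly, while the matching dimension count will force spanning.

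For linear independence, suppose a finite relation $\sum_\lambda c_\lambda\, \mathbf{K}_\lambda(n) = 0$ holds with each $\lambda_1 \leq n$. Among the partitions $\lambda$ with $c_\lambda \neq 0$, choose $\lambda^*$ with $|\lambda^*|$ minimal, and then dominance-minimal among the candidates of that size. Evaluate both sides on the highest weight vector $v_{\widetilde{\lambda^*}}$. By Theorem \ref{K Triangularity}, eq. (\ref{tr 1}), all terms indexed by $\lambda$ with $|\lambda| > |\lambda^*|$ vanish; by eq. (\ref{tr 2}), terms with $|\lambda| = |\lambda^*|$ but $\lambda \neq \lambda^*$ vanish, because minimality of $\lambda^*$ in dominance order forces $\lambda^* \not\trianglerighteq \lambda$; finally, by eq. (\ref{tr 3}),
\[
c_{\lambda^*}\, \mathbf{K}_{\lambda^*}(n)(v_{\widetilde{\lambda^*}}) \;=\; c_{\lambda^*}(-1)^{\binom{|\lambda^*|}{2}} H(\lambda^*)\, v_{\widetilde{\lambda^*}},
\]
and $H(\lambda^*) \neq 0$, so $c_{\lambda^*} = 0$, contradicting the choice of $\lambda^*$.

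For spanning, invoke Theorem \ref{Capelli generators}: the monomials $\mathbf{H}_\lambda(n) = \mathbf{H}_{\lambda_1}(n)\cdots\mathbf{H}_{\lambda_p}(n)$ with $\lambda_1 \leq n$ form a linear basis of $\boldsymbol{\zeta}(n)$. Since $\mathbf{H}_k(n) \in \boldsymbol{\zeta}(n)^{(k)}$, the monomials with $|\lambda| \leq m$ give a basis of $\boldsymbol{\zeta}(n)^{(m)}$. Hence $\dim_{\mathbb C}\, \boldsymbol{\zeta}(n)^{(m)}$ equals the number of partitions $\lambda$ with $\lambda_1 \leq n$ and $|\lambda| \leq m$, which is exactly the cardinality of $\{\mathbf{K}_\lambda(n) : \lambda_1 \leq n,\ |\lambda| \leq m\}$. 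Combined with linear independence just established, this proves the first assertion; taking the union over $m \in \mathbb{Z}^+$ yields the second.

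The only real work in this argument is hidden in Theorem \ref{K Triangularity}, itself a fairly immediate consequence of the Vanishing Lemmas (Proposition \ref{Vanishing Lemma}) and Regonati's Hook Lemma (Proposition \ref{hook lemma}); once those are available, the present step is a routine triangularity-plus-dimension-count deduction and presents no genuine obstacle.
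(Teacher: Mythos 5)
Your argument is correct and mirrors the paper's own (very terse) proof: both establish linear independence by exploiting the triangularity of Theorem~\ref{K Triangularity} on highest weight vectors, and then deduce the basis claim by comparing cardinalities with the Capelli basis $\{\mathbf{H}_\lambda(n);\ \lambda_1 \le n,\ |\lambda| \le m\}$ of $\boldsymbol{\zeta}(n)^{(m)}$. The bookkeeping you supply (choosing $\lambda^*$ of minimal size and then dominance-minimal, so that (\ref{tr 1}), (\ref{tr 2}) kill all other terms and (\ref{tr 3}) isolates $c_{\lambda^*}$) is exactly what the paper's remark ``linearly independent\dots follows at once'' leaves implicit.
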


The elements $\mathbf{K}_{\lambda}(n)$ are related to the Capelli generators $\mathbf{H}_k(n)$ by a {\it{Straightenig Law}}.

\begin{proposition}

\begin{equation}\label{sfilamenti 1}
\mathbf{K}_{\lambda}(n) = \pm \mathbf{H}_{\lambda}(n) + \sum \ c_{\lambda,\mu} \mathbf{K}_{\mu}(n),
\end{equation}
where $c_{\lambda,\mu} = 0$ unless $|\mu| \nless |\lambda|$.
\end{proposition}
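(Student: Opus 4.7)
The strategy is to write a common virtual presentation for both $\mathbf{H}_\lambda(n)$ and $\mathbf{K}_\lambda(n)$ in $Virt(m_0+m_1,n)$ using the same set of positive virtual symbols $\alpha_1,\ldots,\alpha_p\in A_0$, one per row of $\lambda=(\lambda_1,\ldots,\lambda_p)$, and then to rearrange the ordering of the factors so as to transform one into the other, up to commutator corrections that sit in a strictly smaller filtration piece of $\boldsymbol{\zeta}(n)$.

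Using a distinct $\alpha_i\in A_0$ inside each factor $\mathbf{H}_{\lambda_i}(n)$ and invoking the anticommutation of the odd generators $e_{x_j,\alpha_i}$ among themselves, one rewrites
\[
\mathbf{H}_{\lambda_i}(n)\;=\;(-1)^{\binom{\lambda_i}{2}}\sum_{S_i}\,\mathfrak{p}\bigl(A_i\,B_i\bigr),\qquad A_i=e_{S_i,C^*_{(\lambda_i)}},\quad B_i=e_{C^*_{(\lambda_i)},S_i},
\]
with $S_i$ ranging over strictly increasing rows of length $\lambda_i$ in $L$ and $C^*_{(\lambda_i)}=\alpha_i^{\lambda_i}$. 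Since the $\alpha_i$'s are pairwise distinct the projection $\mathfrak{p}$ factors through the product, so that
\[
\mathbf{H}_\lambda(n)\;=\;(-1)^{\sum_i\binom{\lambda_i}{2}}\sum_{S_1,\ldots,S_p}\mathfrak{p}\bigl(A_1B_1\,A_2B_2\,\cdots\,A_pB_p\bigr),
\]
whereas, by the very definition of the shaped Capelli determinants,
\[
\mathbf{K}_\lambda(n)\;=\;\sum_{S_1,\ldots,S_p}\mathfrak{p}\bigl(A_1A_2\cdots A_p\,B_1B_2\cdots B_p\bigr).
\]

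Next I would commute each $B_i$ rightward past the subsequent $A_j$ with $j>i$. Since every elementary factor $e_{\alpha_i,x}$ or $e_{x',\alpha_j}$ is odd (parity $|x|+|\alpha|=1$) and $\alpha_i\ne\alpha_j$, the supercommutator formula yields the basic move
\[
e_{\alpha_i,x}\,e_{x',\alpha_j}\;=\;-\,e_{x',\alpha_j}\,e_{\alpha_i,x}\;+\;\delta_{x,x'}\,e_{\alpha_i,\alpha_j}.
\]
Retaining only the pure anticommutation contribution at each elementary step, one picks up a total sign $(-1)^{\sum_{i<j}\lambda_i\lambda_j}$; combined with the prefactor $(-1)^{\sum_i\binom{\lambda_i}{2}}$, and using the identity $\sum_i\binom{\lambda_i}{2}+\sum_{i<j}\lambda_i\lambda_j=\binom{|\lambda|}{2}$, this assembles into an overall sign $(-1)^{\binom{|\lambda|}{2}}$ in front of $\mathbf{K}_\lambda(n)$. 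Hence one obtains a formula
\[
\mathbf{H}_\lambda(n)\;=\;(-1)^{\binom{|\lambda|}{2}}\,\mathbf{K}_\lambda(n)\;+\;R,
\]
where $R$ gathers all contributions originating in the contraction terms $\delta_{x,x'}e_{\alpha_i,\alpha_j}$; solving for $\mathbf{K}_\lambda(n)$ produces the $\pm\mathbf{H}_\lambda(n)$ part of the statement, with $\pm=(-1)^{\binom{|\lambda|}{2}}$.

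The main technical obstacle is then to control $R$. Each contraction replaces a pair of proper generators $(e_{x,\alpha_i},e_{x,\alpha_j})$ by the single ``all virtual'' operator $e_{\alpha_i,\alpha_j}$, strictly decreasing the number of proper factors in the corresponding monomial; all the intermediate manipulations take place inside $Virt(m_0+m_1,n)$ and commute with the adjoint action of $gl(n)$, so by Proposition~\ref{rappresentazione aggiunta-BR} and Corollary~\ref{invarianti virtuali} the image $\mathfrak{p}(R)$ is again a central element, but one of strictly lower filtration. Invoking the basis property of $\{\mathbf{K}_\mu:|\mu|\le m\}$ for $\boldsymbol{\zeta}(n)^{(m)}$ (Proposition~\ref{K basis}), this image expands uniquely as $\sum c_{\lambda,\mu}\mathbf{K}_\mu(n)$ with indices $\mu$ of smaller size than $\lambda$, giving the desired straightening identity. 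The subtle point is that the contracted virtual expressions, although no longer balanced monomials in the sense of Definition~\ref{balanced monomials-BR}, either become irregular at the leftmost ``all virtual'' step and hence project to zero by the very definition of $\mathfrak{p}$, or else reorganize as legitimate balanced monomials of strictly smaller total shape whose $\mathfrak{p}$-images fit the predicted $\mathbf{K}_\mu$-expansion.
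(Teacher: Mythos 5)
Your proof follows the same route as the paper's (which is just a one-line sketch appealing to commutator relations and the filtration degree): rewrite both $\mathbf{H}_\lambda(n)$ and $\mathbf{K}_\lambda(n)$ by a common virtual presentation with distinct row symbols $\alpha_i$, commute, track the sign, and discard the commutator remainder into lower filtration using the $\mathbf{K}$-basis of $\boldsymbol{\zeta}(n)^{(m)}$. The sign bookkeeping ($\sum_i\binom{\lambda_i}{2}+\sum_{i<j}\lambda_i\lambda_j=\binom{|\lambda|}{2}$) is correct and consistent with the special case $\mathbf{H}_k(n)=(-1)^{\binom{k}{2}}\mathbf{K}_{(k)}(n)$ from the $\mathbf{S}_{(k)}$ computation; note only that the paper's condition ``$|\mu|\nless|\lambda|$'' in the statement appears to be a typo for ``$|\mu|<|\lambda|$'', which is what both you and the paper's own proof actually establish.
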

\begin{proof} From the virtual presentations of $\mathbf{K}_{\lambda}(n)$ and $\mathbf{H}_{\lambda}(n)$,
it is easy to see - just applying the commutator relations in the algebra ${\mathbf{U}}(gl(m_0|m_1+n))$ -
that the difference between the l.h.s. and the first summand of the r.h.s. in eq. (\ref{sfilamenti 1}) is a central element in
$\boldsymbol{\zeta}(n)$ that belongs to a filtration element $\boldsymbol{\zeta}(n)^{(m)}$ with $m < |\lambda|.$
\end{proof}

\subsection{The Shaped Nazarov/Umeda permanental elements $\mathbf{J}_\lambda(n)$}

Given a column-nondecreasing tableaux $S$  on
the proper alphabet $L = \{x_1, \ldots, x_n \},$ let $o_S$ denote the product of the factorials of the
repetitions (of the symbols)  column by column. For example, let

$$
S = \left(
\begin{array}{lllll}
1 \quad 1 \quad 4 \\
1 \quad 1 \quad 4 \\
2 \quad 1 \quad 4 \\
3 \quad 2 \quad 5 \\
3 \quad 3
\end{array}
\right), \quad sh(S) = (3, 3, 3, 3, 2);
$$
then $o_S = 2! \ 2! \ 3! \ 3!.$

Let $\lambda = (\lambda_1, \ldots, \lambda_p),$  $\lambda \vdash d, \quad d \in \mathbb{Z}^{+}$.

\begin{definition}\label{shaped Nazarov/Umeda}

We set
$$
\mathbf{J}_\lambda(n) =
\sum_S \ (o_S)^{-1} \ \mathfrak{p} \big( e_{S,D^*_{\widetilde{\lambda}}} \cdot e_{D^*_{\widetilde{\lambda}},S} \big) =
\sum_S \  (o_S)^{-1} \ SD^*_{\widetilde{\lambda}} \ D^*_{\widetilde{\lambda}}S
\in \mathbf{U}(gl(n)),
$$
where the sum is extended to all  column-nondecreasing tableaux $S$ (of shape $\widetilde{\lambda}$) on
the proper alphabet $L = \{x_1, \ldots, x_n \}$.
\end{definition}

By convention, if $\lambda$ is the empty partition, we set $\mathbf{J}_{\emptyset}(n) = \mathbf{1} \in \boldsymbol{\zeta}(n).$

\begin{claim}

We have

$$
e_{S,D^*_{\widetilde{\lambda}}} = e_{\widetilde{S},\widetilde{D^*_{\widetilde{\lambda}}}}
$$
and
$$
e_{D^*_{\widetilde{\lambda}},S} =  e_{\widetilde{D^*_{\widetilde{\lambda}}},\widetilde{S}}.
$$
Therefore
\begin{align}
\mathbf{J}_\lambda(n) &=
\sum_T \ (o_{\widetilde{T}})^{-1} \ \mathfrak{p} \big( e_{T,\widetilde{D^*_{\widetilde{\lambda}}}} \cdot  e_{\widetilde{D^*_{\widetilde{\lambda}}},T} \big)
\label{J  definition}
\\
&= \sum_T \ (o_{\widetilde{T}})^{-1} \ T\widetilde{D^*_{\widetilde{\lambda}}} \ \widetilde{D^*_{\widetilde{\lambda}}}T
\in {\mathbf{U}}(gl(n)),
\end{align}
where the sum is extended to all  row-nondecreasing tableaux $T$ (of shape $\lambda$) on the proper alphabet $L = \{x_1, \ldots, x_n \}$.

In particular, if $\lambda = (k)$, the row shape of length $k$, then $\mathbf{J}_\lambda(n) = \mathbf{I}_k(n).$

\end{claim}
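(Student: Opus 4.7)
The plan is to establish the claim by showing that the factors constituting each bitableau monomial $e_{S, D^*_{\widetilde{\lambda}}}$ and $e_{D^*_{\widetilde{\lambda}}, S}$ commute pairwise in ${\mathbf{U}}(gl(m_0|m_1+n))$, so that the products depend only on the underlying multisets of factors and not on the traversal order dictated by the tableau. The transposition $S \mapsto \widetilde{S}$, $D^*_{\widetilde{\lambda}} \mapsto \widetilde{D^*_{\widetilde{\lambda}}}$ merely reindexes these multisets, which yields the two stated identities; the rest of the claim is then a change of summation variable plus a direct comparison with Definition \ref{perm gen-BR}.

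First, I would unpack the monomial: since $D^*_{\widetilde{\lambda}}(i,j) = \beta_j$, the product $e_{S, D^*_{\widetilde{\lambda}}}$ is built out of factors $e_{S(i,j), \beta_j}$ as $(i,j)$ ranges over the Ferrers diagram of $\widetilde{\lambda}$. Because $S(i,j) \in L$ is proper (odd) and $\beta_j \in A_1$ is virtual (odd), each such factor is $\mathbb{Z}_2$-homogeneous of degree $1+1=0$; that is, \emph{even}. Applying the explicit supercommutator
$$
[e_{a,b}, e_{c,d}] = \delta_{b,c}\, e_{a,d} - (-1)^{(|a|+|b|)(|c|+|d|)}\delta_{a,d}\, e_{c,b}
$$
to two such factors, both Kronecker deltas vanish because no proper symbol ever coincides with a virtual one. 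Hence all factors of $e_{S, D^*_{\widetilde{\lambda}}}$ commute with one another, and the same argument shows that all factors of $e_{D^*_{\widetilde{\lambda}}, S}$ commute among themselves.

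Next, I would match the multisets. The entry of $\widetilde{S}$ at position $(i,j)\in \lambda$ equals $S(j,i)$, and the entry of $\widetilde{D^*_{\widetilde{\lambda}}}$ at the same position equals $D^*_{\widetilde{\lambda}}(j,i)=\beta_i$. Hence the family of factors of $e_{\widetilde{S}, \widetilde{D^*_{\widetilde{\lambda}}}}$ is $\{e_{S(j,i),\beta_i}\}_{(i,j)\in\lambda}$, which under the bijection $(i,j)\leftrightarrow (j,i)$ between $\lambda$ and $\widetilde{\lambda}$ is exactly the family $\{e_{S(i,j),\beta_j}\}_{(i,j)\in\widetilde{\lambda}}$ underlying $e_{S, D^*_{\widetilde{\lambda}}}$. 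By the commutativity established in the previous paragraph, the two products coincide; the same bookkeeping yields $e_{D^*_{\widetilde{\lambda}},S}=e_{\widetilde{D^*_{\widetilde{\lambda}}},\widetilde{S}}$. The reindexation $S\mapsto T=\widetilde{S}$ in Definition \ref{shaped Nazarov/Umeda} then turns the sum over column-nondecreasing tableaux of shape $\widetilde{\lambda}$ into a sum over row-nondecreasing tableaux of shape $\lambda$, while $o_S=o_{\widetilde{T}}$ holds because the column repetitions of $S$ are literally the row repetitions of $T$.

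Finally, for the specialization $\lambda=(k)$ one has $\widetilde{\lambda}=(1^k)$, so $\widetilde{D^*_{\widetilde{\lambda}}}$ is the single row $\beta_1\beta_1\cdots\beta_1$. A row-nondecreasing $T$ of shape $(k)$ is uniquely written as $x_1^{h_1}x_2^{h_2}\cdots x_n^{h_n}$ with $h_1+\cdots+h_n=k$, giving $o_{\widetilde{T}}=h_1!h_2!\cdots h_n!$; using once more that the factors $e_{x_i,\beta_1}$ (respectively $e_{\beta_1, x_i}$) commute, the summand $\mathfrak{p}(e_{T,\widetilde{D^*_{\widetilde{\lambda}}}}\cdot e_{\widetilde{D^*_{\widetilde{\lambda}}},T})$ coincides with $\mathfrak{p}(e_{n,\beta}^{h_n}\cdots e_{1,\beta}^{h_1}\cdot e_{\beta,1}^{h_1}\cdots e_{\beta,n}^{h_n})$, matching Definition \ref{perm gen-BR} term by term. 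The only genuine step is the supercommutator check of paragraph two: once that is in place, everything else is a matter of transposing diagrams and renaming indices, and no nontrivial obstacle remains.
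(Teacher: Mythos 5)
Your proof is correct and complete. The paper states this Claim without proof, and your argument fills the gap in the natural way: the only substantive step is observing that every factor $e_{x,\beta}$ (resp.\ $e_{\beta,x}$) with $x \in L$ and $\beta \in A_1$ is $\mathbb{Z}_2$-even and that the two Kronecker deltas in the supercommutator formula vanish because a proper symbol never equals a virtual one, so all factors of each bitableau monomial commute pairwise; after that, matching the multisets of factors under the bijection $(i,j) \in \lambda \leftrightarrow (j,i) \in \widetilde{\lambda}$, substituting $T = \widetilde{S}$ in the sum (so that $o_{\widetilde{T}} = o_S$ trivially since $\widetilde{T} = S$), and unwinding the case $\lambda = (k)$ against Definition \ref{perm gen-BR} are all routine bookkeeping, exactly as you carried out.
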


\begin{proposition}
Since the adjoint representation acts by derivation, we have
$$
ad(e_{i j})\big( \sum_S \ (o_S)^{-1} \  e_{S,D^*_{\widetilde{\lambda}}} \cdot e_{D^*_{\widetilde{\lambda}},S}   \big) = 0 ,
$$
for every  $e_{i j} \in gl(n).$
\end{proposition}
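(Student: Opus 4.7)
The plan is to mirror the proof of Proposition \ref{central virtual permanental}, extending the cancellation argument from the single-row case $\mathbf{I}_k(n)$ to the shaped case by iterating over the columns of $\widetilde{\lambda}$. Since the adjoint representation acts by derivation, I would split
\[
ad(e_{ij})\bigl(e_{S,D^*_{\widetilde\lambda}} \cdot e_{D^*_{\widetilde\lambda},S}\bigr) = ad(e_{ij})\bigl(e_{S,D^*_{\widetilde\lambda}}\bigr) \cdot e_{D^*_{\widetilde\lambda},S} + e_{S,D^*_{\widetilde\lambda}} \cdot ad(e_{ij})\bigl(e_{D^*_{\widetilde\lambda},S}\bigr),
\]
using the formulas $ad(e_{ij})(e_{x_h,\beta_k}) = \delta_{jh}\, e_{x_i,\beta_k}$ and $ad(e_{ij})(e_{\beta_k,x_h}) = -\delta_{ih}\, e_{\beta_k,x_j}$. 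Thus the ``left'' action replaces one occurrence of $x_j$ inside $S$ by $x_i$, and the ``right'' action replaces one occurrence of $x_i$ inside $S$ by $-x_j$. Without loss of generality I would restrict to $i \neq j$, the diagonal case being an immediate balance of $+x_i$ and $-x_i$ contributions.

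The key structural point is that $D^*_{\widetilde\lambda}$ uses distinct virtual symbols $\beta_1, \beta_2, \ldots$ within each row (unlike $C^*_\lambda$), and moreover the same $\beta_k$ appears exactly once in each row of length $\geq k$, in column $k$. Consequently the virtual symbol $\beta_k$ couples only to the entries of $S$ sitting in column $k$ of $\widetilde\lambda$. I would use this to localize the cancellation one column at a time: group the sum over column-nondecreasing $S$ by fixing the entries in all columns other than $k$, and watch the action of $ad(e_{ij})$ on the column-$k$ slice. Within that slice the sum over column-$k$ entries reduces precisely to the single-column (i.e. $\mathbf{I}_{\lambda_k}$-type) sum of Proposition \ref{central virtual permanental}, since the weight $(o_S)^{-1}$ factorizes as a product of column-wise factorial weights, and the column-$k$ factor is exactly the product $(h_1^{(k)}!\cdots h_n^{(k)}!)^{-1}$ appearing in Definition \ref{perm gen-BR}.

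Once the reduction to a fixed column is in place, the same pairwise cancellation as in Proposition \ref{central virtual permanental} applies: the contribution of the left action on a column-$k$ content vector $(\ldots,h_i,h_j+1,\ldots)$ (coefficient $(h_j+1)\cdot((h_j+1)!\,h_i!)^{-1} = (h_j!\,h_i!)^{-1}$) exactly cancels the contribution of the right action on the shifted content $(\ldots,h_i+1,h_j,\ldots)$ (coefficient $(h_i+1)\cdot((h_i+1)!\,h_j!)^{-1} = (h_i!\,h_j!)^{-1}$), after the change of variable $j = p+1$ that was used in the $\mathbf{I}_k$ proof.

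The main obstacle will be bookkeeping the signs arising from the odd parity $|\beta_k| = 1$ when rearranging the monomial to collect column-$k$ factors together across rows, since the factors $e_{x,\beta_k}$ and $e_{x',\beta_{k'}}$ anticommute for $k \neq k'$. However, the ``virtualizing'' half $e_{D^*_{\widetilde\lambda},S}$ and the ``devirtualizing'' half $e_{S,D^*_{\widetilde\lambda}}$ must be rearranged in parallel, so the two sign contributions multiply to $+1$, and the localization passes through cleanly. After this sign-check, the column-by-column application of the $\mathbf{I}_{\lambda_k}(n)$ argument gives $ad(e_{ij})(\mathbf{J}_\lambda(n)) = 0$ for every $e_{ij}\in gl(n)$, and centrality then follows from Remark \ref{centrality-BR} exactly as for $\mathbf{K}_\lambda(n)$.
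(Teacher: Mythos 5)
Your column-by-column reduction is a sound way to carry the cancellation argument of Proposition \ref{central virtual permanental} over to the shaped case, and it matches the paper's intent (the paper says only that this proof is ``essentially the same as'' the single-row case). The structural facts you rely on --- that $D^*_{\widetilde\lambda}$ places the virtual symbol $\beta_k$ exactly once in each row of length $\geq k$, and that $(o_S)^{-1}$ factorizes over columns into the weights $(h^{(k)}_1! \cdots h^{(k)}_n!)^{-1}$ that occur in Definition \ref{perm gen-BR} --- are correct, and the pairwise $j \mapsto p+1$ cancellation then goes through column by column.

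One correction to the ``main obstacle'' paragraph, though: you claim that $e_{x,\beta_k}$ and $e_{x',\beta_{k'}}$ anticommute because $|\beta_k| = 1$, and you then salvage this with a ``parallel rearrangement'' sign argument. Both steps are off. The $\mathbb{Z}_2$-degree of a matrix unit is $|e_{a,b}| = |a| + |b|$, and since $|x_h| = |\beta_k| = 1$, the factor $e_{x_h,\beta_k}$ has degree $1+1 = 0$, i.e.\ it is \emph{even}; moreover the explicit bracket
$[e_{x_h,\beta_k},\, e_{x_{h'},\beta_{k'}}] = \delta_{\beta_k, x_{h'}}\, e_{x_h,\beta_{k'}} - \delta_{x_h, \beta_{k'}}\, e_{x_{h'},\beta_k} = 0$
vanishes outright, because a virtual $\beta$ never equals a proper $x$. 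The same holds for the $e_{\beta_k,x_h}$'s among themselves. So the left-hand factors (and separately the right-hand factors) genuinely commute with no signs whatsoever, and no sign bookkeeping is needed for the localization. Your conclusion that the rearrangement ``passes through cleanly'' is therefore correct, but for a more elementary reason than you offer. The distinction would actually matter if you attempted the analogous reduction for $\mathbf{K}_\lambda(n)$: there the factors $e_{x_h,\alpha_k}$ have degree $1+0 = 1$ and really do anticommute, which is why the paper models that proof on the determinantal Proposition \ref{central virtual determinantal} rather than on the permanental one.
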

The proof is essentially the same as the proof of Proposition \ref{central virtual permanental}.

From Remark \ref{centrality-BR}, it follows

\begin{theorem}
The elements $\mathbf{J}_\lambda(n)$ are  central in $\mathbf{U}(gl(n))$.
\end{theorem}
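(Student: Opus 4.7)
The plan is to invoke Remark \ref{centrality-BR}: if the virtual presentation
$$
\boldsymbol{\rho} \;=\; \sum_S (o_S)^{-1} \ e_{S, D^*_{\widetilde\lambda}} \cdot e_{D^*_{\widetilde\lambda}, S}
$$
is an invariant for the adjoint action of $gl(n)$ on $Virt(m_0+m_1, n)$, then by Proposition \ref{rappresentazione aggiunta-BR} the image $\mathbf{J}_\lambda(n) = \mathfrak{p}(\boldsymbol{\rho})$ is automatically central in $\mathbf{U}(gl(n))$. The task thus reduces to checking that $ad(e_{x_i, x_j})(\boldsymbol{\rho}) = 0$ for every $e_{x_i, x_j} \in gl(n)$, which is precisely the Proposition stated immediately above the Theorem.

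First I would pass to the transposed presentation furnished by the Claim, namely
$$
\boldsymbol{\rho} \;=\; \sum_T (o_{\widetilde T})^{-1} \ e_{T, \widetilde{D^*_{\widetilde\lambda}}} \cdot e_{\widetilde{D^*_{\widetilde\lambda}}, T},
$$
where $T$ ranges over row-nondecreasing tableaux of shape $\lambda$. The transposed Deruyts tableau $\widetilde{D^*_{\widetilde\lambda}}$ has the distinctive feature that its $r$-th row consists of $\lambda_r$ copies of the single odd virtual symbol $\beta_r \in A_1$, with the $\beta_r$'s pairwise distinct. Consequently the bitableau monomial factors row-by-row as
$$
e_{T, \widetilde{D^*_{\widetilde\lambda}}} \;=\; \prod_r L_r(T), \qquad e_{\widetilde{D^*_{\widetilde\lambda}}, T} \;=\; \prod_r R_r(T),
$$
with $L_r(T) = e_{t_{r,1}, \beta_r} \cdots e_{t_{r,\lambda_r}, \beta_r}$ and $R_r(T) = e_{\beta_r, t_{r,1}} \cdots e_{\beta_r, t_{r,\lambda_r}}$, and the weight also factors $(o_{\widetilde T})^{-1} = \prod_r (o_{\widetilde T}^{(r)})^{-1}$, with $o_{\widetilde T}^{(r)}$ the product of factorials of the repetitions in the $r$-th row of $T$.

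Next, since $ad(e_{x_i, x_j})$ is a derivation, its action on $\prod_r L_r(T) \cdot \prod_r R_r(T)$ expands as a sum of terms each hitting a single factor $L_r$ or $R_r$ for some fixed row-index $r$. Fixing all rows but row $r$ and summing over row-$r$ compositions $(h_1, \ldots, h_n)$ with $h_1 + \cdots + h_n = \lambda_r$, the partial sum literally coincides with the expression
$$
\sum_{(h_1, \ldots, h_n)} (h_1! \cdots h_n!)^{-1} \ e_{x_n, \beta_r}^{h_n} \cdots e_{x_1, \beta_r}^{h_1} \cdot e_{\beta_r, x_1}^{h_1} \cdots e_{\beta_r, x_n}^{h_n}
$$
shown $ad(e_{x_i, x_j})$-invariant in Proposition \ref{central virtual permanental} via the substitution $j \mapsto p+1$ that telescopes two consecutive multinomial-weighted terms. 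Thus every row-indexed summand in the derivation expansion vanishes.

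The main obstacle is that the row-blocks $L_r$ and $R_{r'}$ with $r \neq r'$ do not commute outright, since the commutator $[e_{x_h, \beta_r}, e_{\beta_{r'}, x_k}]$ can produce nonzero cross-virtual contributions $-\delta_{hk} \ e_{\beta_{r'}, \beta_r}$. The cleanest way to absorb these is to note that any such rearrangement lies in the left ideal $\mathbf{Irr}$, because $e_{\beta_{r'}, \beta_r}$ annihilates the virtual symbol $\beta_r$ without a matching prior creation; such terms are therefore killed by the Capelli epimorphism $\mathfrak{p}$, and by Proposition \ref{rappresentazione aggiunta-BR} they contribute trivially to $ad(e_{x_i, x_j})(\mathbf{J}_\lambda(n))$. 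Combining this with the row-wise vanishing from the previous step yields $ad(e_{x_i, x_j})(\mathbf{J}_\lambda(n)) = 0$ in $\mathbf{U}(gl(n))$, establishing the centrality of $\mathbf{J}_\lambda(n)$.
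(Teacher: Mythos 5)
Your proposal follows the same route the paper takes, namely invoking Remark \ref{centrality-BR} and reducing to the statement that the virtual presentation is an invariant for the adjoint action, with the paper then deferring to the telescoping cancellation already carried out for Proposition \ref{central virtual permanental}. The first three paragraphs are sound: the transposed Deruyts tableau $\widetilde{D^*_{\widetilde\lambda}}$ indeed has $r$-th row $\beta_r^{\lambda_r}$ with distinct $\beta_r$'s, so the bitableau monomial factors row-by-row, the weight $(o_{\widetilde T})^{-1}$ factors the same way, and the Leibniz expansion of $ad(e_{ij})$ over the factors reproduces the one-row computation of Proposition \ref{central virtual permanental} row by row.

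The final paragraph, however, addresses a non-obstacle and its resolution is not rigorously justified. The cancellation does \emph{not} require bringing $L_r$ and $R_r$ adjacent (hence no commutation of row-blocks is ever performed). The Leibniz rule writes $ad(e_{ij})\big(L_1\cdots L_p R_1\cdots R_p\big)$ as a sum of products with a single factor modified \emph{in place}; the summand obtained by hitting $L_r$ of a tableau $T$ with row-$r$ content $(h_1,h_2,\ldots)$ and the summand obtained by hitting $R_r$ of the shifted tableau $T'$ with row-$r$ content $(h_1+1,h_2-1,\ldots)$ are the \emph{same} element of $\mathbf{U}(gl(m_0|m_1+n))$, with coefficients $(o_{\widetilde T})^{-1}h_2$ and $-(o_{\widetilde{T'}})^{-1}(h_1+1)$. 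Since $o_{\widetilde{T'}} = o_{\widetilde T}\cdot(h_1+1)/h_2$, these are equal and opposite, so $ad(e_{ij})(\boldsymbol\rho)=0$ holds exactly, not merely modulo $\mathbf{Irr}$. As for the claim that any cross-row commutator term $e_{\beta_{r'},\beta_r}$ is automatically irregular: this is not established by the argument given — irregularity is a positional condition (Definition \ref{Irregular expressions-BR}), and whether the inserted $e_{\beta_{r'},\beta_r}$ lies to the left or to the right of the creations $e_{\beta_r,*}$ coming from $R_r$ would have to be tracked. Fortunately this question simply does not arise if one does not rearrange the factors.
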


Hence, the elements $\mathbf{J}_\lambda(n)$ are  central in $\mathbf{U}(gl(n))$, by Remark \ref{centrality-BR}.

Clearly,
$$
\mathbf{J}_\lambda(n) \in \boldsymbol{\zeta}(n)^{(m)},
$$
for every $m \geq |\lambda|.$

The elements $J_\lambda(n)$ will be called {\it{shaped Nazarov/Umeda permanental elements}}.

\begin{proposition}(Triangularity of the actions on highest weight vectors)\label{J Triangularity}
We have:
\begin{align}
&\textrm{If} \ |\mu| < |\lambda|, \  \textrm{then}  &   \mathbf{J}_{\lambda}(n)(v_{\widetilde{\mu}}) &= 0, \label{tr-unobis}
\\
&\textrm{If} \ |\mu| = |\lambda| \ \textrm{and} \ \widetilde{\mu} \ntrianglerighteq \lambda, \ \textrm{then} &
\mathbf{J}_{\lambda} (n) (v_{\widetilde{\mu}}) &= 0. \label{tr-duebis}
\end{align}
\end{proposition}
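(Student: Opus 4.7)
The plan is to mirror the strategy used for Theorem \ref{K Triangularity}, reducing the triangularity assertions to the Vanishing Lemma (Proposition \ref{Vanishing Lemma}), but now invoking the clauses (\ref{uno}) and (\ref{unobis}) in place of (\ref{due}) and (\ref{duebis}). As stressed in Remark \ref{centrality-BR} and throughout this chapter, the virtual presentation does all the work.

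First I would unfold the virtual presentation given in Definition \ref{shaped Nazarov/Umeda},
\[
\mathbf{J}_{\lambda}(n)(v_{\widetilde{\mu}}) \;=\; \sum_{S}\,(o_{S})^{-1}\; SD^{*}_{\widetilde{\lambda}}\cdot D^{*}_{\widetilde{\lambda}}S\,(v_{\widetilde{\mu}}),
\]
the sum running over column-nondecreasing tableaux $S$ of shape $\widetilde{\lambda}$ on the proper alphabet $L$. Since the rightmost (virtualizing) factor $e_{D^{*}_{\widetilde{\lambda}},S}$ acts first on $v_{\widetilde{\mu}}$, and the devirtualization $e_{S,D^{*}_{\widetilde{\lambda}}}$ is subsequently applied to the result, it suffices to establish that the partial action $D^{*}_{\widetilde{\lambda}}S\,(v_{\widetilde{\mu}}) = 0$ for every admissible $S$; the outer operator is then applied to zero and each summand vanishes.

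Next I would apply Proposition \ref{Vanishing Lemma} with the partition parameter specialized to $\widetilde{\lambda}$, so that the virtual Deruyts tableau appearing there coincides with the $D^{*}_{\widetilde{\lambda}}$ of our virtual presentation. Eq. (\ref{uno}), read with $\lambda$ replaced by $\widetilde{\lambda}$, yields $D^{*}_{\widetilde{\lambda}}S(v_{\widetilde{\mu}})=0$ whenever $|\mu|<|\widetilde{\lambda}|=|\lambda|$, which is precisely (\ref{tr-unobis}). Eq. (\ref{unobis}) is stated in exactly the form we need: if $|\mu|=|\lambda|$ and $\widetilde{\mu}\ntrianglerighteq\lambda$ then $D^{*}_{\widetilde{\lambda}}S(v_{\widetilde{\mu}})=0$, which gives (\ref{tr-duebis}).

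No real obstacle arises here: once the virtual presentation of $\mathbf{J}_{\lambda}(n)$ is in hand, the entire argument is a one-line deduction from the Vanishing Lemma, the only subtlety being the careful tracking of the $\lambda\leftrightarrow\widetilde{\lambda}$ swap forced by the fact that the $\mathbf{J}_{\lambda}(n)$ are built from Deruyts tableaux of shape $\widetilde{\lambda}$ (``internal row symmetry''), whereas the $\mathbf{K}_{\lambda}(n)$ are built from Coderuyts tableaux of shape $\lambda$ (``internal row skew-symmetry''). This transposition exchange is also the reason the dominance hypothesis in (\ref{tr-duebis}) is phrased in terms of $\widetilde{\mu}\ntrianglerighteq\lambda$ rather than $\mu\ntrianglerighteq\lambda$ as in (\ref{tr 2}).
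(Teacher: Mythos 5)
Your proof is correct and follows essentially the same route as the paper's: the paper's proof is the one-line observation that (\ref{tr-unobis}) and (\ref{tr-duebis}) follow from eqs.\ (\ref{uno}) and (\ref{unobis}) of Proposition \ref{Vanishing Lemma}, respectively, applied to the virtualizing factor $D^*_{\widetilde{\lambda}}S$ in the presentation of $\mathbf{J}_\lambda(n)$. You fill in the details correctly, including the harmless $\lambda\mapsto\widetilde{\lambda}$ substitution in (\ref{uno}); the only small imprecision is in the closing remark, since (\ref{unobis}) is invoked verbatim without any transposition swap — its hypothesis already reads $\widetilde{\mu}\ntrianglerighteq\lambda$ and its conclusion already involves $D^*_{\widetilde{\lambda}}$.
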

Assertions (\ref{tr-unobis}) and (\ref{tr-duebis})   follow from the Proposition \ref{Vanishing Lemma}, eq. (\ref{uno}) and eq. (\ref{unobis}),
respectively.

\begin{proposition}\label{J filtration} For every shape $\lambda$, we have:
\begin{equation}\label{J sfilamenti}
\mathbf{J}_{\lambda}(n) = \mathbf{I}_{\lambda}(n) + \mathbf{F}_{\lambda}(n),
\end{equation}
where
\begin{align*}
&\mathbf{I}_{\lambda}(n) \notin \boldsymbol{\zeta}(n)^{(m)}, \quad if \ m < |\lambda|,
\\
&\mathbf{F}_{\lambda}(n) \in \boldsymbol{\zeta}(n)^{(p)}, \quad for \ some \ p < |\lambda|.
\end{align*}
\end{proposition}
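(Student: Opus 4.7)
The plan is to compare the virtual presentations of $\mathbf{J}_\lambda(n)$ and $\mathbf{I}_\lambda(n)$ by using a common family of $p=\ell(\lambda)$ distinct odd virtual symbols $\beta_1,\ldots,\beta_p\in A_1$, one for each row of $\lambda$. In $\widetilde{D^*_{\widetilde{\lambda}}}$ row $j$ consists of $\lambda_j$ copies of $\beta_j$; and since each elementary operator $e_{s,\beta_j}$, $e_{\beta_j,s}$ is even (as $|s|=|\beta_j|=1$) while operators on distinct virtual symbols commute, the sum in Definition \ref{shaped Nazarov/Umeda} factors by rows. Setting, for fixed $(h_{j,1},\ldots,h_{j,n})$ with $\sum_s h_{j,s}=\lambda_j$,
\begin{equation*}
\mathcal{A}_j^{(h)}\,=\, e_{1,\beta_j}^{h_{j,1}}\cdots e_{n,\beta_j}^{h_{j,n}},\qquad \mathcal{C}_j^{(h)}\,=\, e_{\beta_j,1}^{h_{j,1}}\cdots e_{\beta_j,n}^{h_{j,n}},
\end{equation*}
the virtual presentation of $\mathbf{J}_\lambda(n)$ becomes
\begin{equation*}
\mathbf{J}_\lambda(n)\,=\,\mathfrak{p}\Bigl(\sum_{(h)}\tfrac{1}{\prod_{j,s}h_{j,s}!}\,\mathcal{A}_1^{(h)}\cdots\mathcal{A}_p^{(h)}\,\mathcal{C}_1^{(h)}\cdots\mathcal{C}_p^{(h)}\Bigr),
\end{equation*}
with all devirtualizations preceding all virtualizations; whereas, by Definition \ref{perm gen-BR} (using $\beta_j$ inside the $j$-th factor of $\mathbf{I}_\lambda(n)$) together with the fact that $\mathfrak{p}$ is an algebra morphism on $Virt(m_0+m_1,n)$,
\begin{equation*}
\mathbf{I}_\lambda(n)\,=\,\mathfrak{p}\Bigl(\sum_{(h)}\tfrac{1}{\prod_{j,s}h_{j,s}!}\,\mathcal{A}_1^{(h)}\mathcal{C}_1^{(h)}\,\mathcal{A}_2^{(h)}\mathcal{C}_2^{(h)}\,\cdots\,\mathcal{A}_p^{(h)}\mathcal{C}_p^{(h)}\Bigr),
\end{equation*}
with devirtualizations and virtualizations alternating.

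The two presentations differ only in the relative order of the blocks $\mathcal{C}_i^{(h)}$ and $\mathcal{A}_j^{(h)}$ for $i<j$. The only nontrivial elementary commutator arising when shifting the $\mathcal{C}_i$'s to the right is
\begin{equation*}
[e_{\beta_i,s},\,e_{t,\beta_j}]\,=\,\delta_{s,t}\,e_{\beta_i,\beta_j}\qquad (i<j,\ \beta_i\neq\beta_j),
\end{equation*}
which removes one creation of $\beta_i$ and one annihilation of $\beta_j$, inserting in their place a single virtual-to-virtual factor $e_{\beta_i,\beta_j}$. Iterating the rearrangement, I will write $\mathbf{I}_\lambda(n)=\mathbf{J}_\lambda(n)-\mathfrak{p}(\mathbf{R})$, where $\mathbf{R}$ is a linear combination of balanced monomials (the matching between $\beta$-creations and $\beta$-annihilations is preserved by each commutator) whose count of proper-to-virtual operators $e_{\beta_j,s}$ is strictly less than $|\lambda|$. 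Since under the Capelli epimorphism the filtration degree in $\mathbf{U}(gl(n))$ of the image of a balanced monomial is exactly its number of proper-to-virtual pairs, each summand of $\mathfrak{p}(\mathbf{R})$ lies in some $\boldsymbol{\zeta}(n)^{(q)}$ with $q<|\lambda|$. Both $\mathbf{J}_\lambda(n)$ and $\mathbf{I}_\lambda(n)$ are central by construction, so the difference $\mathbf{F}_\lambda(n)=\mathbf{J}_\lambda(n)-\mathbf{I}_\lambda(n)=-\mathfrak{p}(\mathbf{R})$ is central too, and the filtration bound on $\mathbf{F}_\lambda(n)$ follows.

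For the non-containment $\mathbf{I}_\lambda(n)\notin\boldsymbol{\zeta}(n)^{(m)}$ whenever $m<|\lambda|$, I pass to the associated graded algebra $\mathrm{Sym}(gl(n))\cong \mathrm{gr}\,\mathbf{U}(gl(n))$. From the column-permanent formula of Definition \ref{perm gen-BR}, the principal symbol of $\mathbf{I}_k(n)$ is obtained by discarding the integer shifts on the diagonal entries; the purely diagonal summand already contributes the nonzero invariant $\sum_{(h_s)}(h_1!\cdots h_n!)^{-1}e_{1,1}^{h_1}\cdots e_{n,n}^{h_n}$ of degree $k$. Combined with the algebraic independence of the $\mathbf{I}_k(n)$'s (Proposition \ref{Umeda free}), this shows that the principal symbol of the product $\mathbf{I}_\lambda(n)=\prod_j\mathbf{I}_{\lambda_j}(n)$ is the product of these, a nonzero element of degree exactly $|\lambda|$; hence $\mathbf{I}_\lambda(n)\notin\boldsymbol{\zeta}(n)^{(m)}$ for $m<|\lambda|$. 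The main obstacle is the bookkeeping in the second paragraph: one must confirm that the auxiliary virtual-to-virtual operators $e_{\beta_i,\beta_j}$ produced by commutation do not contribute to the filtration in $\mathbf{U}(gl(n))$ under $\mathfrak{p}$ — only the residual proper-to-virtual pairs count — and that each commutator therefore strictly reduces the filtration degree of the image by one.
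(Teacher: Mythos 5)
Your approach matches the paper's: the paper's proof of the filtration assertion about $\mathbf{F}_\lambda(n)$ is exactly the remark that, comparing the virtual presentations of $\mathbf{J}_\lambda(n)$ and $\mathbf{I}_\lambda(n)$ and ``just applying the commutator relations in $\mathbf{U}(gl(m_0|m_1+n))$,'' the difference falls into a lower filtration piece. You have unpacked that one-line argument concretely, and done so correctly: the re-parametrization of $\mathbf{J}_\lambda(n)$ via the transposed virtual Deruyts tableau (so that row $j$ of $\widetilde{D^*_{\widetilde\lambda}}$ carries $\lambda_j$ copies of the single odd symbol $\beta_j$), the identification of the normalizing constant $o_{\widetilde T}=\prod_{j,s}h_{j,s}!$, the block factorization into $\mathcal{A}_j^{(h)}$ and $\mathcal{C}_j^{(h)}$, and the only surviving elementary commutator $[e_{\beta_i,s},e_{t,\beta_j}]=\delta_{s,t}\,e_{\beta_i,\beta_j}$ are all exactly as the paper intends. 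You also supply an argument for the first assertion ($\mathbf{I}_\lambda(n)\notin\boldsymbol{\zeta}(n)^{(m)}$ for $m<|\lambda|$) via principal symbols in $\mathrm{gr}\,\mathbf{U}(gl(n))\cong\mathrm{Sym}(gl(n))$, which the paper leaves implicit; note that once you observe that the principal symbol of each $\mathbf{I}_{\lambda_j}(n)$ is a nonzero homogeneous element of degree $\lambda_j$ (the diagonal contribution already shows this), the integral-domain property of $\mathrm{Sym}(gl(n))$ finishes it, and the appeal to Proposition \ref{Umeda free} is not actually needed.

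Two small points. First, there is a sign slip at the end: from $\mathbf{I}_\lambda(n)=\mathbf{J}_\lambda(n)-\mathfrak{p}(\mathbf{R})$ you get $\mathbf{F}_\lambda(n)=\mathbf{J}_\lambda(n)-\mathbf{I}_\lambda(n)=+\mathfrak{p}(\mathbf{R})$, not $-\mathfrak{p}(\mathbf{R})$; this is cosmetic. Second, the point you flag yourself --- that $\mathfrak{p}$ of a balanced monomial containing $k'$ operators of type $e_{\beta_j,s}$ (proper-to-virtual) lies in $\mathbf{U}(gl(n))^{(k')}$, so that each commutator strictly lowers the filtration degree of the image --- is the essential lemma making the argument rigorous. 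It does hold, and is the content of Proposition \ref{Monomial virtual presentation and adjoint actions-BR}: the image of a balanced monomial with $k'$ proper-to-virtual pairs is expressible as iterated adjoint actions (degree-preserving) applied to a degree-$k'$ monomial, and the interior virtual-to-virtual factors $e_{\beta_i,\beta_j}$ get absorbed by the adjoint operators without contributing degree. Citing that proposition (or its generalization to multi-factor balanced monomials) would close the gap you identify. With that reference, your proof is complete and in effect writes out the details the paper compresses.
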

\begin{proof} From the virtual presentations of $\mathbf{J}_{\lambda}(n)$ and $\mathbf{I}_{\lambda}(n)$,
it is easy to see - just applying the commutator relations in the algebra ${\mathbf{U}}(gl(m_0|m_1+n))$ -
that the difference between the l.h.s. and the first summand of the r.h.s. in eq. (\ref{J sfilamenti}) is a central element in
$\boldsymbol{\zeta}(n)$ that belongs to a filtration element $\boldsymbol{\zeta}(n)^{(m)}$ with $m < |\lambda|.$
\end{proof}

\begin{example} In order to simplify the notation, we will write any summand
$e_{T,\widetilde{D^*_{\widetilde{\lambda}}}} \cdot  e_{\widetilde{D^*_{\widetilde{\lambda}}},T}$ in eq. (\ref{J  definition}) as
$[T|T]^*.$ Let $\lambda = (2,2)$, and $n = 2.$ In this notation, the central element $\mathbf{J}_{(2,2)}(2)$ can be written in the following way:
\begin{align*}
\mathbf{J}_{(2,2)}(2)=
\frac{1}{2!2!}
&\left[ \left.
\begin{array}{ll}
1 \ 1 \\
1 \ 1 \\
\end{array}
\right|
\begin{array}{ll}
1 \ 1 \\
1 \ 1 \\
\end{array} \right]^*&
+
\frac{1}{2!}
&\left[ \left.
\begin{array}{ll}
1 \ 1 \\
1 \ 2 \\
\end{array}
\right|
\begin{array}{ll}
1 \ 1 \\
1 \ 2 \\
\end{array} \right]^*&
+
\frac{1}{2!2!}
&\left[ \left.
\begin{array}{ll}
1 \ 1 \\
2 \ 2 \\
\end{array}
\right|
\begin{array}{ll}
1 \ 1 \\
2 \ 2 \\
\end{array} \right]^*+
\\
+
\frac{1}{2!}
&\left[ \left.
\begin{array}{ll}
1 \ 2 \\
1 \ 1 \\
\end{array}
\right|
\begin{array}{ll}
1 \ 2 \\
1 \ 1 \\
\end{array} \right]^*&
+&\left[ \left.
\begin{array}{ll}
1 \ 2 \\
1\ 2 \\
\end{array}
\right|
\begin{array}{ll}
1 \ 2 \\
1 \ 2 \\
\end{array} \right]^*&
+
\frac{1}{2!}
&\left[ \left.
\begin{array}{ll}
1 \ 2 \\
2\ 2 \\
\end{array}
\right|
\begin{array}{ll}
1 \ 2 \\
2 \ 2 \\
\end{array} \right]^*+
\\
+
\frac{1}{2!2!}
&\left[ \left.
\begin{array}{ll}
2 \ 2 \\
1 \ 1 \\
\end{array}
\right|
\begin{array}{ll}
2 \ 2 \\
1 \ 1 \\
\end{array} \right]^*&
+
\frac{1}{2!}
&\left[ \left.
\begin{array}{ll}
2 \ 2 \\
1\ 2 \\
\end{array}
\right|
\begin{array}{ll}
2 \ 2 \\
1 \ 2 \\
\end{array} \right]^*&
+
\frac{1}{2!2!}
&\left[ \left.
\begin{array}{ll}
2 \ 2 \\
2\ 2 \\
\end{array}
\right|
\begin{array}{ll}
2 \ 2 \\
2 \ 2 \\
\end{array} \right]^*.
\end{align*}
In turn, applying the commutator relations, we get:
\begin{align*}
\mathbf{J}_{(2,2)}(2)=&\frac{1}{2!} [1 1|1 1]^*\frac{1}{2!} [1 1|1 1]^* + \mathbf{G}_1 + \frac{1}{2!} [1 1|1 1]^* [1 2|1 2]^* + \mathbf{G}_2 +
\\
+&\frac{1}{2!} [1 1|1 1]^*\frac{1}{2!} [1 1| 2 2]^* + \mathbf{G}_3 +[1 2|1 2]^*\frac{1}{2!} [1 1|1 1]^* + \mathbf{G}_4 +
\\
+&[1 2|1 2]^* [1 2|1 2]^* + \mathbf{G}_5 +  [1 2|1 2]^*\frac{1}{2!} [2 2|2 2]^* + \mathbf{G}_6 +
\\
+ &\frac{1}{2!} [2 2|2 2 ]^*\frac{1}{2!} [1 1|1 1]^* + \mathbf{G}_7 + \frac{1}{2!} [2 2|2 2]^* [1 2|1 2]^* + \mathbf{G}_8 + \\
+&\frac{1}{2!} [2 2|2 2]^*\frac{1}{2!} [2 2|2 2]^* + \mathbf{G}_9,
\end{align*}
where $ \mathbf{G}_j \in \boldsymbol{U}(gl(2))^{(p)}$,  for  some $p < |\lambda| = 4$, for $j = 1, 2, \ldots, 9.$
Therefore,
\begin{align*}
\mathbf{J}_{(2,2)}(2) &= \Big(\frac{1}{2!} [1 1|1 1]^* + [1 2|1 2]^* + \frac{1}{2!} [2 2|2 2]^* \Big)^2 +  \sum_{j=1}^9 \ \mathbf{G}_j =
\mathbf{I}_2(2)^2 + \sum_{j=1}^9 \ \mathbf{G}_j
\\
&= \mathbf{I}_{(2,2)}(2) +  \sum_{j=1}^9 \ \mathbf{G}_j,
\end{align*}
where $\mathbf{I}_{(2,2)}(2) \notin \boldsymbol{\zeta}(2)^{(m)},$ if  $m < |\lambda| = 4,$  and
$\sum_{j=1}^9 \ \mathbf{G}_j \in \boldsymbol{\zeta}(2)^{(p)},$ for  some $p < |\lambda| = 4.$

\end{example}\qed

\begin{proposition}\label{J basis}

For every $m \in \mathbb{Z}^+$, the set
$$
\big\{ \mathbf{J}_{\lambda}(n); \ \lambda_1 \leq n, \ |\lambda| \leq m \ \big\}
$$
is a linear basis of $\boldsymbol{\zeta}(n)^{(m)}.$

The set
$$
\big\{ \mathbf{J}_{\lambda}(n); \ \lambda_1 \leq n \ \big\}
$$
is a linear basis of the center $\boldsymbol{\zeta}(n).$
\end{proposition}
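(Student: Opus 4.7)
The plan is to deduce this basis property from the analogous property for the Nazarov/Umeda generators $\mathbf{I}_k(n)$ together with the triangular relation in Proposition \ref{J filtration}. Concretely, I will first establish that the monomials $\{\mathbf{I}_{\lambda}(n);\ \lambda_1\leq n,\ |\lambda|\leq m\}$ form a linear basis of the filtration piece $\boldsymbol{\zeta}(n)^{(m)}$, and then invoke Proposition \ref{J filtration} to transfer this basis statement from the $\mathbf{I}_\lambda(n)$'s to the $\mathbf{J}_\lambda(n)$'s via a unitriangular change of basis.

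For the first step, Proposition \ref{Umeda free} gives that $\mathbf{I}_1(n),\ldots,\mathbf{I}_n(n)$ are algebraically independent generators of $\boldsymbol{\zeta}(n)$, so the monomials $\mathbf{I}_{\lambda}(n)=\mathbf{I}_{\lambda_1}(n)\cdots \mathbf{I}_{\lambda_p}(n)$ with $\lambda_1\leq n$ form a vector space basis of $\boldsymbol{\zeta}(n)$. To refine this to a basis of each $\boldsymbol{\zeta}(n)^{(m)}$, I need that each $\mathbf{I}_k(n)$ has filtration degree \emph{exactly} $k$, not strictly less; equivalently, its principal symbol in the associated graded $\mathrm{gr}\,\boldsymbol{\zeta}(n)\subset S(gl(n))$ is nonzero. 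This follows from Theorem \ref{vertical strip}, since the eigenvalue $h^*_k(\widetilde{\mu})$ on $v_{\widetilde{\mu}}$ has leading term in $\widetilde{\mu}$ of degree $k$ (the summand with all $i_j$ distinct in \eqref{complete shift} contributes the elementary symmetric function of degree $k$ in the $\widetilde{\mu}_i$'s, modulo shifts). Hence $\mathbf{I}_k(n)\in \boldsymbol{\zeta}(n)^{(k)}\setminus \boldsymbol{\zeta}(n)^{(k-1)}$, and consequently $\mathbf{I}_\lambda(n)\in \boldsymbol{\zeta}(n)^{(|\lambda|)}\setminus \boldsymbol{\zeta}(n)^{(|\lambda|-1)}$; a standard filtered-polynomial-algebra argument then gives that $\{\mathbf{I}_\lambda(n);\ \lambda_1\leq n,\ |\lambda|\leq m\}$ is a basis of $\boldsymbol{\zeta}(n)^{(m)}$.

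For the second step, Proposition \ref{J filtration} asserts
\[
\mathbf{J}_\lambda(n)=\mathbf{I}_\lambda(n)+\mathbf{F}_\lambda(n), \qquad \mathbf{F}_\lambda(n)\in \boldsymbol{\zeta}(n)^{(p)}\ \text{for some}\ p<|\lambda|.
\]
Order the index set of partitions $\{\lambda;\ \lambda_1\leq n,\ |\lambda|\leq m\}$ by $|\lambda|$ (breaking ties arbitrarily). With respect to this order, the transition matrix from $\{\mathbf{I}_\lambda(n)\}$ to $\{\mathbf{J}_\lambda(n)\}$ is lower-unitriangular inside each filtration piece: each $\mathbf{J}_\lambda(n)$ has $\mathbf{I}_\lambda(n)$ as its "leading part" and all other $\mathbf{I}_\mu(n)$ occurring in its expansion satisfy $|\mu|<|\lambda|$. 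Being unitriangular, this matrix is invertible, and both families span the same subspaces $\boldsymbol{\zeta}(n)^{(m)}$. This yields the first assertion, and taking $m\to\infty$ (i.e. the union over all $m$) yields the second.

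The main obstacle is the verification that $\mathbf{I}_k(n)$ sits in filtration degree exactly $k$, which is what converts algebraic independence in $\boldsymbol{\zeta}(n)$ into a basis property compatible with the filtration. This is essentially already contained in the eigenvalue formula of Theorem \ref{vertical strip}: if $\mathbf{I}_k(n)$ lay in $\boldsymbol{\zeta}(n)^{(k-1)}$, its eigenvalue on $v_{\widetilde{\mu}}$ would be a polynomial in $\widetilde{\mu}$ of degree at most $k-1$, contradicting the leading degree-$k$ behaviour of $h^*_k(\widetilde{\mu})$. Once this filtration-exactness is in hand, the rest of the argument reduces to routine linear algebra on a unitriangular change of basis.
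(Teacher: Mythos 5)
Your proof is correct, but it takes a somewhat different route from the paper's. The paper establishes linear independence of the $\mathbf{J}_\lambda(n)$'s directly (reducing, via Proposition \ref{J filtration}, to independence of the leading parts $\mathbf{I}_\lambda(n)$ and then invoking Proposition \ref{Umeda free}), and then finishes by a pure cardinality count: the proposed set has the same cardinality as the already-established basis $\{\mathbf{K}_\lambda(n);\ \lambda_1\leq n,\ |\lambda|\leq m\}$ of $\boldsymbol{\zeta}(n)^{(m)}$ from Proposition \ref{K basis}, so linear independence alone forces it to be a basis. You instead first establish that $\{\mathbf{I}_\lambda(n);\ \lambda_1\leq n,\ |\lambda|\leq m\}$ is itself a filtered basis of $\boldsymbol{\zeta}(n)^{(m)}$ — which requires the extra step of verifying filtration-exactness $\mathbf{I}_k(n)\in\boldsymbol{\zeta}(n)^{(k)}\setminus\boldsymbol{\zeta}(n)^{(k-1)}$ via the leading behaviour of the eigenvalue polynomial $h^*_k$ — and then perform a unitriangular change of basis. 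Your route is more self-contained, since it does not depend on Proposition \ref{K basis}, and it also makes explicit the filtration-exactness claim for $\mathbf{I}_\lambda(n)$ that Proposition \ref{J filtration} asserts (and that the paper's linear-independence step implicitly uses) but justifies only cursorily. The price is that it is longer; the paper's counting argument cleverly offloads the spanning step onto work already done.
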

\begin{proof} We first notice that the elements of the set $\big\{ \mathbf{J}_{\lambda}(n); \ \lambda_1 \leq n \ \big\}$
are linearly independent. Indeed, by
Proposition \ref{J filtration}, we can limit ourselves to consider a linear combination
$$
\sum_\lambda \ c_\lambda \ \mathbf{J}_{\lambda}(n), \quad \lambda_1 \leq n, \  |\lambda| = m.
$$
If this linear combination  equals zero, then, again by Proposition \ref{J filtration}, we should
have $\sum_\lambda \ c_\lambda \ \mathbf{I}_{\lambda}(n) = 0$; since each $\mathbf{I}_{\lambda}(n)$ is a product of Nazarov/Umeda
elements $\mathbf{I}_k(n)$, $k \leq n$, this is a contradiction with respect to Proposition \ref{Umeda free}.
Since the two sets
$$
\big\{ \mathbf{J}_{\lambda}(n); \ \lambda_1 \leq n, \ |\lambda| \leq m \ \big\} \ \text{and} \
\big\{ \mathbf{K}_{\lambda}(n); \ \lambda_1 \leq n, \ |\lambda| \leq m \ \big\}
$$
have the same cardinality, the assertions follow from Proposition \ref{K basis}.
\end{proof}

By    Proposition \ref{J basis}, we get a stronger version of Proposition \ref{J filtration}.

\begin{proposition}
We have
$$
\mathbf{J}_{\lambda}(n) =  \mathbf{I}_{\lambda}(n) + \sum \ d_{\lambda,\mu} \mathbf{J}_{\mu}(n),
$$
where $d_{\lambda,\mu} = 0$ unless $|\mu| < |\lambda|$.
\end{proposition}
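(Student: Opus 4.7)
The plan is to deduce this strengthened expansion directly from the two preceding results, namely Proposition~\ref{J filtration} together with the basis property established in Proposition~\ref{J basis}. The key observation is that once we know both (i) the leading term $\mathbf{I}_\lambda(n)$ corresponds to the ``top filtration'' contribution and (ii) the $\mathbf{J}_\mu(n)$ form a filtered linear basis indexed by shape size, the refinement from ``belongs to lower filtration'' to ``is a combination of strictly lower $\mathbf{J}_\mu$'s'' is automatic.

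Concretely, I would argue as follows. By Proposition~\ref{J filtration}, write
\begin{equation*}
\mathbf{J}_{\lambda}(n) \;=\; \mathbf{I}_{\lambda}(n) \;+\; \mathbf{F}_{\lambda}(n),
\end{equation*}
where $\mathbf{F}_{\lambda}(n)\in\boldsymbol{\zeta}(n)^{(p)}$ for some integer $p<|\lambda|$. Since $\mathbf{F}_{\lambda}(n)$ is central and lies in the filtration element $\boldsymbol{\zeta}(n)^{(p)}$, Proposition~\ref{J basis} applied with $m=p$ tells us that
\begin{equation*}
\mathbf{F}_{\lambda}(n) \;=\; \sum_{\substack{\mu_1\le n\\ |\mu|\le p}} d_{\lambda,\mu}\,\mathbf{J}_{\mu}(n)
\end{equation*}
for uniquely determined scalars $d_{\lambda,\mu}\in\mathbb{C}$. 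Because $p<|\lambda|$, every $\mu$ appearing in the sum satisfies $|\mu|<|\lambda|$, so the coefficients $d_{\lambda,\mu}$ vanish whenever $|\mu|\ge|\lambda|$. Substituting this expansion into the display above gives exactly the claimed formula.

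I do not foresee any genuine obstacle: the whole statement is a purely formal consequence of the preceding two results and requires no new computation with virtual presentations or commutator relations. The only substantive content is hidden in Proposition~\ref{J basis}, which already provides the filtration-compatible basis, and in Proposition~\ref{J filtration}, which identifies $\mathbf{I}_{\lambda}(n)$ as the ``leading term.'' The one point worth being careful about is to cite Proposition~\ref{J basis} with the exact filtration degree $p$ (not $|\lambda|$), so that the $\mathbf{J}_\mu(n)$ appearing in the expansion of $\mathbf{F}_\lambda(n)$ really have $|\mu|<|\lambda|$ rather than merely $|\mu|\le|\lambda|$; this is immediate from $p<|\lambda|$ but must be stated explicitly to justify the form of the error term.
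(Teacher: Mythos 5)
Your proof is correct and takes the same route the paper intends: the paper introduces the proposition with the single sentence ``By Proposition~\ref{J basis}, we get a stronger version of Proposition~\ref{J filtration}'' and leaves the details to the reader, and your argument supplies exactly those details, expanding the lower-order term $\mathbf{F}_{\lambda}(n)$ in the $\mathbf{J}_\mu(n)$ basis of $\boldsymbol{\zeta}(n)^{(p)}$ with $p<|\lambda|$. Your remark about citing Proposition~\ref{J basis} at the exact degree $p$ rather than $|\lambda|$ is precisely the point that makes the inequality strict, as required.
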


\begin{example}
Consider the Harish-Chandra isomorphism
$$
\chi_2 : \boldsymbol{\zeta}(2) \rightarrow \Lambda^*(2),
$$
where $\Lambda^*(2)$ denotes the algebra of {\textit{shifted symmetric}} polynomials
in two variables \cite{OkOlsh-BR}.
By computing the eigevalues of highest weight vectors as polynomials in the weight, we have:
\begin{align*}
\chi_2(\mathbf{J}_{(2,2)}(2)) & = y^4+2xy^3-8y^3+3x^2y^2-11xy^2+21y^2+ \\
                              &  \phantom{ = = }  +2x^3y-11x^2y+19xy-18y+x^4-6x^3+11x^2-6x,
\\
\chi_2(\mathbf{J}_{(2,1)}(2)) & = y^3+2xy^2-4y^2+2x^2y-5xy+4y+x^3-3x^2+2x,
\\
\chi_2(\mathbf{J}_{(2)}(2)) & = \chi_2(\mathbf{I}_2(2)) = y^2+xy-2y+x^2-x,
\\
\chi_2(\mathbf{J}_{(1,1,1)}(2)) & = y^3+3xy^2-3y^2+3x^2y-6xy+2y+x^3-3x^2+2x,
\\
\chi_2(\mathbf{I}_1(2)) & =  y+x.
\end{align*}
By direct computations, we get the following identity in the algebra $\Lambda^*(2)$:
\begin{align*}
\chi_2 \big( \mathbf{J}_{(2,2)}(2) \big) &= \chi_2 \big(  \mathbf{I}_2(2)^2 - 7\mathbf{J}_{(2,1)}(2)  - 2\mathbf{J}_{(2)}(2) + 3\mathbf{J}_{(1,1,1)}(2) \big) \\
&= \chi_2 \big( \mathbf{I}_2(2)^2 - 7\mathbf{I}_2(2)\mathbf{I}_1(2) + 3\mathbf{I}_1(2)^3 + 12\mathbf{I}_2(2) - 9\mathbf{I}_1(2)^2 +
6\mathbf{I}_1(2) \big).
\end{align*}
Hence, in the center $\boldsymbol{\zeta}(2)$, we have:
\begin{align*}
\mathbf{J}_{(2,2)}(2) &=  \mathbf{I}_2(2)^2 - 7\mathbf{J}_{(2,1)}(2)  - 2\mathbf{J}_{(2)}(2) + 3\mathbf{J}_{(1,1,1)}(2),\\
&= \mathbf{I}_2(2)^2 - 7\mathbf{I}_2(2)\mathbf{I}_1(2) + 3\mathbf{I}_1(2)^3 + 12\mathbf{I}_2(2) - 9\mathbf{I}_1(2)^2 +
6\mathbf{I}_1(2),
\end{align*}
(Notice that $\mathbf{I}_2(2)^2 = \mathbf{I}_{(2,2)}(2)$).
\end{example}

\subsection{The virtual form of the Schur/Okounkov/Sahi basis  $\mathbf{S}_\lambda(n)$}\label{Schur elements}

\subsubsection{The  virtual definition of $\mathbf{S}_\lambda(n)$ and   main results}

Let $\lambda \vdash d$ be a partition,  $\lambda_1 \leq n$ .
We notice that any element
$$
e_{S_1,C_{\lambda}^{*}} \cdot e_{C_{\lambda}^{*},D_{\lambda}^{*}} \cdot  e_{D_{\lambda}^{*},C_{\lambda}^{*}}
\cdot e_{C_{\lambda}^{*},S_2} \in Virt(m_0+m_1,n),
$$
where $S_1, S_2$ are tableaux  on the proper alphabet $L = \{x_1, \ldots, x_n \}$ of shape $\lambda$,
$\lambda_1 \leq n$, $m_0 \geq \widetilde{\lambda}_1, \ m_1 \geq \lambda_1$, is
{\textit{skew-symmetric}}  in the rows of $S_1$ and  $S_2$, respectively.

\begin{definition}\label{Schur basis}

We set
\begin{align*}
\mathbf{S}_\lambda(n) &=
\frac {1} {H(\lambda)} \  \ \sum_S \ \mathfrak{p} \big(e_{S,C_{\lambda}^{*}} \cdot e_{C_{\lambda}^{*},D_{\lambda}^{*}} \cdot  e_{D_{\lambda}^{*},C_{\lambda}^{*}}
\cdot e_{C_{\lambda}^{*},S} \big) \\
&= \frac {1} {H(\lambda)} \ \  \sum_S \ SC_{\lambda}^{*} \ C_{\lambda}^{*}D_{\lambda}^{*} \
D_{\lambda}^{*}C_{\lambda}^{*} \ C_{\lambda}^{*}S \in {\mathbf{U}}(gl(n)), \\
\end{align*}
where the sum is extended to all  row (strictly) increasing tableaux $S$ on the proper alphabet $L = \{x_1, \ldots, x_n \}$.
\end{definition}

By convention, if $\lambda$ is the empty partition, we set $\mathbf{S}_{\emptyset}(n) = \mathbf{1} \in \boldsymbol{\zeta}(n).$

The element $\mathbf{S}_\lambda(n) \in {\mathbf{U}}(gl(n))$ is called the {\it{Schur element}} of shape $\lambda$ in dimension $n$.

\begin{example}   Let $\lambda = (2,1) \vdash d = 3$, $n = 2$. The Schur element $\mathbf{S}_{(2,1)}(2) \in {\mathbf{U}}(gl(2))$ is expressed
as the image (under the Capelli epimorphism $\mathfrak{p}$) of the sum of three monomials in $Virt(m_0+m_1,3) \subset {\mathbf{U}}(gl(m_0|m_1+2)),$
$m_0 \geq 2$ and $m_1 \geq :$
\begin{align*}
\mathbf{S}_{(2,1)}(2) &=
\frac {1} {3}  \times
\\
&\mathfrak{p} \Big( e_{1 \alpha_1}e_{2 \alpha_1}e_{1 \alpha_2}
e_{\alpha_1 \beta_1}e_{\alpha_1 \beta_2}e_{ \alpha_2 \beta_1}
e_{\beta_1 \alpha_1 }e_{ \beta_2 \alpha_1}e_{ \beta_1 \alpha_2 }
e_{ \alpha_1 1}e_{ \alpha_1 2}e_{ \alpha_2 1} +
\\
&+ e_{1 \alpha_1}e_{2 \alpha_1}e_{2 \alpha_2}
e_{\alpha_1 \beta_1}e_{\alpha_1 \beta_2}e_{ \alpha_2 \beta_1}
e_{\beta_1 \alpha_1 }e_{ \beta_2 \alpha_1}e_{ \beta_1 \alpha_2 }
e_{ \alpha_1 1}e_{ \alpha_1 2}e_{ \alpha_2 2} \Big),
\end{align*}
where $\alpha_1, \alpha_2 \in A_0$ and $\beta_1, \beta_2,  \in A_1$.

In the notation and terminology of Subsection \ref{Bitableaux monomials}, the Schur element $\mathbf{S}_{(2,1)}(2)$
is expressed as the image $\mathfrak{p}$ as the sum of two elements of ${\mathbf{U}}(gl(2))$, each of them being the product of four
{\it{bitableau monomials}}:
\begin{align*}
\mathbf{S}_{(2,1)}(2) &=
\frac {1} {3} \times
\mathfrak{p} \Big( e_{{\fontsize{6} {6} \selectfont \substack{
1  2  \ \alpha_1  \alpha_1  \\
1   \phantom{2}\ \alpha_2   \phantom{\alpha_2}\\
}}} \times
e_{{\fontsize{6} {6} \selectfont \substack{
\alpha_1  \alpha_1  \ \beta_1  \beta_2  \\
\alpha_2   \phantom{\alpha_1}\ \beta_1   \phantom{\beta_2}\\
}}}
\times
e_{{\fontsize{6} {6} \selectfont \substack{
\beta_1  \beta_2  \ \alpha_1  \alpha_1  \\
\beta_1   \phantom{\beta_2}\ \alpha_2   \phantom{\alpha_1}\\
}}}
\times
e_{{\fontsize{6} {6} \selectfont \substack{
\alpha_1  \alpha_1  \ 1  2  \\
\alpha_2   \phantom{\alpha_1} \ 1   \phantom{2}\\
}}}
 \Big) +
\\
&+\frac {1} {3} \times
\mathfrak{p} \Big( e_{{\fontsize{6} {6} \selectfont \substack{
1  2  \ \alpha_1  \alpha_1  \\
2   \phantom{2}\ \alpha_2   \phantom{\alpha_2}\\
}}} \times
e_{{\fontsize{6} {6} \selectfont \substack{
\alpha_1  \alpha_1  \ \beta_1  \beta_2  \\
\alpha_2   \phantom{\alpha_1}\ \beta_1   \phantom{\beta_2}\\
}}}
\times
e_{{\fontsize{6} {6} \selectfont \substack{
\beta_1  \beta_2  \ \alpha_1  \alpha_1  \\
\beta_1   \phantom{\beta_2}\ \alpha_2   \phantom{\alpha_1}\\
}}}
\times
e_{{\fontsize{6} {6} \selectfont \substack{
\alpha_1  \alpha_1  \ 1  2  \\
\alpha_2   \phantom{\alpha_1} \ 2   \phantom{2}\\
}}}
 \Big).
\end{align*}
Therefore, the Schur element $\mathbf{S}_{(2,1)}(2)$ acts on the Schur module $Schur_\mu(2)$, $\mu_1 \leq 2$, by the following
polynomial in superpolarization operators:
\begin{scriptsize}
\begin{align*}
& \frac {1} {3} \times
\big( D_{1 \alpha_1}D_{2 \alpha_1}D_{1 \alpha_2}
D_{ \alpha_1 \beta_1}D_{ \alpha_1 \beta_2 }D_{ \alpha_2 \beta_1}
D_{  \beta_1 \alpha_1}D_{ \beta_2 \alpha_1 }D_{  \beta_1 \alpha_2}
D_{ \alpha_1 1}D_{ \alpha_1 2}D_{ \alpha_2 1} +
\\
&\phantom{\frac {1} {3} \times} + D_{1 \alpha_1}D_{2 \alpha_1}D_{2 \alpha_2}
D_{ \alpha_1 \beta_1}D_{ \alpha_1 \beta_2 }D_{ \alpha_2 \beta_1}
D_{  \beta_1 \alpha_1}D_{ \beta_2 \alpha_1 }D_{  \beta_1 \alpha_2}
D_{ \alpha_1 1}D_{ \alpha_1 2}D_{ \alpha_2 2}
\big).
\end{align*}
\end{scriptsize}
\end{example}\qed

\begin{proposition}
Consider the element
$$
 \sum_S \  e_{S,C_{\lambda}^{*}} \cdot e_{C_{\lambda}^{*},D_{\lambda}^{*} } \cdot e_{D_{\lambda}^{*},C_{\lambda}^{*}}
\cdot e_{C_{\lambda}^{*},S},
$$
where the sum is extended to all  row (strictly) increasing tableaux $S$ on the proper alphabet $L = \{x_1, \ldots, x_n \}$.

Since the adjoint representation acts by derivation, we have
$$
ad(e_{i j})\big( \sum_S \  e_{S,C_{\lambda}^{*}} \cdot e_{C_{\lambda}^{*},D_{\lambda}^{*} } \cdot e_{D_{\lambda}^{*},C_{\lambda}^{*}}
\cdot e_{C_{\lambda}^{*},S}  \big) = 0,
$$
for every  $e_{i j} \in gl(n).$
\end{proposition}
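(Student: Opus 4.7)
The plan is to reduce the statement to the combinatorial pairing argument already carried out in Proposition \ref{central virtual determinantal} (and reused in Proposition \ref{K centr}), by showing that the inserted ``middle'' factor
$$Y := e_{C_{\lambda}^{*},D_{\lambda}^{*}} \cdot e_{D_{\lambda}^{*},C_{\lambda}^{*}}$$
contributes nothing to the adjoint action of $gl(n)$ and may simply be carried along inertly. The key observation is that every generator $e_{\gamma,\delta}$ appearing in $Y$ has both $\gamma,\delta \in A_0 \cup A_1$, so the supercommutator formula immediately yields $[e_{ij}, e_{\gamma,\delta}] = 0$ for every $e_{ij} \in gl(n)$: both Kronecker deltas that could appear vanish, because proper and virtual symbols are distinct. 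By the Leibniz rule, $ad(e_{ij})(Y) = 0$.

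Writing $A_S := e_{S,C_{\lambda}^{*}}$ and $B_S := e_{C_{\lambda}^{*},S}$, the derivation property then collapses the action on each summand to
$$ad(e_{ij})\bigl(A_S\,Y\,B_S\bigr) \;=\; ad(e_{ij})(A_S)\cdot Y\cdot B_S \;+\; A_S\cdot Y\cdot ad(e_{ij})(B_S),$$
so the problem is reduced to verifying
$$\sum_S \Bigl[\, ad(e_{ij})(A_S)\cdot Y\cdot B_S \;+\; A_S\cdot Y\cdot ad(e_{ij})(B_S)\,\Bigr] \;=\; 0.$$

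Next I would transcribe the cancellation scheme of Proposition \ref{central virtual determinantal}. The action of $ad(e_{ij})$ on $A_S$ produces, by derivation, a signed sum indexed by the occurrences of $x_j$ in $S$: each such occurrence is replaced by $x_i$. By the row-skew-symmetry induced by $C_\lambda^{*}$ (whose rows consist of repeated copies of the same virtual symbol $\alpha_k \in A_0$), such a modified monomial is either zero (when the corresponding row of $S$ already contained $x_i$) or, after reordering that row back into row-increasing form, equal up to a sign to $A_{S'}$, with $S'$ obtained from $S$ by the single replacement. Dually, $ad(e_{ij})$ acts on $B_S$ through the rule $e_{\alpha,x_i}\mapsto -e_{\alpha,x_j}$, producing analogous signed $B_{S'}$'s. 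Fixing any pair $(S,S')$ of row-increasing tableaux differing by one $j\to i$ swap in a single row, the term $A_{S'}\,Y\,B_S$ occurs exactly once in $ad(A_S)\,Y\,B_S$ (from the $S$-summand) and exactly once in $A_{S'}\,Y\,ad(B_{S'})$ (from the $S'$-summand), with opposite signs. Since $Y$ is $S$-independent and sits passively between the two outer pieces, the bijection matches the one used to prove the centrality of $\mathbf{K}_\lambda(n)$, and summing yields zero.

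The only genuine obstacle I foresee is the sign bookkeeping needed to confirm that the two appearances of $A_{S'}\,Y\,B_S$ really do come with opposite signs once the row reorderings are normalized, together with the diagonal case $i=j$, where the cancellation reduces to counting that each $x_i$ occurring in $S$ contributes symmetrically to $A_S$ and to $B_S$. Both points are exactly what is checked in the proofs of Propositions \ref{central virtual determinantal} and \ref{K centr}; the insertion of the purely virtual factor $Y$ between $A_S$ and $B_S$ does not interfere with either verification, so the identity transfers verbatim, and invariance of $\sum_S A_S\,Y\,B_S$ under $ad(e_{ij})$ follows for every $e_{ij}\in gl(n)$.
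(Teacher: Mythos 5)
Your argument is correct and coincides in substance with the paper's: the paper simply says the proof is ``essentially the same'' as that of Proposition \ref{K centr} (hence as Proposition \ref{central virtual determinantal}), precisely because the middle factor involves only virtual symbols and is therefore inert under $ad(e_{ij})$ for $e_{ij}\in gl(n)$, so the row-skew-symmetric pairing cancellation on $e_{S,C_\lambda^*}$ and $e_{C_\lambda^*,S}$ carries over unchanged. You have merely made explicit, via the supercommutator formula and the Leibniz rule, the step that the paper leaves implicit.
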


The proof is essentially the same as the proof of Proposition \ref{K centr}.

\begin{example} Consider the action of $ad(e_{21})$ on $\mathbf{S}_{(2,1)}(2)$.
We have
\begin{align*}
ad(e_{21}) \big( \mathbf{S}_{(2,1)}(2) \big) =
& \frac {1} {3} \times
\mathfrak{p}  \Big( +  e_{{\fontsize{6} {6} \selectfont \substack{
2  2  \ \alpha_1  \alpha_1  \\
1   \phantom{2}\ \alpha_2   \phantom{\alpha_2}\\
}}} \times
e_{{\fontsize{6} {6} \selectfont \substack{
\alpha_1  \alpha_1  \ \beta_1  \beta_2  \\
\alpha_2   \phantom{\alpha_1}\ \beta_1   \phantom{\beta_2}\\
}}}
\times
e_{{\fontsize{6} {6} \selectfont \substack{
\beta_1  \beta_2  \ \alpha_1  \alpha_1  \\
\beta_1   \phantom{\beta_2}\ \alpha_2   \phantom{\alpha_1}\\
}}}
\times
e_{{\fontsize{6} {6} \selectfont \substack{
\alpha_1  \alpha_1  \ 1  2  \\
\alpha_2   \phantom{\alpha_1} \ 1   \phantom{2}\\
}}}
\\
& \phantom{\frac {1} {3} \times
\mathfrak{p} \Big(} +
e_{{\fontsize{6} {6} \selectfont \substack{
1  2  \ \alpha_1  \alpha_1  \\
2   \phantom{2}\ \alpha_2   \phantom{\alpha_2}\\
}}} \times
e_{{\fontsize{6} {6} \selectfont \substack{
\alpha_1  \alpha_1  \ \beta_1  \beta_2  \\
\alpha_2   \phantom{\alpha_1}\ \beta_1   \phantom{\beta_2}\\
}}}
\times
e_{{\fontsize{6} {6} \selectfont \substack{
\beta_1  \beta_2  \ \alpha_1  \alpha_1  \\
\beta_1   \phantom{\beta_2}\ \alpha_2   \phantom{\alpha_1}\\
}}}
\times
e_{{\fontsize{6} {6} \selectfont \substack{
\alpha_1  \alpha_1  \ 1  2  \\
\alpha_2   \phantom{\alpha_1} \ 1   \phantom{2}\\
}}}
\\
& \phantom{\frac {1} {3} \times
\mathfrak{p} \Big(} -
e_{{\fontsize{6} {6} \selectfont \substack{
1  2  \ \alpha_1  \alpha_1  \\
1   \phantom{2}\ \alpha_2   \phantom{\alpha_2}\\
}}} \times
e_{{\fontsize{6} {6} \selectfont \substack{
\alpha_1  \alpha_1  \ \beta_1  \beta_2  \\
\alpha_2   \phantom{\alpha_1}\ \beta_1   \phantom{\beta_2}\\
}}}
\times
e_{{\fontsize{6} {6} \selectfont \substack{
\beta_1  \beta_2  \ \alpha_1  \alpha_1  \\
\beta_1   \phantom{\beta_2}\ \alpha_2   \phantom{\alpha_1}\\
}}}
\times
e_{{\fontsize{6} {6} \selectfont \substack{
\alpha_1  \alpha_1  \ 1  1  \\
\alpha_2   \phantom{\alpha_1} \ 1   \phantom{2}\\
}}}
\Big)
\\
+ &
\frac {1} {3} \times
\mathfrak{p} \Big(  + e_{{\fontsize{6} {6} \selectfont \substack{
2  2  \ \alpha_1  \alpha_1  \\
2   \phantom{2}\ \alpha_2   \phantom{\alpha_2}\\
}}} \times
e_{{\fontsize{6} {6} \selectfont \substack{
\alpha_1  \alpha_1  \ \beta_1  \beta_2  \\
\alpha_2   \phantom{\alpha_1}\ \beta_1   \phantom{\beta_2}\\
}}}
\times
e_{{\fontsize{6} {6} \selectfont \substack{
\beta_1  \beta_2  \ \alpha_1  \alpha_1  \\
\beta_1   \phantom{\beta_2}\ \alpha_2   \phantom{\alpha_1}\\
}}}
\times
e_{{\fontsize{6} {6} \selectfont \substack{
\alpha_1  \alpha_1  \ 1  2  \\
\alpha_2   \phantom{\alpha_1} \ 2   \phantom{2}\\
}}}
\\
& \phantom{\frac {1} {3} \times
\mathfrak{p} \Big(} -
e_{{\fontsize{6} {6} \selectfont \substack{
1  2  \ \alpha_1  \alpha_1  \\
2   \phantom{2}\ \alpha_2   \phantom{\alpha_2}\\
}}} \times
e_{{\fontsize{6} {6} \selectfont \substack{
\alpha_1  \alpha_1  \ \beta_1  \beta_2  \\
\alpha_2   \phantom{\alpha_1}\ \beta_1   \phantom{\beta_2}\\
}}}
\times
e_{{\fontsize{6} {6} \selectfont \substack{
\beta_1  \beta_2  \ \alpha_1  \alpha_1  \\
\beta_1   \phantom{\beta_2}\ \alpha_2   \phantom{\alpha_1}\\
}}}
\times
e_{{\fontsize{6} {6} \selectfont \substack{
\alpha_1  \alpha_1  \ 1  1  \\
\alpha_2   \phantom{\alpha_1} \ 2   \phantom{2}\\
}}}
\\
& \phantom{\frac {1} {3} \times
\mathfrak{p} \Big(} -
e_{{\fontsize{6} {6} \selectfont \substack{
1  2  \ \alpha_1  \alpha_1  \\
2   \phantom{2}\ \alpha_2   \phantom{\alpha_2}\\
}}} \times
e_{{\fontsize{6} {6} \selectfont \substack{
\alpha_1  \alpha_1  \ \beta_1  \beta_2  \\
\alpha_2   \phantom{\alpha_1}\ \beta_1   \phantom{\beta_2}\\
}}}
\times
e_{{\fontsize{6} {6} \selectfont \substack{
\beta_1  \beta_2  \ \alpha_1  \alpha_1  \\
\beta_1   \phantom{\beta_2}\ \alpha_2   \phantom{\alpha_1}\\
}}}
\times
e_{{\fontsize{6} {6} \selectfont \substack{
\alpha_1  \alpha_1  \ 1  2  \\
\alpha_2   \phantom{\alpha_1} \ 1   \phantom{2}\\
}}}
\Big),
\end{align*}
that equals, by row skew-symmetry,
\begin{align*}
ad(e_{21}) \big( \mathbf{S}_{(2,1)}(2) \big) =
& \frac {1} {3} \times
\mathfrak{p} \Big(
+
e_{{\fontsize{6} {6} \selectfont \substack{
1  2  \ \alpha_1  \alpha_1  \\
2   \phantom{2}\ \alpha_2   \phantom{\alpha_2}\\
}}} \times
e_{{\fontsize{6} {6} \selectfont \substack{
\alpha_1  \alpha_1  \ \beta_1  \beta_2  \\
\alpha_2   \phantom{\alpha_1}\ \beta_1   \phantom{\beta_2}\\
}}}
\times
e_{{\fontsize{6} {6} \selectfont \substack{
\beta_1  \beta_2  \ \alpha_1  \alpha_1  \\
\beta_1   \phantom{\beta_2}\ \alpha_2   \phantom{\alpha_1}\\
}}}
\times
e_{{\fontsize{6} {6} \selectfont \substack{
\alpha_1  \alpha_1  \ 1  2  \\
\alpha_2   \phantom{\alpha_1} \ 1   \phantom{2}\\
}}}
\\
& \phantom{\frac {1} {3} \times
\mathfrak{p} \Big(} -
e_{{\fontsize{6} {6} \selectfont \substack{
1  2  \ \alpha_1  \alpha_1  \\
2   \phantom{2}\ \alpha_2   \phantom{\alpha_2}\\
}}} \times
e_{{\fontsize{6} {6} \selectfont \substack{
\alpha_1  \alpha_1  \ \beta_1  \beta_2  \\
\alpha_2   \phantom{\alpha_1}\ \beta_1   \phantom{\beta_2}\\
}}}
\times
e_{{\fontsize{6} {6} \selectfont \substack{
\beta_1  \beta_2  \ \alpha_1  \alpha_1  \\
\beta_1   \phantom{\beta_2}\ \alpha_2   \phantom{\alpha_1}\\
}}}
\times
e_{{\fontsize{6} {6} \selectfont \substack{
\alpha_1  \alpha_1  \ 1  2  \\
\alpha_2   \phantom{\alpha_1} \ 1   \phantom{2}\\
}}}
\Big)
 = 0.
\end{align*}
\end{example}\qed

From Remark \ref{centrality-BR}, it follows

\begin{theorem}
The Schur elements $\mathbf{S}_\lambda(n)$ are  central in $\mathbf{U}(gl(n))$.
\end{theorem}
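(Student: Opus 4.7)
The plan is a short verification that combines two ingredients already set up in the paper. Let $\mathbf{s}_\lambda \in Virt(m_0+m_1,n)$ denote the virtual presentation
$$
\mathbf{s}_\lambda = \sum_S e_{S,C_\lambda^*}\cdot e_{C_\lambda^*,D_\lambda^*}\cdot e_{D_\lambda^*,C_\lambda^*}\cdot e_{C_\lambda^*,S},
$$
the sum being over row-strictly-increasing tableaux $S$ of shape $\lambda$ on $L$, so that by Definition \ref{Schur basis} one has $\mathbf{S}_\lambda(n) = H(\lambda)^{-1}\mathfrak{p}(\mathbf{s}_\lambda)$. By Remark \ref{centrality-BR} (equivalently, Corollary \ref{invarianti virtuali}), it suffices to verify that $\mathbf{s}_\lambda$ is invariant under $ad(e_{x_i,x_j})$ for every $e_{x_i,x_j} \in gl(n) \subset gl(m_0|m_1+n)$. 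This is precisely the content of the proposition just stated.

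To make the reduction to Proposition \ref{K centr} transparent, I would also spell out the $ad$-invariance in a few lines. The key observation is that the inner block
$$
\mathbf{m}_\lambda := e_{C_\lambda^*,D_\lambda^*}\cdot e_{D_\lambda^*,C_\lambda^*}
$$
involves only virtual symbols from $A_0\cup A_1$, and each $e_{x_i,x_j}$ supercommutes with every $e_{a,b}$ whose indices lie in $A_0\cup A_1$; hence $ad(e_{x_i,x_j})(\mathbf{m}_\lambda)=0$. Applying the super-Leibniz rule to $ad(e_{x_i,x_j})(\mathbf{s}_\lambda)$ therefore leaves only the contributions arising from the outer factors $e_{S,C_\lambda^*}$ and $e_{C_\lambda^*,S}$, which are the very factors already present in the virtual presentation of $\mathbf{K}_\lambda(n)$. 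Since both of these factors are row skew-symmetric (the rows of $C_\lambda^*$ being constant in symbols from $A_0$), the pairwise cancellation of summands used in Proposition \ref{central virtual determinantal} --- pairing an ``on-the-left'' contribution from one tableau $S$ with the corresponding ``on-the-right'' contribution from the tableau obtained by the appropriate swap of proper symbols --- applies verbatim.

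The only bookkeeping point I expect to watch is that no extra sign enters when threading a cancellation through the middle factor $\mathbf{m}_\lambda$. This is harmless because $\mathbf{m}_\lambda$ has $\mathbb{Z}_2$-degree $0$: it creates and annihilates exactly the same multiset of virtual symbols from $A_0$ and $A_1$, so its total parity is even and it supercommutes with sign $+1$ past the pieces produced by $ad(e_{x_i,x_j})$ on the outer factors. Consequently the cancellation scheme of Proposition \ref{central virtual determinantal} transports without modification, giving $ad(e_{x_i,x_j})(\mathbf{s}_\lambda)=0$ for every $i,j$, and the centrality $\mathbf{S}_\lambda(n)\in\boldsymbol{\zeta}(n)$ follows at once from Corollary \ref{invarianti virtuali}. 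I do not foresee any substantive obstacle; the argument is essentially a two-line reduction to the already-treated cases of $\mathbf{H}_k(n)$ and $\mathbf{K}_\lambda(n)$.
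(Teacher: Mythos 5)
Your proposal is correct and takes essentially the same route as the paper: the paper also reduces centrality via Remark \ref{centrality-BR} to $ad$-invariance of the virtual presentation and declares the verification ``essentially the same as the proof of Proposition \ref{K centr},'' which is exactly the reduction you carry out. Your explicit reasons why the inner block $e_{C_\lambda^*,D_\lambda^*}\,e_{D_\lambda^*,C_\lambda^*}$ drops out of the Leibniz expansion are sound; one small simplification is that $e_{x_i,x_j}$ is itself of $\mathbb{Z}_2$-degree $0$ (since $|x_i|=|x_j|=1$), so the super-Leibniz rule produces no signs at all and the parity of the middle factor — which you correctly compute to be even — never actually needs to be invoked.
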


Clearly,
\begin{equation}\label{filtration element}
\mathbf{S}_\lambda(n) \in \boldsymbol{\zeta}(n)^{(m)},
\end{equation}
for every $m \geq |\lambda|.$

\begin{theorem}(Vanishing theorem)\label{Vanishing theorem}
We have:
\begin{align*}
&\textrm{If} \ \lambda   \nsubseteq \mu , \  \textrm{then}  &   \mathbf{S}_{\lambda}(n)(v_{\widetilde{\mu}}) &= 0,
\\
&\textrm{If} \ \mu = \lambda, \ \textrm{then} &
\mathbf{S}_{\lambda} (n) (v_{\widetilde{\lambda}}) &=  H(\lambda) \cdot v_{\widetilde{\lambda}},
\end{align*}
where $H(\lambda)$ denotes the hook number of the shape (partition) $\lambda \vdash k.$
\end{theorem}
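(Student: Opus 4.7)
The plan is to handle the two cases separately, with the first an immediate consequence of the Vanishing Lemmas and the second a direct computation via Regonati's Hook Lemma on the single surviving summand $S = D_\lambda$.

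For the case $\lambda \nsubseteq \mu$, note that each summand of $\mathbf{S}_\lambda(n)$ has the form $SC_\lambda^* \cdot C_\lambda^* D_\lambda^* \cdot D_\lambda^* C_\lambda^* \cdot C_\lambda^* S$, whose action on $v_{\widetilde{\mu}}$ factors through the application of the two rightmost factors $D_\lambda^* C_\lambda^* \cdot C_\lambda^* S(v_{\widetilde{\mu}})$. By equation (\ref{cinque}) of Proposition \ref{Vanishing Lemma}, this already vanishes for every $S$, so the whole sum is zero.

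For the case $\mu = \lambda$, I would first reduce the sum to the single term $S = D_\lambda$. The operator $e_{C_\lambda^*, S}$ acts on $v_{\widetilde{\lambda}} = (D_\lambda | D_\lambda^P)$ as a product of $|\lambda|$ polarizations, each replacing one proper symbol by a virtual $\alpha_i$; since $D_\lambda$ contains exactly $|\lambda|$ proper symbols and all must be consumed, the multiplicity of each $x_k$ in $S$ is forced to equal $\widetilde{\lambda}_k$. The strict row-increasing condition on $S$ then determines $S = D_\lambda$ by a column-by-column induction: $x_1$ must occupy column $1$ of the top $\widetilde{\lambda}_1$ rows, $x_2$ must occupy column $2$ of the top $\widetilde{\lambda}_2$ rows, and so on. With only $S = D_\lambda$ surviving, the four-step action unwinds via iterated application of Regonati's Hook Lemma (Proposition \ref{hook lemma}): eq. (\ref{proper}) sends $v_{\widetilde{\lambda}}$ to $(-1)^{k \choose 2} H(\lambda)(\lambda!)^{-1}(C_\lambda^*|D_\lambda^P)$; eq. (\ref{trivial}) converts this to $(-1)^{k \choose 2} H(\lambda)(D_\lambda^*|D_\lambda^P)$; eq. (\ref{virtual}) yields $H(\lambda)^2 (\lambda!)^{-1}(C_\lambda^*|D_\lambda^P)$; and the final factor $D_\lambda C_\lambda^*$ returns $H(\lambda)^2 \cdot v_{\widetilde{\lambda}}$. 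Division by the normalizing $H(\lambda)^{-1}$ in the definition of $\mathbf{S}_\lambda(n)$ then produces the eigenvalue $H(\lambda)$.

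The main subtlety I anticipate is the very last step of the Regonati chain, which requires the proper-symbol analogue of eq. (\ref{trivial}), namely $D_\lambda C_\lambda^* \bigl((C_\lambda^*|D_\lambda^P)(\lambda!)^{-1}\bigr) = v_{\widetilde{\lambda}}$. This identity is not explicitly part of the Hook Lemma package, but it follows by a short sign-tracking argument via Proposition \ref{action on tableaux}: row by row, the operator $\prod_j e_{x_j, \alpha_i}$ replaces the $\lambda_i$ copies of $\alpha_i$ in the $i$-th row by $x_1, \ldots, x_{\lambda_i}$ in all $\lambda_i!$ orderings, which coalesce via $\alpha_i$-antisymmetry on $C_\lambda^*$ into a factor of $\lambda_i!$ multiplying the Deruyts row. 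Once this auxiliary identity is in place, the proof reduces to a brief unwinding of the four Regonati identities.
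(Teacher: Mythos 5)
Your proof takes the same route as the paper: the first assertion is an immediate consequence of eq.~(\ref{cinque}) of Proposition~\ref{Vanishing Lemma}, and the second reduces the sum to the single surviving tableau $S = D_\lambda$ and then unwinds the four factors via Regonati's Hook Lemma (Proposition~\ref{hook lemma}), with the normalization $H(\lambda)^{-1}$ turning $H(\lambda)^2$ into the eigenvalue $H(\lambda)$. The paper compresses two steps you make explicit --- the reduction to $S = D_\lambda$ (which it labels ``Clearly'') and the proper-symbol identity $D_\lambda C_\lambda^*\bigl((C_\lambda^*|D_\lambda^P)(\lambda!)^{-1}\bigr) = v_{\widetilde{\lambda}}$ used in the final factor --- so your version is the same argument with those gaps correctly filled in.
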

\begin{proof}
The first assertion is an immediate consequence of Proposition \ref{Vanishing Lemma}, eq. (\ref{cinque}).

Recall that the element
$$
v_{\widetilde{\lambda}} = (D_\lambda|D^P_\lambda)
$$
is the ``canonical''  highest weight vector of the irreducible $gl(n)-$module $Schur_\lambda(n)$.
Clearly
\begin{align*}
\mathbf{S}_\lambda(n) \left( v_{\widetilde{\lambda}} \right) &= \frac {1} {H(\lambda)} \ \sum_S \
SC_{\lambda}^{*} \ C_{\lambda}^{*}D_{\lambda}^{*} \ D_{\lambda}^{*}C_{\lambda}^{*} \ C_{\lambda}^{*}S \big( (D_\lambda|D^P_\lambda) \big)
\\
&= \frac {1} {H(\lambda)} \  \ D_{\lambda}C_{\lambda}^{*} \ C_{\lambda}^{*}D_{\lambda}^{*}  \ D_{\lambda}^{*}C_{\lambda}^{*}
\ C_{\lambda}^{*}D_\lambda \big( (D_\lambda|D^P_\lambda) \big).
\end{align*}

By Proposition \ref{hook lemma}, eqs. (\ref{proper}), (\ref{trivial}),
\begin{align*}
\mathbf{S}_\lambda(n) \left( v_{\widetilde{\lambda}} \right) &=  \frac {1} {H(\lambda)} (-1)^{{k} \choose {2}}  \  H(\lambda) \cdot
D_{\lambda}C_{\lambda}^{*} \ C_{\lambda}^{*}D_{\lambda}^{*} \ D_{\lambda}^{*}C_{\lambda}^{*} \big( (C_{\lambda}^{*}|D^P_\lambda) \ (\lambda!)^{-1} \big)
\\
&= \frac {1} {H(\lambda)} (-1)^{{k} \choose {2}}  \  H(\lambda) \cdot
D_{\lambda}C_{\lambda}^{*} \ C_{\lambda}^{*}D_{\lambda}^{*}  \big( (D_{\lambda}^{*}|D^P_\lambda)  \big)
\\
&= \frac {1} {H(\lambda)} (-1)^{{k} \choose {2}}  \  H(\lambda) \cdot D_{\lambda}C_{\lambda}^{*} \big( (C_{\lambda}^{*}|D^P_\lambda) \ (\lambda!)^{-1} \big) \cdot H(\lambda) (-1)^{{k} \choose {2}}
\\
&= H(\lambda)  \cdot (D_\lambda|D^P_\lambda).
\end{align*}
\end{proof}

\begin{theorem}(Triangularity/orthogonality of the actions on highest weight vectors)\label{Schur action}
We have:
\begin{align*}
&\textrm{If} \ |\mu| < |\lambda|, \  \textrm{then}  &   \mathbf{S}_{\lambda}(n)(v_{\widetilde{\mu}}) &= 0,
\\
&\textrm{If} \ |\mu| = |\lambda|, \ \textrm{then} &
\mathbf{S}_{\lambda} (n) (v_{\widetilde{\mu}}) &= \delta_{\lambda, \mu} \cdot H(\lambda) \cdot v_{\widetilde{\mu}}.
\end{align*}

\end{theorem}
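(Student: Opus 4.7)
The plan is to deduce this triangularity/orthogonality statement directly from the already-proved Vanishing Theorem (Theorem \ref{Vanishing theorem}) via an elementary observation about partitions: if $|\mu|\le|\lambda|$ and $\lambda\subseteq\mu$, then necessarily $\lambda=\mu$. This turns the two vanishing clauses of our theorem into special cases of the condition $\lambda\nsubseteq\mu$.

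Concretely, first I would dispose of the case $|\mu|<|\lambda|$: a Young diagram with strictly fewer boxes than $\lambda$ cannot contain $\lambda$, so $\lambda\nsubseteq\mu$, and the first clause of Theorem \ref{Vanishing theorem} yields $\mathbf{S}_{\lambda}(n)(v_{\widetilde{\mu}})=0$. Next, for $|\mu|=|\lambda|$ with $\mu\neq\lambda$: were $\lambda\subseteq\mu$, equality of cardinalities would force $\lambda=\mu$, a contradiction; hence $\lambda\nsubseteq\mu$ and again the first clause of Theorem \ref{Vanishing theorem} gives vanishing. Finally, for $\mu=\lambda$ the second clause of Theorem \ref{Vanishing theorem} delivers $\mathbf{S}_{\lambda}(n)(v_{\widetilde{\lambda}})=H(\lambda)\cdot v_{\widetilde{\lambda}}$, which is exactly $\delta_{\lambda,\mu}H(\lambda)\cdot v_{\widetilde{\mu}}$.

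There is no substantive obstacle here: the genuine work (the skew-symmetric virtual presentation and the hook-lemma eigenvalue calculation) is already encapsulated in Theorem \ref{Vanishing theorem}, and the present statement is a trivial corollary obtained via the poset observation above. If one prefers a self-contained argument bypassing the Vanishing Theorem, the same two-step reduction can be performed directly on the virtual presentation of $\mathbf{S}_{\lambda}(n)$: eq.~(\ref{due}) of the Vanishing Lemmas annihilates the rightmost factor $C_{\lambda}^{*}S$ applied to $v_{\widetilde{\mu}}$ when $|\mu|<|\lambda|$; eq.~(\ref{tre}) annihilates the composition $D_{\lambda}^{*}C_{\lambda}^{*}\cdot C_{\lambda}^{*}S$ appearing inside the virtual monomial when $|\mu|=|\lambda|$ and $\mu\neq\lambda$; and Regonati's hook lemma (Proposition \ref{hook lemma}) produces the eigenvalue $H(\lambda)$ on $v_{\widetilde{\lambda}}$, cancelling the normalising prefactor $1/H(\lambda)$ in Definition \ref{Schur basis}.
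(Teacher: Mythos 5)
Your proposal is correct, and in fact it records two valid derivations, one of which is precisely the paper's. The paper proves Theorem~\ref{Schur action} by invoking eq.~(\ref{due}) of Proposition~\ref{Vanishing Lemma} for the case $|\mu|<|\lambda|$ and eq.~(\ref{tre}) for the case $|\mu|=|\lambda|$, $\mu\neq\lambda$, with the diagonal entry $\mu=\lambda$ supplied by the computation in Theorem~\ref{Vanishing theorem} via Regonati's hook lemma. This is exactly what you describe in your ``self-contained'' alternative. Your primary route is slightly different in packaging: you observe that $|\mu|\le|\lambda|$ together with $\mu\neq\lambda$ forces $\lambda\nsubseteq\mu$, and then read off both vanishing clauses from the single hypothesis of Theorem~\ref{Vanishing theorem} (which itself rests on eq.~(\ref{cinque})). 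This is a mild streamlining: it makes Theorem~\ref{Schur action} a formal corollary of the Vanishing Theorem plus a one-line combinatorial fact about Young diagrams, rather than re-citing the individual Vanishing Lemmas. Both routes ultimately trace back to the same superstraightening arguments in Proposition~\ref{Vanishing Lemma} and to Proposition~\ref{hook lemma} for the diagonal eigenvalue $H(\lambda)$, so the substance is the same; your version simply emphasises that $\{\lambda\nsubseteq\mu\}$ subsumes both off-diagonal regimes.
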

\begin{proof}
The first assertion is an immediate consequence of Proposition \ref{Vanishing Lemma}, eq. (\ref{due}).
The fact that, if $|\mu| = |\lambda|,$ $\mu \neq \lambda,$ then $\mathbf{S}_{\lambda} (n) (v_{\widetilde{\mu}}) = 0,$
is an immediate consequence of Proposition \ref{Vanishing Lemma}, eq. (\ref{tre}).
\end{proof}

\begin{theorem}\label{Schur basis}

For every $m \in \mathbb{Z}^+$, the set
$$
\big\{ \mathbf{S}_{\lambda}(n); \ \lambda_1 \leq n, \ |\lambda| \leq m \ \big\}
$$
is a linear basis of $\boldsymbol{\zeta}(n)^{(m)}.$

The set
$$
\big\{ \mathbf{S}_{\lambda}(n); \ \lambda_1 \leq n \ \big\}
$$
is a linear basis of the center $\boldsymbol{\zeta}(n).$

\end{theorem}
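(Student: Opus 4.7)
The plan is to deduce the theorem as a nearly immediate corollary of the triangularity/orthogonality property of Theorem \ref{Schur action} combined with the dimension count supplied by the $\mathbf{K}_\lambda(n)$-basis of Proposition \ref{K basis}. All $\mathbf{S}_\lambda(n)$ with $|\lambda|\leq m$ already lie in $\boldsymbol{\zeta}(n)^{(m)}$ by the filtration observation (\ref{filtration element}), so the only substantive thing to verify is linear independence; once that is in hand, a cardinality comparison with $\{\mathbf{K}_\mu(n); \mu_1\leq n,\ |\mu|\leq m\}$ finishes the finite-$m$ statement, and the filtered union then gives the global statement.

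First I would establish linear independence by a ``minimal-weight'' argument driven by Theorem \ref{Schur action}. Assume, toward contradiction, a nontrivial relation
\begin{equation*}
\sum_{\lambda_1\leq n,\ |\lambda|\leq m} c_\lambda\, \mathbf{S}_\lambda(n)\;=\;0
\end{equation*}
and let $\mu_0$ be any partition with $c_{\mu_0}\neq 0$ of minimal size $|\mu_0|$ among those appearing. Evaluating both sides on the canonical highest weight vector $v_{\widetilde{\mu}_0}$ of the Schur module $Schur_{\mu_0}(n)$, the terms with $|\lambda|>|\mu_0|$ vanish by the first clause of Theorem \ref{Schur action}, the terms with $|\lambda|=|\mu_0|$ and $\lambda\neq\mu_0$ vanish by the orthogonality clause, and the terms with $|\lambda|<|\mu_0|$ have zero coefficient by minimality of $|\mu_0|$. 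The only surviving contribution is $c_{\mu_0}\,H(\mu_0)\, v_{\widetilde{\mu}_0}$, and since $H(\mu_0)\neq 0$ we obtain $c_{\mu_0}=0$, contradicting the choice of $\mu_0$.

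Next I would finish the finite-filtration statement by a dimension argument. Every $\mathbf{S}_\lambda(n)$ with $|\lambda|\leq m$ lies in $\boldsymbol{\zeta}(n)^{(m)}$ by (\ref{filtration element}), so $\{\mathbf{S}_\lambda(n);\ \lambda_1\leq n,\ |\lambda|\leq m\}$ is a linearly independent subset of $\boldsymbol{\zeta}(n)^{(m)}$. By Proposition \ref{K basis}, $\boldsymbol{\zeta}(n)^{(m)}$ has $\mathbb{C}$-dimension equal to the number of partitions $\lambda$ with $\lambda_1\leq n$ and $|\lambda|\leq m$, which is exactly the cardinality of our family. A linearly independent set of the correct cardinality in a finite-dimensional space is automatically a basis, establishing the first assertion.

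For the global assertion, since $\boldsymbol{\zeta}(n)=\bigcup_{m\geq 0}\boldsymbol{\zeta}(n)^{(m)}$ as filtered algebra, and each bounded-filtration set $\{\mathbf{S}_\lambda(n);\ \lambda_1\leq n,\ |\lambda|\leq m\}$ is a basis of $\boldsymbol{\zeta}(n)^{(m)}$, the directed union $\{\mathbf{S}_\lambda(n);\ \lambda_1\leq n\}$ is a basis of the whole center $\boldsymbol{\zeta}(n)$. There is no real obstacle here: the entire content is concentrated in Theorem \ref{Schur action}, whose triangularity kills all but one summand against each highest weight vector, and the dimension of $\boldsymbol{\zeta}(n)^{(m)}$ is already pinned down by Capelli's classical generation theorem (Theorem \ref{Capelli generators}) as transcribed into the $\mathbf{K}_\lambda(n)$-basis.
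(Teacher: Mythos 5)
Your proposal is correct and uses the same underlying ideas the paper relies on: linear independence is extracted from the triangularity/orthogonality of Theorem \ref{Schur action} via the minimal-size eigenvalue argument, and the dimension count is supplied by the $\mathbf{K}_\lambda(n)$-basis of Proposition \ref{K basis} together with the filtration observation (\ref{filtration element}). This matches the paper's (implicit) deduction, which states the result directly after Theorem \ref{Schur action}, in the same spirit as the paper's explicit arguments for the $\mathbf{K}_\lambda(n)$- and $\mathbf{J}_\lambda(n)$-bases.
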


\subsubsection{The determinantal Capelli generators and the permanental Nazarov/Umeda elements as elements of the Schur basis}

\begin{proposition}
Let $\lambda = (k)$ denote the row shape of length $k.$ We have
$$
\mathbf{S}_{(k)}(n) = \mathbf{H}_k(n).
$$
\end{proposition}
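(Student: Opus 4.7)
The plan is to avoid a direct (and sign-heavy) simplification of the four-factor virtual presentation of $\mathbf{S}_{(k)}(n)$, and instead pin down $\mathbf{H}_k(n)-\mathbf{S}_{(k)}(n)$ inside the filtered piece $\boldsymbol{\zeta}(n)^{(k)}$ by testing on highest weight vectors of bounded weight, exploiting the ``delta-function'' nature of the Schur basis.

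First I set up the framework. Both $\mathbf{H}_k(n)$ (from its column-determinantal form (\ref{31})) and $\mathbf{S}_{(k)}(n)$ (by (\ref{filtration element})) lie in $\boldsymbol{\zeta}(n)^{(k)}$, and both are central by Theorem \ref{H centrality} and the centrality statement following Definition \ref{Schur basis}. Put $\mathbf{D} := \mathbf{H}_k(n)-\mathbf{S}_{(k)}(n) \in \boldsymbol{\zeta}(n)^{(k)}$. By Theorem \ref{Schur basis} there is a unique expansion
$$
\mathbf{D} \;=\; \sum_{\substack{\lambda_1 \leq n \\ |\lambda|\le k}} c_\lambda\,\mathbf{S}_\lambda(n),
$$
and it suffices to show every $c_\lambda$ vanishes. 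Note $\mu_1 \le |\mu| \le k \le n$ throughout, so all the relevant highest weight vectors $v_{\widetilde{\mu}}$ are available.

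Next I verify $\mathbf{D}(v_{\widetilde{\mu}})=0$ for every partition $\mu$ with $|\mu|\le k$. For $|\mu|<k$, Proposition \ref{Capelli eigenvalues}.2 gives $\mathbf{H}_k(n)(v_{\widetilde{\mu}})=0$ (since $\mu_1<k$), while Theorem \ref{Schur action} gives $\mathbf{S}_{(k)}(n)(v_{\widetilde{\mu}})=0$ (since $|\mu|<|(k)|$). For $|\mu|=k$, Theorem \ref{Schur action} gives $\mathbf{S}_{(k)}(n)(v_{\widetilde{\mu}})=\delta_{\mu,(k)}\,H((k))\,v_{\widetilde{\mu}}=\delta_{\mu,(k)}\,k!\,v_{\widetilde{\mu}}$; on the other side, (\ref{elementary shift}) shows that $e_k^*(\widetilde{\mu})$ vanishes unless every index $i_j\in\{1,\dots,k\}$ satisfies $\widetilde{\mu}_{i_j}\ge 1$, which forces $\mu_1\ge k$ and, combined with $|\mu|=k$, yields $\mu=(k)$ and $\widetilde{\mu}=(1^k)$, in which case $e_k^*((1^k))=(1+k-1)(1+k-2)\cdots 1=k!$. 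So $\mathbf{H}_k(n)$ and $\mathbf{S}_{(k)}(n)$ have the same eigenvalue on every $v_{\widetilde{\mu}}$ with $|\mu|\le k$.

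Finally I kill the coefficients $c_\lambda$ by induction on $m=|\mu|=0,1,\dots,k$. The base case $m=0$ gives $0=\mathbf{D}(v_{\widetilde{\emptyset}})=c_\emptyset\cdot 1$, so $c_\emptyset=0$. Assuming $c_\lambda=0$ for every $|\lambda|<m$, choose any partition $\mu$ with $|\mu|=m$ and $\mu_1\le n$; Theorem \ref{Schur action} collapses the surviving sum to
$$
0 \;=\; \mathbf{D}(v_{\widetilde{\mu}}) \;=\; \sum_{|\lambda|=m} c_\lambda\,\mathbf{S}_\lambda(n)(v_{\widetilde{\mu}}) \;=\; c_\mu\,H(\mu)\,v_{\widetilde{\mu}},
$$
because $\mathbf{S}_\lambda(n)(v_{\widetilde{\mu}})=0$ for $|\lambda|>|\mu|$ and equals $\delta_{\lambda,\mu}\,H(\lambda)\,v_{\widetilde{\mu}}$ for $|\lambda|=|\mu|$. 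Since $H(\mu)>0$ we conclude $c_\mu=0$, completing the induction and showing $\mathbf{D}=0$. The argument is essentially frictionless: its main (minor) technical point is the vanishing check $e_k^*(\widetilde{\mu})=\delta_{\mu,(k)}\,k!$ for $|\mu|=k$, which is a direct inspection of (\ref{elementary shift}); no commutator bookkeeping in the virtual algebra, and in particular no $(-1)^{\binom{k}{2}}$ sign chase, is required.
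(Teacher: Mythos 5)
Your proof is correct, but it takes a genuinely different route from the paper. The paper's argument is a two-line symbolic simplification of the virtual presentation: for the single-row shape, $e_{C_{(k)}^{*},D_{(k)}^{*}}\,e_{D_{(k)}^{*},C_{(k)}^{*}}$ reduces modulo $\mathbf{Irr}$ to $(-1)^{\binom{k}{2}}\,e_{C_{(k)}^{*},C_{(k)}^{*}}$, which in turn contributes a factor of $k!$, and after cancelling with $1/H((k))=1/k!$ and reversing the order of the left factors (producing the compensating $(-1)^{\binom{k}{2}}$), one lands directly on the virtual presentation (\ref{30}) of $\mathbf{H}_k(n)$. Your proof instead works entirely at the level of eigenvalues: you place the difference $\mathbf{D}=\mathbf{H}_k(n)-\mathbf{S}_{(k)}(n)$ in $\boldsymbol{\zeta}(n)^{(k)}$, expand it in the Schur basis of Theorem \ref{Schur basis}, verify that $\mathbf{D}$ kills every $v_{\widetilde{\mu}}$ with $|\mu|\le k$ (comparing $e^*_k(\widetilde{\mu})$ from (\ref{elementary shift}) against $\delta_{\mu,(k)}\cdot k!$ from Theorem \ref{Schur action}), and then sweep up the coefficients by induction on degree using the orthogonality. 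What you buy is a sign-free, conceptually clean argument that generalizes to any situation where one wants to identify a central element with a Schur element $\mathbf{S}_\lambda(n)$ by matching eigenvalues; what you pay is reliance on the auxiliary theorems (Theorem \ref{Schur basis} for spanning, Theorem \ref{Schur action} for orthogonality, Proposition \ref{Capelli eigenvalues} for the Capelli eigenvalues), whereas the paper's direct manipulation is self-contained and also exposes the structural reason for the identity, namely that $C^{*}_{(k)}$ and $D^{*}_{(k)}$ collapse for one-row shapes. Your inspection of (\ref{elementary shift}) could be tightened: the point is that every summand carries the factor $\widetilde{\mu}_{i_k}$ with $i_k\ge k$, so the whole sum vanishes whenever $\mu_1<k$, and for $|\mu|=k$ the only remaining $\mu$ is $(k)$ with single surviving term $k!$; your phrasing about "every index $i_j\in\{1,\dots,k\}$" describes only the term $(1,\dots,k)$ and reads as if that were the whole sum.
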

\begin{proof}

We have
\begin{align*}
\mathbf{S}_{(k)}(n) &= \frac {1} {H(\lambda)} \  \ \sum_S \ \mathfrak{p} \big(e_{S,C_{(k)}^{*}} \cdot e_{C_{(k)}^{*},D_{(k)}^{*}} \cdot  e_{D_{(k)}^{*},C_{(k)}^{*}} \cdot e_{C_{(k)}^{*},S} \big) \\
&= \frac {1} {k!} \sum_S \ SC_{(k)}^{*} \
C_{(k)}^{*}D_{(k)}^{*} \ D_{(k)}^{*}C_{(k)}^{*} \ C_{(k)}^{*}S,
\end{align*}
where the sum is extended to all strictly increasing row tableaux $S$ of shape $(k)$.

Notice that
$$
\mathfrak{p} \big(e_{S,C_{(k)}^{*}} \cdot e_{C_{(k)}^{*},D_{(k)}^{*}} \cdot  e_{D_{(k)}^{*},C_{(k)}^{*}} \cdot e_{C_{(k)}^{*},S} \big)
$$
equals
$$
(-1)^{{k} \choose {2}} \
\mathfrak{p} \big(e_{S,C_{(k)}^{*}} \cdot e_{C_{(k)}^{*},C_{(k)}^{*}}  \cdot e_{C_{(k)}^{*},S} \big),
$$
that, in turn, equals
$$
(-1)^{{k} \choose {2}} \ k! \
\mathfrak{p} \big(e_{S,C_{(k)}^{*}}   \cdot e_{C_{(k)}^{*},S} \big).
$$
Therefore,
\begin{align*}
\mathbf{S}_{(k)}(n) &= (-1)^{{k} \choose {2}} \ \sum_{1 \leq i_1 < \cdots < i_k \leq n} \ \mathfrak{p} \big( e_{i_1 , \alpha}  e_{i_2, \alpha} \cdots e_{i_k, \alpha}
e_{\alpha, i_1}e_{\alpha, i_2} \cdots e_{\alpha, i_k } \big)
\\
&=\sum_{1 \leq i_1 < \cdots < i_k \leq n} \ \mathfrak{p} \big( e_{i_k , \alpha} \cdots e_{i_2, \alpha} e_{i_1, \alpha}
e_{\alpha, i_1}e_{\alpha, i_2} \cdots e_{\alpha, i_k } \big)
\\
&= \mathbf{H}_k(n).
\end{align*}
\end{proof}

\begin{remark}
Notice that $e^*_k((k))$, the eigenvalue of $\mathbf{H}_k$ on the row shape $(k)$ of length $k$, equals the hook
coefficient $H( (k) ) = k!.$
\end{remark}

\begin{proposition}
Let $\lambda = (1^k)$ denote the column shape of length $k.$ We have
$$
\mathbf{S}_{(1^k)}(n) = \mathbf{I}_k(n).
$$
\end{proposition}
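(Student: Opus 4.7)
The plan is to compare eigenvalues on highest weight vectors rather than manipulate the virtual presentations directly, and to invoke the triangularity of the Schur basis. Since both $\mathbf{I}_k(n)$ and $\mathbf{S}_{(1^k)}(n)$ lie in $\boldsymbol{\zeta}(n)^{(k)}$, Theorem \ref{Schur basis} lets me write
$$
\mathbf{I}_k(n) \;=\; \sum_{\mu_1\leq n,\ |\mu|\leq k} c_\mu\,\mathbf{S}_\mu(n),
$$
and the task reduces to showing $c_{(1^k)}=1$ and $c_\mu=0$ for every other $\mu$. Evaluated on $v_{\widetilde{\nu}}$ with $|\nu|\leq k$, the Vanishing Theorem \ref{Vanishing theorem} kills every contribution with $\mu\nsubseteq\nu$, and Theorem \ref{Schur action} gives the diagonal term $c_\nu H(\nu)\,v_{\widetilde{\nu}}$ when $\mu=\nu$. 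Hence the linear system for the $c_\mu$'s is lower triangular in $|\mu|$ with nonzero diagonal, so the coefficients are uniquely determined by the eigenvalues of $\mathbf{I}_k(n)$ on the vectors $v_{\widetilde{\nu}}$, $|\nu|\leq k$.

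I read off these eigenvalues from Theorem \ref{vertical strip}: $\mathbf{I}_k(n)(v_{\widetilde{\nu}}) = h^*_k(\widetilde{\nu})\,v_{\widetilde{\nu}}$, with $h^*_k(\widetilde{\nu})=\sum vstrip_\nu(k)!$ summed over vertical strips of length $k$ in the Ferrers diagram of $\nu$. A vertical strip contains at most one cell per row, hence at most $\ell(\nu)$ cells in total. When $|\nu|<k$ the diagram has too few cells altogether; when $|\nu|=k$ and $\nu\neq(1^k)$, the assumption $\nu_1\geq 2$ gives $\ell(\nu)\leq|\nu|-\nu_1+1\leq k-1<k$, and once again no vertical strip of length $k$ exists. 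In both cases $h^*_k(\widetilde{\nu})=0$. The remaining possibility $\nu=(1^k)$ admits exactly one vertical strip of length $k$, namely the whole column viewed as a single vertical component of size $k$, giving $h^*_k(\widetilde{(1^k)})=k!=H((1^k))$.

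These eigenvalues coincide, term by term, with those of $\mathbf{S}_{(1^k)}(n)$ supplied by Theorem \ref{Schur action}. Solving the triangular system by induction on $|\mu|$ (base case $\mu=\emptyset$, where $c_\emptyset=0$ because $h^*_k(\emptyset)=0$) then forces $c_\mu=0$ whenever $|\mu|<k$, and whenever $|\mu|=k$ with $\mu\neq(1^k)$; the diagonal equation $c_{(1^k)}H((1^k))=k!$ yields $c_{(1^k)}=1$. This proves $\mathbf{I}_k(n)=\mathbf{S}_{(1^k)}(n)$. The only step that requires real combinatorial reasoning (rather than bookkeeping) is the vertical-strip analysis for $|\nu|=k$, $\nu\neq(1^k)$, where the ``one cell per row'' constraint must be combined with the row-count bound $\ell(\nu)\leq k-1$ derived from $\nu_1\geq 2$.
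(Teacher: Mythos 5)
Your proposal is correct, but it takes a genuinely different route from the paper's. The paper proves $\mathbf{S}_{(1^k)}(n) = \mathbf{I}_k(n)$ by direct manipulation of the virtual presentations: since the Coderuyts tableau $C_{(1^k)}^*$ for a column shape is multilinear (its entries are $k$ distinct positive virtual symbols $\alpha_1,\dots,\alpha_k$), the middle factor $e_{C_{(1^k)}^*,D_{(1^k)}^*}\cdot e_{D_{(1^k)}^*,C_{(1^k)}^*}$ collapses, so each summand $\mathfrak{p}\bigl(e_{S,C_{(1^k)}^*}\cdot e_{C_{(1^k)}^*,D_{(1^k)}^*}\cdot e_{D_{(1^k)}^*,C_{(1^k)}^*}\cdot e_{C_{(1^k)}^*,S}\bigr)$ reduces to $\mathfrak{p}\bigl(e_{S,D_{(1^k)}^*}\cdot e_{D_{(1^k)}^*,S}\bigr)$; the sum over column tableaux $S$ is then regrouped by content, row-commutativity produces a multinomial coefficient, and $\tfrac{1}{k!}\binom{k}{h_1,\dots,h_n} = (h_1!\cdots h_n!)^{-1}$ recovers the definition of $\mathbf{I}_k(n)$. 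That proof lives entirely at the level of balanced monomials in $Virt(m_0+m_1,n)$. You instead characterize both sides by their eigenvalues: expand $\mathbf{I}_k(n)$ in the Schur basis of $\boldsymbol{\zeta}(n)^{(k)}$, use the triangularity/orthogonality of Theorem \ref{Schur action} to make the resulting system invertible, and then show (via Theorem \ref{vertical strip} and the ``one cell per row'' bound $\ell(\nu)\leq|\nu|-\nu_1+1$) that $h^*_k(\widetilde\nu)$ vanishes for every test partition with $|\nu|\leq k$ except $\nu=(1^k)$, where it equals $H((1^k))=k!$. This is in effect the Sahi/Okounkov characterization argument specialized to the column shape, and it is a perfectly valid alternative. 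What the two approaches buy: yours is shorter and more conceptual, but leans on the full Schur-basis apparatus (Theorems \ref{Schur basis}, \ref{Schur action}, \ref{vertical strip}); the paper's is more elementary and self-contained at the virtual-variable level, and, by exhibiting an explicit identity between the two monomial presentations, gives structural information that an eigenvalue comparison does not. Both are correct, and your vertical-strip counting (including the edge cases $|\nu|<k$ and $\nu_1\geq 2$) checks out.
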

\begin{proof}

We have
\begin{align*}
\mathbf{S}_{(1^k)}(n) &= \frac {1} {H(\lambda)} \  \ \sum_S \ \mathfrak{p} \big(e_{S,C_{(1^k)}^{*}} \cdot e_{C_{(1^k)}^{*},D_{(1^k)}^{*}} \cdot  e_{D_{(1^k)}^{*},C_{(1^k)}^{*}} \cdot e_{C_{(1^k)}^{*},S} \big) \\
&= \frac {1} {k!} \sum_S \ SC_{(1^k)}^{*} \
C_{(1^k)}^{*}D_{(1^k)}^{*} \ D_{(1^k)}^{*}C_{(1^k)}^{*} \ C_{(1^k)}^{*}S,
\end{align*}
where the sum is extended to all column tableaux $S$ of shape $(1^k)$.

Since the column tableau $C_{(1^k)}^{*}$ is multilinear (that is,
$$
C_{(1^k)}^{*} =
\left(
\begin{array}{c}
\alpha_1\\  \alpha_2 \\ \vdots \\ \alpha_k
\end{array}
\right),
$$
where the $\alpha_i$'s are distinct positive virtual symbols), then each summand
$$
\mathfrak{p} \big(e_{S,C_{(1^k)}^{*}} \cdot e_{C_{(1^k)}^{*},D_{(1^k)}^{*}} \cdot  e_{D_{(1^k)}^{*},C_{(1^k)}^{*}} \cdot e_{C_{(1^k)}^{*},S} \big)
$$
equals
$$
\mathfrak{p} \big(e_{S,D_{(1^k)}^{*}}  \cdot e_{D_{(1^k)}^{*},S} \big).
$$
Therefore
\begin{align*}
\mathbf{S}_{(1^k)}(n) &= \frac {1} {k!} \sum_S \ SD_{(1^k)}^{*}  D_{(1^k)}^{*}S
\\
&= \frac {1} {k!} \sum_{(h_1, \ldots, h_n)} \ \sum_T \ TD_{(1^k)}^{*}  D_{(1^k)}^{*}T,
\end{align*}
where the outer sum is extended over all indexes $h_1 + \cdots + h_n = k$ and inner sum
is extended over all column tableaux $T$ with $h_1$ occurrences of $1$, $h_2$ occurrences of $2$,
$\ldots$, $h_n$ occurrences of $n.$
Moreover, since each element  $TD_{(1^k)}^{*}$ and  $D_{(1^k)}^{*}T$ is row-commutative, then
the inner sum
$$
\sum_T \ TD_{(1^k)}^{*}  D_{(1^k)}^{*}T
$$
equals
$$
{{k} \choose  {h_1,h_2, \ldots, h_n}}
\cdot
\left[
\begin{array}{c}
1 \\ \vdots \\ 1 \\ \vdots \\ n \\ \vdots \\ n
\end{array}
\right| \left.
\begin{array}{c}
\beta_1 \\ \vdots \\ \beta_1\\  \vdots \\ \beta_1 \\ \vdots \\ \beta_1
\end{array}
\right]
\left[
\begin{array}{c}
\beta_1 \\ \vdots \\ \beta_1\\  \vdots \\ \beta_1 \\ \vdots \\ \beta_1
\end{array}
\right| \left.
\begin{array}{c}
1 \\ \vdots \\ 1 \\ \vdots \\ n \\ \vdots \\ n
\end{array}
\right],
$$
where there are  $h_1$ occurrences of $1$, $h_2$ occurrences of $2$,
$\ldots$, $h_n$ occurrences of $n.$
Therefore
\begin{align*}
\mathbf{S}_{(1^k)}(n) &= \frac {1} {k!} \sum_{(h_1, \ldots, h_n)} \ \sum_T \ TD_{(1^k)}^{*}  D_{(1^k)}^{*}T
\\
&= \frac {1} {k!} \sum_{(h_1, \ldots, h_n)} \ {{k} \choose  {h_1,h_2, \ldots, h_n}}
\cdot
\left[
\begin{array}{c}
1 \\ \vdots \\ 1 \\ \vdots \\ n \\ \vdots \\ n
\end{array}
\right| \left.
\begin{array}{c}
\beta_1 \\ \vdots \\ \beta_1\\  \vdots \\ \beta_1 \\ \vdots \\ \beta_1
\end{array}
\right]
\left[
\begin{array}{c}
\beta_1 \\ \vdots \\ \beta_1\\  \vdots \\ \beta_1 \\ \vdots \\ \beta_1
\end{array}
\right| \left.
\begin{array}{c}
1 \\ \vdots \\ 1 \\ \vdots \\ n \\ \vdots \\ n
\end{array}
\right]
\\
&= \sum_{(h_1, h_2, \ldots, h_n) } \ \frac {1} {h_1! h_2! \cdots h_n!} \ [n^{h_n} \cdots 2^{h_2} 1^{h_1} | 1^{h_1} 2^{h_2} \cdots n^{h_n} ]^{*}
\\
&= \mathbf{I}_k(n).
\end{align*}
\end{proof}

\begin{remark}

Notice that  $h^*_k( (1^k))$, the eigenvalue of $\mathbf{I}_k$ on the column shape $(1^k)$ of length $k$, equals the hook
coefficient $H( (1^k) ) = k!.$
\end{remark}

\subsubsection{The Sahi/Okounkov Characterization  Theorem}\label{Characterization  Theorems}
We anticipate the formulation of Theorem \ref{Schur action} in terms of the Harish-Chandra
isomorphism (Definition \ref{HC isomorphism}, below)
$$
\chi_n : \boldsymbol{\zeta}(n) \longrightarrow \Lambda^*(n),
$$
where $\Lambda^*(n)$ denotes the algebra of {\it{shifted symmetric polynomials}} in $n$ variables (see Section \ref{Lambda(n)} below).
\begin{proposition}\label{HC action}
We have that $\chi_n\left( \mathbf{S}_{\lambda}(n) \right)$ is an element $m-$th filtration  element $\Lambda^*(n)^{(m)}$, for any $m \geq |\lambda|$,
of  the algebra
$\Lambda^*(n)$ of shifted symmetric polynomials in $n$ variables. Furthermore
\begin{align*}
&\textrm{If} \ |\mu| < |\lambda|, \  \textrm{then}  &   \chi_n\left( \mathbf{S}_{\lambda}(n) \right)(\widetilde{\mu}) &= 0,
\\
&\textrm{If} \ |\mu| = |\lambda|, \ \textrm{then} &
\chi_n\left( \mathbf{S}_{\lambda}(n) \right)(\widetilde{\mu}) &= \delta_{\lambda, \mu} \cdot H(\lambda),
\end{align*}
where $H(\lambda)$ denotes the hook number of the shape (partition) $\lambda.$
\end{proposition}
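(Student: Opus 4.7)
The plan is to derive Proposition \ref{HC action} directly from Theorem \ref{Schur action} by transporting the statement along the Harish-Chandra isomorphism $\chi_n$, whose essential property (to be recalled in Definition \ref{HC isomorphism}) is that for any central element $\boldsymbol{\varrho} \in \boldsymbol{\zeta}(n)$ and any partition $\mu$ with $\mu_1 \leq n$, the value of the shifted symmetric polynomial $\chi_n(\boldsymbol{\varrho})$ at the point $\widetilde{\mu}$ equals the scalar by which $\boldsymbol{\varrho}$ acts on the highest weight vector $v_{\widetilde{\mu}}$ of the irreducible module $Schur_\mu(n)$.

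First, I would invoke the filtration-preservation property of $\chi_n$: this map carries the standard filtration of $\boldsymbol{\zeta}(n)$ (inherited from the PBW filtration of $\mathbf{U}(gl(n))$) onto the degree filtration of $\Lambda^*(n)$. Combined with (\ref{filtration element}), which states $\mathbf{S}_\lambda(n) \in \boldsymbol{\zeta}(n)^{(m)}$ for every $m \geq |\lambda|$, this yields $\chi_n(\mathbf{S}_\lambda(n)) \in \Lambda^*(n)^{(m)}$ at once.

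Second, for the pointwise values, the defining property of $\chi_n$ recalled above gives
\[
\chi_n\bigl(\mathbf{S}_\lambda(n)\bigr)(\widetilde{\mu}) \;=\; \text{eigenvalue of } \mathbf{S}_\lambda(n) \text{ on } v_{\widetilde{\mu}}.
\]
Theorem \ref{Schur action} evaluates exactly this scalar: it is $0$ whenever $|\mu| < |\lambda|$, and it is $\delta_{\lambda,\mu} \cdot H(\lambda)$ whenever $|\mu| = |\lambda|$. Substituting into the displayed equality closes the argument.

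There is no genuine obstacle here: the proposition is a pure translation of Theorem \ref{Schur action} through $\chi_n$, using only two abstract properties of the Harish-Chandra map, namely evaluation-at-highest-weight and preservation of filtration. The only items that must still be verified are that these two properties hold for $\chi_n$ as formally introduced in Section \ref{Lambda(n)}, which is standard and does not enter the present argument.
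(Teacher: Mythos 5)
Your proposal is correct and matches the paper's (implicit) argument: the paper states Proposition~\ref{HC action} as an anticipated reformulation of Theorem~\ref{Schur action} via the defining evaluation property of $\chi_n$ in Definition~\ref{HC isomorphism} together with the filtration observation~(\ref{filtration element}), which is exactly what you do. One small point worth making explicit for a careful reader: since $Schur_\mu(n)$ has highest weight $\widetilde{\mu}$, the module called $V_{\widetilde{\mu}}$ in Definition~\ref{HC isomorphism} is precisely $Schur_\mu(n)$, so evaluating $\chi_n(\mathbf{S}_\lambda(n))$ at the tuple $\widetilde{\mu}$ indeed reads off the eigenvalue of $\mathbf{S}_\lambda(n)$ on $v_{\widetilde{\mu}}$, which is what your second step uses.
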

By combining Proposition \ref{HC action} with the {\it{Sahi/Okounkov Characterization Theorem}}
(Theorem 1 of \cite{Sahi2-BR} and Theorem 3.3 of \cite{OkOlsh-BR}, see also \cite{Okounkov-BR}), the shifted symmetric polynomial
$\chi_n\left( \mathbf{S}_{\lambda}(n) \right)$  is the
{\it{Schur shifted symmetric polynomial}} $s_{\widetilde{\lambda}|n}^*$ of \cite{OkOlsh-BR}.
It follows that the basis $\big\{ \mathbf{S}_{\lambda}(n); \ \lambda_1 \leq n \ \big\}$
of the center $\boldsymbol{\zeta}(n)$ is the preimage of the basis of Schur shifted symmetric polynomials
of $\Lambda^*(n)$ characterized and described by Sahi \cite{Sahi2-BR}
({\it{recursive procedure}}), and, moreover, it coincides with  the basis of $\boldsymbol{\zeta}(n)$
described by Okounkov in terms of {\it{quantum immanants}} \cite{Okounkov-BR}
(for further descriptions, see also \cite{Okounkov1-BR} and \cite{Nazarov2-BR}).

\begin{remark}
Notice that $e^*_k((k))$, the eigenvalue of $\mathbf{H}_k$ on the row shape $(k)$ of length $k$, equals the hook
coefficient $H( (k) ).$
Similarly,  $h^*_k( (1^k))$, the eigenvalue of $\mathbf{I}_k$ on the column shape $(1^k)$ of length $k$, equals the hook
coefficient $H( (1^k) ).$
\end{remark}

\subsection{Duality in $\boldsymbol{\zeta}(n)$}\label{central duality}

Let
$$
\mathcal{W}_n : \boldsymbol{\zeta}(n) \rightarrow \boldsymbol{\zeta}(n)
$$
be the algebra automorphism defined by setting
$$
\mathcal{W}_n \Big(  \mathbf{H}_k(n)  \Big) = \mathbf{I}_k(n), \quad k = 1, 2, \ldots, n.
$$

Clearly, Proposition \ref{horizontal strip} and Theorem \ref{vertical strip}.$2$ imply
that, if $\mu_1, \widetilde{\mu}_1 \leq n$, then
\begin{equation}\label{duality}
e^*_k(\widetilde{\mu}) = h^*_k(\mu).
\end{equation}

\begin{theorem}\label{finite duality}
Let $\mu$ be such that $\mu_1, \widetilde{\mu}_1 \leq n.$
For every $\boldsymbol{\varrho} \in \boldsymbol{\zeta}(n)$ the eigenvalue of $\boldsymbol{\varrho}$ on the
$gl(n)-$irreducible module $Schur_{\mu}(n)$ (with highest weight $\widetilde{\mu}$) equals eigenvalue of
$\mathcal{W}_n \Big( \boldsymbol{\varrho} \Big)$ on the
$gl(n)-$irreducible module $Schur_{\widetilde{\mu}}(n)$ (with highest weight $\mu$).
\end{theorem}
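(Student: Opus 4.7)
The plan is to reduce the general statement to the case of the Capelli generators and then invoke the combinatorial eigenvalue identity already visible (in numerical form) in Example \ref{horizontal example} and Example \ref{vertical example}. By Theorem \ref{Capelli generators}, $\mathbf{H}_1(n), \ldots, \mathbf{H}_n(n)$ are algebraically independent generators of $\boldsymbol{\zeta}(n)$, so every $\boldsymbol{\varrho} \in \boldsymbol{\zeta}(n)$ can be written uniquely as a polynomial $\boldsymbol{\varrho} = P(\mathbf{H}_1(n), \ldots, \mathbf{H}_n(n))$, and by the very definition of $\mathcal{W}_n$ we have $\mathcal{W}_n(\boldsymbol{\varrho}) = P(\mathbf{I}_1(n), \ldots, \mathbf{I}_n(n))$. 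Since central elements act as scalars on irreducible representations and the scalar action is multiplicative, Proposition \ref{Capelli eigenvalues} yields
$$
\boldsymbol{\varrho}(v_{\widetilde{\mu}}) = P\big(e^*_1(\widetilde{\mu}), \ldots, e^*_n(\widetilde{\mu})\big) \cdot v_{\widetilde{\mu}},
$$
while Theorem \ref{vertical strip}.1, applied to the canonical highest weight vector $v_\mu$ of $Schur_{\widetilde{\mu}}(n)$ (whose highest weight is $\widetilde{\widetilde{\mu}} = \mu$), yields
$$
\mathcal{W}_n(\boldsymbol{\varrho})(v_\mu) = P\big(h^*_1(\mu), \ldots, h^*_n(\mu)\big) \cdot v_\mu.
$$
Thus the theorem reduces to the ``atomic'' duality $e^*_k(\widetilde{\mu}) = h^*_k(\mu)$ for every $k = 1, \ldots, n$.

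For the atomic duality, I invoke the two combinatorial descriptions afforded by the virtual monomial presentations: by Proposition \ref{horizontal strip},
$$
e^*_k(\widetilde{\mu}) = \sum \ hstrip_\mu(k)!,
$$
where the sum ranges over horizontal strips of length $k$ in the Ferrers diagram $F_\mu$; and by Theorem \ref{vertical strip}.1 (with the roles of $\mu$ and $\widetilde{\mu}$ suitably exchanged), since $Schur_{\widetilde{\mu}}(n)$ has shape $\widetilde{\mu}$,
$$
h^*_k(\mu) = \sum \ vstrip_{\widetilde{\mu}}(k)!,
$$
where the sum ranges over vertical strips of length $k$ in $F_{\widetilde{\mu}}$. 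Since $F_{\widetilde{\mu}}$ is, by definition, the transpose of $F_\mu$, transposition is a bijection carrying horizontal strips of $F_\mu$ onto vertical strips of $F_{\widetilde{\mu}}$ and transforming horizontal components into vertical components of the same cardinality; hence the factorial weights match term by term, completing the argument.

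The main obstacle is essentially already dispatched: the entire reduction rests on Proposition \ref{horizontal strip}, the \emph{new} combinatorial description of $e^*_k(\widetilde{\mu})$ as a sum over horizontal strips of $\mu$, which is ``dual'' in shape to the vertical-strip description of $h^*_k$. Without this reformulation -- transparent only through the virtual presentation (\ref{30}) of $\mathbf{H}_k(n)$ -- one would have to compare the classical shifted-elementary and shifted-complete polynomial presentations (\ref{elementary shift}) and (\ref{complete shift}) directly, a substantially less illuminating task. Once Proposition \ref{horizontal strip} is in hand, the rest of the proof is a one-line transposition bijection combined with standard algebra-generator bookkeeping for central elements.
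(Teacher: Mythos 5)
Your proposal is correct and follows the paper's approach: the paper likewise establishes the theorem by first reducing to the atomic identity $e^*_k(\widetilde{\mu}) = h^*_k(\mu)$ (eq.\ (\ref{duality})), obtained by comparing the horizontal-strip description of $e^*_k$ from Proposition \ref{horizontal strip} with the vertical-strip description of $h^*_k$ from Theorem \ref{vertical strip} via the transpose bijection $F_\mu \leftrightarrow F_{\widetilde{\mu}}$, and then extending to arbitrary $\boldsymbol{\varrho} \in \boldsymbol{\zeta}(n)$ through the free generating set $\mathbf{H}_1(n), \ldots, \mathbf{H}_n(n)$ and the definition of $\mathcal{W}_n$ as the algebra automorphism sending $\mathbf{H}_k(n)$ to $\mathbf{I}_k(n)$. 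Your explicit spelling-out of the polynomial-in-generators step, which the paper leaves implicit, is a faithful reconstruction of the intended argument.
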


\begin{example}  From Example \ref{horizontal example}, the eigenvalue of $\mathbf{H}_2(3)$
on the $gl(3)$-irreducible module $Schur_{(3,2)}(3)$ equals $e^*_2((2,2,1)) = 12.$

From Example \ref{vertical example}, the eigenvalue of $\mathcal{W}_3 \Big(  \mathbf{H}_2(3)  \Big) = \mathbf{I}_2(3)$
on the $gl(3)$-irreducible module $Schur_{(2,2,1)}(3)$ equals $h^*_2((3,2)) = 12.$
\end{example}\qed

The preceding result, in combination with the characterization Theorems of subsection \ref{Characterization Theorems}, implies

\begin{corollary}\label{Schur duality}

Let $\lambda_1, \widetilde{\lambda}_1 \leq n$. Since $H(\lambda) = H(\widetilde{\lambda}),$  then

$$
\mathcal{W}_n \Big(  \mathbf{S}_{\lambda}(n)  \Big) = \mathbf{S}_{\widetilde{\lambda}}(n).
$$
\end{corollary}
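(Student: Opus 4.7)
The plan is to invoke the Sahi/Okounkov Characterization Theorem (Subsection \ref{Characterization  Theorems}, Proposition \ref{HC action}), according to which a central element of $\boldsymbol{\zeta}(n)^{(m)}$ is pinned down by its eigenvalues on sufficiently many highest weight vectors. I will apply Theorem \ref{finite duality} to transport the eigenvalue data of $\mathbf{S}_\lambda(n)$ (known from Theorem \ref{Schur action}) onto $\mathcal{W}_n(\mathbf{S}_\lambda(n))$, and then match it with the eigenvalue data of $\mathbf{S}_{\widetilde{\lambda}}(n)$ (also from Theorem \ref{Schur action}).

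Preliminarily I will check that both sides lie in the same filtration piece. The automorphism $\mathcal{W}_n$ exchanges the algebra generators $\mathbf{H}_k(n)$ and $\mathbf{I}_k(n)$, both of which live in $\boldsymbol{\zeta}(n)^{(k)}$, and by Theorem \ref{Capelli generators} the $\mathbf{H}_k(n)$'s generate $\boldsymbol{\zeta}(n)$ as an algebra; hence $\mathcal{W}_n$ preserves the standard filtration, and $\mathcal{W}_n(\mathbf{S}_\lambda(n)) \in \boldsymbol{\zeta}(n)^{(|\lambda|)} = \boldsymbol{\zeta}(n)^{(|\widetilde\lambda|)}$, matching the filtration degree of $\mathbf{S}_{\widetilde{\lambda}}(n)$ as given by (\ref{filtration element}).

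Next, for any partition $\mu$ with $\mu_1, \widetilde{\mu}_1 \leq n$ and $|\mu| \leq |\lambda|$, I will apply Theorem \ref{finite duality} to $\boldsymbol{\varrho} = \mathbf{S}_\lambda(n)$ to obtain
\begin{equation*}
\mathcal{W}_n(\mathbf{S}_\lambda(n))(v_\mu) \;=\; \mathbf{S}_\lambda(n)(v_{\widetilde{\mu}}),
\end{equation*}
and then evaluate the right-hand side to $\delta_{\lambda,\mu}\,H(\lambda)$ via Theorem \ref{Schur action}. Since $\delta_{\lambda,\mu} = \delta_{\widetilde{\lambda},\widetilde{\mu}}$ and the hook number $H$ is conjugation-invariant, the result is precisely the eigenvalue of $\mathbf{S}_{\widetilde{\lambda}}(n)$ on $v_\mu = v_{\widetilde{\widetilde{\mu}}}$ supplied by Theorem \ref{Schur action}. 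The two central elements will thus have identical eigenvalues on every highest weight vector $v_\mu$ whose underlying shape fits in the $n\times n$ square and has size at most $|\lambda|$; crucially, both produce the distinguished nonzero value $H(\widetilde{\lambda})$ exactly at $v_\lambda$, attached to the shape $\widetilde{\lambda}$, which lies inside the square by the hypothesis $\lambda_1, \widetilde{\lambda}_1 \leq n$.

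The hard part will be reconciling the hypothesis of Theorem \ref{finite duality} — which forces $\mu$ into the $n\times n$ square — with the scope of the Sahi/Okounkov characterization, which a priori tests against all partitions of length at most $n$. I plan to dispatch this by exploiting the fact that the characterization singles out an element of $\boldsymbol{\zeta}(n)^{(|\widetilde\lambda|)}$ through one distinguished nonzero eigenvalue $H(\widetilde{\lambda})$ at the shape $\widetilde{\lambda}$ together with vanishing at all other shapes of size $\leq |\widetilde{\lambda}|$; since $\widetilde{\lambda}$ is square-fitting by hypothesis, the matching of eigenvalues established on the square suffices to identify $\mathcal{W}_n(\mathbf{S}_\lambda(n))$ with $\mathbf{S}_{\widetilde{\lambda}}(n)$, yielding the claimed equality.
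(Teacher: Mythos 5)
Your route is the paper's route: transport the eigenvalue data of $\mathbf{S}_{\lambda}(n)$ across $\mathcal{W}_n$ via Theorem~\ref{finite duality}, then pin down $\mathcal{W}_n(\mathbf{S}_{\lambda}(n))$ with the Sahi/Okounkov characterization. You also correctly spotted the one nontrivial point the paper glosses over, namely the mismatch between the $n\times n$ square constraint in Theorem~\ref{finite duality} and the ``all partitions of length $\leq n$'' scope of the characterization. But your fix for that mismatch does not work.

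The characterization does not single out $s^*_{\lambda|n}$ by one distinguished nonzero value plus \emph{some} vanishing; it needs vanishing at \emph{every} shape $\nu$ with $\nu_1 \leq n$ and $|\nu| \leq |\lambda|$, $\nu \neq \widetilde{\lambda}$. That $\widetilde{\lambda}$ itself is square-fitting is irrelevant to the missing vanishing conditions. When $|\lambda| > n$ there really are shapes $\nu$ with $\nu_1 \leq n$, $|\nu| \leq |\lambda|$ that do \emph{not} satisfy $\widetilde{\nu}_1 \leq n$, and Theorem~\ref{finite duality} is silent about the eigenvalues there. Concretely, for $n=2$ and $|\lambda|=4$ the space $\boldsymbol{\zeta}(2)^{(4)}$ has dimension $9$ (partitions of size $\leq 4$ with at most two columns), but only $6$ of the test partitions fit in the $2\times 2$ square; the interpolation problem your argument sets up is under-determined, so equality of eigenvalues on the square alone cannot force $\mathcal{W}_n(\mathbf{S}_{\lambda}(n)) = \mathbf{S}_{\widetilde{\lambda}}(n)$.

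When $|\lambda| \leq n$ there is no gap at all, since any $\nu$ with $|\nu| \leq |\lambda| \leq n$ automatically has $\nu_1, \widetilde{\nu}_1 \leq |\nu| \leq n$ and is therefore square-fitting; your argument is then complete, and indeed that is the only range the paper actually uses afterward (to derive $\mathcal{W}_n(\mathbf{I}_k(n)) = \mathbf{H}_k(n)$ for $k \leq n$). For the full statement, i.e.\ $\lambda$ merely fitting in the $n\times n$ square with possibly $|\lambda| > n$, you would need to supply the missing vanishing at the non-square test shapes separately — for instance by first proving the duality $\mathcal{W}(\mathbf{S}_{\lambda}) = \mathbf{S}_{\widetilde{\lambda}}$ in the stable algebra $\boldsymbol{\zeta}$ (where no square constraint arises) and then applying the Capelli--Olshanski projection $\boldsymbol{\pi}_{n,\bullet}$, using that $\boldsymbol{\pi}$ intertwines $\mathcal{W}$ and $\mathcal{W}_n$ and that $\boldsymbol{\pi}_{n,\bullet}(\mathbf{S}_{\mu}) = \mathbf{S}_{\mu}(n)$ whenever $\mu_1 \leq n$.
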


Since the $\mathbf{H}_k(n)$'s and the $\mathbf{I}_k(n)$'s, $k =1, 2, \ldots, n$, are elements of the Schur basis associated to pairs
of (row/column) conjugate partitions that satisfy the conditions of Corollary \ref{finite duality} , then
$$
\mathcal{W}_n \Big(  \mathbf{I}_k(n)  \Big) = \mathbf{H}_k(n), \quad k = 1, 2, \ldots, n.
$$

\begin{corollary}

The algebra automorphism $\mathcal{W}_n$ is an involution.
\end{corollary}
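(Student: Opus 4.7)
The plan is straightforward: since $\mathcal{W}_n$ is defined to be an algebra automorphism, so is $\mathcal{W}_n \circ \mathcal{W}_n$, and to prove it equals the identity it suffices to check equality on any system of algebra generators. Capelli's theorem (Theorem \ref{Capelli generators}) furnishes exactly such a system: the elements $\mathbf{H}_1(n), \mathbf{H}_2(n), \ldots, \mathbf{H}_n(n)$ generate $\boldsymbol{\zeta}(n)$ (freely, but freeness is not needed for this argument).

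First, I would recall the two ingredients that are already in place. On one hand, by the very definition of $\mathcal{W}_n$, we have $\mathcal{W}_n(\mathbf{H}_k(n)) = \mathbf{I}_k(n)$ for $k = 1, \ldots, n$. On the other hand, by the observation made just before the statement (obtained by combining the fact that $\mathbf{H}_k(n) = \mathbf{S}_{(k)}(n)$ and $\mathbf{I}_k(n) = \mathbf{S}_{(1^k)}(n)$, that the shapes $(k)$ and $(1^k)$ are conjugate partitions satisfying the hypothesis of Corollary \ref{Schur duality}), we also have $\mathcal{W}_n(\mathbf{I}_k(n)) = \mathbf{H}_k(n)$ for $k = 1, \ldots, n$.

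Composing these two identities gives, for every $k = 1, 2, \ldots, n$,
\begin{equation*}
(\mathcal{W}_n \circ \mathcal{W}_n)\bigl( \mathbf{H}_k(n) \bigr) = \mathcal{W}_n\bigl( \mathbf{I}_k(n) \bigr) = \mathbf{H}_k(n).
\end{equation*}
Hence the algebra endomorphism $\mathcal{W}_n \circ \mathcal{W}_n$ of $\boldsymbol{\zeta}(n)$ agrees with the identity on the generating set $\{\mathbf{H}_1(n), \ldots, \mathbf{H}_n(n)\}$, and therefore coincides with the identity on all of $\boldsymbol{\zeta}(n)$.

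There is essentially no obstacle here; the whole content sits in the previously established dualities (Theorem \ref{finite duality} and Corollary \ref{Schur duality}), which guarantee that $\mathcal{W}_n$ swaps the determinantal Capelli generators with their permanental counterparts in both directions. Once this symmetric behaviour on generators is in hand, the involutivity is a one-line consequence of the universal property of an algebra morphism determined by its values on generators.
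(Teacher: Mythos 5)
Your proof is correct and follows exactly the paper's intended line of reasoning: the paper records $\mathcal{W}_n(\mathbf{I}_k(n)) = \mathbf{H}_k(n)$ (via Corollary \ref{Schur duality} applied to the conjugate pair $(k)$, $(1^k)$) immediately before the corollary, and involutivity then follows because $\mathcal{W}_n \circ \mathcal{W}_n$ fixes the Capelli generators $\mathbf{H}_1(n), \ldots, \mathbf{H}_n(n)$. You have merely made explicit the one-line deduction the paper leaves implicit.
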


\begin{remark}
The preceding  results admit a representation-theoretic interpretation.

Given a partition $\mu$, there are two
 non equivalent functors (on the category of commutative ring with unity) that associate to $\mu$ a $GL(n)-$indecomposable representation, the {\it{Schur functor}} and
 the {\it{Weyl functor}} (see, e.g. \cite{ABW-BR}, where the Weyl functor is called co-Schur functor). When evaluated on
 a field of characteristic zero, the two functors produce irreducible modules $Schur_\mu(n)$ and $Weyl_\mu(n)$ of highest
 weight $\widetilde{\mu}$ and $\mu$, respectively. Hence  $Schur_\mu(n)$ is isomorphic to $Weyl_{\widetilde{\mu}}(n)$, and then,
 passing to characters, the ``duality'' between Schur and Weyl modules can be regarded as a representation-theoretic version of
the classical involution of the algebra $\Lambda(n)$ of symmetric polynomials. In this language, Theorem \ref{finite duality} can
be restated by saying that the eigenvalue of an element $\boldsymbol{\varrho} \in \boldsymbol{\zeta}(n)$ on the Schur module
$Schur_\mu(n)$ equals the eigenvalue its image $\mathcal{W}_n\big(\boldsymbol{\varrho})$ on the Weyl module
$Weyl_\mu(n)$.
\end{remark}

\section{The limit $n \rightarrow  \infty$ for $\boldsymbol{\zeta}(n)$: the algebra $\boldsymbol{\zeta}$}\label{zeta limit}

\subsection{The Capelli monomorphisms $\mathbf{i}_{n+1,n}$}\label{Capelli monomorphisms}

Given $n \in \mathbb{Z}^+$, let  we recall that $\boldsymbol{\zeta}(n)$ denote the center of the enveloping algebra $\mathbf{U}(gl(n))$, and
$$
\mathbf{H}_k(n), \quad k =1, \ldots , n
$$
denote the  Capelli free generators  $\boldsymbol{\zeta}(n)$, for every $n \in \mathbb{Z}^+$.

\

For every $n \in \mathbb{Z}^+$, let
$$
\mathbf{i}_{n+1,n} : \boldsymbol{\zeta}(n) \hookrightarrow \zeta(n+1)
$$
be the algebra monomorphism:
$$
\mathbf{i}_{n+1,n} : \mathbf{H}_k(n) \rightarrow \mathbf{H}_k(n+1), \quad k = 1,2, \ldots, n.
$$

We will refer to the monomorphism $\mathbf{i}_{n+1,n}$ as the {\it{Capelli monomorphisms}}.

\begin{remark}

Given $m \in \mathbb{Z}^+$, let $\boldsymbol{\zeta}(n)^{(m)}$ denote the $m-$th filtration element
of $\boldsymbol{\zeta}(n)$ (with respect to the filtration induced  by the standard filtration of $\mathbf{U}(n)$).

Clearly, the Capelli monomorphisms are {\it{morphisms in  the category of filtered algebras}}, that is
$$
\mathbf{i}_{n+1,n}\Big[ \boldsymbol{\zeta}(n)^{(m)} \Big] \subseteq \boldsymbol{\zeta}(n+1)^{(m)}
$$
\end{remark}

\begin{definition}
We  consider the {\it{direct limit}} (in the category of filtered algebras):
\begin{equation}\label{direct limit}
\underrightarrow{lim} \ \boldsymbol{\zeta}(n) = \boldsymbol{\zeta}.
\end{equation}
\end{definition}
\

The algebra $\boldsymbol{\zeta}$ inherits a structure of filtered algebra, where
$$
{\boldsymbol{\zeta}}^{(m)} = \underrightarrow{lim} \ \boldsymbol{\zeta}(n)^{(m)}.
$$

\

On the other hand, for every $n \in \mathbb{Z}^+$, we may consider the projection operator
$$
\boldsymbol{\pi}_{n,n+1} : \boldsymbol{\zeta}(n+1) \twoheadrightarrow \boldsymbol{\zeta}(n),
$$
such that
$$
\boldsymbol{\pi}_{n,n+1}(\mathbf{H}_k(n+1)) = \mathbf{H}_k(n)\, \quad k = 1, 2, \ldots, n,
$$
$$
\boldsymbol{\pi}_{n,n+1}(\mathbf{H}_{n+1}(n+1)) = 0.
$$

The following Remarks and Proposition are fairly obvious from the definitions.

\begin{remark}

\begin{enumerate}

\

\item

$$
Ker \big( \boldsymbol{\pi}_{n,n+1} \big) = \Big(  \mathbf{H}_{n+1}(n+1) \Big),
$$
the bilateral ideal of $\boldsymbol{\zeta}(n+1)$ generated by the element $\mathbf{H}_{n+1}(n+1)$.

\item

 The  projection $\boldsymbol{\pi}_{n,n+1}$ is the (filtered) left inverse  of the Capelli monomorphism $\mathbf{i}_{n+1,n}.$

In symbols,
$$
 \boldsymbol{\pi}_{n,n+1} \circ \mathbf{i}_{n+1,n} = Id_{\boldsymbol{\zeta}(n)}.
$$
\end{enumerate}

\end{remark}

\begin{proposition}\label{inverso filtrato}

If $n \geq m  $ , then the restriction  $\boldsymbol{\pi}^{(m)}_{n,n+1}$ of $\boldsymbol{\pi}_{n,n+1}$
to  $\boldsymbol{\zeta}(n+1)^{(m)}$
and the restriction  $\mathbf{i}^{(m)}_{n+1,n}$ of  $\mathbf{i}_{n+1,n}$ to  $\boldsymbol{\zeta}(n)^{(m)}$
 are  the  inverse  of  each  other.

\end{proposition}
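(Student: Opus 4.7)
The plan is to exploit the Capelli basis theorem (the Corollary after Theorem \ref{Capelli generators}) together with the one-sided inverse relation $\boldsymbol{\pi}_{n,n+1} \circ \mathbf{i}_{n+1,n} = Id_{\boldsymbol{\zeta}(n)}$ noted in the Remark preceding the Proposition. Since this already gives $\mathbf{i}^{(m)}_{n+1,n}$ as a left inverse of $\boldsymbol{\pi}^{(m)}_{n,n+1}$ (and in particular $\mathbf{i}^{(m)}_{n+1,n}$ is injective and $\boldsymbol{\pi}^{(m)}_{n,n+1}$ is surjective), the entire task reduces to verifying the reverse composition on a basis of $\boldsymbol{\zeta}(n+1)^{(m)}$.

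First I would recall that, by the Corollary after Theorem \ref{Capelli generators},
$$
\{ \mathbf{H}_\lambda(n+1); \ \lambda_1 \leq n+1, \ |\lambda| \leq m \}
$$
is a linear basis of $\boldsymbol{\zeta}(n+1)^{(m)}$, and similarly for $\boldsymbol{\zeta}(n)^{(m)}$. The crucial observation is that the hypothesis $n \geq m$ forces $\lambda_1 \leq |\lambda| \leq m \leq n$ for any such partition $\lambda$; hence in both bases the constraint on $\lambda_1$ is vacuous, and both bases are indexed by the same set of partitions $\{ \lambda : |\lambda| \leq m \}$. In particular none of the basis elements of $\boldsymbol{\zeta}(n+1)^{(m)}$ involves the generator $\mathbf{H}_{n+1}(n+1)$, nor any factor $\mathbf{H}_k(n+1)$ with $k > n$.

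Next I would compute the composition on this basis. For any partition $\lambda$ with $|\lambda| \leq m$, every part $\lambda_i$ satisfies $\lambda_i \leq n$, so by definition of the two maps
$$
\boldsymbol{\pi}_{n,n+1}\bigl(\mathbf{H}_\lambda(n+1)\bigr) = \mathbf{H}_{\lambda_1}(n)\cdots \mathbf{H}_{\lambda_p}(n) = \mathbf{H}_\lambda(n),
$$
and then
$$
\mathbf{i}_{n+1,n}\bigl(\mathbf{H}_\lambda(n)\bigr) = \mathbf{H}_{\lambda_1}(n+1)\cdots \mathbf{H}_{\lambda_p}(n+1) = \mathbf{H}_\lambda(n+1).
$$
This shows in particular that $\boldsymbol{\pi}_{n,n+1}$ maps $\boldsymbol{\zeta}(n+1)^{(m)}$ into $\boldsymbol{\zeta}(n)^{(m)}$ and that $\mathbf{i}_{n+1,n}$ maps $\boldsymbol{\zeta}(n)^{(m)}$ into $\boldsymbol{\zeta}(n+1)^{(m)}$, so the restrictions $\boldsymbol{\pi}^{(m)}_{n,n+1}$ and $\mathbf{i}^{(m)}_{n+1,n}$ are well-defined, and that $\mathbf{i}^{(m)}_{n+1,n}\circ \boldsymbol{\pi}^{(m)}_{n,n+1}$ acts as the identity on each basis element $\mathbf{H}_\lambda(n+1)$, hence by linearity on all of $\boldsymbol{\zeta}(n+1)^{(m)}$. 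Combined with the already-known relation $\boldsymbol{\pi}^{(m)}_{n,n+1}\circ \mathbf{i}^{(m)}_{n+1,n} = Id_{\boldsymbol{\zeta}(n)^{(m)}}$, this proves that the two restrictions are mutually inverse.

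There is essentially no real obstacle: the only subtle point is recognizing that the filtered-basis statement combined with the hypothesis $n \geq m$ automatically kills the $\mathbf{H}_{n+1}(n+1)$-factor that is responsible for $\boldsymbol{\pi}_{n,n+1}$ failing to be injective on all of $\boldsymbol{\zeta}(n+1)$. Once this is observed, the verification is a one-line computation on the basis, so the proof is almost immediate.
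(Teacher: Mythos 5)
Your proof is correct, and it is exactly the argument the paper treats as ``fairly obvious from the definitions'' (the paper gives no explicit proof beyond that remark). You correctly isolate the only nontrivial point: the hypothesis $n \geq m$ ensures every partition indexing the $\mathbf{H}_\lambda$-basis of $\boldsymbol{\zeta}(n+1)^{(m)}$ has $\lambda_1 \leq |\lambda| \leq m \leq n$, so $\mathbf{H}_{n+1}(n+1)$ never appears and the multiplicative computation on the basis shows both compositions are the identity.
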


\

\

\begin{claim}

The crucial point is that the projections $\boldsymbol{\pi}_{n,n+1}$ admit an {\it{intrinsic/invariant}} presentation
that is founded on the {\it{Olshanski decomposition}}.

\end{claim}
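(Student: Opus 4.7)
The plan is to give the Olshanski decomposition of $\mathbf{U}(gl(n+1))$, use it to define an intrinsic linear projection $\boldsymbol{\mu}_{n,n+1} : \mathbf{U}(gl(n+1)) \twoheadrightarrow \mathbf{U}(gl(n))$, show that its restriction to the center lands in $\boldsymbol{\zeta}(n)$, and identify this restriction with $\boldsymbol{\pi}_{n,n+1}$. First I would set up the Olshanski decomposition: via the PBW theorem one has a direct sum of vector spaces
\begin{equation*}
\mathbf{U}(gl(n+1)) \;=\; \mathbf{U}(gl(n)) \;\oplus\; \mathcal{K}_{n,n+1},
\end{equation*}
where $\mathcal{K}_{n,n+1}$ is the span of PBW-ordered monomials containing at least one factor involving the index $n+1$ (namely $e_{i,n+1}$ or $e_{n+1,j}$ for $i,j \leq n$, or $e_{n+1,n+1}$). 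Define $\boldsymbol{\mu}_{n,n+1}$ as the projection onto the first summand. The first structural step is to check that $\boldsymbol{\mu}_{n,n+1}(\boldsymbol{\zeta}(n+1)) \subseteq \boldsymbol{\zeta}(n)$: since $ad(e_{ij})$ for $e_{ij} \in gl(n)$ preserves both direct summands (it sends generators of $\mathcal{K}_{n,n+1}$ to generators of $\mathcal{K}_{n,n+1}$ and acts as a derivation), for any $z \in \boldsymbol{\zeta}(n+1)$ the relation $ad(e_{ij})(z) = 0$ splits componentwise, forcing the $\mathbf{U}(gl(n))$-component $\boldsymbol{\mu}_{n,n+1}(z)$ to commute with all of $gl(n)$, hence to lie in $\boldsymbol{\zeta}(n)$.

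The heart of the argument is the computation of $\boldsymbol{\mu}_{n,n+1}(\mathbf{H}_k(n+1))$, carried out transparently through virtual presentations. From
\begin{equation*}
\mathbf{H}_k(n+1) \;=\; \sum_{1 \leq i_1 < \cdots < i_k \leq n+1} \ \mathfrak{p}\bigl( e_{i_k, \alpha} \cdots e_{i_1, \alpha} \cdot e_{\alpha, i_1} \cdots e_{\alpha, i_k} \bigr),
\end{equation*}
the summands indexed by $k$-subsets of $\{1, \ldots, n\}$ are, after applying $\mathfrak{p}$, exactly the summands of $\mathbf{H}_k(n)$ in $\mathbf{U}(gl(n))$, while any summand indexed by a subset containing $n+1$ produces, upon devirtualization, a linear combination of PBW-monomials each of which contains a factor of the form $e_{\bullet, n+1}$ or $e_{n+1, \bullet}$ and therefore lies in $\mathcal{K}_{n,n+1}$. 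Consequently $\boldsymbol{\mu}_{n,n+1}$ kills all such summands, yielding $\boldsymbol{\mu}_{n,n+1}(\mathbf{H}_k(n+1)) = \mathbf{H}_k(n)$ for $k \leq n$ and $\boldsymbol{\mu}_{n,n+1}(\mathbf{H}_{n+1}(n+1)) = 0$, in perfect agreement with the algebraic definition of $\boldsymbol{\pi}_{n,n+1}$. Since by Theorem \ref{Capelli generators} the $\mathbf{H}_k(n+1)$ freely generate $\boldsymbol{\zeta}(n+1)$, agreement on these generators will force $\boldsymbol{\mu}_{n,n+1}|_{\boldsymbol{\zeta}(n+1)} = \boldsymbol{\pi}_{n,n+1}$, provided one knows that the restriction is multiplicative.

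The main obstacle is precisely this last point: $\boldsymbol{\mu}_{n,n+1}$ is only linear on the full enveloping algebra, and one must justify that its restriction to the center is an algebra map. The cleanest route is to show that the subspace $\mathcal{K}_{n,n+1} \cap \boldsymbol{\zeta}(n+1)$ is a two-sided ideal of $\boldsymbol{\zeta}(n+1)$, and then to identify it with the principal ideal $\bigl(\mathbf{H}_{n+1}(n+1)\bigr)$ by a degree/generation count against the freeness result of Capelli; this is plausible since the image $\boldsymbol{\mu}_{n,n+1}(\boldsymbol{\zeta}(n+1))$ contains $\mathbf{H}_1(n), \ldots, \mathbf{H}_n(n)$ and hence the whole $\boldsymbol{\zeta}(n)$, while the kernel evidently contains $\mathbf{H}_{n+1}(n+1)$. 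Once this identification is in hand, $\boldsymbol{\mu}_{n,n+1}|_{\boldsymbol{\zeta}(n+1)}$ is exhibited as the quotient map $\boldsymbol{\zeta}(n+1) \twoheadrightarrow \boldsymbol{\zeta}(n+1) / \bigl(\mathbf{H}_{n+1}(n+1)\bigr) \cong \boldsymbol{\zeta}(n)$, i.e.\ precisely $\boldsymbol{\pi}_{n,n+1}$, and the subsequent descriptions (Propositions \ref{Olshanski Capelli} and \ref{Olshanski altri}) of the effect of $\boldsymbol{\mu}_{n,n+1}$ on virtual presentations reduce to truncating the outer summation range to subsets of $\{1, \ldots, n\}$.
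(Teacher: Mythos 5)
Your proposal captures the right general shape — decompose $\mathbf{U}(gl(n+1))$, project, compute the projection on the Capelli generators via the virtual presentation — but it uses a genuinely different decomposition from the paper's, and this difference is exactly where your gap lives. You decompose the \emph{whole} enveloping algebra as $\mathbf{U}(gl(n+1)) = \mathbf{U}(gl(n)) \oplus \mathcal{K}_{n,n+1}$ via a PBW basis, a purely linear splitting. The paper instead works inside the centralizer $\mathbf{U}(gl(n+1))^0$ of $e_{n+1,n+1}$ and uses the Olshanski decomposition
\begin{equation*}
\mathbf{U}(gl(n+1))^0 \;=\; \mathbf{U}(gl(n)) \;\oplus\; \mathbf{I}(n+1)^0, \qquad \mathbf{I}(n+1)^0 := \mathbf{I}(n+1) \cap \mathbf{U}(gl(n+1))^0,
\end{equation*}
where $\mathbf{I}(n+1)$ is the left ideal generated by the $e_{i,n+1}$. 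The crucial structural fact you do not have access to in your approach is that $\mathbf{I}(n+1)^0$ is a \emph{two-sided ideal of the subalgebra $\mathbf{U}(gl(n+1))^0$}. This makes the Olshanski map $\mathcal{M}_{n+1}$ an algebra epimorphism by construction, and multiplicativity of $\boldsymbol{\mu}_{n,n+1}$ on $\boldsymbol{\zeta}(n+1) \subset \mathbf{U}(gl(n+1))^0$ is then automatic. Your $\mathcal{K}_{n,n+1}$ is not an ideal (not even one-sided) in $\mathbf{U}(gl(n+1))$, so the projection you define is only a linear map, and you correctly flag that multiplicativity of its restriction to the center is the whole issue.

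Your proposed repair of that gap, however, does not close it. You suggest showing $\mathcal{K}_{n,n+1} \cap \boldsymbol{\zeta}(n+1)$ is an ideal of $\boldsymbol{\zeta}(n+1)$ and identifying it with $(\mathbf{H}_{n+1}(n+1))$ ``by a degree/generation count against the freeness result of Capelli,'' on the grounds that the image contains $\mathbf{H}_1(n), \ldots, \mathbf{H}_n(n)$ and the kernel contains $\mathbf{H}_{n+1}(n+1)$. But this argument presupposes the very multiplicativity it is meant to establish: without it, you cannot conclude that $\boldsymbol{\mu}_{n,n+1}(\mathbf{H}_\lambda(n+1)) = \mathbf{H}_\lambda(n)$ for partitions $\lambda$ with more than one part, so you cannot even argue that the image is all of $\boldsymbol{\zeta}(n)$, nor that the kernel is the monomial ideal generated by $\mathbf{H}_{n+1}(n+1)$. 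The circularity is the gap. The reason the paper passes through the centralizer $\mathbf{U}(gl(n+1))^0$ rather than the raw PBW splitting is precisely that it converts a left ideal $\mathbf{I}(n+1)$ into a bilateral ideal $\mathbf{I}(n+1)^0$ of a smaller algebra that still contains $\boldsymbol{\zeta}(n+1)$, and it is this bilaterality, not a counting argument, that delivers multiplicativity for free. Once $\boldsymbol{\mu}_{n,n+1}$ is known to be an algebra morphism, its agreement with $\boldsymbol{\pi}_{n,n+1}$ follows from the single computation $\boldsymbol{\mu}_{n,n+1}(\mathbf{H}_k(n+1)) = \mathbf{H}_k(n)$ (your virtual-presentation argument for this part is correct and is essentially Proposition \ref{Olshanski Capelli}) together with the freeness of the Capelli generators.
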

\

\subsection{The Olshanski decomposition/projection }\label{The Olshanski decomposition/projection}

We recall a special case of an essential costruction due to Olshanski \cite{Olsh1-BR}, \cite{Olsh3-BR}.
For the sake of simplicity, we follow Molev (\cite{Molev1-BR}, pp. 928 ff.).

\

Let $\mathbf{U}(gl(n+1))^0$ be the centralizer in $\mathbf{U}(gl(n+1))$ of the element $e_{n+1,n+1}$ of the standard basis
of $gl(n+1)$, regarded as an element of $\mathbf{U}(gl(n+1))$.

\

Let $\mathbf{I}(n+1)$ be the {\it{left ideal}} of $\mathbf{U}(gl(n+1))$ generated by the elements
$$
e_{i,n+1}, \quad i = 1, 2, \ldots, n+1.
$$

\

Let $\mathbf{I}(n+1)^0$  be the intersection
\begin{equation}\label{first Olshanski decomposition}
\mathbf{I}(n+1)^0 = \mathbf{I}(n+1) \cap \mathbf{U}(gl(n+1))^0.
\end{equation}

We recall that $\mathbf{I}(n+1)^0$ is a {\it{bilateral ideal}} of $\mathbf{U}(gl(n+1)^0$, and  the following {\it{direct sum decomposition}} hold

\begin{equation}\label{direct sum decomposition}
\mathbf{U}((gl(n+1))^0 = \mathbf{U}(gl(n)) \oplus \mathbf{I}(n+1)^0.
\end{equation}

\

Therefore, the {\it{Olshanski map}}
$$
\mathcal{M}_{n+1} : \mathbf{U}((gl(n+1))^0 \twoheadrightarrow \mathbf{U}(gl(n))
$$
that maps any element in the direct summand  $\mathbf{U}(gl(n))$ to itself and any element
in the direct summand  $\mathbf{I}(n+1)^0$ to zero is a well-defined algebra epimorphism.

\

Since $\boldsymbol{\zeta}(n+1)$ is a subalgebra of $\mathbf{U}(n+1)^0$, the direct sum decomposition (\ref{direct sum decomposition})
induces a  direct sum decomposition
of any element in $\boldsymbol{\zeta}(n+1)$ and the $\mathcal{M}_{n+1}$ map defines, by restriction, an algebra epimorphism
$$
\boldsymbol{\mu}_{n,n+1} : \boldsymbol{\zeta}(n+1) \twoheadrightarrow \boldsymbol{\zeta}(n).
$$

\

 In plain words, any element $\boldsymbol{\varrho} \in \boldsymbol{\zeta}(n+1)$ admits a {\it{unique}} decomposition

 \begin{equation}\label{Olshanski decomposition}
 \boldsymbol{\varrho} = \boldsymbol{\varrho}' \dotplus \boldsymbol{\varrho}^0, \quad \boldsymbol{\varrho}' \in \boldsymbol{\zeta}(n), \
 \boldsymbol{\varrho}^0 \in \mathbf{I}(n+1)^0.
 \end{equation}

We call the decomposition (\ref{Olshanski decomposition}) the {\it{Olshanski decomposition}} of the
element $\boldsymbol{\varrho} \in \boldsymbol{\zeta}(n)$.

In this notation, the projection
$$
\boldsymbol{\mu}_{n,n+1} : \boldsymbol{\zeta}(n+1) \twoheadrightarrow \boldsymbol{\zeta}(n),
$$
$$
\boldsymbol{\mu}_{n+1,n}(\boldsymbol{\varrho}) = \boldsymbol{\varrho}', \quad \boldsymbol{\varrho} \in \boldsymbol{\zeta}(n+1)
$$
is defined.
\

We claim that

\begin{proposition}\label{Olshanski Capelli}
\

$$
\mathbf{H}_k(n+1) = \mathbf{H}_k(n) \dotplus \mathbf{H}_k(n+1)^0,
$$
where
$$
\mathbf{H}_k(n+1)^0 = \mathbf{H}_k(n+1) - \mathbf{H}_k(n) \in \mathbf{I}(n+1)^0,
$$
and
$$
\mathbf{H}_k(n) \in \boldsymbol{\zeta}(n).
$$
\end{proposition}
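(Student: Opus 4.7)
The proof strategy is to verify the asserted Olshanski decomposition piecewise. Set $\mathbf{H}_k(n+1)^0 := \mathbf{H}_k(n+1) - \mathbf{H}_k(n)$ and check that (a) $\mathbf{H}_k(n) \in \mathbf{U}(gl(n)) \subset \mathbf{U}(gl(n+1))^0$, and (b) $\mathbf{H}_k(n+1)^0 \in \mathbf{I}(n+1)^0 = \mathbf{I}(n+1) \cap \mathbf{U}(gl(n+1))^0$. Part (a) is automatic from the commutator formula $[e_{a,b}, e_{n+1,n+1}] = \delta_{b,n+1}\, e_{a,n+1} - \delta_{a,n+1}\, e_{n+1,b}$, which vanishes on all generators $e_{a,b}$ of $\mathbf{U}(gl(n))$.

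For the centralizer half of (b), $\mathbf{H}_k(n+1)$ commutes with $e_{n+1,n+1}$ by Theorem \ref{H centrality}, and $\mathbf{H}_k(n)$ commutes with it by (a); hence so does their difference.

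The heart of the argument is the left-ideal membership. From the classical determinantal presentation (\ref{31}),
$$\mathbf{H}_k(n+1) = \sum_{1 \leq i_1 < \cdots < i_k \leq n+1} \mathbf{cdet}\,M_{i_1,\ldots,i_k},$$
and splitting the sum according to whether $i_k \leq n$ (which reproduces $\mathbf{H}_k(n)$) or $i_k = n+1$, we obtain
$$\mathbf{H}_k(n+1)^0 = \sum_{1 \leq i_1 < \cdots < i_{k-1} \leq n} \mathbf{cdet}\,M_{i_1,\ldots,i_{k-1},n+1}.$$
In each such $k \times k$ Capelli matrix, the last column consists of the entries $e_{i_1,n+1}, \ldots, e_{i_{k-1},n+1}$ and, on the diagonal in row $k$, the entry $e_{n+1,n+1}$ (the shift $(k-k) = 0$ contributes nothing), all of which are generators of the left ideal $\mathbf{I}(n+1)$. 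I will then apply the last-column Laplace expansion for the column determinant: grouping the permutations $\sigma \in S_k$ appearing in $\mathbf{cdet}(A) = \sum_\sigma (-1)^{|\sigma|} a_{\sigma(1),1} \cdots a_{\sigma(k),k}$ by the value of $\sigma(k)$ yields
$$\mathbf{cdet}\,M_{i_1,\ldots,i_{k-1},n+1} = \sum_{j=1}^{k} (-1)^{k-j}\,\mathbf{cdet}(N_j) \cdot e_{\iota_j,n+1},$$
where $N_j$ is the $(k-1)\times(k-1)$ minor obtained by deleting row $j$ and the last column, and $\iota_j = i_j$ for $j < k$, $\iota_k = n+1$. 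Each summand is a product of an element of $\mathbf{U}(gl(n+1))$ with a generator of $\mathbf{I}(n+1)$, so the whole difference lies in $\mathbf{I}(n+1)$, completing (b) and hence the Olshanski decomposition.

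The one point deserving care is the last-column Laplace expansion with noncommutative entries. This is almost cost-free here: the factor $a_{\sigma(k),k}$ already sits at the right end of every summand in the definition of $\mathbf{cdet}$, so no reordering is required and the only bookkeeping is the standard sign $(-1)^{k-j}$ arising from rewriting a permutation $\sigma$ with $\sigma(k)=j$ in terms of a bijection from $\{1,\ldots,k-1\}$ onto $\{1,\ldots,\widehat{j},\ldots,k\}$. I do not foresee this as a genuine obstacle; the substantive content of the proposition is the observation that, once one removes the summands of $\mathbf{H}_k(n+1)$ in which $n+1$ appears among the indices, the remainder is precisely $\mathbf{H}_k(n)$, and the removed summands are automatically in the left ideal because their defining matrix has its last column built entirely from the generators of $\mathbf{I}(n+1)$.
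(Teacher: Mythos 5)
Your argument is correct and complete. The paper does not actually give a proof of Proposition \ref{Olshanski Capelli}; it is stated as a claim, with the intended mechanism only hinted at by the remark that the Olshanski projection acts on virtual presentations ``by truncating the sums of monomials.'' You have supplied the missing verification, and the route you take---working from the classical column-determinant presentation (\ref{31}) rather than from the virtual monomial presentation (\ref{30})---is arguably the cleanest way to see the left-ideal membership, since the ideal $\mathbf{I}(n+1)$ is generated by proper symbols $e_{i,n+1}$, not by virtual ones. Two small observations. The centralizer half is exactly as you say: $\mathbf{H}_k(n+1)$ is central in $\mathbf{U}(gl(n+1))$ by Theorem \ref{H centrality}, $\mathbf{H}_k(n)\in\mathbf{U}(gl(n))$ commutes with $e_{n+1,n+1}$ trivially, and $\mathbf{H}_k(n)\in\boldsymbol{\zeta}(n)$ is again Theorem \ref{H centrality}. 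For the left-ideal half, the Laplace-expansion machinery you sketch is more than is needed: by the very definition $\mathbf{cdet}(A) = \sum_\sigma (-1)^{|\sigma|} a_{\sigma(1),1}\cdots a_{\sigma(k),k}$, every summand already terminates in the factor $a_{\sigma(k),k}$ from the last column, and for $M_{i_1,\ldots,i_{k-1},n+1}$ that column is exactly $(e_{i_1,n+1},\ldots,e_{i_{k-1},n+1},e_{n+1,n+1})^{\mathrm T}$ with no diagonal shift (since $k-k=0$). Each entry is a generator of $\mathbf{I}(n+1)$, so each monomial, and hence the whole sum, lies in $\mathbf{I}(n+1)$ with no regrouping required. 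You note this yourself at the end; the $\sigma(k)$-grouping is harmless but superfluous.
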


\begin{example}We have:
\begin{align*}
\mathbf{H}_2(4) &= [21|12]+[31|13]+[41|14]+[32|23]+[42|24]+[43|34]
\\
&= \mathbf{H}_2(3) \dotplus \mathbf{H}_2(4)^0,
\end{align*}
where
$$
\mathbf{H}_2(3) = [21|12]+[31|13]+[32|23] \in \boldsymbol{\zeta}(3),
$$
and
$$
\mathbf{H}_2(4)^0 = [41|14]+[42|24]+[43|34] \in \mathbf{I}(4)^0.
$$
\end{example}\qed

\begin{proposition}\label{pi=mu}

The map $\boldsymbol{\mu}_{n,n+1}$ is the {\it{same}} as the map $\boldsymbol{\pi}_{n,n+1}$.

\end{proposition}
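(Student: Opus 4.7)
The plan is to reduce the equality of the two maps to the finitely many Capelli generators of $\boldsymbol{\zeta}(n+1)$. Both $\boldsymbol{\mu}_{n,n+1}$ and $\boldsymbol{\pi}_{n,n+1}$ are algebra homomorphisms from $\boldsymbol{\zeta}(n+1)$ to $\boldsymbol{\zeta}(n)$, and by Theorem \ref{Capelli generators} the set $\mathbf{H}_1(n+1), \ldots, \mathbf{H}_{n+1}(n+1)$ is a free system of algebra generators of $\boldsymbol{\zeta}(n+1)$; so the identity $\boldsymbol{\mu}_{n,n+1} = \boldsymbol{\pi}_{n,n+1}$ will follow once I verify agreement on each $\mathbf{H}_k(n+1)$, $k = 1, \ldots, n+1$.

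For the indices $k = 1, 2, \ldots, n$, no new work is needed: Proposition \ref{Olshanski Capelli} is precisely the Olshanski decomposition $\mathbf{H}_k(n+1) = \mathbf{H}_k(n) \dotplus \mathbf{H}_k(n+1)^0$ with $\mathbf{H}_k(n) \in \boldsymbol{\zeta}(n)$ and $\mathbf{H}_k(n+1)^0 \in \mathbf{I}(n+1)^0$, so by the definition of the Olshanski projection $\boldsymbol{\mu}_{n,n+1}(\mathbf{H}_k(n+1)) = \mathbf{H}_k(n) = \boldsymbol{\pi}_{n,n+1}(\mathbf{H}_k(n+1))$.

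The remaining case $k = n+1$ is the substantive one: I will show that $\mathbf{H}_{n+1}(n+1)$ lies entirely in $\mathbf{I}(n+1)^0$, which by the uniqueness of the direct sum decomposition (\ref{direct sum decomposition}) forces its $\boldsymbol{\zeta}(n)$-part to be zero and hence $\boldsymbol{\mu}_{n,n+1}(\mathbf{H}_{n+1}(n+1)) = 0$, matching the value $\boldsymbol{\pi}_{n,n+1}(\mathbf{H}_{n+1}(n+1)) = 0$. Membership in the centralizer $\mathbf{U}(gl(n+1))^0$ is automatic from Theorem \ref{H centrality}, since a central element commutes with $e_{n+1,n+1}$. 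For membership in the left ideal $\mathbf{I}(n+1)$, I will use the column determinant form (\ref{31}): in the expansion $\mathbf{cdet}(\cdots) = \sum_\sigma (-1)^{|\sigma|} a_{\sigma(1),1} \cdots a_{\sigma(n+1),n+1}$, every summand ends with an entry $a_{\sigma(n+1),n+1}$ taken from the last column. For $k = n+1$ the diagonal shift $k-k = 0$ on the bottom-right slot leaves that column unaltered, so its entries are literally $e_{1,n+1}, e_{2,n+1}, \ldots, e_{n+1,n+1}$; each of these is a generator of $\mathbf{I}(n+1)$, and since $\mathbf{I}(n+1)$ is a left ideal, each summand and consequently the whole $\mathbf{H}_{n+1}(n+1)$ lies in it.

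The only step that is not purely formal is the column-expansion argument in the last paragraph. The mild subtlety there is checking that the diagonal integer shifts do not contaminate the last column of the Capelli matrix when $k = n+1$; once one observes that the shift sequence $k-1, k-2, \ldots, 1, 0$ places a zero precisely at the bottom-right corner, the rest of the proof is a routine assembly of the Olshanski uniqueness and the freeness of the Capelli generators.
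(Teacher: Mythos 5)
Your proof is correct and follows the paper's approach: it reduces equality of the two algebra homomorphisms to agreement on the free Capelli generators $\mathbf{H}_1(n+1),\ldots,\mathbf{H}_{n+1}(n+1)$ and invokes the Olshanski decomposition of Proposition \ref{Olshanski Capelli}. Your explicit column-determinant verification that $\mathbf{H}_{n+1}(n+1)\in\mathbf{I}(n+1)^0$ (noting that the diagonal shift vanishes in the bottom-right slot, so every monomial of the $\mathbf{cdet}$ expansion terminates in a generator $e_{i,n+1}$ of the left ideal $\mathbf{I}(n+1)$) is a clean, self-contained way to settle the terminal case $k=n+1$, which the paper covers implicitly via the empty-sum convention $\mathbf{H}_{n+1}(n)=0$.
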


Therefore, in the following, we refer to the projections
$$\boldsymbol{\mu}_{n,n+1} = \boldsymbol{\pi}_{n,n+1}$$
as the {\it{Capelli-Olshanski projections}}.

\

\begin{remark}\label{projective limit}

From Proposition \ref{inverso filtrato}, the algebra $\boldsymbol{\zeta}$ (direct limit) is the same as the
{\it {projective limit in the category of filtered algebras}}
$$
\boldsymbol{\zeta} = \underleftarrow{lim} \ \boldsymbol{\zeta}(n)
$$
with respect to the system of Capelli-Olshanski projections (Molev, \cite{Molev1-BR}).

\end{remark}

\

\subsection{Main results}\label{main}

From their virtual presentation, we directly infer the  {\it{Olshanski decompositions}} and  {\it{Capelli-Olshanski projections}} :

\begin{proposition}\label{Olshanski altri}

\

\begin{enumerate}

\item
$
\mathbf{I}_k(n+1) = \mathbf{I}_k(n) \dotplus \mathbf{I}_k(n+1)^0,
$
where
$$
\mathbf{I}_k(n+1)^0 = \mathbf{I}_k(n+1) - \mathbf{I}_k(n) \in \mathbf{I}_k(n+1)^0,
$$
and
$$
\mathbf{I}_k(n) \in \boldsymbol{\zeta}(n).
$$
Then
\begin{equation}
\boldsymbol{\pi}_{n,n+1}(\mathbf{I}_k(n+1)) = \mathbf{I}_k(n).
\end{equation}

\item
$
\mathbf{K}_{\lambda}(n+1) = \mathbf{K}_{\lambda}(n) \dotplus \mathbf{K}_{\lambda}(n+1)^0,
$
where
$$
\mathbf{K}_{\lambda}(n+1)^0 = \mathbf{K}_{\lambda}(n+1) - \mathbf{K}_{\lambda}(n) \mathbf{I}_k(n+1)^0,
$$
and
$$
\mathbf{K}_{\lambda}(n) \in \boldsymbol{\zeta}(n).
$$
Then
\begin{equation}
\boldsymbol{\pi}_{n,n+1}(\mathbf{K}_{\lambda}(n+1)) = \mathbf{K}_{\lambda}(n).
\end{equation}

\item
$
\mathbf{J}_{\lambda}(n+1) = \mathbf{J}_{\lambda}(n) \dotplus \mathbf{J}_{\lambda}(n+1)^0,
$
where
$$
\mathbf{J}_{\lambda}(n+1)^0 = \mathbf{J}_{\lambda}(n+1) - \mathbf{J}_{\lambda}(n) \in \mathbf{I}_k(n+1)^0,
$$
and
$$
\mathbf{J}_{\lambda}(n) \in \boldsymbol{\zeta}(n).
$$
Then
\begin{equation}
\boldsymbol{\pi}_{n,n+1}(\mathbf{J}_{\lambda}(n+1)) = \mathbf{J}_{\lambda}(n).
\end{equation}

\item
$
\mathbf{S}_{\lambda}(n+1) = \mathbf{S}_{\lambda}(n) \dotplus \mathbf{S}_{\lambda}(n+1)^0,
$
where
$$
\mathbf{S}_{\lambda}(n+1)^0 = \mathbf{S}_{\lambda}(n+1) - \mathbf{S}_{\lambda}(n) \in \mathbf{I}_k(n+1)^0,
$$
and
$$
\mathbf{S}_{\lambda}(n) \in \boldsymbol{\zeta}(n).
$$
Then
\begin{equation}
\boldsymbol{\pi}_{n,n+1}(\mathbf{S}_{\lambda}(n+1)) = \mathbf{S}_{\lambda}(n).
\end{equation}

\end{enumerate}

\end{proposition}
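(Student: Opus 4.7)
The plan is to treat all four cases $\mathbf{I}_k$, $\mathbf{K}_\lambda$, $\mathbf{J}_\lambda$, $\mathbf{S}_\lambda$ in parallel, directly adapting the argument behind Proposition \ref{Olshanski Capelli} to the virtual presentations of the remaining families. For each family $\mathbf{X}$, the defining virtual presentation of $\mathbf{X}(n+1)$ is a sum over combinatorial data whose proper symbols range over $\{1,\ldots,n+1\}$: $(n+1)$-tuples $(h_1,\ldots,h_{n+1})$ for $\mathbf{I}_k$, and tableaux $S$ on $L = \{x_1,\ldots,x_{n+1}\}$ for $\mathbf{K}_\lambda$, $\mathbf{J}_\lambda$, $\mathbf{S}_\lambda$. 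I split each sum into (a) terms whose data involves only the symbols $\{1,\ldots,n\}$, and (b) terms in which $n+1$ appears at least once. By inspection of Definitions \ref{perm gen-BR}, \ref{shaped Capelli}, \ref{shaped Nazarov/Umeda} and \ref{Schur basis}, subsum (a) is literally the defining expression for $\mathbf{X}(n)$, viewed inside $\mathbf{U}(gl(n+1))$ through the inclusion $\mathbf{U}(gl(n))\subset\mathbf{U}(gl(n+1))$. Setting $\mathbf{X}(n+1)^0$ to be subsum (b) therefore gives the desired decomposition $\mathbf{X}(n+1) = \mathbf{X}(n)\dotplus\mathbf{X}(n+1)^0$.

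The heart of the proof is to show $\mathbf{X}(n+1)^0\in\mathbf{I}(n+1)^0$. Since $\mathbf{X}(n+1)$ is central and $\mathbf{X}(n)\in\boldsymbol{\zeta}(n)\subset\mathbf{U}(gl(n+1))^0$, the difference lies in the centralizer $\mathbf{U}(gl(n+1))^0$, so by the decomposition (\ref{direct sum decomposition}) it suffices to prove that $\mathbf{X}(n+1)^0$ belongs to the left ideal $\mathbf{I}(n+1)$ generated by $e_{i,n+1}$, $i=1,\ldots,n+1$. For this I invoke Proposition \ref{Monomial virtual presentation and adjoint actions-BR}: every summand appearing in subsum (b) has the form $\mathfrak{p}[ad(A_1)\cdots ad(A_m)(B)]$, where the "devirtualizing" block $B$ contains at least one factor $e_{\gamma,n+1}$ with $\gamma$ a virtual symbol. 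Expanding the adjoint actions produces a sum of monomials in $\mathbf{U}(gl(n+1))$ in which the prescribed $e_{\gamma,n+1}$-slot becomes a factor $e_{j,n+1}$, situated at the position in the monomial dictated by the original ordering of $B$.

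For $\mathbf{I}_k$ the factor $e_{\beta,n+1}^{h_{n+1}}$ sits at the very right of $B$, so the Laplace-expanded monomials manifestly end with $e_{j,n+1}$, immediately yielding membership in $\mathbf{I}(n+1)$; this is the direct analogue of the $\mathbf{H}_k$ argument. For the shaped elements I will exploit the row-skew-symmetry (for $\mathbf{K}_\lambda$ and $\mathbf{S}_\lambda$, via $C_\lambda^*$) and the row-symmetry (for $\mathbf{J}_\lambda$, via $D_{\widetilde\lambda}^*$) of the balanced-monomial virtual presentation to regroup tableaux according to the row in which $n+1$ appears, so that the relevant $e_{?,n+1}$ factor can be shifted to the rightmost slot by standard commutations in $\mathbf{U}(gl(n+1))$; each commutator correction produced in this process either retains an $e_{?,n+1}$ factor or cancels against its (skew-)symmetric partner in the sum.

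The main obstacle will be step (iii) in the shaped cases: carefully tracking the commutation corrections when $n+1$ appears in an interior row of $S$ and verifying that they all remain in $\mathbf{I}(n+1)$ after the pairwise cancellations. Once this is settled for all four families, Proposition \ref{pi=mu} identifies the Olshanski map $\boldsymbol{\mu}_{n,n+1}$ with the projection $\boldsymbol{\pi}_{n,n+1}$, and the four displayed formulas $\boldsymbol{\pi}_{n,n+1}(\mathbf{X}(n+1))=\mathbf{X}(n)$ follow at once from the decompositions $\mathbf{X}(n+1)=\mathbf{X}(n)\dotplus\mathbf{X}(n+1)^0$ just established.
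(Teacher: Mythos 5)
Your reduction to showing that the subsum $\mathbf{X}(n+1)^0$ (over tableaux or tuples containing the symbol $n+1$) belongs to the left ideal $\mathbf{I}(n+1)$ is correct, and your handling of $\mathbf{I}_k$ is clean, since there the devirtualizing factor $e_{\beta,n+1}^{h_{n+1}}$ sits at the very right of the balanced monomial, so membership in $\mathbf{I}(n+1)$ is manifest. The paper itself offers no written argument at all here: the proposition is introduced with the phrase ``from their virtual presentation, we directly infer,'' so the comparison is really between your sketch and what is implicitly behind that assertion.

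The commutation scheme you propose for the three shaped families is the weak point you yourself flag, and it genuinely is delicate. The supercommutator $[e_{a,b},e_{c,n+1}]=\delta_{bc}e_{a,n+1}-\delta_{a,n+1}e_{c,b}$ produces a correction term $e_{c,b}$ carrying \emph{no} $e_{?,n+1}$ factor whenever $a=n+1$, and since the tableau $S$ can place the symbol $n+1$ in any row while the same column of $C_\lambda^*$ (resp. $D^*_{\widetilde\lambda}$) is involved in every row block, proving that all such corrections pair off by the internal (skew-)symmetries of $e_{S,C_\lambda^*}e_{C_\lambda^*,S}$ is nontrivial bookkeeping your sketch does not resolve.

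There is a cleaner route, more in keeping with the method of virtual variables, which sidesteps the commutation bookkeeping entirely. Since $\mathbf{X}(n+1)\in\boldsymbol{\zeta}(n+1)$ and $\mathbf{X}(n)\in\boldsymbol{\zeta}(n)\subset\mathbf{U}(gl(n))\subset\mathbf{U}(gl(n+1))^0$, the difference $\mathbf{X}(n+1)^0$ lies in $\mathbf{U}(gl(n+1))^0=\mathbf{U}(gl(n))\oplus\mathbf{I}(n+1)^0$; write $\mathbf{X}(n+1)^0=a\dotplus b$, and note that $a=\boldsymbol{\mu}_{n,n+1}(\mathbf{X}(n+1))-\mathbf{X}(n)\in\boldsymbol{\zeta}(n)$ since the Olshanski map sends $\boldsymbol{\zeta}(n+1)$ into $\boldsymbol{\zeta}(n)$. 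Now observe that $\mathbf{X}(n+1)^0$ acts as zero on the subalgebra $\mathbb{C}[M_{n,d}]\subset\mathbb{C}[M_{n+1,d}]$: by Theorem \ref{Capelli epimorphism}, each summand acts as the corresponding balanced monomial in $Virt(m_0+m_1,n+1)$, and that monomial contains a devirtualizing factor $e_{\gamma,x_{n+1}}$ sitting in a block whose other factors only annihilate proper symbols and create virtual ones, so $D_{\gamma,x_{n+1}}$ inevitably hits an intermediate form with no $x_{n+1}$-variables and annihilates it. The ideal $\mathbf{I}(n+1)$ likewise kills $\mathbb{C}[M_{n,d}]$ (its generators act by $D_{x_i,x_{n+1}}$, vanishing on forms without $x_{n+1}$), so $a$ kills $\mathbb{C}[M_{n,d}]$; being a central element of $\mathbf{U}(gl(n))$ that annihilates every $Schur_\mu(n)$, $a=0$, and therefore $\mathbf{X}(n+1)^0=b\in\mathbf{I}(n+1)^0$, uniformly for all four families without any case analysis on tableau shape.
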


By combining the preceding Proposition with Proposition \ref{inverso filtrato}, we get

\begin{theorem}\label{germs}We have:
\begin{itemize}
\renewcommand{\labelitemi}{\normalfont -}
\item
$\mathbf{I}_k(n) \in \boldsymbol{\zeta}(n)^{(k)}$, then
$$
\mathbf{i}_{n+1, n}(\mathbf{I}_k(n)) = \mathbf{I}_k(n+1), \quad n \geq k;
$$
\item
$\mathbf{K}_{\lambda}(n) \in \boldsymbol{\zeta}(n)^{ ( | \lambda | ) }$, then
$$
\mathbf{i}_{n+1, n}(\mathbf{K}_{\lambda}(n)) = \mathbf{K}_{\lambda}(n+1), \quad n \geq |\lambda|;
$$
\item
$\mathbf{J}_{\lambda}(n) \in \boldsymbol{\zeta}(n)^{ ( | \lambda | ) }$, then
$$
\mathbf{i}_{n+1, n}(\mathbf{J}_{\lambda}(n)) = \mathbf{J}_{\lambda}(n+1), \quad n \geq |\lambda|;
$$
\item
$\mathbf{S}_{\lambda}(n) \in \boldsymbol{\zeta}(n)^{ ( | \lambda | ) }$, then
$$
\mathbf{i}_{n+1, n}(\mathbf{S}_{\lambda}(n)) = \mathbf{S}_{\lambda}(n+1), \quad n \geq |\lambda|.
$$
\end{itemize}
\end{theorem}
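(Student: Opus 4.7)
The plan is to deduce Theorem \ref{germs} as a direct consequence of the combination of Proposition \ref{inverso filtrato} with Proposition \ref{Olshanski altri} and Proposition \ref{pi=mu}. The overall strategy is: in each of the four cases, I already know from Proposition \ref{Olshanski altri} that the Olshanski projection $\boldsymbol{\mu}_{n,n+1}$ sends the level-$(n+1)$ element to its level-$n$ analogue; by Proposition \ref{pi=mu} this projection coincides with $\boldsymbol{\pi}_{n,n+1}$; and by Proposition \ref{inverso filtrato}, once the relevant filtration degree is small enough with respect to $n$, the restriction of $\boldsymbol{\pi}_{n,n+1}$ to the filtered piece is the two-sided inverse of the restriction of the Capelli monomorphism $\mathbf{i}_{n+1,n}$. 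Inverting then gives the claimed equality.

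Concretely, I will first record the filtration degree of each generator, which is read off directly from its virtual presentation: a balanced monomial with $k$ ``creation factors'' on the right and $k$ ``annihilation factors'' on the left maps under $\mathfrak{p}$ to an element of $\mathbf{U}(gl(n))$ of degree at most $k$ in the standard PBW filtration, whence
\begin{align*}
\mathbf{I}_k(n) &\in \boldsymbol{\zeta}(n)^{(k)},\\
\mathbf{K}_\lambda(n),\ \mathbf{J}_\lambda(n),\ \mathbf{S}_\lambda(n) &\in \boldsymbol{\zeta}(n)^{(|\lambda|)},
\end{align*}
as already noted in Chapter \ref{The center} (and in eq. (\ref{filtration element}) for $\mathbf{S}_\lambda(n)$). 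The same bound holds at level $n+1$, so the level-$(n+1)$ elements lie in $\boldsymbol{\zeta}(n+1)^{(m)}$ with $m=k$ or $m=|\lambda|$ respectively.

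Next, for $n\geq m$, Proposition \ref{inverso filtrato} guarantees that
$$
\mathbf{i}^{(m)}_{n+1,n}\colon \boldsymbol{\zeta}(n)^{(m)}\xrightarrow{\ \sim\ }\boldsymbol{\zeta}(n+1)^{(m)}
$$
is an isomorphism whose inverse is the restriction $\boldsymbol{\pi}^{(m)}_{n,n+1}$. Taking the identity from Proposition \ref{Olshanski altri},
$$
\boldsymbol{\pi}_{n,n+1}\bigl(\mathbf{I}_k(n+1)\bigr)=\mathbf{I}_k(n),
$$
and applying $\mathbf{i}^{(m)}_{n+1,n}$ to both sides (which makes sense because $\mathbf{I}_k(n+1)\in\boldsymbol{\zeta}(n+1)^{(k)}$ and $n\geq k$) yields $\mathbf{i}_{n+1,n}(\mathbf{I}_k(n))=\mathbf{I}_k(n+1)$. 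The same three-line argument, carried out with $m=|\lambda|$ and $n\geq|\lambda|$, settles the cases of $\mathbf{K}_\lambda$, $\mathbf{J}_\lambda$ and $\mathbf{S}_\lambda$ in turn.

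I expect no serious obstacle: the real work has already been done in Propositions \ref{inverso filtrato}, \ref{Olshanski Capelli}, \ref{pi=mu} and \ref{Olshanski altri}, and the present theorem is essentially a formal transcription of their conjunction. The only point that deserves a line of care is the verification that the filtration degree really is bounded by $k$ (resp.\ $|\lambda|$) uniformly in $n$; this is precisely what allows the condition ``$n$ sufficiently large'' in Proposition \ref{inverso filtrato} to be made explicit as $n\geq k$ (resp.\ $n\geq|\lambda|$), and it is transparent from the virtual presentations given in Definitions \ref{perm gen-BR}, \ref{shaped Capelli}, \ref{shaped Nazarov/Umeda} and \ref{Schur basis}.
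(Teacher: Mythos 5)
Your argument is correct and coincides with the paper's own proof: the paper derives Theorem \ref{germs} precisely by combining the Olshanski decomposition formulas of Proposition \ref{Olshanski altri} (read via Proposition \ref{pi=mu} as identities for $\boldsymbol{\pi}_{n,n+1}$) with the two-sided invertibility of the restricted Capelli monomorphism from Proposition \ref{inverso filtrato}, exactly as you do.
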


Passing to the direct limit $\underrightarrow{lim} \ \boldsymbol{\zeta}(n) = \boldsymbol{\zeta},$ we  set:

\begin{definition}\label{series} $ $

\begin{itemize}
\renewcommand{\labelitemi}{\normalfont }
\item
$
\mathbf{H}_k  = \underrightarrow{lim}  \ \mathbf{H}_k(n) \in \boldsymbol{\zeta}, \quad n \geq k.
$

\item
$
\mathbf{I}_k = \underrightarrow{lim} \  \mathbf{I}_k(n)  \in \boldsymbol{\zeta}, \quad n \geq k.
$

\item
$
\mathbf{K}_{\lambda} = \underrightarrow{lim}  \  \mathbf{K}_{\lambda}(n)  \in \boldsymbol{\zeta}, \quad n \geq |\lambda|.
$

\item
$
\mathbf{J}_{\lambda} = \underrightarrow{lim} \  \mathbf{J}_{\lambda}(n)  \in \boldsymbol{\zeta}, \quad n \geq |\lambda|.
$

\item
$
\mathbf{S}_{\lambda} = \underrightarrow{lim} \ \mathbf{S}_{\lambda}(n)  \in \boldsymbol{\zeta}, \quad n \geq |\lambda|.
$

\end{itemize}
\end{definition}

From the definition of Capelli monomorphisms and Theorem \ref{germs}, it follows

\begin{proposition}\label{formal series}
The elements
$\mathbf{H}_{k}, \mathbf{I}_{k}, \mathbf{K}_{\lambda}, \mathbf{J}_{\lambda}, \mathbf{S}_{\lambda} \in \boldsymbol{\zeta}$
can be consistently written as {\text{formal series}}. More precisely, setting $L^* = \mathbb{Z}^* = \{ 1, 2, \ldots \}$,
\begin{itemize}
\item \begin{align*}
\mathbf{H}_k & =
 \sum_{i_1 < \cdots < i_k } \ [ i_k \cdots i_2 i_1 | i_1 i_2 \cdots i_k ] = \\
 & =  \sum_{i_1 < \cdots < i_k } \ \mathfrak{p} \big( e_{i_k , \alpha} \cdots e_{i_2, \alpha} e_{i_1, \alpha}
e_{\alpha, i_1}e_{\alpha, i_2} \cdots e_{\alpha, i_k } \big)
\end{align*}

\item \begin{align*}
\mathbf{I}_k & =
 \sum_{j_1 < j_2 < \cdots < j_p } \ (i_{j_1}! \ i_{j_2}! \cdots i_{j_p}!)^{-1}
  \ [j_p^{i_{j_p}} \cdots j_2^{i_{j_2}} j_1^{i_{j_1}} | j_1^{i_{j_1}} j_2^{i_{j_2}}  \cdots j_p^{i_{j_p}} ]^{*}
 = \\
 & =  \sum_{j_1 < j_2 < \cdots < j_p } \ (i_{j_1}! \ i_{j_2}! \cdots i_{j_p}!)^{-1} \
 \mathfrak{p}  \big( e_{j_p , \beta}^{i_{j_p}} \cdots e_{j_2, \beta}^{i_{j_2}} e_{j_1, \beta}^{i_{j_1}}
e_{\beta, j_1}^{i_{j_1}}e_{\beta, j_2}^{i_{j_2}} \cdots e_{\beta, j_p}^{i_{j_p}} \big), \\
\end{align*}
where $\beta \in A_1$ denotes {\it{any}} negative virtual symbol,
the sum is extended to all $p-$tuples $j_1 < j_2 < \cdots < j_p$ in $L^*$ ($p\leq k$),
and to all the $p-$tuples of exponents $(i_{j_1}, i_{j_2}, \cdots, i_{j_p})$ such that
$i_{j_1}+i_{j_2}+ \cdots  + i_{j_p} = k$
and any
$$
[j_p^{i_{j_p}} \cdots j_2^{i_{j_2}} j_1^{i_{j_1}} | j_1^{i_{j_1}} j_2^{i_{j_2}}  \cdots j_p^{i_{j_p}} ]^{*}
$$
is a {\textit{permanental}} Capelli bitableau with one row.

\item $$
\mathbf{K}_\lambda =
\sum_S \ \mathfrak{p} \big( e_{S,C_{\lambda}^*} \cdot e_{C_{\lambda}^*,S} \big) = \sum_S \ SC_{\lambda}^* \ C_{\lambda}^*S,
$$
where the sum is extended to all  row-increasing tableaux $S$ on the  alphabet $L^*$.

\item $$
\mathbf{J}_\lambda =
\sum_S \ (o_S)^{-1} \ \mathfrak{p} \big( e_{S,D^*_{\widetilde{\lambda}}} \cdot e_{D^*_{\widetilde{\lambda}},S} \big) =
\sum_S \  (o_S)^{-1} \ SD^*_{\widetilde{\lambda}} \ D^*_{\widetilde{\lambda}}S,
$$
where the sum is extended to all  column-nondecreasing tableaux $S$ (of shape $\widetilde{\lambda}$) on
the  alphabet $L^*$.

\item
\begin{align*}
\mathbf{S}_\lambda &=
\frac {1} {H(\lambda)} \  \ \sum_S \ \mathfrak{p} \big(e_{S,C_{\lambda}^{*}} \cdot e_{C_{\lambda}^{*},D_{\lambda}^{*}} \cdot  e_{D_{\lambda}^{*},C_{\lambda}^{*}}
\cdot e_{C_{\lambda}^{*},S} \big) \\
&= \frac {1} {H(\lambda)} \ \  \sum_S \ SC_{\lambda}^{*} \ C_{\lambda}^{*}D_{\lambda}^{*} \
D_{\lambda}^{*}C_{\lambda}^{*} \ C_{\lambda}^{*}S, \\
\end{align*}
 where the sum is extended to all  row-increasing tableaux $S$ on the alphabet $L^*$.
\end{itemize}

\end{proposition}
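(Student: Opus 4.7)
The plan is to derive all five assertions uniformly from the compatibility result of Theorem \ref{germs}, together with the explicit virtual presentations of the finite-$n$ elements given in Chapter \ref{The center}. The strategy has three stages: first, rewrite each finite-$n$ element as an indexed sum of virtual monomials labelled by combinatorial data with entries in $\{1,\dots,n\}$; second, verify that enlarging $n$ to $n+1$ amounts to adjoining precisely the summands whose labelling data uses the new symbol $n+1$; third, conclude that in the direct limit (equivalently, the projective limit described in Remark \ref{projective limit}) the resulting ``union over $n$'' can be consistently interpreted as a formal series indexed by the infinite alphabet $L^* = \mathbb{Z}^+$.

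First I would recall, for each of $\mathbf{H}_k(n)$, $\mathbf{I}_k(n)$, $\mathbf{K}_\lambda(n)$, $\mathbf{J}_\lambda(n)$, $\mathbf{S}_\lambda(n)$, the finite virtual presentations given respectively in equation (\ref{30}), Definition \ref{perm gen-BR}, Definition \ref{shaped Capelli}, Definition \ref{shaped Nazarov/Umeda} and Definition \ref{Schur basis}; each is a finite sum of $\mathfrak{p}$-images of balanced monomials whose proper symbols range over a combinatorial set that depends only on the alphabet $\{1,\dots,n\}$ (increasing $k$-tuples; tuples of exponents summing to $k$; row-increasing, column-nondecreasing, or row-strictly-increasing tableaux of the prescribed shape). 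Then I would invoke Proposition \ref{Olshanski Capelli} and Proposition \ref{Olshanski altri}, which assert that the Olshanski component $\mathbf{X}_\bullet(n+1)^0 = \mathbf{X}_\bullet(n+1) - \mathbf{X}_\bullet(n)$ equals precisely the sum of those virtual-monomial summands in the presentation of $\mathbf{X}_\bullet(n+1)$ that involve at least one occurrence of the new proper symbol $n+1$. Combined with Proposition \ref{pi=mu} and Theorem \ref{germs}, this shows that the Capelli monomorphism $\mathbf{i}_{n+1,n}$, for $n$ at least the minimum-filtration degree of the element, simply re-indexes the finite sum at level $n$ into the initial segment (by truncation on tuples/tableaux whose entries are $\leq n$) of the finite sum at level $n+1$.

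Given this compatibility, for every fixed combinatorial label $\tau$ (an increasing tuple, or an exponent sequence, or a tableau on $L^*$ of the prescribed type) the corresponding virtual monomial appears, from some $n_\tau$ onwards, as a summand of the presentation of $\mathbf{X}_\bullet(n)$, and its coefficient stabilizes. Since only finitely many labels $\tau$ contribute virtual monomials landing in any fixed filtration degree $\boldsymbol{\zeta}^{(m)}$ (the shape/size of $\tau$ bounds the image's filtration degree), the formal sums displayed in the statement have well-defined truncations at each filtration degree and coincide, on $\boldsymbol{\zeta}^{(m)}$, with the germ $\mathbf{X}_\bullet(n)$ for any sufficiently large $n$. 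This is exactly the content of the identification in Definition \ref{series}, so the five formal series equal $\mathbf{H}_k$, $\mathbf{I}_k$, $\mathbf{K}_\lambda$, $\mathbf{J}_\lambda$, $\mathbf{S}_\lambda$ respectively.

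The main obstacle is not any deep computation but the bookkeeping in the second stage: one must verify, for each of the five families separately, that the Olshanski decomposition really does act by ``label-truncation'' at the level of the virtual presentation — i.e., that no cancellations or rearrangements occur when splitting $\mathbf{X}_\bullet(n+1)$ into the part using only symbols $\leq n$ and the part using $n+1$. For $\mathbf{H}_k$ and $\mathbf{I}_k$ this is immediate from the indexing over tuples; for $\mathbf{K}_\lambda$, $\mathbf{J}_\lambda$, $\mathbf{S}_\lambda$ it follows by partitioning the tableau index set according to whether the symbol $n+1$ appears in the tableau $S$, and checking that the tableaux containing no $n+1$ reproduce exactly the presentation at level $n$ while those containing at least one $n+1$ give virtual monomials lying in $\mathbf{I}(n+1)^0$ (the factor $e_{\alpha,n+1}$ or $e_{\beta,n+1}$ on the extreme right places the monomial in the left ideal $\mathbf{I}(n+1)$, and centrality in $\mathbf{U}(gl(n+1))^0$ is automatic from the discussion in Chapter \ref{The center}). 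Once this routine case analysis is done, Proposition \ref{formal series} is immediate.
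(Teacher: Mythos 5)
Your proof follows the same route as the paper's: Proposition \ref{formal series} is stated in the paper as an immediate consequence of the definition of the Capelli monomorphisms and Theorem \ref{germs}, together with Definition \ref{series}, and your argument correctly organizes this as ``stabilization of the finite-$n$ virtual sums in each filtration degree, hence a well-defined formal series.'' Your final paragraph goes beyond the paper by attempting to justify the label-truncation content of Propositions \ref{Olshanski Capelli} and \ref{Olshanski altri} (which the paper simply asserts); that is in the right spirit, but the phrase claiming that the factor $e_{\alpha,n+1}$ or $e_{\beta,n+1}$ on the extreme right ``places the monomial in the left ideal $\mathbf{I}(n+1)$'' is imprecise, since those are virtual generators of $\mathbf{U}(gl(m_0|m_1+n+1))$ rather than elements of $\mathbf{U}(gl(n+1))$: one must first pass through the Capelli epimorphism $\mathfrak{p}$, e.g.\ via the devirtualization formula (Proposition \ref{Monomial virtual presentation and adjoint actions-BR}) or the column-bitableau expansion (Proposition \ref{column expansion}), to conclude that the $\mathfrak{p}$-image lands in $\mathbf{I}(n+1)$.
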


\newpage

From Proposition \ref{projective limit}, it follows

\begin{corollary} We have:

\begin{itemize}
\renewcommand{\labelitemi}{\normalfont }
\item
$
\underleftarrow{lim}  \ \mathbf{H}_k(n) = \mathbf{H}_k \in \boldsymbol{\zeta},
$

\item
$
\underleftarrow{lim} \  \mathbf{I}_k(n) = \mathbf{I}_k \in \boldsymbol{\zeta},
$

\item
$
\underleftarrow{lim}  \  \mathbf{K}_{\lambda}(n) = \mathbf{K}_{\lambda} \in \boldsymbol{\zeta},
$

\item
$
 \underleftarrow{lim} \  \mathbf{J}_{\lambda}(n) = \mathbf{J}_{\lambda} \in \boldsymbol{\zeta},
$

\item
$
 \underleftarrow{lim} \ \mathbf{S}_{\lambda}(n) = \mathbf{S}_{\lambda} \in \boldsymbol{\zeta}.
$

\end{itemize}

\end{corollary}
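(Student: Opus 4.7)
The plan is to deduce each equality directly from Remark \ref{projective limit}, whose content is that under the Capelli--Olshanski projections $\boldsymbol{\mu}_{n,n+1} = \boldsymbol{\pi}_{n,n+1}$ the inverse limit $\underleftarrow{lim}\,\boldsymbol{\zeta}(n)$ in the category of filtered algebras coincides, via the canonical identification, with the direct limit $\underrightarrow{lim}\,\boldsymbol{\zeta}(n) = \boldsymbol{\zeta}$. Once this bi-limit description is in place, the corollary reduces to checking that each of the five families is a coherent system for the inverse system $(\boldsymbol{\zeta}(n),\boldsymbol{\pi}_{n,n+1})$ whose associated fiber, level by level, is exactly the direct-limit element recorded in Definition \ref{series}.

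The first step is therefore to invoke the explicit Olshanski decompositions supplied by Proposition \ref{Olshanski Capelli} and Proposition \ref{Olshanski altri}. These say that for each of
$$P(n) \in \{\mathbf{H}_k(n),\ \mathbf{I}_k(n),\ \mathbf{K}_\lambda(n),\ \mathbf{J}_\lambda(n),\ \mathbf{S}_\lambda(n)\}$$
one has $P(n+1) = P(n) \dotplus P(n+1)^0$ with $P(n+1)^0 \in \mathbf{I}(n+1)^0$, hence
$$\boldsymbol{\pi}_{n,n+1}\bigl(P(n+1)\bigr) = P(n)$$
for all $n$ sufficiently large (namely $n \geq k$ in the first two cases and $n \geq |\lambda|$ in the last three). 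Thus $(P(n))_n$ is a coherent system, and $\underleftarrow{lim}\,P(n)$ is a well-defined element of $\underleftarrow{lim}\,\boldsymbol{\zeta}(n)$.

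The second step is to match this inverse-limit element with the direct-limit element of Definition \ref{series}. By Theorem \ref{germs}, the same families are coherent for the Capelli monomorphisms, $\mathbf{i}_{n+1,n}(P(n)) = P(n+1)$ for $n$ sufficiently large; moreover each $P(n)$ lies in a fixed filtration degree ($\boldsymbol{\zeta}(n)^{(k)}$ for $\mathbf{H}_k,\mathbf{I}_k$, and $\boldsymbol{\zeta}(n)^{(|\lambda|)}$ for the other three). Proposition \ref{inverso filtrato} tells us that on these filtration elements the maps $\mathbf{i}_{n+1,n}$ and $\boldsymbol{\pi}_{n,n+1}$ are two-sided inverses; consequently the canonical identification from Remark \ref{projective limit} sends the inverse-limit fiber $(P(n))_n$ to the direct-limit germ $P = \underrightarrow{lim}\,P(n) \in \boldsymbol{\zeta}$, which is exactly the assertion $\underleftarrow{lim}\,P(n) = P$.

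The only real subtlety — and therefore the step to be handled with some care, rather than a deep obstacle — is that the compatibility relations $\boldsymbol{\pi}_{n,n+1}(P(n+1)) = P(n)$ and $\mathbf{i}_{n+1,n}(P(n)) = P(n+1)$ are established in this paper only for $n$ sufficiently large relative to $k$ or $|\lambda|$. Since the bi-limit identification of Remark \ref{projective limit} is built precisely at the level of filtration pieces $\boldsymbol{\zeta}(n)^{(m)}$ where $n \geq m$, this is harmless: the coherent tails suffice to determine the limits on both sides, and the identification of the two descriptions of $P$ is automatic.
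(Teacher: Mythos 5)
Your proposal is correct and follows essentially the same route as the paper. The paper dispatches the corollary by citing Remark \ref{projective limit} (the identification of $\boldsymbol{\zeta}$ as both a direct and a projective limit), leaving implicit exactly the bookkeeping you spell out: that the Olshanski decompositions of Propositions \ref{Olshanski Capelli} and \ref{Olshanski altri} give $\boldsymbol{\pi}_{n,n+1}(P(n+1)) = P(n)$ for $n$ large enough, that Theorem \ref{germs} gives the dual coherence under $\mathbf{i}_{n+1,n}$, and that Proposition \ref{inverso filtrato} makes these two-sided inverse at the relevant filtration levels so the two limit descriptions agree. Your explicit handling of the ``$n$ sufficiently large'' issue is a welcome clarification, but it does not constitute a different argument.
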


Due the fact that the algebra $\boldsymbol{\zeta}$ is defined as a direct limit, we infer:

\begin{theorem} $ $

\begin{enumerate}

\item
The set
$$
\Big\{  \mathbf{H}_k; \  k \in \mathbb{Z}^+ \Big\}
$$
is a system of free algebraic generators of $\boldsymbol{\zeta}$.

\item
The set
$$
\Big\{  \mathbf{I}_k; \  k \in \mathbb{Z}^+ \Big\}
$$
is a system of free algebraic generators of $\boldsymbol{\zeta}$.

\item
The set
$$
\Big\{ \mathbf{K}_{\lambda}; \  \lambda \ any \ partition \ \Big\}
$$
is a linear basis  of $\boldsymbol{\zeta}$.

\item
The set
$$
\Big\{ \mathbf{J}_{\lambda}; \  \lambda \ any \ partition \ \Big\}
$$
is a linear basis  of $\boldsymbol{\zeta}$.

\item
The set
$$
\Big\{ \mathbf{S}_{\lambda}; \  \lambda \ any \ partition \ \Big\}
$$
is a linear basis  of $\boldsymbol{\zeta}$.

\end{enumerate}

\end{theorem}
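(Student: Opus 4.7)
The plan is to deduce all five assertions by passage to the direct limit $\boldsymbol{\zeta} = \underrightarrow{lim}\ \boldsymbol{\zeta}(n)$, using their finite-dimensional analogues (Theorem \ref{Capelli generators}, Proposition \ref{Umeda free}, Corollary \ref{K basis}, Proposition \ref{J basis}, and Theorem \ref{Schur basis}) together with the compatibility of the relevant families with the Capelli monomorphisms $\mathbf{i}_{n+1,n}$ (Theorem \ref{germs}). The essential observation is that once $n$ is large enough (namely $n\geq k$ or $n\geq |\lambda|$), the map $\mathbf{i}_{n+1,n}$ acts as the identity on the symbols $\mathbf{H}_k(n)$, $\mathbf{I}_k(n)$, $\mathbf{K}_\lambda(n)$, $\mathbf{J}_\lambda(n)$, $\mathbf{S}_\lambda(n)$, so that the direct-limit elements $\mathbf{H}_k, \mathbf{I}_k, \mathbf{K}_\lambda, \mathbf{J}_\lambda, \mathbf{S}_\lambda$ of Definition \ref{series} are well defined and any finite family of them has all its ``germs'' already present in $\boldsymbol{\zeta}(n)$ for $n$ sufficiently large.

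For the generator statements (1) and (2): by Theorem \ref{Capelli generators}, the subalgebra of $\boldsymbol{\zeta}(n)$ generated by $\mathbf{H}_1(n),\ldots,\mathbf{H}_n(n)$ is the full $\boldsymbol{\zeta}(n)$ and is a polynomial algebra on those generators. Since $\mathbf{i}_{n+1,n}$ sends $\mathbf{H}_k(n)$ to $\mathbf{H}_k(n+1)$ for $k\leq n$, it restricts to the standard polynomial inclusion $\mathbb{C}[\mathbf{H}_1,\ldots,\mathbf{H}_n]\hookrightarrow \mathbb{C}[\mathbf{H}_1,\ldots,\mathbf{H}_{n+1}]$. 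The direct limit in the category of (filtered) algebras of this chain is the polynomial algebra $\mathbb{C}[\mathbf{H}_k;\ k\in \mathbb{Z}^+]$, yielding (1). The identical argument, replacing Theorem \ref{Capelli generators} by Proposition \ref{Umeda free} and invoking Theorem \ref{germs} for the family $\mathbf{I}_k$, proves (2).

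For the basis statements (3), (4), (5): fix $m\in\mathbb{Z}^+$ and work inside the filtration piece $\boldsymbol{\zeta}^{(m)} = \underrightarrow{lim}\ \boldsymbol{\zeta}(n)^{(m)}$. By Corollary \ref{K basis}, the set $\{\mathbf{K}_\lambda(n)\, ;\, \lambda_1\leq n,\ |\lambda|\leq m\}$ is a linear basis of $\boldsymbol{\zeta}(n)^{(m)}$. Once $n\geq m$, every partition with $|\lambda|\leq m$ automatically satisfies $\lambda_1\leq n$, and Theorem \ref{germs} guarantees that $\mathbf{i}_{n+1,n}$ maps this basis bijectively onto the corresponding basis of $\boldsymbol{\zeta}(n+1)^{(m)}$; by Proposition \ref{inverso filtrato} the map $\mathbf{i}^{(m)}_{n+1,n}$ is in fact an isomorphism of finite-dimensional vector spaces for $n\geq m$. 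Therefore $\boldsymbol{\zeta}^{(m)}$ is canonically identified with the common value of this stationary system, with linear basis $\{\mathbf{K}_\lambda\, ;\, |\lambda|\leq m\}$. Taking the union over $m$ (equivalently, using that $\boldsymbol{\zeta} = \bigcup_m \boldsymbol{\zeta}^{(m)}$ as a filtered vector space) yields assertion (3). Verbatim the same argument, substituting Proposition \ref{J basis} or Theorem \ref{Schur basis} for Corollary \ref{K basis}, proves (4) and (5).

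The proof is essentially a bookkeeping exercise, so there is no serious obstacle; the only point requiring care is the interchange of the two limits involved (filtration and $n\to\infty$). This is handled cleanly by Proposition \ref{inverso filtrato}, which ensures that each filtration piece stabilizes after a finite stage, and by the fact that a partition $\lambda$ with $|\lambda|\leq m$ necessarily has $\lambda_1\leq m\leq n$ for $n\geq m$, so that no basis element is ``missed'' in the passage to the limit.
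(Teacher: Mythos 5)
Your proposal is correct and follows essentially the same route as the paper, which simply asserts the theorem as a direct consequence of the direct-limit construction. Your argument supplies the standard bookkeeping the paper leaves implicit: stabilization of the filtration pieces via Proposition \ref{inverso filtrato} together with the compatibility of the five families with the Capelli monomorphisms guaranteed by Theorem \ref{germs} and the finite-$n$ basis/generator results.
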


\section{The algebra $\Lambda^*(n)$ of shifted symmetric polynomials and the Harish-Chandra Isomorphism}\label{Lambda(n)}

\subsection{The Harish-Chandra isomorphism $\chi_n : \boldsymbol{\zeta}(n) \longrightarrow \Lambda^*(n)$}

In this subsection we follow  Okounkov and Olshanski \cite{OkOlsh-BR}.

As in the classical context of the algebra $\Lambda(n)$ of symmetric
polynomials in $n$ variables $x_1, x_2, \ldots, x_n$, the algebra
$\Lambda^*(n)$ of {\textit{shifted symmetric polynomials}} is an algebra
of polynomials $p(x_1, x_2, \ldots, x_n)$  but the ordinary symmetry
is replaced by the {\textit{shifted symmetry}}:
$$
 f(x_1, \ldots , x_i, x_{i+1}, \ldots, x_n) = f(x_1, \ldots , x_{i+1} - 1, x_i + 1,
 \ldots, x_n),
$$
for $i = 1, 2, \ldots, n - 1.$

\vskip 0.3cm

\begin{examples}\label{Ex shifted-BR}

Two basic classes of shifted symmetric polynomials are provided by the sequences of {\textit{shifted elementary symmetric polynomials}} and
{\textit{shifted complete symmetric polynomials}}.

\begin{itemize}
\renewcommand{\labelitemi}{\normalfont }
\item {\textbf{Elementary shifted symmetric polynomials}}

For every $k \in \mathbb{N}$ let
\begin{equation}\label{shifted elementary-BR}
\mathbf{e}_k^{*}(x_1, x_2, \ldots, x_n)
= \sum_{1 \leq i_1 < i_2 < \cdots < i_r \leq n} \ (x_{i_1}  + k  - 1)
(x_{i_2}  + k - 2) \cdots (x_{i_k}),
\end{equation}
and $\mathbf{e}_0^{*}(x_1, x_2, \ldots, x_n) = \mathbf{1}.$

\item {\textbf{Complete shifted symmetric polynomials}}

For every $r \in \mathbb{N}$ let
\begin{equation}\label{shifted complete-BR}
\mathbf{h}_k^{*}(x_1, x_2, \ldots, x_n)
= \sum_{1 \leq i_1 \leq i_2 < \cdots \leq i_k \leq n} \ (x_{i_1}  - k  + 1)
(x_{i_2}  - k + 2) \cdots (x_{i_k}),
\end{equation}
and $\mathbf{h}_0^{*}(x_1, x_2, \ldots, x_n) = \mathbf{1}.$

\end{itemize}

\end{examples}

\vskip 0.3cm

\begin{definition}\label{HC isomorphism}

The {\textit{Harish-Chandra isomorphism}} $\chi_n$ is the algebra isomorphism
$$
\chi_n : \boldsymbol{\zeta}(n) \longrightarrow \Lambda^*(n), \qquad  \ A \mapsto \chi_n(A),
$$
$\chi_n(A)$ being the shifted symmetric polynomial such that, for every highest weight module $V_{\mu}$,
the evaluation $\chi_n(A)(\mu_1, \mu_2, \ldots , \mu_n)$ equals the eigenvalue of
$A \in \boldsymbol{\zeta}(n)$ in  $V_{\mu}$ (\cite{OkOlsh-BR}, Proposition $\mathbf{2.1}$).

\end{definition}

\subsection{The Harish-Chandra images of the Capelli free generators $\mathbf{H}_k(n)$}

From  Corollary \ref{Capelli eigenvalues}.$1$, it follows

\begin{proposition}\label{elementary}
$$
\chi_n(\mathbf{H}_k(n)) =  \mathbf{e}_k^{*}(x_1, x_2, \ldots, x_n) \in \Lambda^*(n),
$$
for every $k =  1, 2, \ldots, n.$
\end{proposition}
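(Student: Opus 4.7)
The plan is to derive Proposition~\ref{elementary} as an immediate consequence of Proposition~\ref{Capelli eigenvalues} together with the definition of the Harish-Chandra isomorphism, closed off by a standard density argument. No new machinery is required; the statement is essentially a matching of two explicit formulas for the same integer eigenvalues.

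First I will recall that, by Proposition~\ref{Capelli eigenvalues}.1 combined with the classical definition (\ref{The classical Capelli generators of $1893$-BR})--(\ref{31}) of the Capelli generators, the central element $\mathbf{H}_k(n)$ acts on the canonical highest weight vector $v_{\widetilde{\mu}}$ of the Schur module $Schur_{\mu}(n)$ (of highest weight $\widetilde{\mu}$) as the scalar
\[
e^*_k(\widetilde{\mu}) = \sum_{1 \leq i_1 < i_2 < \cdots < i_k \leq n} (\widetilde{\mu}_{i_1} + k - 1)(\widetilde{\mu}_{i_2} + k - 2) \cdots (\widetilde{\mu}_{i_k}),
\]
for every partition $\mu$ with $\mu_1 \leq n$. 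A term-by-term comparison of this formula with the definition (\ref{shifted elementary-BR}) of the shifted elementary symmetric polynomial in $n$ variables shows that $e^*_k(\widetilde{\mu})$ coincides with the evaluation $\mathbf{e}_k^{*}(\widetilde{\mu}_1, \widetilde{\mu}_2, \ldots, \widetilde{\mu}_n)$; this is nothing but the observation that both expressions are sums over the same index set of the same products of linear factors, with the same integer shifts on the main diagonal.

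Next, by the defining property of $\chi_n$ stated in Definition~\ref{HC isomorphism}, the image $\chi_n(\mathbf{H}_k(n)) \in \Lambda^*(n)$ is the unique shifted symmetric polynomial whose value at a dominant weight $\nu = (\nu_1,\ldots,\nu_n)$ equals the eigenvalue of $\mathbf{H}_k(n)$ on the irreducible representation $V_{\nu}$. Specializing this to $\nu = \widetilde{\mu}$, the two preceding observations yield
\[
\chi_n(\mathbf{H}_k(n))(\widetilde{\mu}_1, \ldots, \widetilde{\mu}_n) = \mathbf{e}_k^{*}(\widetilde{\mu}_1, \ldots, \widetilde{\mu}_n),
\]
for every partition $\mu$ with $\mu_1 \leq n$.

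The one step that requires a brief justification is the transition from agreement on all such $n$-tuples to the equality of polynomials in $\Lambda^*(n)$. As $\mu$ varies over partitions with $\mu_1 \leq n$, the $n$-tuple $(\widetilde{\mu}_1, \ldots, \widetilde{\mu}_n)$ exhausts every weakly decreasing sequence of non-negative integers of length $n$; via the linear bijection $(b_1,\ldots,b_n) \mapsto (b_1+\cdots+b_n,\, b_2+\cdots+b_n,\, \ldots,\, b_n)$ with $\mathbb{N}^n$, this set is Zariski-dense in $\mathbb{C}^n$. Since both $\chi_n(\mathbf{H}_k(n))$ and $\mathbf{e}_k^{*}(x_1,\ldots,x_n)$ are polynomials that agree on a Zariski-dense set, they coincide identically in $\Lambda^*(n)$, which completes the proof. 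The bulk of the argument is already packaged in Proposition~\ref{Capelli eigenvalues}; the only point to monitor is this final density remark, which is routine.
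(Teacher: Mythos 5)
Your proof is correct and follows essentially the same route as the paper: the paper simply notes that the result follows from the eigenvalue formula of Proposition~\ref{Capelli eigenvalues}.1 together with Definition~\ref{HC isomorphism}. The only addition you make is the explicit Zariski-density remark, which the paper absorbs into the statement of Definition~\ref{HC isomorphism} (where $\chi_n(A)$ is taken, citing \cite{OkOlsh-BR}, to be the unique shifted symmetric polynomial reproducing the eigenvalues on dominant weights), so this is a matter of presentation rather than a genuinely different argument.
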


\subsection{The Harish-Chandra images of the Nazarov/Umeda elements $\mathbf{I}_k(n)$}

From Theorem \ref{vertical strip}.$2$, it follows
\begin{proposition}
For every $k \in \mathbb{Z}^+$,

$$  \chi_n( \mathbf{I}_k(n) ) = \mathbf{h}_k^{*}(x_1, x_2, \ldots, x_n) \in \Lambda^*(n).$$

\end{proposition}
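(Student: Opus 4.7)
The plan is to deduce the identity by reading off the eigenvalues of $\mathbf{I}_k(n)$ on all highest weight vectors and invoking the defining property of the Harish-Chandra isomorphism, together with the fact that a shifted symmetric polynomial in $n$ variables is determined by its values on partitions with at most $n$ parts.

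First I would recall that, by Definition \ref{HC isomorphism}, for every central element $A \in \boldsymbol{\zeta}(n)$ and every partition $\nu$ with $\ell(\nu) \leq n$, the value $\chi_n(A)(\nu_1,\dots,\nu_n)$ coincides with the eigenvalue of $A$ on a highest weight vector of weight $\nu$. The canonical highest weight vector $v_{\widetilde{\mu}} = (D_\mu | D_\mu^P)$ of the Schur module $Schur_\mu(n)$ has weight $\widetilde{\mu}$, and as $\mu$ ranges over all partitions with $\mu_1 \leq n$ the conjugate $\widetilde{\mu}$ ranges over all partitions with at most $n$ parts. Thus plugging $\nu = \widetilde{\mu}$ into the definition of $\chi_n$ applied to $\mathbf{I}_k(n)$ gives
\begin{equation*}
\chi_n\bigl(\mathbf{I}_k(n)\bigr)(\widetilde{\mu}_1,\dots,\widetilde{\mu}_n) \;=\; \text{eigenvalue of }\mathbf{I}_k(n)\text{ on }v_{\widetilde{\mu}}.
\end{equation*}

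Next I would quote Theorem \ref{vertical strip}.2, whose proof via the virtual presentation of $\mathbf{I}_k(n)$ was the real content: the eigenvalue in question is exactly
\begin{equation*}
h^*_k(\widetilde{\mu}) \;=\; \sum_{1 \leq i_1 \leq \cdots \leq i_k \leq n}(\widetilde{\mu}_{i_1} - k + 1)(\widetilde{\mu}_{i_2} - k + 2)\cdots(\widetilde{\mu}_{i_k}),
\end{equation*}
which, by Example \ref{Ex shifted-BR}, is precisely the evaluation $\mathbf{h}_k^*(\widetilde{\mu}_1,\dots,\widetilde{\mu}_n)$ of the shifted complete symmetric polynomial. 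Therefore the two shifted symmetric polynomials $\chi_n(\mathbf{I}_k(n))$ and $\mathbf{h}_k^*$ agree on every tuple of the form $(\widetilde{\mu}_1,\dots,\widetilde{\mu}_n)$ with $\mu_1 \leq n$.

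Finally I would close the argument by invoking the standard fact (cf.\ \cite{OkOlsh-BR}) that an element of $\Lambda^*(n)$ is uniquely determined by its values on the set of partitions with at most $n$ parts: this set is Zariski-dense enough for any polynomial identity between shifted symmetric polynomials. Consequently $\chi_n(\mathbf{I}_k(n)) = \mathbf{h}_k^*(x_1,\dots,x_n)$ as elements of $\Lambda^*(n)$. The only potential subtlety, and the part most in need of a pointer to the literature rather than a genuine obstacle, is this last density/vanishing statement for shifted symmetric polynomials; everything else is a direct combination of the eigenvalue computation already carried out via virtual variables and the bare definition of $\chi_n$.
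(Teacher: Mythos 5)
Your argument is correct and matches the paper's proof: the paper likewise deduces the identity directly from Theorem \ref{vertical strip}.2 (the eigenvalue formula $h^*_k(\widetilde{\mu})$ obtained via the virtual presentation) together with the defining property of the Harish-Chandra isomorphism. The concluding density remark is a reasonable articulation of why the stated evaluations pin down the polynomial, but it is already implicit in Definition \ref{HC isomorphism}, which presupposes $\chi_n$ is well-defined as a map into $\Lambda^*(n)$.
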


\subsection{The shifted Schur polynomials $\mathbf{s}^*_\lambda(x_1, \ldots, x_n)$ as images of the Schur elements $\mathbf{S}_\lambda(n)$}

Recall that, given a variable $z$ and a natural integer $p$, the symbol $(z)_p$ denotes
(see, e.g. \cite{Aigner-BR}) the {\it{falling factorial polynomials}}:
$$
(z)_p = z(z - 1) \cdots (z - p + 1), \quad p \geq 1,
\quad \quad
(z)_0 = 1.
$$

Let $\lambda$ be a partition, $\lambda_1 \leq n$.

Following \cite{OkOlsh-BR}, set
$$
\mathbf{s}^*_\lambda(x_1, \ldots, x_n) =
\frac { det \Big[ (x_i + n -i)_{\widetilde{\lambda}_i + n - j } \Big]} { det \Big[ (x_i + n -i)_{n - j} \Big]},
$$
for every $1 \leq i, j \leq n.$

The polynomials $\mathbf{s}^*_\lambda(x_1, \ldots, x_n)$ are shifted symmetric polynomials (in symbols,
$\mathbf{s}^*_\lambda(x_1, \ldots, x_n) \in \Lambda^*(n)$) and are called the
{\it{shifted Schur polynomials}} in $n$ variables.

From the Characterization Theorems for the Schur elements $\mathbf{S}_\lambda(n) \in \boldsymbol{\zeta}(n)$
(see subsection \ref{Characterization  Theorems})
and the the Characterization Theorems for the shifted Schur polynomials \cite{OkOlsh-BR}, we have:

\begin{theorem}
For every partition $\lambda$, $\lambda_1 \leq n$,
$$  \chi_n( \mathbf{S}_\lambda(n) ) = \mathbf{s}^*_\lambda(x_1, \ldots, x_n).$$
\end{theorem}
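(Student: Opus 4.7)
The plan is to reduce the identity $\chi_n(\mathbf{S}_\lambda(n)) = \mathbf{s}^*_\lambda(x_1,\ldots,x_n)$ to an application of the Sahi/Okounkov uniqueness theorem for shifted Schur polynomials, using Theorem \ref{Schur action} as the only nontrivial input. In fact, Proposition \ref{HC action}, which is the direct translation of Theorem \ref{Schur action} across the Harish-Chandra isomorphism, already does the bulk of the work; what remains is to match it against the defining vanishing characterization of the $\mathbf{s}^*_\lambda$.

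First I would unpack Proposition \ref{HC action}. By Definition \ref{HC isomorphism} and the fact that the canonical highest weight vector $v_{\widetilde{\mu}} = (D_\mu|D_\mu^P)$ of $Schur_\mu(n)$ has weight $\widetilde{\mu}$ (Subsection \ref{Schur modules}), Theorem \ref{Schur action} forces the polynomial $P := \chi_n(\mathbf{S}_\lambda(n)) \in \Lambda^*(n)$ to satisfy $P(\widetilde{\mu}) = 0$ whenever $|\mu| < |\lambda|$ and $P(\widetilde{\mu}) = \delta_{\lambda,\mu} H(\lambda)$ whenever $|\mu| = |\lambda|$. Changing variables to $\nu = \widetilde{\mu}$, these conditions read $P(\nu) = 0$ for every partition $\nu$ with $|\nu| < |\lambda|$, and $P(\nu) = \delta_{\nu,\widetilde{\lambda}} \cdot H(\widetilde{\lambda})$ for every partition $\nu$ with $|\nu| = |\lambda|$ (using $H(\lambda) = H(\widetilde{\lambda})$). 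Moreover, the containment $\mathbf{S}_\lambda(n) \in \boldsymbol{\zeta}(n)^{(|\lambda|)}$ recorded in (\ref{filtration element}) and the fact that $\chi_n$ is a morphism of filtered algebras imply that $P$ lies in the filtration level $\Lambda^*(n)^{(|\lambda|)}$.

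Next I would invoke the Sahi/Okounkov Characterization Theorem recalled in Subsection \ref{Characterization  Theorems}: the shifted Schur polynomial attached (in the Okounkov-Olshanski convention) to a partition $\kappa$ with $\kappa_1 \leq n$ is the unique element of $\Lambda^*(n)^{(|\kappa|)}$ that vanishes on every partition of size at most $|\kappa|$ other than $\kappa$ itself and takes the value $H(\kappa)$ at $\kappa$. Applying this with $\kappa = \widetilde{\lambda}$ pins down $P$ to be exactly the shifted Schur polynomial attached to $\widetilde{\lambda}$; because the present paper defines $\mathbf{s}^*_\lambda$ through the determinantal ratio in which $\widetilde{\lambda}_i$ appears in the numerator, this element is precisely $\mathbf{s}^*_\lambda(x_1,\ldots,x_n)$. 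The only step that deserves care, and the main potential source of confusion, is the double conjugation --- module labels versus highest weights on one side, and the present paper's convention versus the Okounkov-Olshanski convention on the other --- but the two conjugations cancel, leaving no genuine obstacle, and the theorem follows.
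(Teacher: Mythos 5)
Your proposal is correct and follows the same route as the paper: the theorem is obtained by combining Proposition \ref{HC action} (the translation of Theorem \ref{Schur action} across the Harish--Chandra isomorphism) with the Sahi/Okounkov uniqueness characterization, with the conjugation bookkeeping handled exactly as you describe. One small slip: the uniqueness statement for the shifted Schur polynomial attached to $\kappa$ in $n$ variables requires $\kappa$ to have at most $n$ parts rather than $\kappa_1 \leq n$; since $\kappa = \widetilde{\lambda}$ has $\lambda_1 \leq n$ parts by hypothesis, this does not affect your argument.
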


\subsection{The fundamental theorems for the algebra $\Lambda^*(n)$}

 From Theorem \ref{Capelli generators} and Proposition \ref{elementary}, it follows

\begin{proposition}

The set
$$
\Big\{  \mathbf{e}_k^{*}(x_1, x_2, \ldots, x_n); \ k = 1, 2, \ldots, n   \Big\}
$$
is a set of free algebra generators of the polynomial algebra $\Lambda^*(n)$.
\end{proposition}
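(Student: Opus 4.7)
The plan is to transport Capelli's theorem across the Harish-Chandra isomorphism. By Definition \ref{HC isomorphism}, $\chi_n : \boldsymbol{\zeta}(n) \longrightarrow \Lambda^*(n)$ is an algebra isomorphism. By Theorem \ref{Capelli generators}, the elements $\mathbf{H}_1(n), \mathbf{H}_2(n), \ldots, \mathbf{H}_n(n)$ form a system of algebraically independent generators of $\boldsymbol{\zeta}(n)$. By Proposition \ref{elementary}, $\chi_n(\mathbf{H}_k(n)) = \mathbf{e}_k^{*}(x_1, x_2, \ldots, x_n)$ for each $k = 1, 2, \ldots, n$.

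Therefore, since an algebra isomorphism carries any system of free algebra generators of the source to a system of free algebra generators of the target, the images $\mathbf{e}_1^{*}(x_1, \ldots, x_n), \ldots, \mathbf{e}_n^{*}(x_1, \ldots, x_n)$ form a system of free algebra generators of $\Lambda^*(n)$. This requires no computation beyond the citations already in place.

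There is essentially no obstacle: the statement is a one-line consequence of the three results just invoked. If one wished to spell this out more carefully, one would note that (i) the $\mathbf{e}_k^*$'s generate $\Lambda^*(n)$ because $\chi_n$ is surjective and the $\mathbf{H}_k(n)$'s generate $\boldsymbol{\zeta}(n)$, and (ii) any nontrivial polynomial relation among the $\mathbf{e}_k^*$'s would pull back, via the injectivity of $\chi_n$, to a nontrivial polynomial relation among the $\mathbf{H}_k(n)$'s, contradicting Theorem \ref{Capelli generators}.
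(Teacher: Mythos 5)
Your proposal is correct and takes exactly the approach the paper intends: the paper derives this proposition as an immediate corollary of Theorem \ref{Capelli generators} and Proposition \ref{elementary}, transported across the Harish--Chandra isomorphism $\chi_n$. Your spelling-out of the surjectivity and injectivity arguments is a faithful expansion of what the paper leaves implicit.
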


Since, for every $k \in \mathbb{Z}^+$,  the {\it{indicator}} (top degree homogeneous part) of $\mathbf{h}_k^{*}(x_1, x_2, \ldots, x_n)$
is the classical complete homogeneous symmetric polynomial $\mathbf{h}_k(x_1, x_2, \ldots, x_n) \in \Lambda(n)$,
from the preceding discussions it also follows

\begin{proposition}

The set
$$
\Big\{  \mathbf{h}_k^{*}(x_1, x_2, \ldots, x_n); \ k = 1, 2, \ldots, n   \Big\}
$$
is a set of free algebra generators of the polynomial algebra $\Lambda^*(n)$.
\end{proposition}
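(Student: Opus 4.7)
The plan is to reduce the claim to the classical fact that the complete homogeneous symmetric polynomials $\mathbf{h}_1(x_1,\ldots,x_n), \ldots, \mathbf{h}_n(x_1,\ldots,x_n)$ form a free system of algebra generators of the ordinary symmetric polynomial algebra $\Lambda(n)$, by passing to the associated graded algebra of $\Lambda^*(n)$ with respect to its natural degree filtration.

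First, I would make explicit the filtration. Let $\Lambda^*(n)^{(m)}$ denote the subspace of shifted symmetric polynomials of total degree at most $m$; this is a genuine algebra filtration and, since the shift $x_i \mapsto x_i - (i-1)$ (say) intertwines shifted symmetry with ordinary symmetry, the associated graded algebra $\mathrm{gr}\,\Lambda^*(n)$ is canonically isomorphic to $\Lambda(n)$, the isomorphism being induced by taking the top degree homogeneous component (the ``indicator''). From the defining formula \eqref{shifted complete-BR}, the element $\mathbf{h}_k^*(x_1,\ldots,x_n)$ lies in $\Lambda^*(n)^{(k)}$ and its image in $\mathrm{gr}^k\,\Lambda^*(n) \cong \Lambda^k(n)$ is precisely the classical complete homogeneous symmetric polynomial $\mathbf{h}_k(x_1,\ldots,x_n)$, as already observed in the paragraph preceding the statement.

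Next I would invoke the classical fact that $\mathbf{h}_1,\mathbf{h}_2,\ldots,\mathbf{h}_n$ are free polynomial generators of $\Lambda(n)$ (this is the ``complete homogeneous'' counterpart of the fundamental theorem of symmetric polynomials). Then I would run the standard lifting argument: given any polynomial relation $P(\mathbf{h}_1^*,\ldots,\mathbf{h}_n^*)=0$ in $\Lambda^*(n)$, look at the highest degree homogeneous component of $P$ evaluated at the $\mathbf{h}_k^*$; since filtration degree is additive and the top symbol of $\mathbf{h}_k^*$ is $\mathbf{h}_k$, this top component equals $P_{\mathrm{top}}(\mathbf{h}_1,\ldots,\mathbf{h}_n)$ in $\Lambda(n)$, which must vanish and hence forces $P_{\mathrm{top}}=0$; iterating, $P=0$. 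This proves algebraic independence. Surjectivity onto $\Lambda^*(n)$ is the same argument in reverse: any $f \in \Lambda^*(n)^{(m)}$ has a top symbol in $\Lambda^m(n)$ which, by the classical result, is a polynomial in the $\mathbf{h}_k$'s; subtracting the corresponding polynomial in the $\mathbf{h}_k^*$'s lowers the filtration degree, and one concludes by induction on $m$.

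There is really no hard step here: once the identification of the top symbol with the classical $\mathbf{h}_k$ is in hand, the ``free algebra generators lift through a filtration whose associated graded is a polynomial algebra'' principle takes care of the rest. The only point requiring minimal care is the bookkeeping of the filtration (checking that the shifted symmetric structure is compatible with the ordinary degree filtration on $\mathbb{C}[x_1,\ldots,x_n]$ and that the top symbol map is a well-defined algebra map onto $\Lambda(n)$); this is essentially the same argument used implicitly for the analogous statement about $\mathbf{e}_k^*$ in the previous proposition, so one could even write the present proof simply as: ``The proof is entirely analogous to that of the preceding proposition, replacing elementary by complete symmetric polynomials.''
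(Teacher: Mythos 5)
Your proposal is correct and follows exactly the route the paper intends: the paper's one-sentence proof rests on the observation that the top-degree homogeneous part (the ``indicator'') of $\mathbf{h}_k^*$ is the classical complete homogeneous symmetric polynomial $\mathbf{h}_k$, and you correctly unpack the implicit ``lift free generators through the filtration'' argument. The only thing worth noting is that the paper could also have obtained this directly by transporting Proposition \ref{Umeda free} (freeness of the $\mathbf{I}_k(n)$) through the Harish-Chandra isomorphism, exactly parallel to the way the preceding proposition on the $\mathbf{e}_k^*$ was derived from Theorem \ref{Capelli generators}; but since the paper explicitly appeals to the indicator, your filtration/associated-graded argument is the faithful reconstruction.
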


\begin{proposition}

The set
$$
\Big\{  \mathbf{s}^*_\lambda(x_1, \ldots, x_n); \ \lambda_1 \leq n   \Big\}
$$
is a linear basis of the polynomial algebra $\Lambda^*(n)$.
\end{proposition}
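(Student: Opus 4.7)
The plan is to deduce this statement as an immediate transport-of-structure argument along the Harish-Chandra isomorphism $\chi_n : \boldsymbol{\zeta}(n) \longrightarrow \Lambda^*(n)$. Since $\chi_n$ is an algebra isomorphism, it is in particular a linear bijection, and therefore it sends any linear basis of $\boldsymbol{\zeta}(n)$ to a linear basis of $\Lambda^*(n)$.

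First, I would invoke Theorem \ref{Schur basis}, which asserts that the family
$$
\big\{ \mathbf{S}_{\lambda}(n); \ \lambda_1 \leq n \ \big\}
$$
is a linear basis of the center $\boldsymbol{\zeta}(n)$. Next, I would invoke the identification
$$
\chi_n\big( \mathbf{S}_\lambda(n) \big) = \mathbf{s}^*_\lambda(x_1, \ldots, x_n)
$$
established just above (as a consequence of the Sahi/Okounkov Characterization Theorem and Theorem \ref{Schur action}). Applying the bijection $\chi_n$ term by term to the basis $\{ \mathbf{S}_{\lambda}(n) \}_{\lambda_1 \leq n}$ then yields the claim.

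There is no real obstacle here: the entire content has been packaged into the two preceding results, and the present proposition is a one-line corollary of the fact that linear isomorphisms preserve bases. The only thing worth checking explicitly is that the indexing set $\{\lambda; \lambda_1 \leq n\}$ on the center side matches verbatim the indexing set used to define the shifted Schur polynomials on the $\Lambda^*(n)$ side, which is immediate from the definitions. Thus the proof will consist of essentially a single sentence citing Theorem \ref{Schur basis} and the identification $\chi_n(\mathbf{S}_\lambda(n)) = \mathbf{s}^*_\lambda(x_1, \ldots, x_n)$.
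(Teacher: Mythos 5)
Your proof is correct and matches the intent of the paper: the proposition is stated immediately after the identification $\chi_n(\mathbf{S}_\lambda(n)) = \mathbf{s}^*_\lambda(x_1, \ldots, x_n)$ and is clearly meant as a transport-of-structure corollary of Theorem \ref{Schur basis} along the algebra isomorphism $\chi_n$. The paper leaves the proof implicit, and your one-line argument supplies exactly what is needed.
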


\section{The algebra $\Lambda^*$  of shifted symmetric functions and the
Harish-Chandra isomorphism $\chi : \boldsymbol{\zeta} \rightarrow \Lambda^*$}\label{Lambda}

\subsection{The monomorphism $\mathbf{i}^*_{n+1,n}$  and the projection $\boldsymbol{\pi}^*_{n,n+1}$}

Let

$$
\mathbf{i}^*_{n+1,n} : \Lambda^*(n) \hookrightarrow \Lambda^*(n+1)
$$
be the algebra monomorphism such that
$$
\mathbf{i}^*_{n+1,n}\big( \mathbf{e}_k^{*}(x_1, x_2, \ldots, x_n) \big) = \mathbf{e}_k^{*}(x_1, x_2, \ldots, x_n, x_{n+1}),
$$
for $k = 1, 2, \ldots, n.$

\begin{remark}

Given $m \in \mathbb{Z}^*$, let ${\Lambda^*(n)}^{(m)}$ denote the $m-$th filtration element
of $\Lambda^*(n)$ (with respect to the filtration induced  by the standard filtration of the algebra of polynomials
in the variables $x_1, x_2, \ldots, x_n$).

Clearly, the  monomorphisms $\mathbf{i}^*_{n+1,n}$ are {\it{morphisms in  the category of filtered algebras}}, that is
$$
\mathbf{i}^*_{n+1,n}\Big[ \Lambda^*(n)^{(m)} \Big] \subseteq \Lambda^*(n+1)^{(m)}.
$$
\end{remark}

\begin{definition}
We  consider the {\it{direct limit}} (in the category of filtered algebras):
\begin{equation}\label{direct limit}
\underrightarrow{lim} \ \Lambda^*(n) = \Lambda^*.
\end{equation}
\end{definition}
\

The algebra $\Lambda^*$ inherits a structure of filtered algebra, where
$$
{\Lambda^*}^{(m)} = \underrightarrow{lim} \ \Lambda^*(n)^{(m)}.
$$

Let

$$
\boldsymbol{\pi}^*_{n,n+1} : \Lambda^*(n+1) \twoheadrightarrow \Lambda^*(n)
$$
be the algebra epimorphism such that
$$
\boldsymbol{\pi}^*_{n,n+1}\big( \mathbf{f}^{*}(x_1, x_2, \ldots, x_n, x_{n+1}) \big) = \mathbf{f}^{*}(x_1, x_2, \ldots, x_n, 0),
$$
for every $\mathbf{f}^{*}(x_1, x_2, \ldots, x_n, x_{n+1}) \in \Lambda^*(n+1).$
Clearly,
$$
\boldsymbol{\pi}^*_{n,n+1}\big( \mathbf{e}_k^{*}(x_1, x_2, \ldots, x_n, x_{n+1}) \big) = \mathbf{e}_k^{*}(x_1, x_2, \ldots, x_n),
$$
for $k = 1, 2, \ldots, n,$ and
$$
\boldsymbol{\pi}^*_{n,n+1}\big( \mathbf{e}_{n+1}^{*}(x_1, x_2, \ldots, x_n, x_{n+1}) \big) = 0.
$$

As for the centers $\boldsymbol{\zeta}(n+1)$ and $\boldsymbol{\zeta}(n)$, the following Remarks and Proposition on $\Lambda^*(n+1)$ and
$\Lambda^*(n)$ are obvious from the definitions.

\begin{remark}
We have
\begin{enumerate}

\item
$Ker \big( \boldsymbol{\pi}^*_{n,n+1} \big)$
is the bilateral ideal
$$
\Big(  \mathbf{e}_{n+1}^{*}(x_1, x_2, \ldots, x_n, x_{n+1}) \Big)
$$
of $\Lambda^*(n+1)$
generated by the element $\mathbf{e}_{n+1}^{*}(x_1, x_2, \ldots, x_n, x_{n+1})$.

\item
The  projection $\boldsymbol{\pi}^*_{n,n+1}$ is the left inverse  of the  monomorphism $\mathbf{i}^*_{n+1,n}.$
In symbols,
$$
 \boldsymbol{\pi}^*_{n,n+1} \circ \mathbf{i}^*_{n+1,n} = Id_{\Lambda^*(n)}.
$$
\end{enumerate}

\end{remark}

\begin{proposition}\label{inverso filtrato funzioni}
If $n \geq m  $ , then the restriction  ${\boldsymbol{\pi}^*_{n,n+1}}^{(m)}$ of $\boldsymbol{\pi}^*_{n,n+1}$
to  ${\Lambda^*(n+1)}^{(m)}$
and the restriction  $\mathbf{i}^*_{n+1,n}$ of  $\mathbf{i}^*_{n+1,n}$ to  ${\Lambda^*(n)}^{(m)}$
 are  the  inverse  of  each  other.
\end{proposition}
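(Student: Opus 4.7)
The plan is to reduce this directly to Proposition \ref{inverso filtrato} by observing that the two sides of the picture (centers versus shifted-symmetric polynomials) are identified, as filtered algebras equipped with the monomorphisms $\mathbf{i}$ and projections $\boldsymbol{\pi}$, by the Harish-Chandra isomorphism. A self-contained direct argument is also easy and I sketch both.

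First I would verify the compatibility: under $\chi_n, \chi_{n+1}$ the Capelli monomorphism $\mathbf{i}_{n+1,n}$ corresponds to $\mathbf{i}^*_{n+1,n}$ and $\boldsymbol{\pi}_{n,n+1}$ corresponds to $\boldsymbol{\pi}^*_{n,n+1}$. This is immediate from Proposition \ref{elementary}: both $\mathbf{i}_{n+1,n}$ and $\mathbf{i}^*_{n+1,n}$ send the $k$-th generator to the $k$-th generator; and for the projection side one only needs to note that $\mathbf{e}_{n+1}^*(x_1,\ldots,x_n,0)=0$, since every summand in the defining sum (\ref{shifted elementary-BR}) must include the factor $x_{n+1}$, matching $\boldsymbol{\pi}_{n,n+1}(\mathbf{H}_{n+1}(n+1))=0$.

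Next I would observe that $\chi_n$ is an isomorphism of filtered algebras, i.e.\ $\chi_n(\boldsymbol{\zeta}(n)^{(m)}) = \Lambda^*(n)^{(m)}$; this follows because the generators $\mathbf{H}_k(n)$ lie in the $k$-th filtration and go over to $\mathbf{e}_k^{*}$ which lie in the $k$-th filtration of $\Lambda^*(n)$, and both filtrations are determined by the total product degree in these free generators. With this compatibility in place, Proposition \ref{inverso filtrato funzioni} is a verbatim transcription of Proposition \ref{inverso filtrato}.

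Alternatively, I would give the direct proof as follows. Since $\boldsymbol{\pi}^*_{n,n+1} \circ \mathbf{i}^*_{n+1,n} = \mathrm{Id}_{\Lambda^*(n)}$ is already recorded, it suffices to show that both restrictions are bijective when $n \geq m$, for which a dimension/basis count is enough. A basis of $\Lambda^*(n)^{(m)}$ is
\begin{equation*}
\big\{ \mathbf{e}^{*}_{\lambda}(x_1,\ldots,x_n) := \mathbf{e}^{*}_{\lambda_1} \cdots \mathbf{e}^{*}_{\lambda_p};\ \lambda_1 \leq n,\ |\lambda|\leq m \big\},
\end{equation*}
and when $n \geq m$ the constraint $\lambda_1 \leq n$ is automatic from $\lambda_1 \leq |\lambda| \leq m \leq n$. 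The monomorphism $\mathbf{i}^*_{n+1,n}$ maps this basis bijectively onto the analogous basis of $\Lambda^*(n+1)^{(m)}$ (no generator of index $>n$ appears), and the projection $\boldsymbol{\pi}^*_{n,n+1}$ does the reverse since no monomial of filtration degree at most $m \leq n$ involves $\mathbf{e}^{*}_{n+1}$. Combined with the left-inverse identity, this shows the two restrictions are mutually inverse.

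There is no real obstacle; the only point requiring care is the verification that $\chi_n$ preserves filtrations (in the transfer route), or equivalently the basis description of $\Lambda^*(n)^{(m)}$ above (in the direct route), but both are immediate consequences of results already quoted.
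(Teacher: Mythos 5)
The paper gives no written proof; it declares the Proposition ``obvious from the definitions,'' mirroring the analogous Proposition \ref{inverso filtrato} for the centers. Your direct argument is precisely the implicit one: the left-inverse identity $\boldsymbol{\pi}^*_{n,n+1} \circ \mathbf{i}^*_{n+1,n} = \mathrm{Id}_{\Lambda^*(n)}$ has already been recorded, and when $n \geq m$ any product $\mathbf{e}^*_{\lambda_1}\cdots\mathbf{e}^*_{\lambda_p}$ of total degree $|\lambda|\leq m$ automatically has $\lambda_1 \leq m \leq n$, so $\mathbf{i}^*_{n+1,n}$ carries the filtered basis of $\Lambda^*(n)^{(m)}$ bijectively onto that of $\Lambda^*(n+1)^{(m)}$ and $\boldsymbol{\pi}^*_{n,n+1}$ reverses this. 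That is exactly the content the paper is summarizing, and the computation $\mathbf{e}^*_{n+1}(x_1,\dots,x_n,0)=0$ you check is the key numerical point. Your alternative transfer route through the Harish--Chandra isomorphism is also sound, but note it requires the additional observation that $\chi_n$ is a \emph{filtered} isomorphism, $\chi_n\big(\boldsymbol{\zeta}(n)^{(m)}\big)=\Lambda^*(n)^{(m)}$, which the paper does not state explicitly at this point (it is implicit only later in Theorem \ref{isomorfismo HC infinito}, so one must supply it independently, as you do, to avoid circularity); the direct basis argument sidesteps this and is the leaner of the two.
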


\begin{remark}
From Proposition \ref{inverso filtrato funzioni}, the algebra $\Lambda^*$ (direct limit) is the same as the
projective limit in the category of filtered algebras
$$
\Lambda^* = \underleftarrow{lim} \ \Lambda^*(n)
$$
with respect to the system of the projections $\boldsymbol{\pi}^*_{n,n+1}$, and therefore, the algebra $\Lambda^*$
is the algebra of shifted symmetric functions of \cite{OkOlsh-BR}.
\end{remark}

\subsection{The Harish-Chandra isomorphism $\chi : \boldsymbol{\zeta} \rightarrow \Lambda^*$}

Consider the following commutative diagram:

\begin{equation}\label{diagramma commutativo}
\begin{tikzpicture}[description/.style={fill=white,inner sep=2pt}]
\fontsize{15.0}{20.0}
\matrix (m) [matrix of math nodes, row sep=5em,
column sep=9em, text height=3.0ex, text depth=1.0ex]
{
\textbf{$\boldsymbol{\zeta}^{(m)}(n)$}
&
\textbf{$\boldsymbol{\zeta}^{(m)}(n+1)$}
\\
\textbf{$\Lambda ^{*(m)}(n)$}
&
\textbf{$\Lambda ^{*(m)}(n+1)$}
\\
};

\fontsize{12}{14}

\draw [line width=0.02cm,<<-] (-2.6,-1.7) -- (1.7,-1.7);
\node  [above] at (0,-1.7) {${\boldsymbol{\pi}^*_{n,n+1}}$};
\draw [line width=0.02cm,right hook->] (-2.6,-2.0) -- (1.7,-2.0);
\node  [below] at (0,-2.0) {${\boldsymbol{i}^*_{n+1,n}}$};

\draw [line width=0.02cm, <<-] (-2.6,1.9) -- (1.7,1.9);
\node  [above] at (0,1.9) {${\boldsymbol{\pi}_{n,n+1}}$};
\draw [line width=0.02cm,right hook->] (-2.6,1.6) -- (1.7,1.6);
\node  [below] at (0,1.6) {${\boldsymbol{i}_{n+1,n}}$};

\draw [line width=0.02cm, ->] (-3.7,1.4) -- (-3.7,-1.4);
\node   at (-3.3,0.0) {{$\chi_n$}};
\draw [line width=0.02cm, ->] (3.2,1.4) -- (3.2,-1.4);
\node   at (3.8,0.0) {{$\chi_{n+1}$}};

\end{tikzpicture}
\end{equation}

\begin{theorem}\label{isomorfismo HC infinito}

\

If $n \geq m$, the pairs of horizontal arrows in the commutative diagram (\ref{diagramma commutativo}) denote
mutually inverse isomorphisms.

\end{theorem}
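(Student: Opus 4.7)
The plan is to combine two ingredients: (i) the filtered-inverse statements of Propositions \ref{inverso filtrato} and \ref{inverso filtrato funzioni}, which already handle the horizontal rows separately; and (ii) the compatibility of the Harish-Chandra isomorphisms $\chi_n$, $\chi_{n+1}$ with the two pairs of horizontal arrows, to establish commutativity of the square as stated.

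First, from Proposition \ref{inverso filtrato} the restrictions $\mathbf{i}_{n+1,n}^{(m)}$ and $\boldsymbol{\pi}_{n,n+1}^{(m)}$ are mutually inverse bijections between $\boldsymbol{\zeta}(n)^{(m)}$ and $\boldsymbol{\zeta}(n+1)^{(m)}$ whenever $n\geq m$; since both are (restrictions of) algebra morphisms, they are filtered algebra isomorphisms. Proposition \ref{inverso filtrato funzioni} yields the analogous statement for the horizontal pair $\mathbf{i}^*_{n+1,n}, \boldsymbol{\pi}^*_{n,n+1}$ on $\Lambda^*(n)^{(m)}\leftrightarrow\Lambda^*(n+1)^{(m)}$. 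This settles the ``mutually inverse'' portion of the conclusion for each row.

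Next I would verify commutativity of the square involving the monomorphisms,
\[
\chi_{n+1}\circ \mathbf{i}_{n+1,n}\;=\;\mathbf{i}^*_{n+1,n}\circ \chi_n .
\]
Since all four maps are algebra homomorphisms and, by Theorem \ref{Capelli generators}, the elements $\mathbf{H}_1(n),\ldots,\mathbf{H}_n(n)$ algebraically generate $\boldsymbol{\zeta}(n)$, it suffices to test the identity on each $\mathbf{H}_k(n)$. By the defining property of the Capelli monomorphism, $\mathbf{i}_{n+1,n}(\mathbf{H}_k(n))=\mathbf{H}_k(n+1)$, and by Proposition \ref{elementary} the image under $\chi_{n+1}$ is $\mathbf{e}_k^{*}(x_1,\ldots,x_{n+1})$; symmetrically, $\chi_n(\mathbf{H}_k(n))=\mathbf{e}_k^{*}(x_1,\ldots,x_n)$ and the definition of $\mathbf{i}^*_{n+1,n}$ sends it to the same element $\mathbf{e}_k^{*}(x_1,\ldots,x_{n+1})$. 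The projection square commutes by the same direct check on the generators $\mathbf{H}_k(n+1)$ of $\boldsymbol{\zeta}(n+1)$: for $k\leq n$ both composites yield $\mathbf{e}_k^{*}(x_1,\ldots,x_n)$, while for $k=n+1$ both composites vanish (the left by the defining relation $\boldsymbol{\pi}_{n,n+1}(\mathbf{H}_{n+1}(n+1))=0$, the right since $\mathbf{e}_{n+1}^{*}(x_1,\ldots,x_n,0)=0$). Alternatively, once the monomorphism square is known to commute, the projection square follows by inverting inside the filtration $m$ using step (i).

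Finally, to confirm that the vertical Harish-Chandra arrows restrict to isomorphisms $\boldsymbol{\zeta}(n)^{(m)}\to \Lambda^*(n)^{(m)}$ (so that the diagram really lives in the stated filtration pieces), I would observe that $\chi_n$ carries the linear basis $\{\mathbf{H}_\lambda(n) : \lambda_1\leq n,\ |\lambda|\leq m\}$ of $\boldsymbol{\zeta}(n)^{(m)}$ to products of shifted elementary symmetric polynomials, which span $\Lambda^*(n)^{(m)}$ by the classical description of the latter. There is no substantive obstacle: once Proposition \ref{elementary} is in hand, the theorem is essentially a bookkeeping assembly of Propositions \ref{inverso filtrato}, \ref{inverso filtrato funzioni}, and the commutativity check above; the only point to be careful about is aligning the filtration degree condition $n\geq m$ from both rows with the degree of the Capelli generators used in the base-case check.
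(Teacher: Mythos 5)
Your proof is correct. The paper does not spell out an argument for this theorem, but your assembly is the natural one: the mutual-inverse property on each row is exactly Propositions \ref{inverso filtrato} and \ref{inverso filtrato funzioni}, and commutativity of the square is checked on the free algebra generators $\mathbf{H}_k$ using the definitions of the Capelli monomorphism $\mathbf{i}_{n+1,n}$, the projection $\boldsymbol{\pi}_{n,n+1}$, the maps $\mathbf{i}^*_{n+1,n}$, $\boldsymbol{\pi}^*_{n,n+1}$, and Proposition \ref{elementary}. One small point you handled correctly but glossed: the identity $\mathbf{e}_k^{*}(x_1,\ldots,x_n,0)=\mathbf{e}_k^{*}(x_1,\ldots,x_n)$ (for $k\leq n$) and $\mathbf{e}_{n+1}^{*}(x_1,\ldots,x_n,0)=0$ hold because the rightmost factor in each summand of $\mathbf{e}_k^*$ is the unshifted variable $x_{i_k}$, so any term with $i_k=n+1$ vanishes on setting $x_{n+1}=0$; your use of this is exactly what makes the projection square commute and also justifies the projective-limit picture in Proposition \ref{inverso filtrato funzioni}. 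Your final paragraph on the filtration compatibility of $\chi_n$ is also the right thing to say: $\chi_n$ sends the basis $\{\mathbf{H}_\lambda(n)\}_{|\lambda|\leq m}$ of $\boldsymbol{\zeta}(n)^{(m)}$ to $\{\mathbf{e}^*_\lambda\}_{|\lambda|\leq m}$, which is a basis of $\Lambda^*(n)^{(m)}$, so the vertical arrows do restrict to filtration pieces as the diagram requires.
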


\
Passing to the direct limit, we get the isomorphism of filtered algebras:
$$
    \chi : \boldsymbol{\zeta} \approx  \Lambda^*.
$$

\

In particular, we infer the images in $\Lambda^*$ of the free systems of algebraic generators of $\boldsymbol{\zeta}$:

$$
\Big\{  \mathbf{H}_k; \  k \in \mathbb{Z}^+ \Big\}, \qquad  \Big\{  \mathbf{I}_k; \  k \in \mathbb{Z}^+ \Big\}.
$$
with respect to the  isomorphism
$\chi.$

\begin{proposition}
We have

\begin{enumerate}

\item
For every $k \in \mathbb{Z}^+$,
$$
\chi \big( \mathbf{H}_k \big) = \mathbf{e}^*_k \in \Lambda^*,
$$
where
$$
\mathbf{e}^*_k = \sum_{ i_1 < i_2 < \cdots < i_k } \ (x_{i_1}  + k  - 1)
(x_{i_2}  + k - 2) \cdots (x_{i_k}), \quad i_s \in \mathbb{Z}^+,
$$
$\mathbf{e}^*_k$ the $k-$th elementary shifted symmetric function;

\item
For every $k \in \mathbb{Z}^+$,
$$
\chi \big( \mathbf{I}_k \big) = \mathbf{h}^*_k \in \Lambda^*,
$$
where
$$
\mathbf{h}_k^{*}
= \sum_{i_1 \leq i_2 < \cdots \leq i_k } \ (x_{i_1}  - k  + 1)
(x_{i_2}  - k + 2) \cdots (x_{i_k}), \quad i_s \in \mathbb{Z}^+,
$$
$\mathbf{h}^*_k$ the $k-$th complete shifted symmetric function.

\end{enumerate}

\end{proposition}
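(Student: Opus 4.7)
The plan is to show that both identities follow formally from passing to the direct limit, once the corresponding finite-dimensional statements and the compatibility of Capelli monomorphisms with the Harish-Chandra isomorphism are in place. So the proof essentially amounts to chasing the commutative diagram (\ref{diagramma commutativo}) in Theorem \ref{isomorfismo HC infinito}.

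First I would record the finite-level identities. For each $n \geq k$, Proposition \ref{elementary} gives $\chi_n(\mathbf{H}_k(n)) = \mathbf{e}_k^*(x_1,\ldots,x_n)$ (a consequence of Proposition \ref{Capelli eigenvalues}.1 together with the definition of $\chi_n$ via eigenvalues on highest weight vectors), and the analogous proposition derived from Theorem \ref{vertical strip}.2 gives $\chi_n(\mathbf{I}_k(n)) = \mathbf{h}_k^*(x_1,\ldots,x_n)$. Both identities hold in $\Lambda^*(n)^{(k)}$, which is the relevant filtration piece.

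Next I would invoke the stability properties. By Theorem \ref{germs}, for $n \geq k$ we have $\mathbf{i}_{n+1,n}(\mathbf{H}_k(n)) = \mathbf{H}_k(n+1)$ and $\mathbf{i}_{n+1,n}(\mathbf{I}_k(n)) = \mathbf{I}_k(n+1)$, so by Definition \ref{series} the germs $\mathbf{H}_k, \mathbf{I}_k \in \boldsymbol{\zeta}$ are well-defined as direct limits. On the other side, by the defining formula (\ref{shifted elementary-BR}) one reads off $\mathbf{i}^*_{n+1,n}(\mathbf{e}_k^*(x_1,\ldots,x_n)) = \mathbf{e}_k^*(x_1,\ldots,x_{n+1})$ for $n \geq k$, because setting $x_{n+1} = 0$ in $\mathbf{e}_k^*(x_1,\ldots,x_{n+1})$ just kills the summands in which $i_k = n+1$ (the corresponding factor $x_{n+1}$ vanishes), recovering $\mathbf{e}_k^*(x_1,\ldots,x_n)$; equivalently one applies Proposition \ref{inverso filtrato funzioni} together with the definition of $\mathbf{i}^*_{n+1,n}$ on the generators. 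The analogous stability holds for $\mathbf{h}_k^*$ (again, the extra summands involving $x_{n+1}$ are killed by the projection $x_{n+1} \mapsto 0$). Hence $\mathbf{e}_k^* = \underrightarrow{lim}\, \mathbf{e}_k^*(x_1,\ldots,x_n)$ and $\mathbf{h}_k^* = \underrightarrow{lim}\, \mathbf{h}_k^*(x_1,\ldots,x_n)$ in $\Lambda^*$.

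Finally I would combine these ingredients through the commutative diagram of Theorem \ref{isomorfismo HC infinito}. Since $\mathbf{H}_k(n) \in \boldsymbol{\zeta}(n)^{(k)}$ for all $n \geq k$ and the horizontal arrows of (\ref{diagramma commutativo}) are the Capelli monomorphisms $\mathbf{i}_{n+1,n}$ (and their counterparts $\mathbf{i}^*_{n+1,n}$), the compatible family $\{\chi_n(\mathbf{H}_k(n))\}_{n\ge k} = \{\mathbf{e}_k^*(x_1,\ldots,x_n)\}_{n\ge k}$ determines a well-defined element of the direct limit $\Lambda^* = \underrightarrow{lim}\,\Lambda^*(n)$. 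By the very definition of $\chi = \underrightarrow{lim}\,\chi_n$, this element is $\chi(\mathbf{H}_k)$, and by the previous paragraph it equals $\mathbf{e}_k^*$. The identical argument with $\mathbf{I}_k(n)$ and $\mathbf{h}_k^*(x_1,\ldots,x_n)$ in place of $\mathbf{H}_k(n)$ and $\mathbf{e}_k^*(x_1,\ldots,x_n)$ yields $\chi(\mathbf{I}_k) = \mathbf{h}_k^*$.

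There is no genuine obstacle: every component of the argument is already recorded in the paper. The only mildly nontrivial point worth checking carefully is the verification that $\mathbf{i}^*_{n+1,n}$ sends $\mathbf{e}_k^*(x_1,\ldots,x_n) \mapsto \mathbf{e}_k^*(x_1,\ldots,x_{n+1})$ and $\mathbf{h}_k^*(x_1,\ldots,x_n) \mapsto \mathbf{h}_k^*(x_1,\ldots,x_{n+1})$ for $n \geq k$, i.e.\ that the elementary and complete shifted symmetric polynomials form stable sequences; but this is an immediate inspection of the defining sums (\ref{shifted elementary-BR}) and (\ref{shifted complete-BR}) after specializing $x_{n+1} \mapsto 0$ via $\boldsymbol{\pi}^*_{n,n+1}$, together with Proposition \ref{inverso filtrato funzioni}.
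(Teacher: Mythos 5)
Your proof is correct and follows essentially the same route the paper intends: pass to the direct limit using the commutative diagram of Theorem \ref{isomorfismo HC infinito} together with the finite-level identities $\chi_n(\mathbf{H}_k(n)) = \mathbf{e}_k^*(x_1,\ldots,x_n)$ and $\chi_n(\mathbf{I}_k(n)) = \mathbf{h}_k^*(x_1,\ldots,x_n)$ and the stability statements of Theorem \ref{germs}. One small remark: the identity $\mathbf{i}^*_{n+1,n}(\mathbf{e}_k^*(x_1,\ldots,x_n)) = \mathbf{e}_k^*(x_1,\ldots,x_{n+1})$ holds by the very definition of $\mathbf{i}^*_{n+1,n}$ on the free generators, so your verification via $\boldsymbol{\pi}^*_{n,n+1}$ is redundant there; the nontrivial stability to check is the one for $\mathbf{h}_k^*$, which you could get either by your direct inspection of the sum or, more cleanly, for free from the commutative diagram once $\mathbf{i}_{n+1,n}(\mathbf{I}_k(n)) = \mathbf{I}_k(n+1)$ is known.
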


\subsection{On the isomorphism $\chi$}

Notice that, for every $\boldsymbol{\varrho}(n) \in \boldsymbol{\zeta}(n)^{(m)}$, it follows
$$
\chi_n(\boldsymbol{\varrho}(n))(x_1, x_2, \ldots, x_n) =
{\boldsymbol{\pi}^*_{n,n+1}}^{(m)} \Big( \chi_{n+1} \big( \mathbf{i}^*_{n+1,n} (\boldsymbol{\varrho}(n)) \big) \Big) (x_1, x_2, \ldots, x_n) =
$$
$$
 = \chi_{n+1} \big( {\mathbf{i}^*_{n+1,n}}^{(m)} (\boldsymbol{\varrho}(n)) \big)  (x_1, x_2, \ldots, x_n, 0).
$$

Then, for every partition $\mu$ and for every $\boldsymbol{\varrho}(n) \in \boldsymbol{\zeta}(n)^{(m)}$,
if
$$
n \geq  max\{ m,  \ l(\widetilde{\mu}) = \mu_1 \},
$$
then
$$
\chi_n(\boldsymbol{\varrho}(n))(\widetilde{\mu}) =  \chi_{n+1} \big( {\mathbf{i}^*_{n+1,n}}^{(m)} (\boldsymbol{\varrho}(n)) \big)  (\widetilde{\mu}).
$$

Therefore, the sequence
$$
\Big( \ \chi_{n+1} \big( {\mathbf{i}^*_{n+1,n}}^{(m)} (\boldsymbol{\varrho}(n)) \big)  (\widetilde{\mu}) \ \Big)_{n \in \mathbb{N}^+}
$$
is definitively constant {\it{ equal to}} $\chi_n(\boldsymbol{\varrho}(n))(\widetilde{\mu}),$
and, passing to the direct limit
$$
 \boldsymbol{\varrho} = \underrightarrow{lim} \ \boldsymbol{\varrho}(n) \in \boldsymbol{\zeta}^{*(m)},
$$

\begin{proposition}\label{The map universal HC}

The eigenvalue
\begin{equation}\label{universal HarishChandra}
\chi(\boldsymbol{\varrho})(\widetilde{\mu}) = \chi_n(\boldsymbol{\varrho}(n))(\widetilde{\mu}), \quad n \ sufficiently \ large
\end{equation}
is well-defined.
\end{proposition}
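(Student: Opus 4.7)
The plan is to verify that the sequence $\big(\chi_n(\boldsymbol{\varrho}(n))(\widetilde{\mu})\big)_n$ is eventually constant for $n$ sufficiently large, so that the value $\chi(\boldsymbol{\varrho})(\widetilde{\mu})$ in equation (\ref{universal HarishChandra}) is unambiguously defined. The strategy is to combine two ingredients: (i) the compatibility of the elements $\boldsymbol{\varrho}(n)$ under the Capelli monomorphisms provided by the direct limit presentation, and (ii) the commutative square of Theorem \ref{isomorfismo HC infinito} translating this compatibility to the polynomial side.

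First I would fix $\boldsymbol{\varrho} \in \boldsymbol{\zeta}^{(m)}$ and a partition $\mu$, and appeal to the direct-limit construction $\boldsymbol{\zeta} = \underrightarrow{\lim}\ \boldsymbol{\zeta}(n)$: since $\boldsymbol{\varrho}$ has filtration degree at most $m$, for every $n \geq m$ there is a well-defined ``germ'' $\boldsymbol{\varrho}(n) \in \boldsymbol{\zeta}(n)^{(m)}$, and these germs satisfy $\boldsymbol{\varrho}(n+1) = \mathbf{i}_{n+1,n}(\boldsymbol{\varrho}(n))$ (this is precisely the filtered direct-limit compatibility, cf.\ Theorem \ref{germs} and Proposition \ref{inverso filtrato}).

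Next I would apply the commutative diagram (\ref{diagramma commutativo}) from Theorem \ref{isomorfismo HC infinito}: for $n \geq m$,
$$
\chi_{n+1}\big(\boldsymbol{\varrho}(n+1)\big) \ = \ \chi_{n+1}\big(\mathbf{i}_{n+1,n}(\boldsymbol{\varrho}(n))\big) \ = \ \mathbf{i}^*_{n+1,n}\big(\chi_n(\boldsymbol{\varrho}(n))\big)
$$
in $\Lambda^*(n+1)^{(m)}$. Since $\mathbf{i}^*_{n+1,n}$ is just the inclusion $\Lambda^*(n) \hookrightarrow \Lambda^*(n+1)$ obtained by adjoining a new variable, the polynomial $\chi_{n+1}(\boldsymbol{\varrho}(n+1))(x_1, \ldots, x_{n+1})$ specializes, on setting $x_{n+1} = 0$, back to $\chi_n(\boldsymbol{\varrho}(n))(x_1, \ldots, x_n)$; this is exactly the content of $\boldsymbol{\pi}^*_{n,n+1} \circ \mathbf{i}^*_{n+1,n} = \mathrm{Id}$.

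Finally, I would specialize at $\widetilde{\mu}$. Since the partition $\mu$ has finite length $\ell(\widetilde{\mu}) = \mu_1$, whenever $n \geq \max\{m, \mu_1\}$ the weight $\widetilde{\mu}$ has all components beyond position $\mu_1$ equal to zero, so the evaluation point $(\widetilde{\mu}_1, \ldots, \widetilde{\mu}_n, 0)$ in $n+1$ variables is literally the weight $\widetilde{\mu}$ viewed in $gl(n+1)$. Combined with the identity above, this yields
$$
\chi_{n+1}(\boldsymbol{\varrho}(n+1))(\widetilde{\mu}) \ = \ \chi_n(\boldsymbol{\varrho}(n))(\widetilde{\mu}),
$$
so the sequence stabilizes, and its eventual common value is the well-defined number $\chi(\boldsymbol{\varrho})(\widetilde{\mu})$. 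The main point, which is actually doing the work here, is the commutative square of Theorem \ref{isomorfismo HC infinito}; once that square is available, the rest is the elementary observation that finite partitions are stable under padding by zeros.
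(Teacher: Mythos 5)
Your argument is correct and coincides in substance with the paper's own reasoning: both rely on the commutative square of Theorem \ref{isomorfismo HC infinito} together with the fact that $\boldsymbol{\pi}^*_{n,n+1}$ is evaluation at $x_{n+1}=0$ (equivalently, $\boldsymbol{\pi}^*_{n,n+1}\circ \mathbf{i}^*_{n+1,n}=\mathrm{Id}$), plus the observation that $\widetilde{\mu}$ padded by zeros is the same evaluation point once $n \geq \max\{m,\mu_1\}$. The only cosmetic difference is that you make the germ identity $\boldsymbol{\varrho}(n+1)=\mathbf{i}_{n+1,n}(\boldsymbol{\varrho}(n))$ explicit and describe $\mathbf{i}^*_{n+1,n}$ loosely as an ``inclusion'' (it is a monomorphism, not an inclusion of polynomial rings), but these do not affect the argument.
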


Equation (\ref{universal HarishChandra}) may be regarded as the explicit definition of the isomorphism
$$
    \chi : \boldsymbol{\zeta} \approx  \Lambda^*.
$$

\subsection{Duality in $\boldsymbol{\zeta}$ and $\Lambda^*$}

Let
$$
\mathcal{W} : \boldsymbol{\zeta} \rightarrow \boldsymbol{\zeta^*}
$$
denote the automorphism such that
$$
\mathcal{W} \Big( \mathbf{H}_k \Big) = \mathbf{I}_k, \quad   for \ every  \ k \in \mathbb{Z}^+,
$$

and let

$$
w : \Lambda^* \rightarrow \Lambda^*
$$
denote the automorphism such that
$$
w \Big( \mathbf{e}^*_k \Big) = \mathbf{h}^*_k, \quad   for \ every  \ k \in \mathbb{Z}^+.
$$

\

Clearly, we have:

\begin{proposition}

$$
 \chi \circ \mathcal{W} = w \circ \chi.
$$

\end{proposition}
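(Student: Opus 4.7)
The plan is to verify the identity $\chi \circ \mathcal{W} = w \circ \chi$ by checking it on a system of algebraic generators of $\boldsymbol{\zeta}$, exploiting the fact that all three maps in sight are algebra homomorphisms, indeed isomorphisms. Since $\chi$, $\mathcal{W}$, and $w$ are morphisms of algebras, both compositions $\chi \circ \mathcal{W}$ and $w \circ \chi$ are algebra homomorphisms $\boldsymbol{\zeta} \to \Lambda^*$, so agreement on a generating set forces agreement everywhere.

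The natural generating set is $\{\mathbf{H}_k : k \in \mathbb{Z}^+\}$, which we have already established is a system of free algebraic generators of $\boldsymbol{\zeta}$ (the infinite-dimensional analogue of Capelli's theorem, obtained via the direct limit). First I would evaluate the left-hand side:
\begin{equation*}
(\chi \circ \mathcal{W})(\mathbf{H}_k) \;=\; \chi\bigl(\mathcal{W}(\mathbf{H}_k)\bigr) \;=\; \chi(\mathbf{I}_k) \;=\; \mathbf{h}^*_k,
\end{equation*}
using the definition of $\mathcal{W}$ together with the identification $\chi(\mathbf{I}_k) = \mathbf{h}^*_k$ recorded in the previous subsection. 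Next I would evaluate the right-hand side:
\begin{equation*}
(w \circ \chi)(\mathbf{H}_k) \;=\; w\bigl(\chi(\mathbf{H}_k)\bigr) \;=\; w(\mathbf{e}^*_k) \;=\; \mathbf{h}^*_k,
\end{equation*}
using $\chi(\mathbf{H}_k) = \mathbf{e}^*_k$ and the definition of $w$. The two values coincide, so $\chi \circ \mathcal{W}$ and $w \circ \chi$ agree on every generator $\mathbf{H}_k$.

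There is no genuine obstacle in this argument: the whole content of the proposition has already been absorbed into the previous results, namely the explicit description of $\chi$ on the two free generating families $\{\mathbf{H}_k\}$ and $\{\mathbf{I}_k\}$, and the fact that $\{\mathbf{H}_k\}$ generates $\boldsymbol{\zeta}$ as an algebra. The only thing worth emphasizing in the writeup is that the compatibility extends from generators to the whole algebra precisely because $\mathcal{W}$ and $w$ are defined as algebra automorphisms; the compatibility of the two diagrams at the finite-$n$ level (the fact that $\chi_n$ intertwines the Capelli and Olshanski structures) is what makes the limiting construction coherent, but it is not invoked directly in this short verification.
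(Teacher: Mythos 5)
Your proof is correct and is exactly the intended argument; the paper states the proposition with only the remark ``Clearly, we have:'' and leaves the generator check implicit, but what it has in mind is precisely your verification that both compositions send $\mathbf{H}_k$ to $\mathbf{h}^*_k$, combined with the fact that the $\mathbf{H}_k$ generate $\boldsymbol{\zeta}$ and all maps involved are algebra homomorphisms.
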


\

From Theorem \ref{finite duality} and Proposition \ref{The map universal HC}, we infer:

\begin{theorem}
\
For every $\boldsymbol{\varrho} \in \boldsymbol{\zeta}^{(m)}$ and for every  partition $\mu$, we have:

\begin{equation}
\Big( ( \chi \circ  \mathcal{W} )  ( \boldsymbol{\varrho} )    \Big)(\mu) = \Big( \chi \circ \boldsymbol{\varrho} \Big)(\widetilde{\mu}).
\end{equation}
\end{theorem}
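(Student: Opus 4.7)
The plan is to reduce the infinite-dimensional statement to the finite-dimensional duality Theorem \ref{finite duality} by passing through germs at a sufficiently large level $n$. Fix $\boldsymbol{\varrho}\in\boldsymbol{\zeta}^{(m)}$ and a partition $\mu$. By the direct-limit construction, $\boldsymbol{\varrho}=\underrightarrow{\lim}\;\boldsymbol{\varrho}(n)$ with $\boldsymbol{\varrho}(n)\in\boldsymbol{\zeta}(n)^{(m)}$, and by Proposition \ref{inverso filtrato} the germs $\boldsymbol{\varrho}(n)$ are uniquely determined and stabilize as soon as $n\ge m$.

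First, I would choose $n$ large enough to satisfy \emph{three} simultaneous requirements: (i) $n\ge m$, so that Proposition \ref{inverso filtrato} applies to $\boldsymbol{\varrho}$; (ii) $n\ge\max\{\mu_1,\widetilde{\mu}_1\}$, so that the hypothesis of Theorem \ref{finite duality} is met for the partition $\mu$; and (iii) $n$ is large enough that the germ of $\mathcal{W}(\boldsymbol{\varrho})\in\boldsymbol{\zeta}^{(m)}$ at level $n$ coincides with $\mathcal{W}_n(\boldsymbol{\varrho}(n))$. Step (iii) is the only non-formal ingredient and I will return to it below.

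Granting these choices, the computation is a short chain. By Proposition \ref{The map universal HC} applied to $\boldsymbol{\varrho}$ at $\widetilde{\mu}$,
$$
\chi(\boldsymbol{\varrho})(\widetilde{\mu})=\chi_n(\boldsymbol{\varrho}(n))(\widetilde{\mu}).
$$
By Theorem \ref{finite duality} applied to $\boldsymbol{\varrho}(n)\in\boldsymbol{\zeta}(n)$ and the partition $\mu$ (whose conjugate satisfies $\widetilde{\mu}_1,\mu_1\le n$),
$$
\chi_n(\boldsymbol{\varrho}(n))(\widetilde{\mu})=\chi_n\bigl(\mathcal{W}_n(\boldsymbol{\varrho}(n))\bigr)(\mu).
$$
Using the compatibility of step (iii), $\mathcal{W}_n(\boldsymbol{\varrho}(n))$ is the level-$n$ germ of $\mathcal{W}(\boldsymbol{\varrho})$, so applying Proposition \ref{The map universal HC} once more in the opposite direction yields
$$
\chi_n\bigl(\mathcal{W}_n(\boldsymbol{\varrho}(n))\bigr)(\mu)=\chi\bigl(\mathcal{W}(\boldsymbol{\varrho})\bigr)(\mu)=\bigl((\chi\circ\mathcal{W})(\boldsymbol{\varrho})\bigr)(\mu),
$$
which is the desired equality.

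The main obstacle is step (iii), namely verifying that the finite involutions $\mathcal{W}_n$ are compatible with the Capelli monomorphisms $\mathbf{i}_{n+1,n}$ in the sense that $\mathbf{i}_{n+1,n}\circ\mathcal{W}_n=\mathcal{W}_{n+1}\circ\mathbf{i}_{n+1,n}$ on $\boldsymbol{\zeta}(n)^{(m)}$ for $n\ge m$. To handle this I would verify the identity on the algebra generators $\mathbf{H}_k(n)$: by definition $\mathcal{W}_n(\mathbf{H}_k(n))=\mathbf{I}_k(n)$, and by Theorem \ref{germs} both $\mathbf{H}_k$ and $\mathbf{I}_k$ are stable under $\mathbf{i}_{n+1,n}$ once $n\ge k$, so
$$
\mathbf{i}_{n+1,n}\bigl(\mathcal{W}_n(\mathbf{H}_k(n))\bigr)=\mathbf{i}_{n+1,n}(\mathbf{I}_k(n))=\mathbf{I}_k(n+1)=\mathcal{W}_{n+1}(\mathbf{H}_k(n+1))=\mathcal{W}_{n+1}\bigl(\mathbf{i}_{n+1,n}(\mathbf{H}_k(n))\bigr).
$$
Since the $\mathbf{H}_k(n)$ with $k\le n$ generate $\boldsymbol{\zeta}(n)$ and $\mathcal{W}_n$, $\mathcal{W}_{n+1}$ are algebra homomorphisms, compatibility propagates to all of $\boldsymbol{\zeta}(n)^{(m)}$ for $n\ge m$, which is exactly what is needed to identify $\mathcal{W}_n(\boldsymbol{\varrho}(n))$ with the $n$-th germ of $\mathcal{W}(\boldsymbol{\varrho})$.
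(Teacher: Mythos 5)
Your proof is correct and spells out exactly the argument the paper implies by citing Theorem \ref{finite duality} together with Proposition \ref{The map universal HC}: pass to a sufficiently large germ level $n$, apply the finite-level duality, and lift back, with the identification of $\mathcal{W}_n(\boldsymbol{\varrho}(n))$ as the germ of $\mathcal{W}(\boldsymbol{\varrho})$ verified on the Capelli generators. The compatibility $\mathbf{i}_{n+1,n}\circ\mathcal{W}_n=\mathcal{W}_{n+1}\circ\mathbf{i}_{n+1,n}$ you isolate as the one nontrivial step is indeed what makes the direct-limit $\mathcal{W}$ well defined, and your generator check plus multiplicativity closes it.
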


\

\begin{corollary}

\

\begin{enumerate}

\item
For every partition $\lambda$,
 $$
  \mathcal{W} \Big( \mathbf{S}_{\lambda}\Big) = \mathbf{S}_{\widetilde{\lambda}}.
 $$

\item
In particular,
$$
  \mathcal{W} \Big( \mathbf{I}_k \Big) = \mathbf{H}_k, \quad for \ every \ k \in\mathbb{Z}^+;
$$
then, the automorphisms $\mathcal{W}$ and $w$ are involutions.

\end{enumerate}
\end{corollary}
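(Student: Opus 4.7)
The plan is to reduce the corollary to its finite-dimensional counterpart, Corollary \ref{Schur duality}, by passing to the direct limit in a way compatible with the Capelli monomorphisms. Concretely, I would verify that the finite-dimensional involutions $(\mathcal{W}_n)_n$ assemble into a single map on $\boldsymbol{\zeta} = \underrightarrow{\lim}\ \boldsymbol{\zeta}(n)$ that coincides with $\mathcal{W}$, and then deduce both assertions.

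The key step is the compatibility check: I would show that the square $\mathbf{i}_{n+1,n} \circ \mathcal{W}_n = \mathcal{W}_{n+1} \circ \mathbf{i}_{n+1,n}$ commutes on $\boldsymbol{\zeta}(n)$. By Theorem \ref{Capelli generators} it suffices to test this on the algebraic generators $\mathbf{H}_k(n)$ for $k = 1, \ldots, n$; for each such $k$, both sides evaluate to $\mathbf{I}_k(n+1)$, using Theorem \ref{germs} on the left and the definition of $\mathcal{W}_{n+1}$ on the right. Consequently the $\mathcal{W}_n$ induce an algebra automorphism $\underrightarrow{\lim}\ \mathcal{W}_n$ of $\boldsymbol{\zeta}$, which on the generator $\mathbf{H}_k = \underrightarrow{\lim}\ \mathbf{H}_k(n)$ sends $\mathbf{H}_k \mapsto \mathbf{I}_k$ and therefore equals $\mathcal{W}$ by free generation.

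For assertion (1), I would fix $\lambda$ and take $n \geq \max(\lambda_1, \widetilde{\lambda}_1)$; Corollary \ref{Schur duality} then gives $\mathcal{W}_n(\mathbf{S}_\lambda(n)) = \mathbf{S}_{\widetilde{\lambda}}(n)$, and passing to the limit via Definition \ref{series} and Theorem \ref{germs} yields $\mathcal{W}(\mathbf{S}_\lambda) = \mathbf{S}_{\widetilde{\lambda}}$. Alternatively, one could bypass the finite identity and invoke the theorem immediately preceding the corollary: applying it to $\boldsymbol{\varrho} = \mathbf{S}_\lambda$ gives $\chi(\mathcal{W}(\mathbf{S}_\lambda))(\mu) = \chi(\mathbf{S}_\lambda)(\widetilde{\mu})$ for every $\mu$, and Theorem \ref{Schur action} together with $H(\lambda) = H(\widetilde{\lambda})$ shows these are exactly the eigenvalues characterizing $\mathbf{S}_{\widetilde{\lambda}}$ via Sahi/Okounkov.

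For assertion (2), I would use the identifications $\mathbf{I}_k = \mathbf{S}_{(1^k)}$ and $\mathbf{H}_k = \mathbf{S}_{(k)}$ (limits of the finite-dimensional identities already established in Section \ref{Schur elements}), together with $\widetilde{(1^k)} = (k)$, to deduce $\mathcal{W}(\mathbf{I}_k) = \mathbf{H}_k$ from (1). Then $\mathcal{W}^2$ is an algebra endomorphism of $\boldsymbol{\zeta}$ fixing every free generator $\mathbf{H}_k$, hence the identity; the corresponding statement for $w$ follows by transport through $\chi$, or equivalently by the same generator argument applied to $w(\mathbf{e}^*_k) = \mathbf{h}^*_k$. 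The main obstacle I anticipate is the compatibility verification in the second paragraph; once the relevant square commutes, (1) is a formal consequence of the finite-dimensional duality and (2) is immediate from (1).
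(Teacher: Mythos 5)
Your proposal is correct. The primary route you give — verifying that the square $\mathbf{i}_{n+1,n} \circ \mathcal{W}_n = \mathcal{W}_{n+1} \circ \mathbf{i}_{n+1,n}$ commutes on the free generators $\mathbf{H}_k(n)$, identifying $\mathcal{W}$ with the induced map $\underrightarrow{lim}\ \mathcal{W}_n$, and then pushing Corollary~\ref{Schur duality} to the limit — is more explicit than what the paper offers, which simply deduces the corollary from the theorem immediately preceding it (the identity $(\chi \circ \mathcal{W})(\boldsymbol{\varrho})(\mu) = (\chi \circ \boldsymbol{\varrho})(\widetilde{\mu})$) together with the Sahi/Okounkov characterization of Schur elements; that deduction is precisely your stated alternative. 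Your main route additionally makes explicit the compatibility of the finite involutions $\mathcal{W}_n$ with the Capelli monomorphisms, a point the paper never checks, so it fills a small expository gap. One minor technical adjustment: in assertion (1) take $n \geq |\lambda|$ rather than merely $n \geq \max(\lambda_1, \widetilde{\lambda}_1)$, since Definition~\ref{series} and Theorem~\ref{germs} realize $\mathbf{S}_\lambda$ and $\mathbf{S}_{\widetilde{\lambda}}$ as germs only at stages $n \geq |\lambda|$; the bound $n \geq |\lambda|$ automatically yields $\lambda_1, \widetilde{\lambda}_1 \leq n$, so Corollary~\ref{Schur duality} still applies. Your handling of assertion (2) — the identifications $\mathbf{I}_k = \mathbf{S}_{(1^k)}$, $\mathbf{H}_k = \mathbf{S}_{(k)}$, the computation that $\mathcal{W}^2$ fixes every free generator $\mathbf{H}_k$ and hence is the identity, and the transport to $w$ via $\chi \circ \mathcal{W} = w \circ \chi$ — is complete.
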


\end{document}